\tikzset{
  labl/.style={anchor=south, rotate=90, inner sep=.5mm}
}
\tikzset{
  labr/.style={anchor=north, rotate=90, inner sep=1mm}
}
\newcommand{\perf}{\text{\rm perf}}
\DeclareMathOperator{\BaseLocus}{Bs}
\newcommand{\Comp}{\operatorname{\text{\rm Comp}}}
\newcommand{\ConnComp}{\operatorname{\text{\rm ConnComp}}}
\newcommand{\pic}[0]{\operatorname{Pic}}
\DeclareMathOperator{\im}{Im}
\DeclareMathOperator{\supp}{Supp}
\DeclareMathOperator{\Supp}{Supp}
\DeclareMathOperator{\Span}{Span}
\DeclareMathOperator{\Hilb}{Hilb}
\DeclareMathOperator{\Bl}{Bl}
\newcommand{\field}[1]{\mathbf #1}
\newcommand{\mf}[1]{\mathfrak #1}
\newcommand{\mc}[1]{\mathcal #1}
\newcommand{\ms}[1]{\mathcal #1}
\renewcommand{\phi}{\varphi}
\DeclareMathOperator{\Proj}{Proj}
\newcommand{\lovely}{definable}
\newcommand{\Torelli}{\ms T}
\newcommand{\AbVar}{\mathbf{\mathcal{V\!a\!r}}}
\newcommand{\Definable}{\mathbf{\mathcal{D\!e\!f}}}
\DeclareMathOperator{\Div}{Div}
\DeclareMathOperator{\Eff}{Eff}
\DeclareMathOperator{\AmpleBPF}{Abpf}
\DeclareMathOperator{\AmpleBPFI}{Abpfi}
\DeclareMathOperator{\Cl}{Cl}
\DeclareMathOperator{\Gr}{Gr}
\DeclareMathOperator{\codim}{codim}
\newcommand{\R}{\field R}
\newcommand{\C}{\field C}
\newcommand{\F}{\field F}
\newcommand{\Z}{\field Z}
\newcommand{\Q}{\field Q}
\newcommand{\N}{\field N}
\newcommand{\simto}{\stackrel{\sim}{\to}}
\DeclareMathOperator{\Spec}{Spec}
\newcommand{\rspec}{\operatorname{\bf Spec}}
\renewcommand{\P}{\field P}
\newcommand{\A}{\field A}
\DeclareMathOperator{\Pic}{Pic}
\newcommand{\G}{\field G} 
\renewcommand{\H}{\operatorname{H}}
\newcommand{\DD}{\mathbf D}
\newcommand{\Gal}{\operatorname{Gal}}
\DeclareMathOperator*{\tensor}{\otimes}
\newcommand{\surj}{\twoheadrightarrow}
\newcommand{\inj}{\hookrightarrow}
\newcommand{\id}{\operatorname{id}}
\DeclareMathOperator{\Aut}{\operatorname{Aut}}
\DeclareMathOperator{\Isom}{\operatorname{Isom}}
\DeclareMathOperator{\Hom}{\operatorname{Hom}}
\renewcommand{\mathbb}{\mathbf}
\newtheoremstyle{fauxsubsub}
  {3pt}
  {0pt}
  {}
  {0pt}
  {}
  {.}%
  {.5em}
  {}
\newtheorem{lem}{Lemma}[subsection]
\numberwithin{equation}{lem}
\newtheorem{thm}[lem]{Theorem}
\newtheorem*{mainthm}{Main Theorem}
\newtheorem*{corollary*}{Corollary}
\newtheorem{prop}[lem]{Proposition}
\newtheorem{cor}[lem]{Corollary}
\newtheorem{claim}[lem]{Claim}
\theoremstyle{definition}
\newtheorem{defn}[lem]{Definition}
\newtheorem{definition}[lem]{Definition}
\newtheorem{example}[lem]{Example}
\newtheorem{situation}[lem]{Situation}
\newtheorem{construction}[lem]{Construction}
\newtheorem{notation}[lem]{Notation}
\newtheorem{assumption}[lem]{Assumption}
\newtheorem{summary}[lem]{Summary}
\theoremstyle{definition}
\newtheorem{remark}[lem]{Remark}
\newtheorem*{remark*}{Remark}
\newtheorem*{remarks*}{Remarks}
\newtheorem{rem}[lem]{Remark}
\newtheorem{question}[lem]{Question}
\newtheorem{pg}[lem]{}
\crefname{pg}{Paragraph}{Paragraphs}
\newtheorem{assume}[subsubsection]{Assumption}
\theoremstyle{fauxsubsub}
\newtheorem{pgss}[lem]{}
\crefname{pgss}{Paragraph}{Paragraphs}
\newcommand{\mls}[1]{\mathscr{#1}}
\newcommand{\mmu}{{\mu \!\!\!\!\mu }}
\newcommand{\bmmu}{\overline {\mu\!\!\!\!\mu }}
\numberwithin{equation}{lem}
\newcommand{\Sp}{\text{\rm Spec}}
\author{János Kollár, Max Lieblich, Martin Olsson, and Will Sawin}
\title{Topological reconstruction theorems for varieties}
\keywords{}
\begin{document}
\begin{abstract}
  We study Torelli-type theorems in the Zariski topology for varieties of
  dimension at least $2$, over arbitrary fields. In place of the Hodge
  structure, we use the linear equivalence relation on Weil divisors. Using this
  setup, we prove a universal Torelli theorem in the sense of Bogomolov and
  Tschinkel. The proofs rely heavily on new variants of the classical
  Fundamental Theorem of Projective Geometry of Veblen and Young.

  For proper normal varieties over uncountable algebraically closed fields of
  characteristic $0$, we show that the Zariski topological space can be used to
  recover the linear equivalence relation on divisors. As a consequence, we show
  that the underlying scheme of any such variety is uniquely determined by its
  Zariski topological space. We use this to prove a topological version of
  Gabriel's theorem, stating that a proper normal variety over an uncountable
  algebraically closed field of characteristic $0$ is determined by its category
  of constructible abelian étale sheaves.

  We also discuss a conjecture in arbitrary characteristic, relating the Zariski
  topological space to the perfection of a proper normal variety.
\end{abstract}

\maketitle

\setcounter{tocdepth}{2}
\tableofcontents

\section{Introduction}
\label{sec:introduction}


The underlying topological space $|X|$ of a smooth projective variety $X$ over a
field $k$ is typically viewed as a rather weak invariant. For example, for a
smooth projective curve $X/k$ the topological space $|X|$ is determined by the
cardinality of the set of points. As we discuss further below in
\cref{lem:surf-homeo}, there are also many examples of homeomorphic smooth
projective surfaces (over algebraic closures of finite fields)\footnote{in fact,
examples of homeomorphic surfaces over fields of different characteristics} that
are not isomorphic.

The present paper is a reflection on what additional structures on $|X|$ enable
one to recover the scheme $X$. There is a substantial literature on related
questions. In particular, we mention the work of Bogomolov--Korotiaev--Tschinkel
\cite{MR2591191} and subsequent work of Zilber \cite{MR3178604}. There is also
related work of Cadoret--Pirutka \cite{1808.04944} and Topaz
\cite{1705.01084,MR3552242} for reconstruction from $K$-theory and other
Galois-cohomological invariants. Finally, we mention the work of Voedvodsky
\cite{MR1098621}, proving a conjecture of Grothendieck that the étale topos of a
normal scheme of finite type over a finitely generated field uniquely determines
the scheme.

We summarize the main results of the present paper somewhat informally, and with
slightly stronger assumptions than in the body of the paper, in the following
theorem:

\begin{mainthm}[Universal Torelli, proper case]\label{thm:torelliintro}
Let $X$ be a proper normal geometrically integral variety of dimension at least
$2$ over a field $k$.
\begin{enumerate}
  \item[{\bf A}.] If $k$ is infinite or $X$ is Cohen-Macaulay of dimension $\geq 3$, then $X$ is uniquely determined as a scheme by the pair
   $$(|X|, c:X^{(1)}\to\Cl(X)),$$ where $|X|$ is the underlying Zariski
   topological space, $\Cl(X)$ is the group of Weil divisor classes, $X^{(1)}$ 
 is the set of codimension $1$ points of $|X|$, and $c$ is
   the map sending a codimension $1$ point of $X$ to its divisor class.
 
   Equivalently, $X$ is determined as a scheme by its underlying topological space
 $|X|$ and the rational equivalence relation on the set of effective divisors.
 \item[{\bf B}.] If $k$ is an uncountable algebraically
 closed field of characteristic $0$, then linear equivalence of divisors on $X$
 is determined by $|X|$. As a consequence of this and statement {\bf A}, $X$ is
 determined by $|X|$ alone.
\end{enumerate}
\end{mainthm}

We also prove a categorical corollary.

\begin{corollary*}[Topological Gabriel--Rosenberg, \cref{thm:gabriel}] If
  $X$ is a normal scheme of dimension at least $2$ such that $\Gamma(X,\ms O_X)$ is an uncountable
  algebraically closed field of characteristic $0$ and $X\to\Spec\Gamma(X,\ms
  O_X)$ is proper, then $X$ is uniquely determined by the category of
  constructible abelian étale sheaves on $X$.
\end{corollary*}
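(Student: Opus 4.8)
The plan is to recover the Zariski topological space $|X|$ from the abstract abelian category $\mathcal C:=\mathrm{Constr}(X_{\met},\mathbf{Ab})$ of constructible abelian étale sheaves on $X$, and then to invoke part (B) of the Main Theorem to recover $X$ as a scheme from $|X|$. For the latter to apply one first observes that $X$ is a proper normal geometrically integral variety of dimension $\ge 2$ over the uncountable algebraically closed field $k=\Gamma(X,\mathcal O_X)$ of characteristic $0$: properness over $\Spec k$ forces $X$ to be of finite type over $k$; $X$ is connected since $\Gamma(X,\mathcal O_X)=k$ has no nontrivial idempotents, hence integral since it is connected and normal; and it is geometrically integral because $k$ is algebraically closed.

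To reconstruct $|X|$ from $\mathcal C$ I would work one prime $\ell$ at a time. The full subcategory $\mathcal C_\ell:=\{F\in\mathcal C:\ell\cdot\mathrm{id}_F=0\}$ is intrinsic to the abelian category and is equivalent to $\mathrm{Constr}(X_{\met},\mathbf F_\ell)$; since $\ch k=0$ the full $\ell$-adic formalism (smooth base change, purity, finite cohomological dimension) is available for it. The first structural fact is that the simple objects of $\mathcal C_\ell$ are exactly the skyscrapers $i_{x,*}\mathbf F_\ell$ at closed points $x$: a simple constructible sheaf has irreducible support $\overline{\{x\}}$, and if $\dim\overline{\{x\}}>0$ then the extension by zero, from a dense open subset of $\overline{\{x\}}$, of its generically lisse restriction is a proper nonzero subobject --- a contradiction; and over $k=\bar k$ a closed point has residue field $k$, whose only simple étale sheaf is $\mathbf F_\ell$. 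Thus the closed points of $X$ are canonically the isomorphism classes of simple objects of $\mathcal C_\ell$.

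The topology itself I would recover by a Gabriel--Rosenberg style argument. The central claim is a classification of subcategories: every Serre subcategory of $\mathcal C_\ell$ equals $\{F:\operatorname{supp}(F)\subseteq Y\}$ for a unique specialization-closed subset $Y\subseteq|X|$. Granting this, the lattice of Serre subcategories of $\mathcal C_\ell$ is canonically the lattice of specialization-closed subsets of $|X|$; this lattice is distributive, its join-irreducible elements are the irreducible closed subsets of $X$ --- hence, via their generic points, the points of $|X|$ --- and a specialization-closed set is closed exactly when it is a finite join of join-irreducibles (because $X$ is Noetherian). This recovers $|X|$ as a topological space, and part (B) of the Main Theorem then recovers $X$. (Alternatively one can proceed more concretely: the frame of closed subsets of $X$ is realized as the poset of quotient objects $F\twoheadrightarrow i_{W,*}(F|_W)$ of a suitably intrinsically characterized rank-one lisse sheaf $F$ on $X$, all such $F$ serving equally well.)

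The hard part is the classification of Serre subcategories of $\mathcal C_\ell$ (equivalently, in the alternative, the intrinsic recognition of the relevant object): one must exclude ``arithmetic'' subcategories coming from monodromy conditions along the generic fibre and show that only the geometric ones occur. This is exactly where the hypotheses enter --- algebraic closedness of $k$ makes every residue field equal to $k$ and eliminates Galois representations at closed points (though the fundamental group of the function field must still be handled), while characteristic $0$ guarantees the $\ell$-adic formalism for all $\ell$ and the Noetherianity of $\mathcal C_\ell$. One should also keep in mind that $\mathcal C_\ell$ is not Artinian --- a general object has infinite composition length --- which is precisely why the Serre-subcategory lattice is rich enough to detect non-closed points even though the simple objects detect only the closed ones.
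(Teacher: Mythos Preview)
Your overall strategy---recover $|X|$ from the category, then invoke part~\textbf{B} of the Main Theorem---matches the paper's, and your identification of the simple objects with skyscrapers at closed points is correct and is also where the paper begins.

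The gap is in your ``central claim'': it is \emph{false} that every Serre subcategory of $\mathcal C_\ell$ has the form $\{F:\operatorname{supp}(F)\subseteq Y\}$. You correctly flag the danger of ``arithmetic'' subcategories coming from monodromy, but the hypotheses do not exclude them. Concretely, let $\eta$ be the generic point of $X$ and let $\mathcal S\subset\mathcal C_\ell$ be the full subcategory of those $F$ for which every Jordan--H\"older constituent of the Galois module $F_{\bar\eta}$ is trivial. This is visibly closed under subobjects, quotients, and extensions, hence Serre; it contains the constant sheaf $\underline{\mathbf F_\ell}$, so the corresponding $Y$ would have to be all of $X$. But $k(X)$ is not separably closed, so there is a dense open $j:U\hookrightarrow X$ carrying a nontrivial rank-one $\mathbf F_\ell$-local system $\mathcal L$, and then $j_!\mathcal L\notin\mathcal S$. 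Thus the lattice of Serre subcategories of $\mathcal C_\ell$ is strictly larger than the lattice of specialization-closed subsets, and your reconstruction of the topology from it does not go through. (Your alternative, via quotients of an intrinsically characterized rank-one lisse sheaf, runs into the same problem: there is no way to single out the constant sheaf among all rank-one lisse sheaves without already knowing the geometry.)

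The paper avoids this obstruction by working with individual objects rather than subcategories. It calls $F\in\mathcal C$ \emph{irreducible} if $\dim_A\Hom(F,s)\le 1$ for every simple $s$ and any two nonzero subobjects of $F$ have nonzero intersection; it then introduces a notion of \emph{closed} irreducible object (via a ``partial closure'' relation among irreducibles) and shows that equivalence classes of closed irreducible objects biject with irreducible closed subsets of $X$, with containment detected by the existence of a surjection. This yields $|X|$ directly from categorical properties of single objects and sidesteps the monodromy issue.
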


As we briefly discuss in \cref{sec:topological gabriel}, this leads to a number
of interesting questions about topological analogues of classical results: the
work of Balmer \cite{MR2196732} and the theory of Fourier--Mukai transforms
primary among them.

The key to proving the Main Theorem is a rational form of the
Fundamental Theorem of Projective Geometry, which we develop in
\cref{sec:fund-theor-defin}. In \cref{sec:definable}, we
leverage the incidence-definition of a Zariski open set of pencils in
certain linear systems to recover the linear structure on
set-theoretic rational equivalence classes using the rational
fundamental theorem. This is inspired by work of Bogomolov and Tschinkel who 
used similar ideas to reconstruct function fields \cite{MR2421544}.

\begin{remarks*} We conclude the introduction with a few remarks.
  \begin{enumerate}
    \item Note that statement \textbf{A} is actually \emph{false} in general for schemes of dimension $1$ (see \cref{prop:curves be false} below). Zilber's proof using model theory \cite{MR3178604} works only under the assumption that the constant fields of the curves are algebraically closed. It would be interesting to have a deeper understanding of the dimension $1$ case, including finding a proof that uses only the types of methods we employ here.

\item The full statements of the main results are \cref{thm:main-func},
\cref{T:maintheoremfinite}, and \cref{thm:second miracle} below. Note that the
isomorphism type that is recovered is the isomorphism class of $X$ over $\Z$,
not over $k$. As an example, observe that the theorem implies that for $n\geq
4$, any Zariski homeomorphism of hypersurfaces in $\P^n_{K}$, with $K$ a number
field, that preserves degrees of divisors induces an isomorphism of the
underlying $\Q$-schemes. For a complex hypersurface, statement {\bf B} says that
the degrees of divisors are uniquely determined by the Zariski topology, so that
the underlying $\Q$-scheme is uniquely determined by the Zariski topological
space. This is the best one could hope for: the group $\Gal(K/\Q)$ acts on
$\P^n_{K}$ by degree-preserving Zariski homeomorphisms.

\item The Main Theorem is a Torelli-type result in the following
sense. One can think of Torelli's theorem as a statement about adding a small
amount of geometric content to the cohomology of a variety in order to
distinguish distinct algebraic structures on a fixed differentiable manifold.
The Main Theorem starts with the Zariski topology, which already encodes some of
the algebraic structure -- for example, the algebraic cycles -- and adds the
class group, which encodes the finest possible ``cohomological relation'' among
divisors. Thus, we can think of the Main Theorem as applying the Torelli
philosophy in reverse, whereupon it becomes universally true.
  \end{enumerate}
\end{remarks*}

\subsection{Conventions}

In this paper, we freely use the theory of \emph{projective structures\/}, as
described in \cite{MR2761484,MR0397521} and summarized in \cite[Section
3]{MR2421544}. We will not recapitulate the theory here.

Given a commutative monoid $(M,+)$, we will call an equivalence relation
$\Lambda$ on $M$ a \emph{congruence relation\/} if for all $a,b,c,d\in M$, we
have that $(a,c)\in\Lambda$ and $(b,d)\in\Lambda$ imply that $(a+c,
b+d)\in\Lambda$.

For a vector space $V$ over a field $k$ we write $\P V$ for the projective 
space of lines in $V$. This convention makes the discussion of classical 
projective geometry easier, though it conflicts with the conventions of EGA.  

Given a variety $X$ and a divisor $D$, we will write $|D|$ for the classical
linear system of $D$, that is $|D|=\P\H^0(X,\ms O(D))$. We will write
$|D|^\vee$ for the dual projective space $\P\H^0(X,\ms O(D))^\vee$ (i.e., the 
space
of hyperplanes in $|D|$), which is the natural target for the induced rational
map $\nu_D:X\dashrightarrow|D|^\vee$. When the base field is algebraically
closed, the closed points of the image of $\nu_D$ correspond to the hyperplanes
$H_x\subset|D|$, where $H_x=\{E\in|D|: x\in E\}$. 

\subsection{Acknowledgments}

During the work on this paper, Lieblich was partially supported by NSF grants 
DMS-1600813 and DMS-1901933 and a Simons Foundation Fellowship; Olsson was partially
supported by NSF grants DMS-1601940 and DMS-1902251; Kollár was partially supported by NSF grant DMS-1901855; Sawin served as a Clay Research Fellow.  Part of this work was done while the 
JK, ML, and MO visited the Mathematical Sciences Research Institute in Berkeley, whose support is gratefully acknowledged. We 
thank Jarod Alper, Giulia Battiston,
Daniel 
Bragg, Charles Godfrey,  and Kristin de Vleming for helpful 
conversations, and Yuri Tschinkel and Brendan Hassett for enlightening
discussions about earlier versions of this paper.  Thanks to Noga Alon for pointing out the connection with linearity testing in \cref{R:2.2.2}.


\section{Generic fundamental theorems of projective geometry}

In this section, we prove two strengthenings of the classical Fundamental Theorem of Projective Geometry, which states that linearity of a map of projective spaces can be detected simply by the preservation of lines. Our strengthenings have to do with assuming only that general lines (either a Zariski open -- over infinite base fields -- or a suitably high fraction of lines -- over finite base fields) are known to be mapped to lines.

\subsection{The fundamental theorem of definable projective geometry}
\label{sec:fund-theor-defin}


Here we discuss a variant of the Fundamental Theorem of Projective Geometry in which one only knows
distinguished subsets of ``definable'' lines in the projective
structures and one still wishes to produce a semilinear isomorphism between the
underlying vector spaces that induces the isomorphism on a dense open subset.  In \cref{sec:definable} and
\cref{sec:univ-torelli-theor} we explain how to use this theory to reconstruct
varieties.

\begin{defn}\label{defn:definable-structure}
  A \emph{definable projective space\/} is a triple $(k,V,U)$
  consisting of an infinite field $k$, a $k$-vector space $V$, and a subset
  $U\subset \Gr (1, \P(V))(k)$ which contains the $k$-points of a dense  open subset of the space $\Gr(1,\P(V))$ of lines in the projective space
  $\P(V)$. The \emph{dimension\/} of $(k,V,U)$ is defined to be 
  $$\dim (k,V,U):=\dim_k V-1.$$
\end{defn}

In other words, a definable projective space is a projective space together with
a collection of lines that are declared ``definable'' subject to some conditions.

\begin{defn}\label{defn:sweep}
Let $k$ be a field and $V$ a $k$-vector space.  The \emph{sweep\/} of a subset $U\subset \Gr (1, \P(V))(k)$, denoted $S_U(\P(V))$ is the set of $k$-points $p\in \P(V)$ that lie on some line parametrized by $U$. 
\end{defn}

\begin{pg} Let $(k, V, U)$ be a definable projective space.   Then there exists a maximal subset $U^\circ \subset U$ which is the $k$-points of a Zariski open subset of $\Gr (1, \P(V))$.  Furthermore, $(k, V, U^\circ )$ is again a definable projective space. This is immediate from the definition.
\end{pg}

\begin{example}
  Fix a projective $k$-variety $(X, \mls O_X(1))$ of dimension $d$ at
  least $2$. Given a 
  closed subset $Z\subset X$, we can associate the
  subspace $V(Z)\subset|\ms O(1)|$ of divisors that contain
  $Z$. The lines of the form $V(Z)$ give a 
  subset of $\Gr(1,|\ms O(1)|)$ (see \cref{sec:definable}).
  These are the definable lines we will consider.
\end{example}

The main goal of this section is to prove the following result.

\begin{thm}\label{thm:definable-proj}
Suppose $(k_1,V_1,U_1)$ and $(k_2, V_2, U_2)$ are finite-dimensional definable
projective spaces of dimension at least $2$. Given an injection
$\phi:\P(V_1)\to\P(V_2)$ that induces an inclusion $\lambda:U_1\to U_2$, there
is an isomorphism $\sigma:k_1\to k_2$ and a $\sigma$-linear injective map of
vector spaces $\psi:V_1\to V_2$ such that $\P(\psi)$ agrees with $\phi$ on a
Zariski-dense open subset of $\P(V_1)$ containing the sweep of
$(k_1,V_1,U_1^\circ )$.
\end{thm}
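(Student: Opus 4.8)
The plan is to recover a semilinear map from the "definable" incidence data by reducing, as much as possible, to the classical Fundamental Theorem of Projective Geometry (FTPG). The classical theorem requires knowing that *all* lines go to lines (or at least a collineation on a sub-projective-geometry); here we only control the lines in $U_1$, which are merely a dense open subset of the Grassmannian. So the real content is a bootstrapping argument: use the abundance of definable lines to propagate linearity from the definable part to enough of $\P(V_1)$ that the classical theorem applies.

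**First I would** work inside the sweep $S:=S_{U_1^\circ}(\P(V_1))$, which (since $U_1^\circ$ is a dense open in $\Gr(1,\P(V_1))$ and $\dim\geq 2$) is a Zariski-dense subset of $\P(V_1)$ whose complement has codimension $\geq 1$; in fact for two general points of $S$ the line joining them lies in $U_1^\circ$. The strategy then has three stages. (1) \emph{Local reconstruction on a plane.} Pick a generic $2$-plane $\Pi\subset\P(V_1)$; a dense open set of lines in $\Pi$ are definable, so $\phi|_\Pi$ sends a dense open set of lines of $\Pi$ to lines of $\P(V_2)$. For a projective plane this is exactly the hypothesis of a "generic" FTPG — one shows that a map of a projective plane sending a dense open family of lines to lines is, after discarding a closed subset, a collineation onto its image, hence induced by a semilinear map $\psi_\Pi$ on the corresponding $3$-dimensional subspace. (Concretely: three non-collinear points and a fourth in general position pin down a frame; definability of generic lines lets one transport the frame and check that cross-ratios, i.e. the field structure, are preserved. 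This is where the isomorphism $\sigma:k_1\to k_2$ is born.) (2) \emph{Gluing the local pieces.} Any two generic planes meet in a line that is again generic, and on the overlap the two semilinear maps $\psi_\Pi$, $\psi_{\Pi'}$ agree projectively on a dense open subset, hence (being semilinear with the same $\sigma$) agree up to scalar; normalizing on a fixed frame, they agree. Since generic planes cover a dense open $W\subset\P(V_1)$ containing $S$, we obtain a single $\sigma$-semilinear $\psi:V_1\to V_2$ with $\P(\psi)=\phi$ on $W$. (3) \emph{Injectivity.} Since $\phi$ is injective and $\P(\psi)$ agrees with $\phi$ on a Zariski-dense set, $\psi$ has trivial kernel (a nonzero kernel would force $\P(\psi)$ to collapse a positive-dimensional locus, contradicting agreement with an injection on a dense set); so $\psi$ is a $\sigma$-linear injection, and by construction $\P(\psi)$ agrees with $\phi$ on a dense open subset of $\P(V_1)$ containing the sweep of $(k_1,V_1,U_1^\circ)$.

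**The main obstacle** I expect is stage (1): making the "generic FTPG on a plane" precise and honest. The classical proof of FTPG uses \emph{every} line through a point, whereas we only have a dense open family, so one must argue that the excluded (closed) set of bad lines does not obstruct reconstructing the field operations — e.g. that addition and multiplication, defined via intersections of auxiliary lines in a configuration, can always be realized using only definable lines after a generic choice of the configuration. One also has to ensure the target points don't accidentally become collinear (the image of a generic plane could a priori be degenerate); injectivity of $\phi$ plus genericity should rule this out, but it needs care. A secondary subtlety is checking that the semilinear maps on different planes all carry the \emph{same} field isomorphism $\sigma$ — this follows because overlaps are $\geq 1$-dimensional and a semilinear isomorphism determines its field automorphism from its action on any single line with enough points, but one must confirm the base field being infinite gives "enough points." Once stage (1) is set up carefully, stages (2) and (3) are essentially formal patching and a dimension count.
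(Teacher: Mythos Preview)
Your outline has the right instincts but two genuine gaps.

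\textbf{The main gap is stage (1).} You correctly identify this as ``the main obstacle,'' but then you treat the ``generic FTPG on a plane'' as a black box: ``one shows that a map of a projective plane sending a dense open family of lines to lines is \ldots\ a collineation.'' That statement \emph{is} the theorem in the case $\dim = 2$, and it is not something you can cite --- it is the heart of the matter. The classical Artin-style proof builds the field isomorphism $\sigma$ by realizing addition and multiplication as incidence configurations (two figures: one for $-a/b$ from a ``perspective'' configuration, one for $a+b$ from a different one), and the whole issue is to verify that, for any given $a,b\in k_1$, one can place those configurations so that \emph{every auxiliary line used is definable}. This requires genuine work: each configuration defines a map from a parameter space of placements to a product of Grassmannians, and one must check that the preimage of $U_1^\circ$ is nonempty. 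The paper carries this out explicitly. Once you do that analysis, however, it works in arbitrary ambient dimension --- there is no advantage to restricting to a plane first, and in fact the extra room in higher dimension makes the genericity arguments easier. So your reduction-to-planes step is not wrong, but it postpones rather than avoids the real work.

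\textbf{A secondary gap in stage (2).} You write ``any two generic planes meet in a line.'' This is false for $\dim \P(V_1)\geq 4$: two generic $2$-planes in $\P^n$ meet in a point when $n=4$ and not at all when $n\geq 5$. You would need to glue along chains of planes sharing a common definable line, and then argue that the field isomorphism $\sigma$ is constant along such chains. This is fixable, but as written the gluing step is incorrect.

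\textbf{Comparison with the paper.} The paper's proof does not restrict to planes. It builds $\sigma$ directly from a single marked definable line $(L_0,\{P,Q,R\})$ via the configuration arguments above, proves $\sigma$ is independent of the marked line (using that any two definable lines are connected by a chain of intersecting definable lines), and then constructs $\psi$ by choosing a basis $e_1,\dots,e_n$ of $V_1$ such that every pair $e_i,e_j$ spans a definable line. Agreement of $\P(\psi)$ with $\phi$ on the sweep is shown by arguing that for any sweep point $p$ one can choose such a basis adapted to $p$. This is more direct than your plane-by-plane approach and sidesteps the gluing issue entirely.
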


\begin{rem} In \cref{thm:definable-proj} we can without loss of generality assume that $U_2 = \Gr (1, \P(V_2))(k)$.  However, we prefer to formulate it as above to make it a statement about definable projective spaces.
\end{rem}
\begin{rem} If either the dimensions of $V_1$ and $V_2$ are equal or we  assume that $\lambda (U_1^\circ )\subset \Gr (1, \P(V_2))$ is dense,  then the map $\psi $ is an isomorphism.  In the case when the dimensions are equal this is immediate, and in the second case observe that if $V_1\otimes _{k_1, \sigma} k_2\subsetneq V_2$ is  a proper subspace then there exists a dense open subset $W\subset \Gr (1, \P(V_2))$ of lines which are not in the image of $\P(\psi )$, contradicting our assumption that $\P(\psi )(U_1^\circ ) = \lambda (U_1^\circ )$ is dense.
\end{rem}

\begin{rem}\label{R:describe}   Observe that two lines in a projective space are coplanar if and only if they intersect in a unique point.    This enables us to describe the map $\P (\psi )$ as follows.  Let $U'\subset U_1$ be any dense subset which is the $k$-points of a Zariski open subset of $\Gr (1, \P(V_1))$, and let $P\in \P(V_1)$ be a point.  Choose any line $\ell \subset \P(V_1)$ corresponding to a point of $U'$ and not containing $P$ (this is possible since $U'$ is the points of an open subset of $\Gr (1, \P (V_1))$), and let $Q, R\in \ell $ be two distinct points.  Let $L_{P, Q}$ (resp. $L_{P, R})$ be the line through $P$ and $Q$ (resp. $P$ and $R$), and choose points $S\in L_{P,Q}-\{P, Q\}$ and $T\in L_{P, R}-\{P, R\}$ such that the line $L_{S, T}$ through $S$ and $T$ is also given by a point of $U'$ (it is possible to choose such $S$ and $T$ since $U'$ is the $k$-points of an open set).  The lines $L_{S, T}$ and $L_{Q, R}=\ell $ are then coplanar and therefore intersect in a unique point $E$.  It follows that $\phi (L_{S, T})$ and $\phi (L_{Q, R})$, which are lines since $L_{S, T}$ and $L_{Q, R}$ are definable, are coplanar since they intersect in $\phi (E)$.  It follows that the lines in $\P (V_2)$ given  by $L_{\phi (Q), \phi (T)}$ and $L_{\phi (S), \phi (R)}$ are coplanar and consequently intersect in a  unique point, which is $\P (\psi )$.

This description will play an important role in  \cref{SS:2.2} below.
\end{rem}

\begin{proof}[Proof of \cref{thm:definable-proj}]
  This proof is very similar to the proof due to Emil Artin in the classical
  case, as described by Jacobson in \cite[Section 8.4]{MR780184}. 

We may without loss of generality assume that $U_1 = U_1^\circ $.

Let us begin by showing the existence of the isomorphism of fields $\sigma
:k_1\rightarrow k_2$.  The construction will be in several steps.

First we set up some basic notation. Let $V$ be a vector space over a field $k$.
For a nonzero element $v\in V$ let $[v]\in \mathbb{P}(V)$ denote the point given
by the line spanned by $v$. For $P\in \mathbb{P}(V)$ write $\ell _P\subset V$
for the line corresponding to $P$, and for two distinct points $P,Q\in \mathbb{P}(V)$
write $L_{P, Q}\subset \mathbb{P}(V)$ for the projective line connecting $P$ and
$Q$. If $P=[v]$ and $Q = [w]$ then $L_{P, Q}$ corresponds to the $2$-dimensional
subspace of $V$ given by
$$
\Span(v, w):= \{av+bw|a, b\in k\}.
$$

If $L\subset \mathbb{P}(V)$ is a line and $P, Q, R\in L$ are three pairwise
distinct points then there is a unique $k$-linear isomorphism
$L\simto\mathbb{P}^1$ sending $P$ to $0$, $Q$ to $1$, and $R$ to $\infty$. For
a collection of data $(L, \{P, Q, R\})$ we therefore have a canonical
identification $$\epsilon ^{P, Q, R}:k\simto L-\{R\}.$$ In the case when $L =
L_{[v], [w]}$ for two non-colinear  vectors $v, w\in V-\{0\}$ we take $P = [v]$, $Q =
[v+w]$, and $R = [w]$. Then the identification of $k$ with $L-\{R\}$ is given by
$$
a\mapsto v+aw.
$$

Suppose given $(L, \{P, Q, R\})$ as above, and fix a basis vector $v_P\in \ell 
_P$.  Then one sees that there exists a unique basis vector $v_R\in \ell _R$ 
such that $[v_P+v_R] = Q$.  This observation enables us to  relate the maps 
$\epsilon ^{P, Q, R}$ for different lines as follows.

Consider a second line $L'$ passing through $P$ and equipped with two 
additional points $\{S, T\}$, and let $a, b\in k-\{0\}$ be two scalars. We can then consider the two lines
$$
L_{T, R}, \ \ L_{\epsilon ^{P, Q, R}(a), \epsilon ^{P, S, T}(b)},
$$
which will intersect in some point 
$$
\{O\} = L_{T, R}\cap L_{\epsilon ^{P, Q, R}(a), \epsilon ^{P, S, T}(b)}.
$$
The situation is summarized in the following picture, where to ease notation we 
write simply $a$ (resp. $b$) for $\epsilon ^{P, Q, R}(a)$ (resp. $\epsilon 
^{P,S,T}(b)$):

\

\

\

\setlength{\unitlength}{0.8cm}
\begin{picture}(12,4)
\thicklines
\put(11,4){\vector(-1,0){6}}
\put(8,1){\vector(1,2){2}}
\put(9,1){\vector(-1,2){2}}
\put(8.5,2){\circle*{0.1}}
\put(7.7,1.8){$P$}
\put(6,4.2){$O$}
\put(7.7,4.2){$R$}
\put(10, 4.2){$T$}
\put(7.5,4){\circle*{0.1}}
\put(9.5,4){\circle*{0.1}}
\put(7.5,2.7){$a$}
\put(9,2.5){$b$}
\put(6,4){\circle*{0.1}}
\put(8.76,2.6){\circle*{0.1}}
\put(8,3){\circle*{0.1}}
\put(10,2){\vector(-2,1){5}}
\put(7.7,0.5){$\mathrm{Figure \ 1}$}
\end{picture}

If we fix a basis element $v_P\in \ell _P$ we get by the above observation a 
basis vector $v_Q$ (resp. $v_R$, $v_S$, $v_T$) for $\ell _Q$ (resp. $\ell _R$, 
$\ell _S$, $\ell _T$), which in turn gives an identification
$$
\epsilon ^{[v_T], [v_T+v_R], [v_R]}:k\simto L_{T, R}-\{R\}.
$$
An elementary calculation then shows that
$$
O = \epsilon ^{[v_T], [v_T+v_R], [v_R]}(-a/b).
$$
In particular, if $a=b$ then the point $O$ is independent of the choice of $a$, 
and furthermore it follows from the construction that $O$ is also independent 
of the choice of the basis element $v_P$.

Consider now a definable projective space $(k, V, U)$, and let 
$L_0\subset \mathbb{P}(V)$ be a definable line with three points $P, 
Q, R\in L$.  Fix $a\in k$ so we have a point 
$$
\epsilon ^{P, Q, R}(a)\in L_0.
$$

Let $M_P$ denote the scheme classifying data $(L, \{S, T\})$, where $L$ is a
line through $P$ and $\{S, T\}$ is a set of two additional points on $L$ such that $P$, $S$, and $T$ are all distinct. The
scheme $M_P$ has the following description. The point $P$ corresponds to a line
$\ell _P\subset V$ and the set of lines passing through $P$ is given by
$\mathbb{P}(V/\ell _P)$. If $\mls L\rightarrow \mathbb{P}(V/\ell _P)$ denotes
the universal line in $\mathbb{P}(V)$ passing through $P$ then there is an
open immersion $$M_P\subset 
\mls L\times _{\mathbb{P}(V/\ell _P)}\mls L,
$$
whence $M_P$ is smooth, geometrically connected, and rational. Since $k$ is infinite it follows that 
the $k$-points of $M_P$ are dense.

\begin{lem}\label{L:4.5} Fix $a\in k$. There exist a nonempty open subset $U_{P,
    a}\subset M_P$ such that if $(L, \{S, T\})$ is a line through $P$ with two
  points corresponding to a $k$-point of $U_{P, a}$ then the lines
\begin{equation}\label{E:4.5.1}
  L_{P, T}, \ \ L_{T, R}, \ \ L_{\epsilon ^{P,Q,R}(a), \epsilon ^{P, S, T}(a)}
\end{equation}
are all definable.
\end{lem}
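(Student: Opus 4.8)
The plan is to exhibit the three lines in \eqref{E:4.5.1} as the images of the parameter space $M_P$ under three morphisms to the Grassmannian $\Gr(1,\P(V))$, to let $U_{P,a}$ be the locus where all three images are definable, and then to deduce nonemptiness of $U_{P,a}$ from the irreducibility of $M_P$ together with a single witness for each of the three conditions --- the definable line $L_0$ itself serving in each case. Recall that we are in the situation $U=U^\circ$, so that $U=\mathcal U(k)$ for a Zariski-dense open $\mathcal U\subset\Gr(1,\P(V))$; thus a line is definable exactly when it lies in $\mathcal U$, and in particular $L_0\in\mathcal U$. (We also assume $a\neq 0$, since for $a=0$ the last line of \eqref{E:4.5.1} is not even defined.)

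First I would note that the assignments
$$
(L,\{S,T\})\longmapsto L_{P,T},\qquad
(L,\{S,T\})\longmapsto L_{T,R},\qquad
(L,\{S,T\})\longmapsto L_{\epsilon^{P,Q,R}(a),\,\epsilon^{P,S,T}(a)}
$$
define morphisms $f_1,f_2,f_3$ from open subschemes of $M_P$ to $\Gr(1,\P(V))$: the first on all of $M_P$ (there $T\neq P$ always), the second on the open locus $\{T\neq R\}$, and the third on the open locus where $\epsilon^{P,S,T}(a)\neq\epsilon^{P,Q,R}(a)$, using that $\epsilon^{P,S,T}(a)$ varies algebraically in $(L,\{S,T\})$ while $\epsilon^{P,Q,R}(a)$ is a fixed point of $L_0$. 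Then set
$$
U_{P,a}\ :=\ f_1^{-1}(\mathcal U)\cap f_2^{-1}(\mathcal U)\cap f_3^{-1}(\mathcal U),
$$
the intersection being taken inside the common domain of $f_1,f_2,f_3$; this is open in $M_P$, and for every $k$-point $(L,\{S,T\})$ of it the three lines of \eqref{E:4.5.1} are $k$-points of $\mathcal U$, hence definable. So everything comes down to showing $U_{P,a}\neq\emptyset$.

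Since $M_P$ is smooth and geometrically connected it is irreducible, so a finite intersection of nonempty open subsets is again nonempty; hence it suffices to see that each of $f_1^{-1}(\mathcal U)$, $f_2^{-1}(\mathcal U)$, $f_3^{-1}(\mathcal U)$ (and, a fortiori, the domains of $f_2$ and $f_3$) is nonempty. For this I would use test points of $M_P$ supported on $L_0$, exploiting $L_0\in\mathcal U$. The point with $L=L_0$, $S=Q$, $T=R$ is carried by $f_1$ to $L_{P,R}=L_0\in\mathcal U$. The point with $L=L_0$, $S=R$, $T=Q$ lies in the domain of $f_2$ (since $T=Q\neq R$) and is carried by $f_2$ to $L_{Q,R}=L_0\in\mathcal U$. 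Finally, since $k$ is infinite the map $T\mapsto\epsilon^{P,S,T}(a)$ on $L_0\setminus\{P,S\}$ is nonconstant when $a\neq 1$ and is constantly $S$ when $a=1$; in either case one can choose $S,T\in L_0$ with $P,S,T$ pairwise distinct and $\epsilon^{P,S,T}(a)\neq\epsilon^{P,Q,R}(a)$, and then the point with $L=L_0$ and this $(S,T)$ lies in the domain of $f_3$ and is carried by $f_3$ to the line through the two distinct points $\epsilon^{P,Q,R}(a),\epsilon^{P,S,T}(a)$ of $L_0$, which is again $L_0\in\mathcal U$. Hence $U_{P,a}\neq\emptyset$.

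The checking that $f_1,f_2,f_3$ are morphisms and the brief case split on $a$ are routine. The one point that deserves a little care --- and where I expect the only real friction --- is verifying that $\{T=R\}$ and $\{\epsilon^{P,S,T}(a)=\epsilon^{P,Q,R}(a)\}$ are \emph{proper} closed subsets of $M_P$, equivalently that these incidence data are generically nondegenerate; but that is precisely what the three $L_0$-supported witnesses above establish. The underlying mechanism is the usual ``general position'' principle on the irreducible space $M_P$: each line in \eqref{E:4.5.1} is as generic as the lone witness $L_0$ allows it to be, so it lands in $\mathcal U$ away from a proper closed subset.
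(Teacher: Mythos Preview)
Your argument is correct and follows essentially the same route as the paper's proof: define the three assignments as morphisms from $M_P$ (or dense opens therein) to $\Gr(1,\P(V))$, take $U_{P,a}$ as the common preimage of the open $\mathcal U$, and use irreducibility of $M_P$ together with witnesses supported on $L_0$ to conclude nonemptiness. The only cosmetic difference is that the paper phrases this more tersely---it notes that the image of each map contains $L_0$ and leaves the domain-of-definition issues for $f_2$ and $f_3$ implicit---whereas you supply separate witness points for each map and explicitly check that $\{T=R\}$ and $\{\epsilon^{P,S,T}(a)=\epsilon^{P,Q,R}(a)\}$ are proper closed loci.
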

\begin{proof}
We may without loss of generality assume that $U = U^\circ $.

  Let $Q_0\in M_P$ denote the point corresponding to $(L_0, \{Q, R\})$. The
  procedure of assigning one of the lines in \eqref{E:4.5.1} to a pointed line
  $(L, \{S, T\})$ is a map
$$
q:M_P\rightarrow \Gr (1, \P(V)).
$$
Note that the image of this map contains the point corresponding to the line
$L_0$, and therefore the inverse image $q^{-1}(U)$ is nonempty. Since $M_P$ is
integral it follows that the intersection of the preimages of $U$ under the
three maps defined by \eqref{E:4.5.1} is nonempty.
\end{proof}

A variant of the above lemma is the following, which we will use below.
\begin{lem}\label{L:2.1.10} With notation as in \cref{L:4.5}, 
let $P, Q\in \mathbf{P}(V)$ be two points in the sweep of $U^\circ $.  Then there exists a definable line $L _P$ through $P$ and a definable line $L _Q$ through $Q$ such that $L _P$ and $L _Q$ intersect in a point $R$. 
\end{lem}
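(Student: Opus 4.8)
The plan is to build the two definable lines by choosing their would-be intersection point first. Concretely, I will find a single point $R\in\P(V)$ for which \emph{both} $L_{P,R}$ and $L_{Q,R}$ are definable, and then set $L_P:=L_{P,R}$ and $L_Q:=L_{Q,R}$; these then automatically meet at $R$. As usual I may assume $U=U^\circ$, so that $U$ consists of the $k$-points of a dense open subscheme $\mathcal U\subset\Gr(1,\P(V))$, and I will assume $P\neq Q$ (the case $P=Q$ being an easy variant). Write $n=\dim_k V-1\geq 2$.

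The first step is a genericity observation: for any point $Z$ in the sweep of $U^\circ$, the definable lines through $Z$ form a dense open subset of the space $\Gr_Z\cong\P(V/\ell_Z)\cong\P^{n-1}$ of \emph{all} lines through $Z$. Indeed, this set is $\mathcal U\cap\Gr_Z$, which is open in $\Gr_Z$; it is nonempty precisely because $Z$ lies in the sweep; and $\Gr_Z$ is irreducible, so a nonempty open subset is dense.

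The second step is to transport this to $\P(V)$ via the projection from the point $Z$. The projection $\rho_Z\colon\P(V)\setminus\{Z\}\to\Gr_Z$, $R\mapsto L_{Z,R}$, is a surjective morphism (every line through $Z$ contains points other than $Z$), so the preimage $\mathcal W_Z$ of the dense open locus of definable lines through $Z$ is a nonempty open subscheme of $\P(V)\setminus\{Z\}$, and hence (being open and nonempty in the irreducible variety $\P(V)$) a dense open subscheme of $\P(V)$; by construction, a $k$-point $R$ lies in $\mathcal W_Z$ if and only if $R\neq Z$ and $L_{Z,R}$ is definable. Now $\mathcal W_P\cap\mathcal W_Q$ is a dense open subscheme of $\P(V)$, and removing the closed line $L_{P,Q}$ (a proper closed subset, since $n\geq 2$) leaves a nonempty open subscheme; as $k$ is infinite, $\P(V)(k)$ is Zariski dense, so this subscheme contains a $k$-point $R$. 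For such an $R$, the lines $L_P:=L_{P,R}$ and $L_Q:=L_{Q,R}$ are definable, pass through $P$ and $Q$ respectively, and both contain $R$; and since $R\notin L_{P,Q}$ they are distinct, hence coplanar lines meeting exactly in $R$, which is what the lemma asserts.

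I do not expect a serious obstacle; this is essentially a dimension count. The one point deserving genuine care is the first step: a priori a dense open subset of $\Gr(1,\P(V))$ could miss every line through a given point $Z$, and what excludes this is precisely the hypothesis that $Z$ lies in the sweep of $U^\circ$ (which forces $\mathcal U\cap\Gr_Z$ to be nonempty) together with the irreducibility of $\Gr_Z$.
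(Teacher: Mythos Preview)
Your proof is correct and follows essentially the same approach as the paper: both argue that the set of points $R\in\P(V)$ connectable to a given sweep point $Z$ by a definable line is (the $k$-points of) a dense open, and then intersect the two dense opens for $Z=P$ and $Z=Q$. Your version is slightly more explicit in removing $L_{P,Q}$ to guarantee distinctness of $L_P$ and $L_Q$, which the paper does not bother with (nor does the lemma strictly require), but this is a harmless refinement.
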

\begin{proof}
  Let $N_P\subset \Gr (1, \mathbf{P}(V))$ denote the space of lines through $P$, so $N_P\simeq \mathbf{P}(V/\ell _P)$ for the line $\ell _P\subset V$ corresponding to $P$.  Let $\mls L\rightarrow N_P$ denote the universal line through $P$, and let $s:N_P\rightarrow \mls L$ denote the tautological section.  Then the natural map
  $$
  \mls L-\{s(N_P)\}\rightarrow \mathbf{P}(V)-\{P\}
  $$
  is an isomorphism, since any two distinct points lie on a  unique line.  The set of points of $\mathbf{P}(V)-\{P\}$ which can be connected to $P$ by a line given by a point of $U^\circ $ is under this isomorphism identified with the preimage of $U^\circ \cap N_P.$  In particular, this set is nonempty and open.  It follows that the set of points of $\mathbf{P}(V)$ which can be connected to both $P$ and $Q$ by lines given by points of $U^\circ $ is the intersection of two dense open subsets, and therefore is nonempty. 
\end{proof}

With these preparations we can now proceed with the proof of 
\cref{thm:definable-proj}. Proceeding with the notation of the theorem, let us
first define the map
$
\sigma :k_1\rightarrow k_2.
$
Choose a  definable line $L_0\subset \P(V_1)$ together with three 
points $P, Q,
R\in L_0$ such that $\varphi (L_0)\subset \P(V_2)$ is also a  definable 
line. We
then get a map
$$
\begin{tikzcd}
k_1\arrow[r,"\epsilon ^{P, Q, R}"]& L_0-\{R\}\ar[r,"\varphi"] & \varphi
(L_0)-\{\varphi (R)\}\ar[rrr,"\left(\epsilon ^{\varphi (P), \varphi (Q), 
\varphi (R)
}\right)^{-1}"]& & & k_2,
\end{tikzcd}
$$
which we temporarily denote by
$
\sigma ^{(L_0, \{P, Q, R\})}.
$

\begin{claim}\label{claim:field map}
  The map $\sigma^{(L_0, \{P, Q, R\})}$ is independent of $(L_0, \{P, Q, R\})$.
\end{claim}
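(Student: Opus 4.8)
\textbf{Proof plan for Claim \ref{claim:field map}.}

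The plan is to reduce independence to two separate assertions: first, that $\sigma^{(L_0,\{P,Q,R\})}$ does not depend on the choice of the triple $\{P,Q,R\}$ once the line $L_0$ is fixed; and second, that it does not depend on the choice of the definable line $L_0$ itself. For the first reduction, recall that for any line $L$ and any two triples of distinct points $\{P,Q,R\}$ and $\{P',Q',R'\}$ on $L$, the composite $(\epsilon^{P',Q',R'})^{-1}\circ \epsilon^{P,Q,R}$ is a fractional linear transformation of $k$ with coefficients in the prime field (it is the effect on coordinates of the unique projective-linear automorphism of $L$ carrying one ordered triple to the other). Since $\phi$ carries $L_0$ to the definable line $\phi(L_0)$ via a set map that, by hypothesis of the theorem, sends definable lines to lines, the key point is that $\phi|_{L_0}\colon L_0\to\phi(L_0)$ together with these $\epsilon$ coordinatizations is compatible enough that changing the triple produces the \emph{same} prime-field fractional linear transformation on both ends, so the conjugated map is unchanged. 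Concretely, I would show $\phi|_{L_0}$ preserves the harmonic-conjugate / cross-ratio data that characterizes $0,1,\infty$-normalization up to $\mathrm{PGL}_2(\text{prime field})$, using that the three auxiliary lines in \eqref{E:4.5.1} are definable (Lemma \ref{L:4.5}) and hence mapped to lines by $\phi$; this is exactly the content that makes the ``$+$'' and ``$\times$'' constructions on $L_0$ transported by $\phi$ well-defined.

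For the second reduction — independence of $L_0$ — I would use a connectedness/spreading-out argument in the spirit of Lemma \ref{L:4.5} and Lemma \ref{L:2.1.10}. Given two definable lines $L_0$ and $L_0'$ whose images under $\phi$ are again definable, I want to compare $\sigma^{(L_0,\ast)}$ and $\sigma^{(L_0',\ast)}$. By the first reduction I may move the triples freely, so I first handle the case where $L_0$ and $L_0'$ are coplanar and meet in a point: then I can choose the triples to share that common point $P$, and the diagram of $\epsilon$-maps through $P$ on the two lines is glued by the construction just before Lemma \ref{L:4.5} (the ``Figure 1'' computation showing the intersection point $O = \epsilon^{[v_T],[v_T+v_R],[v_R]}(-a/b)$, specialized to $a=b$, is precisely the mechanism that transports the identity and sum from one line through $P$ to another). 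Applying $\phi$ to the relevant definable lines and using that $\phi$ preserves the incidences involved forces $\sigma^{(L_0,\ast)}(a)=\sigma^{(L_0',\ast)}(a)$ for all $a$. Finally, for arbitrary $L_0,L_0'$ (possibly skew, or with non-definable intersections), I chain through finitely many definable lines: using that the sweep of $U_1^\circ$ is large and that, by Lemma \ref{L:2.1.10}, any two points in the sweep are joined through a common auxiliary point by definable lines, I can find a finite sequence of definable lines, each consecutive pair coplanar and meeting in a point lying in the sweep, connecting $L_0$ to $L_0'$; independence then propagates along the chain.

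The main obstacle I anticipate is the bookkeeping in the coplanar case: one must be careful that all the lines appearing in the $+$ and $\times$ constructions (the analogues of \eqref{E:4.5.1}, and the connecting line $L_{T,R}$) can be chosen \emph{simultaneously} definable for both $L_0$ and the second line, \emph{and} that their $\phi$-images remain lines — this is a finite intersection of nonempty opens in the appropriate parameter schemes $M_P$ (and fiber products thereof), and one needs $k_1$ infinite, together with density of $U_1^\circ$ and of the sweep, to conclude the $k_1$-points of that intersection are nonempty. The genuinely delicate verification is that these choices are compatible with the specific normalization $L_{[v],[v+w],[w]}$ used to define $\epsilon$, so that the transported ``$+$'' really is addition and not some twist of it; this is where the elementary calculation $O=\epsilon^{[v_T],[v_T+v_R],[v_R]}(-a/b)$ (and its $a=b$ specialization giving independence of the basis vector $v_P$) does the essential work. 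Once that is in hand, the claim follows, and the resulting well-defined map $\sigma\colon k_1\to k_2$ is then shown (in the subsequent steps of the theorem's proof) to be a field isomorphism.
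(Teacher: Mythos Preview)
Your second reduction---independence of the definable line $L_0$ via common points and then chaining through \cref{L:2.1.10}---is essentially what the paper does. The problem is your first reduction.

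You assert that for two triples $\{P,Q,R\}$ and $\{P',Q',R'\}$ on a fixed line $L_0$, the composite $(\epsilon^{P',Q',R'})^{-1}\circ\epsilon^{P,Q,R}$ is a fractional linear transformation \emph{with coefficients in the prime field}. This is false: it is the unique element of $\mathrm{PGL}_2(k_1)$ sending $(0,1,\infty)$ to the $\epsilon^{P',Q',R'}$-coordinates of $(P,Q,R)$, and those coordinates are arbitrary elements of $k_1$. (For instance, keep $P'=P$ and $R'=R$ but move $Q$ to $Q'$; the transition is $a\mapsto ca$ for the coordinate $c\in k_1$ of $Q$ in the $(P,Q',R)$-chart.) Even were the claim true, the argument you sketch---that the same prime-field M\"obius transformation appears on both sides and hence cancels---would implicitly require $\sigma$ to intertwine such transformations, i.e., to already be a field homomorphism, which is exactly what is not yet known.

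The paper avoids this circularity by reversing the order of your reductions. It does \emph{not} first prove triple-independence on a fixed line. Instead it fixes the base point $P$ and, using the auxiliary-line construction of \cref{L:4.5} and the Figure~1 computation, shows directly that
\[
\sigma^{(L_0,\{P,Q,R\})}(a)=\sigma^{(L,\{P,S,T\})}(a)=\sigma^{(L_0',\{P,Q',R'\})}(a)
\]
for a single auxiliary $(L,\{S,T\})$ chosen so that the relevant lines for both $L_0$ and $L_0'$ are simultaneously definable. This already gives independence of the line through $P$ and of the remaining two marked points $Q,R$. To get independence of $P$ itself, the paper uses the inversion identity
\[
\sigma^{(L_0,\{R,Q,P\})}=\iota_{k_2}\circ\sigma^{(L_0,\{P,Q,R\})}\circ\iota_{k_1},
\]
and observes that the left side is, by the step just completed (with the roles of first and last marked point swapped), independent of $Q$ and $P$; hence so is $\sigma^{(L_0,\{P,Q,R\})}$. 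Only then does the chaining argument handle arbitrary pairs of definable lines. Your proposal omits this inversion step entirely, and without it there is no evident way to move the base point $P$.
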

\begin{proof}
  Let  $(L_0', \{P', Q', R'\})$ be a second  definable
  line with three points.  Given $a\in k_1$, we will show that 
  $$
  \sigma ^{(L_0, \{P, Q, R\})}(a) = \sigma ^{(L_0', \{P', Q', R'\})}(a).
  $$
  From the definition, we see that this holds for $a=0$ and $a=1$, so we assume 
that $a\neq 0$ in what follows. 
  First consider the case when $P =P'$.  By \cref{L:4.5}
  we can find a line $L$ with two points $\{S, T\}$ such that the lines
  \eqref{E:4.5.1} are all  definable, as well as the lines
  \eqref{E:4.5.1} obtained by replacing $(L_0, \{P, Q, R\})$ with
  $(L_0', \{P, Q', R'\})$
  
  The picture in Figure 1 is taken by $\varphi $ to the corresponding
  picture in $\P(V_2)$.  Looking at the intersection point it follows
  that 
  $$
  \sigma ^{(L_0, \{P, Q, R\})}(a) = \sigma ^{(L, \{P, S, T\})}(a) = \sigma 
^{(L_0', \{P, Q', R'\})}(a).
  $$
  
  It follows, in particular, that the map $\sigma ^{(L_0, \{P, Q, R\})}$
  is independent of the points $Q$ and $R$.  Since
  $
  \sigma ^{(L_0, \{R, Q, P\})}
  $
  is given by the formula
  $$
  \iota _{k_2}\circ \sigma ^{(L_0, \{P, Q, R\})}\circ \iota _{k_1},
  $$
  where $\iota _{k_j}$ denotes the involution of $k_j^*$ given by $u\mapsto u^{-1}$ ,
   it follows that the map $\sigma ^{(L_0, \{P, Q, R\})}$
  is independent of the triple $\{P, Q, R\}$, so we get a well-defined
  map
  $
  \sigma ^{L_0}:k_1\rightarrow k_2.
  $
  Now for a second definable line $L_0'$ which has nonempty intersection with $L_0$ the intersection, the point
  $P:= L_0\cap L_0'$ is on both lines so we can apply the
  preceding discussion with the two lines $L_0$ and $L_0'$ and $Q, R$
  and $Q', R'$ chosen arbitrarily to deduce the independence of the
  choice of $(L_0, \{P,Q, R\})$.    Finally for an abitrary definable line we can by \cref{L:2.1.10} find a chain (in fact of length $2$) of definable lines which connect the two, which concludes the proof..  
\end{proof}

Let us write the map of \cref{claim:field map} as $\sigma :k_1\rightarrow 
k_2$.

\begin{claim}
  The map $\sigma$ is an isomorphism of fields.
\end{claim}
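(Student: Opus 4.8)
The plan is to verify the three field axioms for $\sigma$ — compatibility with addition, with multiplication, and bijectivity — using the geometric constructions already set up. The underlying principle throughout is that $\varphi$ carries every definable-line configuration to the corresponding configuration in $\P(V_2)$, and that the field operations on $k_j$ have purely incidence-theoretic descriptions in terms of such configurations (this is exactly the content of Artin's treatment in \cite[Section 8.4]{MR780184}). So each axiom reduces to: (i) exhibit a configuration in $\P(V_1)$ whose intersection points compute $a+b$, $ab$, etc.; (ii) invoke \cref{L:4.5} and \cref{L:2.1.10} to arrange that the finitely many lines occurring in the configuration are all definable (a nonempty-open, hence nonempty, condition since $k_1$ is infinite and the relevant parameter spaces are rational and integral); (iii) apply $\varphi$ and read off the identity in $k_2$ from \cref{claim:field map}, which guarantees that the value $\sigma$ assigns to a point does not depend on which definable configuration was used to compute it.

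Concretely, for additivity: working on a fixed definable line $L_0$ with points $P,Q,R$ playing the roles of $0,1,\infty$, the sum $\epsilon^{P,Q,R}(a) + \epsilon^{P,Q,R}(b)$ is constructed by the standard projective ``parallelogram'' using an auxiliary line through $R$ and two further lines, precisely the type of three-line configuration appearing in \eqref{E:4.5.1} and Figure 1. \cref{L:4.5} provides a nonempty open $U_{P,a}\subset M_P$ of auxiliary pointed lines making all of these definable; intersecting with the analogous open set for $b$ (still nonempty by integrality of $M_P$) we fix one such auxiliary line. Applying $\varphi$ to the whole picture and using that $\varphi$ sends the definable lines to lines meeting in $\varphi$ of the intersection points, the image configuration computes $\epsilon^{\varphi(P),\varphi(Q),\varphi(R)}(\sigma a)+\epsilon^{\varphi(P),\varphi(Q),\varphi(R)}(\sigma b)$, and comparing with the image of $\epsilon^{P,Q,R}(a+b)$ gives $\sigma(a+b)=\sigma(a)+\sigma(b)$. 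Multiplicativity is handled by the same scheme with the multiplication configuration (which involves the point $R=\infty$ and a second line through it in the classical Artin picture); here one may also reuse the computation already done in the proof of \cref{claim:field map}, where the intersection point $O$ was identified with $\epsilon^{[v_T],[v_T+v_R],[v_R]}(-a/b)$, to extract the behavior of $\sigma$ on ratios, and hence on products. Compatibility with $1$ and $0$ is immediate from the definition of $\sigma^{(L_0,\{P,Q,R\})}$, as already noted.

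For bijectivity: $\sigma$ is a nonzero ring homomorphism between fields, hence automatically injective; alternatively one runs the same argument with $\varphi$ replaced conceptually by a ``partial inverse,'' but the ring-map observation is cleaner. Surjectivity is where the hypotheses matter least and most at once: $\sigma(k_1)$ is a subfield of $k_2$, and one must check it is all of $k_2$. This is not actually needed for \cref{thm:definable-proj} as stated — a $\sigma$-linear \emph{injective} $\psi$ only requires $\sigma$ to be an embedding — so I expect the paper either only asserts $\sigma$ is an isomorphism of $k_1$ onto its image, or deduces surjectivity afterward from the density hypothesis on $\lambda(U_1^\circ)$ as in the second \cref{rem} following the theorem statement. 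Accordingly, the main obstacle is not any single axiom but the bookkeeping: ensuring that at each stage the \emph{conjunction} of all definability constraints (several for $\sigma(a+b)$, several more coming from the two different defining configurations one is comparing) still cuts out a nonempty open subset of the relevant rational parameter variety. This is exactly the role played by the integrality of $M_P$ in \cref{L:4.5} — a finite intersection of nonempty opens in an irreducible variety is nonempty — and the proof of the claim will consist of invoking this repeatedly, once per field axiom, rather than of any new geometric input.
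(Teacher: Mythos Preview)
Your strategy is essentially the paper's, but two points need correction.

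First, you have the two configurations reversed. Figure~1 and the identity $O=\epsilon^{[v_T],[v_T+v_R],[v_R]}(-a/b)$ are the \emph{multiplication} device, not the addition one. The paper first checks that $\sigma$ commutes with inversion $a\mapsto a^{-1}$ (by comparing the marked triples $(P,Q,R)$ and $(R,Q,P)$ on the same line and invoking \cref{claim:field map}), then reads off $\sigma(-a/b)=-\sigma(a)/\sigma(b)$ from Figure~1; taking $b=1$ gives $\sigma(-a)=-\sigma(a)$, and combining with inversion yields $\sigma(ab)=\sigma(a)\sigma(b)$. Addition uses a separate, more elaborate configuration (Figure~2, with auxiliary points $S,T,V,W$ and seven lines), and the paper proves a dedicated lemma that the parameter space of such data is irreducible and each of the line-assignment maps to $\Gr(1,\P(V_1))$ meets $U_1$. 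This is precisely the ``bookkeeping'' you anticipate, but it is not the picture of \eqref{E:4.5.1}; your sketch would not go through as written because the three-line configuration there does not compute $a+b$.

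Second, your observation about surjectivity is accurate: the argument in the paper only establishes that $\sigma$ is a ring homomorphism sending $1$ to $1$, hence an embedding $k_1\hookrightarrow k_2$; no separate surjectivity step appears in the proof of this claim.
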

\begin{proof}
  First note that by construction the map $\sigma $ sends $1$ to $1$ and
is compatible with the inversion map $a\mapsto a^{-1}$.  Indeed the
statement that $\sigma (1) = 1$ is immediate from the construction and
the compatibility with the inversion map can be seen as follows.  Let
$\iota _j:k_j^\times\rightarrow k_j^\times$ ($k=1,2$) denote the map
$a\mapsto a^{-1}$, and let $(L, \{P, Q, R\})$ be a definable line with
three marked points.  Write $L^\times$ (resp. $\varphi (L)^\times$) for
$L-\{P, R\}$ (resp. $\varphi (L)-\{\varphi (P), \varphi (R)\}$). Then
by the independence of the choice of marked line in the definition of
$\sigma$, we have that the diagram
$$
\begin{tikzcd}[column sep=1 in]
k_1\ar[rr,"\epsilon ^{P, Q, R}"]\ar[rd,"\iota_1"']\ar[dd,"\sigma"']&& 
L^\times\ar[dd,"\varphi"] \\
& k_1\ar[ru,"\epsilon ^{R, Q, P}"']& \\
k_2\ar[rd,"\iota _2"']\ar[rr,"\epsilon ^{\varphi (P), \varphi (Q),
  \varphi (R)}" near start]&& \varphi (L)^\times\\
& k_2\ar[ru,"\epsilon ^{\varphi (R), \varphi (Q), \varphi
  (P)}"']\ar[from=uu, crossing over, near start, "\sigma"]&
\end{tikzcd}
$$
commutes. 
The compatibility with the multiplicative structure again follows from
contemplating Figure 1, and the observation that by construction the
map $\sigma $ takes $1$ to $1$.  Indeed given $a,b\in k^\times_1$ such that
all the lines in Figure 1 are definable, we must have
\begin{equation}\label{E:invertformula}
\sigma (-a/b) = -\sigma (a)/\sigma (b)
\end{equation}
since this fraction is given by the point $O$. Since the condition of
being  definable is open (by our initial reduction to $U_1 = U_1^\circ $), the fact that for any definable
$(L, \{P, Q, R\})$ the line through $\epsilon ^{P, Q, R}(a)$ and
$\epsilon ^{P, Q, R}(b)$ is definable implies that the same is true
after deforming $(L, \{P, Q, R\})$.  Thus we get the formula
\eqref{E:invertformula} for all $a$ and $b$. In particular, taking
$b=1$ we get that $\sigma (-a) = -\sigma (a)$ for all $a$, and since
$\sigma $ is compatible with the inversion maps we get that
$$
\sigma (ab) = \sigma (a)\sigma (b)
$$
for all $a, b\in k^\times$. Since $0$ is also taken to $0$ by
$\sigma $ we in fact get this formula for all $a,b\in k$.

For the verification of the compatibility with additive structure,
consider a marked line $(L, \{P, Q, R\})$.  Let $S$ be a point not on
the line and let $T$ be a third point on $L_{S, R}$. The lines
$L_{P, T}$ and $L_{Q, S}$ intersect in a point we call $V$, and then
the line $L_{V,R}$ intersects $L_{P, S}$ in a point we call $W$. This
is summarized in the following picture, where we write simply $a$
(resp. $b$) for $\epsilon ^{P, Q, R}(a)$ (resp.
$\epsilon ^{S, T, R}(b)$).

\

\

\

\

\setlength{\unitlength}{0.8cm}
\begin{picture}(12,4)
\thicklines
\put(8,0){\vector(0,1){5}}
\put(8,0){\vector(1,1){5}}
\put(8,0){\vector(1,2){3}}
\put(7, 1.5){\vector(2,1){6}}
\put(7,4){\vector(1,0){6}}
\put(7,5){\vector(1,-1){5}}
\put(7.5,2){\vector(2,3){2}}
\put(8,0){\circle*{0.2}}
\put(8,4){\circle*{0.2}}
\put(12,4){\circle*{0.2}}
\put(10,2){\circle*{0.2}}
\put(8.5,3.5){\circle*{0.35}}
\put(7.5,0){$P$}
\put(7.5, 3.5){$R$}
\put(12, 3.5){$S$}
\put(7.5,1.4){$Q$}
\put(7.5,2.7){$a$}
\put(8.7,2){$V$}
\put(10.5,1.7){$W$}
\put(10.3,4.1){$T$}
\put(9.1,4.1){$b$}
\put(7,-1){$\mathrm{Figure \ 2}$}
\end{picture}

\

\

A straightforward calculation done by choosing a basis
$v_R\in \ell _R$ then shows that the point of intersection marked with
the larger bullet is the point
$$
\epsilon ^{W,V,R}(a+b).
$$
To prove that $\sigma $ is compatible with the additive structure it
suffices to show the following lemma, which concludes the proof.
\end{proof}

\begin{lem}
For any $a, b\in k$ there exists a pointed line
$(L, \{P, Q, R\})$ and points $S$ and $T$ such that all the lines in
Figure 2 are definable.  
\end{lem}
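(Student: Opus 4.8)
The plan is to imitate the proof of \cref{L:4.5}. As there, we may after shrinking assume $U=U^\circ$, and we write $W\subseteq\Gr(1,\P(V))$ for the dense open subscheme whose $k$-points are $U$; fix $a,b\in k$. First I would introduce the scheme $M$ parametrizing configurations $(L,P,Q,R,S,T)$ in which $L$ is a line in $\P(V)$, the points $P,Q,R\in L$ are pairwise distinct, $S\in\P(V)$ does not lie on $L$, and $T\in L_{S,R}$ is distinct from $S$ and $R$. Fibring $M$ over $\Gr(1,\P(V))$ by $(L,\dots)\mapsto L$, then over the successive choices of $(P,Q,R)$, of $S$, and of $T$, realizes $M$ as an open subscheme of a tower of Zariski-locally trivial bundles with rational fibres over the rational variety $\Gr(1,\P(V))$; hence $M$ is smooth, geometrically integral and $k$-rational, and since $k$ is infinite its $k$-points are Zariski-dense.

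Next I would record that the configuration of Figure 2 is planar: since the distinct lines $L$ and $L_{S,R}$ meet only at $R$, all of $P,Q,R,S,T$ lie in the plane $\Pi$ they span, and hence so do the derived points $V=L_{P,T}\cap L_{Q,S}$, then $W=L_{V,R}\cap L_{P,S}$, the points $\epsilon^{P,Q,R}(a)\in L$ and $\epsilon^{S,T,R}(b)\in L_{S,R}$, and the intersection $L_{V,R}\cap L_{\epsilon^{P,Q,R}(a),\epsilon^{S,T,R}(b)}$. Thus there is a dense open $M^\circ\subseteq M$ over which all these points are regular functions of the configuration, and on which each of the seven lines occurring in Figure 2, namely
$$L,\ \ L_{S,R},\ \ L_{P,T},\ \ L_{Q,S},\ \ L_{P,S},\ \ L_{V,R},\ \ L_{\epsilon^{P,Q,R}(a),\,\epsilon^{S,T,R}(b)},$$
defines a morphism $q_i\colon M^\circ\to\Gr(1,\P(V))$, $i=1,\dots,7$.

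The heart of the matter, and the step I expect to be the only delicate one, is to show that each $q_i$ is dominant. For $q_1$ (the projection $(L,\dots)\mapsto L$) this is trivial, and for $L_{S,R},L_{P,T},L_{Q,S},L_{P,S}$ it is immediate, each being a join of two configuration points that between them range over all lines. For $L_{V,R}$ and for $L_{\epsilon^{P,Q,R}(a),\,\epsilon^{S,T,R}(b)}$ I would argue directly inside a plane: given a target line $\ell$, choose a plane $\Pi\supseteq\ell$ and place $L$ and $L_{S,R}$ inside $\Pi$, meeting at $R$; then use that, with $R$ fixed, the point $\epsilon^{P,Q,R}(a)$ ranges over all of $L$ as $(P,Q)$ vary when $a\notin\{0,1\}$ (and equals $P$ when $a=0$ and $Q$ when $a=1$), and likewise for $\epsilon^{S,T,R}(b)$ on $L_{S,R}$. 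This freedom lets one prescribe the point $V$, respectively the pair of points defining $L_{\epsilon^{P,Q,R}(a),\,\epsilon^{S,T,R}(b)}$, so as to make that line equal to $\ell$.

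Granting dominance, I would finish exactly as in \cref{L:4.5}: for each $i$ the preimage $q_i^{-1}(W)$ is a nonempty open subset of $M^\circ$ (dominance of $q_i$ and density of $W$), so, $M$ being irreducible, the finite intersection $\bigcap_{i=1}^7 q_i^{-1}(W)$ is a nonempty open subset of $M$ and therefore contains a $k$-point $x$. For this $x$ every line $q_i(x)$ lies in $W(k)=U$, so $x$ is a pointed line $(L,\{P,Q,R\})$ together with points $S,T$ for which all seven lines of Figure 2 are definable, as required.
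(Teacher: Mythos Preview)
Your overall framework matches the paper's: parametrize the Figure~2 configurations by an irreducible rational variety $M$, define for each of the seven lines a morphism $q_i\colon M^\circ\to\Gr(1,\P(V))$, show each $q_i^{-1}(U)$ is a nonempty open, and take the intersection. The difference is in how nonemptiness is established, and here your argument for $q_6=L_{V,R}$ has a gap. You justify dominance by saying that the freedom in $(P,Q)$ moves $\epsilon^{P,Q,R}(a)$ over all of $L$, and that ``this freedom lets one prescribe the point $V$''. But $V=L_{P,T}\cap L_{Q,S}$ has nothing to do with $a$ or with $\epsilon^{P,Q,R}(a)$; the sentence about $\epsilon^{P,Q,R}(a)$ is relevant only to $q_7$, not to $q_6$. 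What you actually need is the direct observation that, having fixed $R$ and two lines $L,L'$ through $R$ in a plane $\Pi$, any point $V\in\Pi\setminus(L\cup L')$ is realized by choosing $P,Q\in L\setminus\{R\}$ freely and then setting $T=L_{P,V}\cap L'$ and $S=L_{Q,V}\cap L'$; this makes $q_6$ dominant. With that fix your argument is complete.

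The paper organizes the verification differently. For the six lines other than $L_{V,R}$ it avoids checking dominance individually by a single degeneration: it enlarges $M$ to an irreducible $\overline M$ in which $S,T$ are allowed to lie on $L$ itself (keeping $\{P,Q,R,S,T,\epsilon^{P,Q,R}(a),\epsilon^{S,T,R}(b)\}$ distinct). At such a degenerate point all six lines collapse to $L$, so taking $L$ definable exhibits a point of $\overline M$ in each preimage, and irreducibility of $\overline M$ pushes this back into $M$. This trick fails for $L_{V,R}$ because $V$ becomes undefined when $S,T\in L$; for that line the paper instead reparametrizes $M$ by $(L,P,Q,R,V,W)$ in place of $(L,P,Q,R,S,T)$, observing that $(S,T)\leftrightarrow(V,W)$ is a birational correspondence, after which $q_6$ is visibly the projection onto the line through $V,W,R$ and hence dominant. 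Your direct-dominance approach is perfectly valid once repaired, but the paper's degeneration handles six of the seven lines with one stroke.
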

\begin{proof}
The collections of data
\begin{equation}\label{E:linedata}
(L, \{P, Q, R\}, \{S, T\})
\end{equation}
defining a diagram as in Figure 2 are classified by an irreducible scheme $M$, 
each line in the diagram gives a morphism
$$
t:M\rightarrow \Gr (1, \mathbf{P}(V_1)).
$$
It therefore suffices to show that for any particular choice of line in Figure 2, there exists a choice of \eqref{E:linedata} for which that line is definable.  Indeed, then the set of choices of data \eqref{E:linedata} for which that line is definable is nonempty and open in $M$. Since the $M$ is irreducible the intersection of nonempty open sets is nonempty and we conclude that there exists a point for which all the lines in Figure 2 are definable. 

For the line through $R$, $V$, and $W$ this follows from noting that the data of the colinear points $S$ and $T$ is equivalent to the data of the points $\{V, W\}$.  Indeed given these two colinear points, the lines $\overline {QV}$ and $\overline {PQ}$ are coplanar and therefore intersect in  a unique point, which defines $S$, and the intersection of $\overline {SR}$ and $\overline {PV}$ then defines $T$.   Therefore the map $t$ is smooth and dominant in this case, so the preimage of $U_1$ is nonempty.

For the other lines in Figure 2, note that we can extend the map $t$ to the bigger (but still irreducible) scheme $\overline M$ classifying collections of data $(L, \{P, Q, R\}, \{S, T\})$, where as before $L$ is a point, $\{P, Q, R\}$ are three points on $L$, and $\{S, T\}$ are two additional points which are colinear with $R$, but where we no longer insist that the line through $T$ and $S$ is distinct from $L$, but only that the points $\{P, Q, R, S, T, a, b\}$ are distinct.    Now it is clear that the preimage in $\overline M$ of $U_1$ is nonempty since we can take all the points to lie on the same definable line $L$.
\end{proof}

Now that we have constructed the isomorphism $\sigma $, it remains to
construct the map $\psi :V_1\rightarrow V_2$.

First note that we can choose a basis $e_1, \dots e_n$ for $V_1$ 
with the property that the span of $e_i$ and $e_j$ is a  definable line 
for any $i\neq j$.
Define
$e_1', \dots, e_n'\in V_2$ as follows.  For $e_1'$ we take any basis
element in $\ell _{\varphi ([e_1])}$.  Now for each $e_i$, $i\geq 2$,
the line in $\P(V_1)$ associated to the plane $\Span(e_1,e_i)$
is  definable, and therefore the image under $\varphi $ is a 
definable line and contains the points $\varphi ([e_1])$, $\varphi ([e_i])$, and
$\varphi ([e_1+e_i])$.  The choice of the representative $e_1'$ for $\varphi
([e_1])$ defines a representative $e_i'$ for $\varphi ([e_i])$ such that
$\phi([e_1+e_i])=e_1'+e_i'$.  Consider the map
$$
\gamma :V_1\rightarrow V_2
$$
defined by
$$
\gamma (a_1e_1+\cdots +a_ne_n) := \sigma (a_1)e_1'+\cdots +\sigma (a_n)e_n'.
$$

\begin{claim}
  For general $(a_1, \dots, a_n)$ we have
$$
\varphi ([a_1e_1+\cdots +a_ne_n]) = [\gamma (a_1e_1+\cdots +a_ne_n)].
$$  
\end{claim}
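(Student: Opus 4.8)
The plan is to prove the claim on a dense open subset of $\P(V_1)$, following Artin's strategy: establish $\varphi=[\gamma]$ first on coordinate lines, then on coordinate planes, and then propagate it to all coordinate subspaces --- in particular to $\P(V_1)$ itself --- by a single geometric device. Before starting I will sharpen the choice of the basis $e_1,\dots,e_n$. Since definable lines are Zariski dense in $\Gr(1,\P(V_1))$, a general linear subspace $W\subseteq V_1$ has the property that definable lines are Zariski dense among the lines of $\P(W)$; as there are only finitely many coordinate subspaces, I may therefore choose the basis so that, besides $L_{[e_i],[e_j]}$ being definable for all $i\ne j$, every coordinate subspace $\P(\Span(e_i:i\in I))$ with $|I|\ge 2$ has this density property. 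Call such a subspace \emph{good}; note $\P(V_1)$ itself is good by the standing hypothesis (which we have reduced to $U_1=U_1^\circ$, and $U_2=\Gr(1,\P(V_2))(k)$).

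I will invoke two elementary facts about a definable line $L$. First, if $\varphi$ agrees with $[\gamma]$ at three distinct points of $L$ then $\varphi|_L=[\gamma]|_L$: by \cref{claim:field map} the map $\varphi|_L$ is a $\sigma$-semilinear isomorphism onto the line $\varphi(L)$, and so is $[\gamma]|_L$ since $\gamma$ is $\sigma$-semilinear, and such an isomorphism of projective lines is determined by the images of three points. Second, if $\varphi$ merely agrees with $[\gamma]$ at two distinct points $A,B$ of $L$, then $\varphi(L)=[\gamma](L)$ as subsets of $\P(V_2)$, both being the line through $\varphi(A)=[\gamma](A)$ and $\varphi(B)=[\gamma](B)$. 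Combining these gives the \emph{two-lines criterion}: if $P$ lies outside $\P(\ker\gamma)$ (so $[\gamma](P)$ is defined; note $\gamma\ne 0$) and lies on two distinct definable lines $\ell_1,\ell_2$ each meeting the locus $\{\varphi=[\gamma]\}$ in at least two points, then $\varphi(P)=[\gamma](P)$. Indeed, were $\varphi(\ell_1)=\varphi(\ell_2)$, then $\varphi$ --- which is injective and restricts to a bijection from each definable line onto its image line --- would carry the two distinct lines $\ell_1,\ell_2$, meeting only at $P$, bijectively onto a common line, which is impossible; hence $\varphi(\ell_1)\cap\varphi(\ell_2)=\{\varphi(P)\}$, while $[\gamma](P)\in[\gamma](\ell_1)\cap[\gamma](\ell_2)=\varphi(\ell_1)\cap\varphi(\ell_2)$ by the second fact, giving $\varphi(P)=[\gamma](P)$. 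Note the criterion does not presuppose that $\gamma$ is injective.

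The argument then runs in stages. For each $j\ge 2$, the line $L_{[e_1],[e_j]}$ is definable and $\varphi$ agrees with $[\gamma]$ at the three points $[e_1],[e_j],[e_1+e_j]$ by the construction of the $e_i'$, so by the first fact $\varphi=[\gamma]$ on all of $L_{[e_1],[e_j]}$. Next, for $2\le j<k$ a general point of the good plane $\Pi_{1jk}:=\P(\Span(e_1,e_j,e_k))$ lies on two distinct definable lines of $\Pi_{1jk}$, and each of them meets the two known lines $L_{[e_1],[e_j]},L_{[e_1],[e_k]}$; the two-lines criterion then gives $\varphi=[\gamma]$ on a dense open of $\Pi_{1jk}$, whence (restricting to $L_{[e_j],[e_k]}\subseteq\Pi_{1jk}$ and using the first fact again) $\varphi=[\gamma]$ on \emph{every} coordinate line $L_{[e_i],[e_j]}$. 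Finally I induct on $|I|$: for $|I|\ge 3$ a general point $P$ of the good subspace $\P(\Span(e_i:i\in I))$ has all coordinates $(a_i)_{i\in I}$ nonzero and lies on two distinct definable lines of that subspace, and a generic such line meets each of the $|I|\ge 3$ coordinate hyperplanes $\P(\Span(e_i:i\in I\setminus\{i_0\}))$ at a point of $\{\varphi=[\gamma]\}$ (by the inductive hypothesis, resp.\ by the coordinate-line case when $|I|=3$), hence meets $\{\varphi=[\gamma]\}$ in at least two points; the two-lines criterion applies. Taking $I=\{1,\dots,n\}$ gives $\varphi=[\gamma]$ on a dense open of $\P(V_1)$, which is the claim.

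The step I expect to require the most care is the genericity bookkeeping underlying each use of the two-lines criterion: one must check that the set of points $P$ for which the requisite pair of definable lines exists is a dense open subset. This reduces to two routine density statements --- that in a good subspace the definable lines through a general point are dense among all lines through that point, and that as a definable line through a fixed general point varies, its intersection with a fixed coordinate hyperplane sweeps out a dense subset of that hyperplane, so generically lands in the agreement locus already produced. A smaller but real point, noted above, is that injectivity of $\gamma$ is only obtained afterwards (from $\varphi$ being injective and equal to $[\gamma]$ on a dense open), so all intermediate steps must avoid using it.
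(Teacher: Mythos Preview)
Your proof is correct, but it follows a genuinely different route from the paper's.

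The paper argues by a single ``chain'' induction on $i$: writing $v_i=a_1e_1+\cdots+a_ie_i$, it imposes for each $2\le i\le n$ the one open condition that $\Span(v_{i-1},e_i)$ be definable, and deduces $\varphi([v_i])=[\gamma(v_i)]$ from $\varphi([v_{i-1}])=[\gamma(v_{i-1})]$ by invoking the construction of $\sigma$ on that single line. This yields an explicit open set $A^\circ$ cut out by $n-1$ conditions, and that explicit description is immediately reused in the paragraph following the Claim, where the basis is re-chosen with $e_1$ at a prescribed point of the sweep. Your approach instead strengthens the genericity hypothesis on the basis (requiring definable lines to be dense in every coordinate subspace), proves a clean two-lines criterion, and inducts over all coordinate subspaces. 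The payoff is transparency: your argument makes visible why the scalar normalization is correct at every stage, something the paper's ``by the construction of $\sigma$'' leaves to the reader, and you are careful not to assume $\gamma$ injective prematurely. The cost is a longer argument, a less explicit open locus, and an extra genericity condition on the basis; you should note that this last point is harmless for the sweep argument that follows, since ``every coordinate subspace is good'' is itself an open condition on bases and remains compatible with pinning $e_1$ at a given point.
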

\begin{proof}
  By the construction of $\sigma $, if for each $2\leq i\leq n$ the vectors 
  \begin{equation}\label{E:thesubs}
  a_1e_1+\cdots +a_{i-1}e_{i-1}, \ \ a_ie_i
  \end{equation}
  span a  definable line, then we get by induction on $i$ that
  $$
  \varphi ([a_1e_1+\cdots +a_ie_i]) = [\gamma (a_1e_1+\cdots +a_ie_i)].
  $$
  Now for each $i$ the map sending a vector $(a_1, \dots, a_n)$ to the span of
  the elements \eqref{E:thesubs} defines a map
  $$
  A\rightarrow \G(1, \mathbb{P}(V_1))
  $$
  whose image meets $U_1$.  Taking the common intersection of the
  preimages of $U_1$ under these maps, we get a nonempty open subset
  $A^\circ \subset A$ of tuples $(a_1, \dots, a_n)\in A(k_1)$ for which
  the vectors \eqref{E:thesubs} span a  definable line.
\end{proof}
As a consequence, the map $\gamma$ defined above is uniquely associated to
$\phi$, up to scalar, and is thus independent of the general choice of basis $e_1,\ldots,e_n$.

To complete the proof of \cref{thm:definable-proj} it suffices to show
that $\P(\gamma)$ agrees with $\phi$ on the entire sweep of $(k_1,V_1,U_1)$. By
the above remark, to show this for a particular point $p$, it suffices to work
with any general basis. To prove this we show that given a point $p\in
S_{U_1}(\P_{k_1}(V_1))$ there exists a basis $e_1, \dots, e_n$ for $V_1$ as
above for which $p$ lies in the resulting subset $A^\circ $.  Reviewing the
above construction, one sees that it suffices to show that we can find a basis
$e_1, \dots, e_n$ for $V_1$ such that the following hold:
\begin{enumerate}
\item [(i)] $p$ is the point corresponding to the line spanned by $e_1$.
\item [(ii)] Any two elements $e_i$ and $e_j$, with $i\neq j$, span a  
definable line.
\item [(iii)] For any $2\leq i\leq n$ the vectors
$$
e_1+\dots +e_{i-1}, \ \ e_i
$$
span a  definable line.
\end{enumerate}
For this start by choosing $e_1$ so that (i) holds.  Since $p$ lies in the 
sweep we can then find $e_2$ such that $e_1$ and $e_2$ span a  definable 
line.  Now observe that given $2\leq r\leq n$ and a basis $e_1, \dots, e_r$ 
satisfying (ii) and (iii) with $i,j\leq r$ we can find $e_{r+1}$ such that (ii) 
and (iii) hold with $i, j\leq r+1$.  Indeed a general choice of vector in $V_1$ 
will do for $e_{r+1}$ since for  given fixed vector $v_0$ lying in the sweep 
there is a nonempty Zariski open subset of vectors $w$ such that $w$ and $v_0$ 
span a  definable line.

 This
completes the proof of the Theorem.
\end{proof}

\subsection{A variant fundamental theorem}

Suppose $(k_1, V_1, U_2)$ and $(k_2, V_2, U_2)$ are finite-dimensional definable projective spaces. Write $P_i=\P_{k_i}(V_i)$ for the associated projective space for $i=1,2$.

In this section we prove the following result, weakening the assumptions of  \cref{thm:definable-proj}. This is included primarily for technical reasons related to \cref{sec:definable}; a reader interested in working only over algebraically closed fields can ignore this section on a first reading.

\begin{thm}\label{T:weak fund thm}
  Assume $P_1$ and $P_2$ have dimension at least $2$. Suppose $\sigma:P_1\to P_2$ is a bijection such that each line in $U_1$ is sent under $\sigma$ to a linear subspace of $P_2$ and each line in $U_2$ is sent under $\sigma^{-1}$ to a linear subspace of $P_1$. Then $\sigma$ sends elements of $U_1$ to lines and it agrees with a linear isomorphism $P_1\to P_2$ on the sweep of $U_1$.
\end{thm}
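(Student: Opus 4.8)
\emph{Strategy.} The plan is to reduce to \cref{thm:definable-proj}. The entire difficulty is to upgrade the hypothesis ``lines of $U_1$ (resp.\ $U_2$) are sent to \emph{linear subspaces}'' into ``lines of a dense open subfamily are sent to \emph{lines}''; once this is known, \cref{thm:definable-proj} provides a semilinear isomorphism agreeing with $\sigma$ on a dense open set, and the bijectivity of $\sigma$ lets one bootstrap that agreement to the whole sweep of $U_1$ and deduce the full statement. Throughout, fix dense open $W_i\subseteq\Gr(1,P_i)$ with $W_i(k_i)\subseteq U_i$ and put $Z_i:=\Gr(1,P_i)\setminus W_i$, a proper closed subvariety.

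\emph{Step 1 (the crux).} Suppose $\ell\in U_1$ has $\dim\sigma(\ell)\ge 2$. Then $\sigma(\ell)$ contains no line of $U_2$: such a line $m$ would satisfy $\sigma^{-1}(m)\subseteq\sigma^{-1}(\sigma(\ell))=\ell$, and $\sigma^{-1}(m)$ is a linear subspace which, being infinite (as $\sigma$ is bijective and $k_2$ is infinite), equals $\ell$, so $m=\sigma(\ell)$, contradicting $\dim\sigma(\ell)\ge 2$. Hence $\Gr(1,\sigma(\ell))(k_2)$ avoids $W_2(k_2)$; since $k_2$ is infinite and $\Gr(1,\sigma(\ell))$ is geometrically integral of positive dimension, its $k_2$-points are Zariski dense, so $\Gr(1,\sigma(\ell))\subseteq Z_2$. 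The goal of this step is to conclude that $\dim\sigma(\ell)=1$ for $\ell$ in a dense open $U_1^\circ\subseteq U_1$ (and symmetrically $\dim\sigma^{-1}(m)=1$ for $m$ in a dense open $U_2^\circ\subseteq U_2$), each $U_i^\circ$ being the $k_i$-points of a dense open of $\Gr(1,P_i)$. The available tools are: (i) two lines of $U_1$ meeting in exactly one point are carried to linear subspaces meeting in exactly one point, which constrains their dimensions sharply (already for $\dim P_2\le 3$ this forces $\dim\sigma(\ell)=1$, since e.g.\ two planes in $\P^3$ meet along a line, never a point); (ii) $\sigma$ is injective on linear subspaces; and (iii) the linear subspaces $\Lambda$ of a fixed dimension $\ge 2$ with $\Gr(1,\Lambda)\subseteq Z_2$ form a proper closed subset of their parameter variety, so a pencil of coplanar ``fat'' lines of $U_1$ cannot exist. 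Assembling these into a proof in arbitrary dimension, with no regularity hypothesis on $\sigma$ to lean on, is the step I expect to be the main obstacle.

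\emph{Step 2 (produce and identify $\psi$).} By the first remark following \cref{thm:definable-proj} we may replace the target structure by $(k_2,V_2,\Gr(1,P_2)(k_2))$; since $\sigma$ sends each line of $U_1^\circ$ to a line it induces an injection $U_1^\circ\hookrightarrow\Gr(1,P_2)(k_2)$, and both structures have dimension $\ge 2$. Thus \cref{thm:definable-proj} yields a field isomorphism $k_1\simto k_2$ and a semilinear injection $\psi\colon V_1\to V_2$ with $\P(\psi)=\sigma$ on a Zariski-dense open $\Omega\subseteq P_1$ containing the sweep of $U_1^\circ$. To see $\psi$ is an isomorphism, note that for $m\in U_2^\circ$ the line $\ell_m:=\sigma^{-1}(m)$ is a line of $P_1$, and whenever $\ell_m\cap\Omega\ne\emptyset$ (which holds for $m$ in a dense sub-family, the complementary $\ell_m$ being confined to the proper closed locus of lines inside $P_1\setminus\Omega$) we get $\psi(\ell_m)=\overline{\sigma(\ell_m\cap\Omega)}\subseteq\sigma(\ell_m)=m$, hence $\psi(\ell_m)=m$ and $m\subseteq\P(\im\psi)$. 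So a dense set of lines of $P_2$ lies in $\P(\im\psi)$, forcing $\P(\im\psi)=P_2$, i.e.\ $\psi$ is an isomorphism.

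\emph{Step 3 (bootstrap to all of $S_{U_1}$, and conclude).} It remains to promote $\P(\psi)=\sigma$ from $\Omega$ to the whole sweep $S_{U_1}$. First, any $\ell\in U_1$ with $\ell\cap\Omega\ne\emptyset$ satisfies $\sigma(\ell)=\psi(\ell)$: indeed $\sigma(\ell)$ is a linear subspace containing the cofinite subset $\psi(\ell\cap\Omega)$ of the line $\psi(\ell)$, and any point of $\sigma(\ell)$ off $\psi(\ell)$ has $\sigma^{-1}$-preimage in the finite set $\ell\setminus\Omega$, so $\sigma(\ell)$ equals $\psi(\ell)$ up to a finite set and hence (as $k_2$ is infinite) equals $\psi(\ell)$. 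Intersecting two such lines through a common point shows $\sigma=\P(\psi)$ on the dense open $\bigcup\{\ell\in U_1:\ell\cap\Omega\ne\emptyset\}$; feeding this enlarged domain of agreement back into the incidence description of \cref{R:describe} (choosing all auxiliary lines in $U_1^\circ$ and all auxiliary points in the domain of agreement) then pins down $\sigma(x)=\P(\psi)(x)$ for every $x\in S_{U_1}$. Thus $\sigma=\P(\psi)$ on $S_{U_1}$; in particular every $\ell\in U_1$ lies in $S_{U_1}$, so $\sigma(\ell)=\psi(\ell)$ is a line, completing the proof.
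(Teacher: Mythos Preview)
Your Steps 2 and 3 are essentially correct and in fact constitute the easy half of the argument; the paper compresses them into a one-line citation of \cref{thm:definable-proj} together with the dimension equality (which forces $\psi$ to be an isomorphism). The genuine gap is exactly where you flag it: Step 1. You correctly observe that if $\dim\sigma(\ell)\ge 2$ then every line contained in $\sigma(\ell)$ lies in $Z_2$, but the tools you list do not assemble into a proof. In particular, observation (iii) is true but useless here: since $\sigma$ is merely a set bijection, the assignment $\ell\mapsto\sigma(\ell)$ carries no algebraic regularity, so knowing that the ``fat'' targets are parametrized by a proper closed subvariety of a Grassmannian says nothing about which $\ell$ land there. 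Observation (i) is in the right spirit but, as you note, only settles the case $\dim P_2\le 3$.

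The paper's substitute for your Step 1 is a linear-algebra/dimension-count argument that avoids any appeal to regularity. One chooses an \emph{admissible} tuple $\mathbf{D}=(D_0,\dots,D_s)$ in $P_1$ (pairwise spanning lines of $U_1$, linearly independent) and builds subspaces $W_t^2\subset P_2$ inductively by $W_0^2=\{\sigma(D_0)\}$ and $W_{t+1}^2=\langle W_t^2,\sigma(\ell_{t,t+1})\rangle$. The key inductive statement (\cref{T:2.3}) is: for any $Q$ that is pairwise $U_1$-good with the $D_i$, if $\sigma(Q)\in W_s^2(\mathbf{D})$ then $Q$ lies in the span of the $D_i$. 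The proof proceeds by projecting from $\sigma(D_s)$ and performing two carefully chosen ``mutations'' of $\mathbf{D}$ (replacing $D_s$ or $D_{s-1}$ by suitable points on $\ell_{s-1,s}$) to bring the configuration into general position; the infinitude of the base field is used to guarantee that an infinite subset of a line meets every open condition. An immediate corollary (\cref{C:2.2.17}) is that $\sigma(\ell_{t-1,t})\cap W_{t-1}^2$ is a single point, hence $\dim W_s^2=\sum_{i=1}^s\dim\sigma(\ell_{i-1,i})\ge s$. Taking $s=\dim P_1$ gives $\dim P_2\ge\dim P_1$; by symmetry the dimensions agree, and then any line of $U_1$ mapping to a subspace of dimension $\ge 2$ would, upon extension to a full admissible tuple, force $\dim W_{\dim P_1}^2>\dim P_2$, a contradiction (\cref{cor:nflakes}). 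This is the missing idea: rather than bounding the locus of fat lines, one shows directly via an incidence-preserving flag construction that a single fat line already overflows the target dimension.
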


 Without loss of generality, we assume that $U_i$ is the $k_i$-points of an open subset of the appropriate Grassmannian, and we make this assumption for the remainder of the proof.

The proof of \cref{T:weak fund thm} appears at the very end of this section, after several requisite precursors about data $(P_1, P_2, \sigma )$ as in the theorem are developed (assuming $U_1$ and $U_2$ are the $k$-points of open subsets).

\begin{rem}
  Note that in the statement of \cref{T:weak fund thm}, we only assume that lines are sent to \emph{linear subspaces\/}, not to lines. This comes up naturally when one seeks to define lines in linear systems using incidence relations: given a subset $Z$ of a variety $X$, the set of sections of a linear system that contains $Z$ is a linear subspace. Detecting the dimension of this linear subspace is quite subtle when the field of constants of $X$ is not algebraically closed. In particular, while it may be obvious that such a subset is a line on one side of an isomorphism, it is not generally clear that it remains a line on the other side. A trivial (and not particularly informative) example comes from the existence of an abstract bijection between a line and a projective space of arbitrary positive dimension. As we will see in the proof, one needs control over a large set of lines to avoid this situation. 
\end{rem}

\begin{defn}
  A pair $(D_0,D_1)\in P_1^2$ is \emph{good\/} if it lies in the inverse image of $U_1$ under the natural span map $P_1^2\setminus\Delta\to\Gr(1,P_1)$.
\end{defn}

\begin{definition}
  A collection of elements 
$$
\DD:=(D_0, \dots, D_s)
$$
of $P_1$ is \emph{admissible} if for any two $0\leq i, j\leq s$ the pair $(D_i, D_j)$ is good and if the $D_i$ span a linear subspace of $P_1$ of dimension $s$.
\end{definition}
We fix an admissible collection $\DD$ in what follows.
\begin{definition}
  A point $Q\in P_1$ is \emph{$\DD$-good\/} if $(D_i,Q)$ is good for all $i$.
\end{definition}

For a pair $i, j$ let $\ell ^1_{ij}\subset P_1$ be the line spanned by $D_i$ and $D_j$ and let $\ell ^2_{ij}\subset P_2$ denote the line spanned by $\sigma (D_i)$ and $\sigma (D_j)$.  Note that since $\ell ^1_{ij}$ is definable, which implies that $\sigma (\ell ^1_{ij})$ is a linear subspace $T_{ij}^2\subset P_2$ containing $\sigma (D_i)$ and $\sigma (D_j)$, we have 
$$
\ell ^2_{ij}\subset T_{ij}^2.
$$

\begin{pg}

For $t\leq s$ define $W_t^2\subset P_2$ inductively as follows.  For $t = 0$ we define $W_0^2:= \sigma (D_0)$.  Then inductively define $W_{t+1}^2$ to be the linear span of $W_t^2$ and $\sigma (\ell _{t, t+1})$. When we want to be unambiguous, we will write $W_t^2(\DD)$ to denote the dependence upon $\DD$. Note that it is \emph{a priori} possible for $\DD$ and $\DD'$ to have the same span in $P_1$ while $W_s^2(\DD)\neq W_s^2(\DD')$. 
\end{pg}

Let $Q\in P_1$ be a $\DD$-good point.

\begin{thm}\label{T:2.3}
If $\sigma (Q)\in W_s^2(\DD)$ then $Q$ is in the linear span of the $D_i$. 
\end{thm}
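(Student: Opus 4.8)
The plan is to argue by contradiction: suppose $Q$ is a $\DD$-good point with $\sigma(Q)\in W_s^2(\DD)$ but $Q$ does \emph{not} lie in the linear span $\Lambda:=\Span(D_0,\dots,D_s)\subset P_1$, which is a linear subspace of dimension $s$. Since $\DD$ is admissible and $Q$ is $\DD$-good, the collection $(D_0,\dots,D_s,Q)$ is again ``almost admissible'': all pairs among $\{D_i,Q\}$ are good, and now the span has dimension $s+1$. The key geometric input is that $\sigma$ carries definable lines to linear subspaces, and that the definable lines available are generic (a Zariski-dense open set of $\Gr(1,P_1)$), so we have a lot of flexibility in choosing the auxiliary lines used to pin down where $\sigma(Q)$ must go.

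The heart of the argument, I expect, is a dimension-counting contradiction inside $P_2$. First I would track the dimension of $W_s^2(\DD)$: by its inductive construction, $W_{t+1}^2$ is the span of $W_t^2$ with $\sigma(\ell_{t,t+1})\supseteq\ell^2_{t,t+1}$, so $\dim W_s^2(\DD)\le s$ provided each step raises the dimension by at most one — and because $\sigma$ restricted to a line $\ell^1_{t,t+1}$ is injective (being a bijection $P_1\to P_2$), the image $\sigma(\ell^1_{t,t+1})=T^2_{t,t+1}$ together with $W_t^2$ cannot have dimension more than $\dim W_t^2+1$, since it is spanned by $W_t^2$ and the images of two further points $\sigma(D_t),\sigma(D_{t+1})$, one of which is already in $W_t^2$; hence $\dim W_s^2(\DD)\le s$. (One must be a bit careful that the $T^2_{ij}$ could be bigger than the line $\ell^2_{ij}$; but this only makes $W_s^2$ potentially larger, so to get the contradiction we should instead build $W_s^2$ out of the honest lines $\ell^2_{ij}$ where possible, or — better — exploit the bijectivity of $\sigma$ more aggressively, as below.)

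Next, assuming for contradiction $Q\notin\Lambda$: then $(D_0,\dots,D_s,Q)$ spans an $(s+1)$-dimensional subspace of $P_1$. Using Lemma~\ref{L:2.1.10} and Lemma~\ref{L:4.5} — which guarantee, for generic choices, an abundance of definable lines through prescribed points in the sweep — I would produce a definable line $\ell$ through $Q$ meeting $\Lambda$ in a point $R$ with $R$ itself $\DD$-good and not among the $D_i$, chosen so that the relevant configuration lines are all definable. Then $\sigma(\ell)$ is a linear subspace of $P_2$ containing $\sigma(Q)$ and $\sigma(R)$. If $\sigma(Q)\in W_s^2(\DD)$, one shows $\sigma(R)\in W_s^2(\DD)$ as well (by the analogous reasoning applied to $R\in\Lambda$, whose image lands in $W_s^2$ by the explicit inductive description), and more generally that $\sigma$ maps the $(s+1)$-dimensional subspace $\Span(\Lambda,Q)$ into the $s$-dimensional $W_s^2(\DD)$ — at least on its intersection with the sweep. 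The final contradiction comes from the injectivity of $\sigma$: choosing a sub-configuration of $s+2$ points in $\Span(\Lambda,Q)$ in general (linearly independent) position, all lying in the sweep and pairwise spanning definable lines, their images under $\sigma$ would be $s+2$ points forced into an $s$-dimensional subspace and forced (by the line-preservation property, applied iteratively as in the construction of $\gamma$ in the proof of \cref{thm:definable-proj}) to be in general position there, which is impossible.

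The main obstacle is the bookkeeping required to ensure that at every stage the auxiliary points we introduce simultaneously (a) lie in the sweep of $U_1^\circ$, (b) are $\DD$-good, and (c) have all the needed connecting lines definable — this is a finite intersection of nonempty Zariski-open conditions on an irreducible parameter scheme, exactly as in Lemmas \ref{L:4.5} and \ref{L:2.1.10}, but one must set up the right parameter space and check each constraint is open and nonempty. A secondary subtlety is controlling $W_s^2(\DD)$: because $\sigma$ only sends lines to linear \emph{subspaces}, not lines, one has to use the bijectivity of $\sigma$ (so that the line $\ell^1_{ij}$, which is ``large'', cannot map into something ``small'') to keep $\dim W_s^2(\DD)\le s$ — this is the point where the hypothesis ``$\sigma^{-1}$ also sends $U_2$-lines to linear subspaces'' will ultimately be needed, though perhaps not in the proof of this particular theorem.
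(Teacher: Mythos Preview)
Your proposal has a fundamental error in the dimension-counting step. You claim that $\dim W_s^2(\DD)\le s$, arguing that $\sigma(\ell^1_{t,t+1})$ together with $W_t^2$ ``cannot have dimension more than $\dim W_t^2+1$, since it is spanned by $W_t^2$ and the images of two further points $\sigma(D_t),\sigma(D_{t+1})$''. But this is exactly the point that fails: the hypothesis is only that $\sigma$ sends the line $\ell^1_{t,t+1}$ to a \emph{linear subspace} $T^2_{t,t+1}$ of $P_2$, not to the line $\ell^2_{t,t+1}$ spanned by $\sigma(D_t)$ and $\sigma(D_{t+1})$. The subspace $T^2_{t,t+1}$ contains $\ell^2_{t,t+1}$ but may be strictly larger, and $W_{t+1}^2$ is by definition the span of $W_t^2$ with the full image $\sigma(\ell^1_{t,t+1})=T^2_{t,t+1}$. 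In fact the correct inequality is the reverse: the paper shows (as \cref{C:2.2.17}, a \emph{corollary} of this theorem) that $\dim W_s^2 = \sum_{i=1}^s \dim \sigma(\ell_{i-1,i}) \ge s$, with equality iff every $\sigma(\ell_{i-1,i})$ is a line. Establishing that equality is the whole purpose of this portion of the argument, so you cannot take $\dim W_s^2\le s$ as input. Your parenthetical acknowledges the issue but does not repair it; the suggestion to ``build $W_s^2$ out of the honest lines $\ell^2_{ij}$'' would change the statement you are proving, and the appeal to bijectivity of $\sigma$ does not by itself bound the dimension of the image of a line.

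There is a second gap: even granting the dimension bound, your contradiction does not close. You want $s+2$ points of $\Span(\Lambda,Q)$ to have images in $W_s^2$ that are ``forced to be in general position'', but $\sigma$ is a set bijection, not a linear map, so injectivity alone says nothing about linear independence of images. The argument you cite from the proof of \cref{thm:definable-proj} requires $\sigma$ to carry lines to lines, which is precisely what \cref{T:2.3} is being used to establish. Relatedly, your claim that $\sigma(R)\in W_s^2$ for a general $\DD$-good $R\in\Lambda$ is not justified: the theorem is an implication in one direction only, and $W_s^2$ is built from the specific chain of lines $\ell_{t,t+1}$, not from all of $\sigma(\Lambda)$.

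The paper's proof is quite different: it proceeds by induction on $s$, and the inductive step works by projection from $\sigma(D_s)$ down to $W_{s-1}^2$. The key technical fact (\cref{lem:1.5}) is that $\sigma(\ell_{s-1,s})\cap W_{s-1}^2=\{\sigma(D_{s-1})\}$, proved using the induction hypothesis. One then performs two carefully chosen ``mutations'' of $\DD$ --- replacing $D_s$ and then $D_{s-1}$ by nearby points on $\ell_{s-1,s}$ --- to arrange that the projected point $Q_{s-1}:=\sigma^{-1}(q_s(\sigma(Q)))$ is $\DD'$-good, so that the induction hypothesis applies to it. The infinitude of the base field is what allows these mutations: the set of candidate replacements is an infinite subset of the line $\ell_{s-1,s}$, hence meets every Zariski-open condition.
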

\begin{pg}\label{p:mutations}
We first identify mutations of $\DD$ that leave \cref{T:2.3} invariant.  In each of the following two cases we have that if the assumptions hold for $\DD$  then they 
hold after replacing $\DD $ by $\DD '$ and if the conclusions hold for $\DD '$  then they hold for $\DD $, and therefore in the proof we may replace $\DD $ by $\DD '$.

\begin{enumerate}
  \item[I.] Suppose $\DD'$ is an admissible tuple gotten by replacing $D_s$ by a point $D_s'\in \ell_{s-1,s}$ such that $Q$ is $\DD'$-good and $\sigma(D_s')$ lies in the linear span $\langle W_{s-1}^2(\DD),\sigma(Q)\rangle$. Then we have that $\sigma(Q)\in W_s^2(\DD')$ and $Q$ is $\DD'$-good. Moreover, we have that $\langle \DD\rangle=\langle \DD'\rangle$. 
  \item[II.] Suppose $\DD'$ is an admissible tuple gotten by replacing $D_{s-1}$ by a point $D_{s-1}'\in\ell_{s-1,s}$ such that $Q$ is $\DD'$-good and $\sigma(D_{s-1})\in\langle\sigma(D_s),\sigma(Q)\rangle$. 
\end{enumerate}
These mutations will arise as follows: the set of choices $D_s'$ or $D_{s-1}'$ will range through a line contained in the definable subspace $\sigma(\ell_{s-1,s})$. Since the base field is infinite, such a line is infinite, so its preimage in the line $\ell_{s-1,s}$ hits every open subset. This is main way in which we use the fact that the definable subspaces $\ell_{i-1,i}$ are lines.

\end{pg}

\begin{pg}

We assume that $\sigma (Q)\in W_s^2$ and show that $Q$ is in the linear span of the $D_i$.

The basic idea is to work inductively by projection from $D_s$ to the lower dimensional subspace.  To get things into appropriately general position, however, we will do this along with modifying our original configuration $(D_0, \dots, D_s)$ so as to obtain a contradiction.

First of all, by our assumptions the line $\ell $ through $Q$ and $D_{s}$ is definable, so $\sigma (\ell )\cap  W_s^2$ is a linear subspace of positive dimension. 

Furthermore, proceeding by induction we may assume that the theorem holds for collections of elements $(D_0, \dots, D_t)$ with $t<s$.  Note here that the statement for $s = 0$ is trivial.
\end{pg}

\begin{lem}\label{lem:1.5} The following hold.
  \begin{enumerate}
      \item[(i)] $\sigma (\ell _{s-1, s})\cap W_{s-1}^2 = \{\sigma (D_{s-1})\}.$
      \item[(ii)] The intersection of $\sigma (\ell _{s-1, s})$ with the linear span of $W_{s-1}^2$ and $\sigma (Q)$ is a positive dimensional linear subspace of $\sigma (\ell _{s-1, s})$.
  \end{enumerate}

\end{lem}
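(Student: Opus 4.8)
The plan is to prove (i) from the inductive hypothesis and then deduce (ii) from (i) by a Grassmann-formula dimension count. Throughout I use the standing data and hypotheses of the proof of \cref{T:2.3}: $\DD=(D_0,\dots,D_s)$ is admissible, $Q$ is $\DD$-good, $\sigma(Q)\in W_s^2$, the statement of \cref{T:2.3} is already known for every admissible collection $(D_0,\dots,D_t)$ with $t<s$, and (we are arguing by contradiction) $Q\notin\langle D_0,\dots,D_s\rangle$. I also use repeatedly that $\ell_{s-1,s}\in U_1$ by admissibility, so $\sigma(\ell_{s-1,s})$ is a linear subspace of $P_2$, and that over the infinite field $k_2$ a nonempty finite linear subspace is a single point.

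For (i): the set $I:=\sigma(\ell_{s-1,s})\cap W_{s-1}^2$ is a linear subspace of $P_2$, being an intersection of two linear subspaces, and it is nonempty because $\sigma(D_{s-1})$ lies in $\sigma(\ell_{s-1,s})$ and, since $D_{s-1}\in\ell_{s-2,s-1}$ (for $s\geq 2$; trivially for $s=1$), also in $W_{s-1}^2$. So it suffices to show $I$ is finite. Put $\DD'':=(D_0,\dots,D_{s-1})$; this is admissible (the pairs stay good, and projective independence of $D_0,\dots,D_s$ forces these $s$ points to span a subspace of dimension exactly $s-1$), and $W_{s-1}^2=W_{s-1}^2(\DD'')$ since it only involves $D_0,\dots,D_{s-1}$. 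If $Q'\in\ell_{s-1,s}$ is $\DD''$-good with $\sigma(Q')\in W_{s-1}^2$, then the inductive hypothesis gives $Q'\in\langle D_0,\dots,D_{s-1}\rangle\cap\ell_{s-1,s}$, which equals $\{D_{s-1}\}$ because $\ell_{s-1,s}$ meets $\langle D_0,\dots,D_{s-1}\rangle$ in $D_{s-1}$ and is not contained in it (as $D_s\notin\langle D_0,\dots,D_{s-1}\rangle$). On the other hand, only finitely many $Q'\in\ell_{s-1,s}$ fail to be $\DD''$-good: the pair $(D_{s-1},Q')$ is good for every $Q'\neq D_{s-1}$ on the line since $\ell_{s-1,s}\in U_1$, and for each $i\leq s-2$ the morphism $\ell_{s-1,s}\to\Gr(1,P_1)$, $Q'\mapsto\Span(D_i,Q')$ (well defined since $D_i\notin\ell_{s-1,s}$), carries $D_s$ to $\ell_{i,s}\in U_1$, so the preimage of $U_1$ is a nonempty open, hence cofinite, subset of the line. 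Therefore $\{Q'\in\ell_{s-1,s}:\sigma(Q')\in W_{s-1}^2\}$ is finite, and so is its image $I$ under the injection $\sigma$; hence $I=\{\sigma(D_{s-1})\}$.

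For (ii): first note $\sigma(Q)\notin W_{s-1}^2$. Indeed $Q$ is $\DD''$-good (being $\DD$-good) and $Q\notin\langle D_0,\dots,D_{s-1}\rangle$ (since $Q\notin\langle D_0,\dots,D_s\rangle$), so the contrapositive of the inductive hypothesis gives $\sigma(Q)\notin W_{s-1}^2$. Write $W=W_{s-1}^2$, $T=\sigma(\ell_{s-1,s})$, $q=\sigma(Q)$; these are linear subspaces of $P_2$ with $q\in W_s^2=\langle W,T\rangle$. By (i), $\dim(W\cap T)=0$, so the Grassmann formula gives $\dim W_s^2=\dim W+\dim T$; and $q\notin W$ gives $\dim\langle W,q\rangle=\dim W+1$. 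Since $q\in W_s^2$, the span of $T$ and $\langle W,q\rangle$ is $\langle W,T,q\rangle=W_s^2$, so a second application of the Grassmann formula gives $\dim(T\cap\langle W,q\rangle)=\dim T+(\dim W+1)-(\dim W+\dim T)=1$. Hence $T\cap\langle W_{s-1}^2,\sigma(Q)\rangle$ is a one-dimensional, in particular positive-dimensional, linear subspace of $\sigma(\ell_{s-1,s})$, which is exactly (ii).

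The step I expect to be the main obstacle is the bookkeeping in (i): verifying that the inductive hypothesis genuinely applies to the truncation $\DD''$ with the matching subspace $W_{s-1}^2$, and controlling the finitely many $\DD''$-bad points of $\ell_{s-1,s}$ so that the linear subspace $I$ collapses to the single point $\sigma(D_{s-1})$. Once (i) is available, (ii) is a purely formal dimension count; the only extra input is the non-containment $\sigma(Q)\notin W_{s-1}^2$, which is itself immediate from the inductive hypothesis.
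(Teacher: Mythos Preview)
Your proof is correct and follows essentially the same approach as the paper: for (i) you apply the induction hypothesis to the truncated collection $(D_0,\dots,D_{s-1})$ and all but finitely many points of $\ell_{s-1,s}$, forcing the linear space $\sigma(\ell_{s-1,s})\cap W_{s-1}^2$ to be finite and hence a single point; for (ii) you run the same Grassmann-formula count the paper does. Your write-up is in fact more careful than the paper's in one respect: you explicitly verify $\sigma(Q)\notin W_{s-1}^2$ (via the contrapositive of the induction hypothesis), which the paper silently uses when asserting $\dim\langle W_{s-1}^2,\sigma(Q)\rangle=\dim W_{s-1}^2+1$.
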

\begin{proof}
Note that all but finitely many points $U\in \ell _{s-1, s}$ the collection
$$
(D_0, \dots, D_{s-1}, U)
$$
is admissible, and $U$ does not lie in the linear span of $(D_0, \dots, D_{s-1})$ (recall that $U_1$ is assumed open).  By the induction hypothesis it follows that the intersection
$$
\sigma (\ell _{s-1, s})\cap W_{s-1}^2
$$
is finite.  Since this is also a linear space it follows that it consists of exactly one point, namely $\sigma (D_{s-1})$.  This proves (i).

For (ii), let $\delta $ be the dimension of the linear space $\sigma (\ell _{s-1, s})$.  Then using (i) we have
$$
\mathrm{dim}(W_{s-1}^2)+\delta = \mathrm{dim}(W_{s}^2).
$$
On the other hand, the dimension of the linear span of $\sigma (Q)$ and $W_{s-1}^2$ is equal to 
$$
\mathrm{dim}(W_{s-1}^2)+1.
$$
Therefore the intersection in question in (ii) is given by intersection a $\delta $-dimensional space with a $\mathrm{dim}(W_{s-1}^2)+1$-dimensional space inside a $\mathrm{dim}(W_{s-1}^2)+1$-dimensional space.
From this (ii) follows.
\end{proof}

\begin{pg}
For all but finitely many points $D_s'\in \ell _{s-1, s}$ the collection of elements
$$
\DD':=(D_0, \dots, D_{s-1}, D_s')
$$
is admissible and each of these elements is pairwise good with $Q$.    We can therefore find $D_s'\in \ell _{s-1, s}$ such that $\DD'$ is admissible, $Q$ is $\DD'$-good, and $\sigma (D_s')$ lies in the linear span of $W_{s-1}^2(\DD')$ and $\sigma(Q)$.  Replacing $\DD$ by such a $\DD'$ is an allowable mutation of type I, as in \cref{p:mutations}. 
\end{pg}

\begin{pg}  
  Consider 
the projection
$$
q_s:
\langle \sigma (D_{s}), W_{s-1}^2\rangle \dashrightarrow W_{s-1}^2
$$
from the linear span of $\sigma (D_s)$ and $W_{s-1}^2$ 
sending an element $R$ to the intersection of the line through $\sigma (D_s)$ and $R$ with $W_{s-1}^2$. This is defined in a neighborhood of the line through $\sigma(D_{s-1})$ and $\sigma(Q)$. In particular, it is defined at $\sigma(Q)$.

Let $Q_{s-1}\in W_{s-1}^2$ denote $\sigma^{-1}(q_s(\sigma (Q))))$. (We will write $Q_{s-1}(\DD)$ when we want to remember the dependence on $\DD$.)
\end{pg}

\begin{pg} If $\sigma(Q_{s-1})\in W_{s-2}^2$ then we are done  by applying our induction hypothesis to
$$
(D_0, \dots, D_{s-2}, D_s)
$$
and $Q$.

So assume $\sigma(Q_{s-1})$ is not in $W_{s-2}^2$.
\end{pg}

\begin{pg} The space $W_{s-1}^2$ is the linear span of $W_{s-2}^2$ and $\sigma (\ell _{s-1, s-2})$.  Since the space spanned by $W_{s-2}^2$ and $\sigma(Q_{s-1})$ is assumed strictly bigger than $W_{s-2}^2$, this space meets a line $T\subset \sigma (\ell _{s-1, s-2})$.  For all but finitely many elements $D_{s-1}'\in \ell _{s-1, s-2}$ the collection
$$
\DD'':=(D_0, \dots, D_{s-2}, D_{s-1}', D_s)
$$
is again admissible. Replacing $D_{s-1}$ by a suitable element of $\sigma^{-1}(T)$ we may therefore further assume that the line through $\sigma(D_{s-1})$ and $\sigma(Q_{s-1})$ meets $W_{s-2}^2$.  Call this point $R_0\in W_{s-2}^2$. (We will write $R_0(\DD)$ when we want to remember the dependence on $\DD$.)

Note that the construction ensures that 
$$\sigma(Q_{s-1}(\DD))=\langle \sigma(D_{s-1}), R_0(\DD)\rangle\cap\langle\sigma(D_s),\sigma(Q)\rangle$$
for any $\DD$ satisfying the assumptions.
\end{pg}

\begin{pg}
We claim that for a  suitably chosen element $D_{s-1}'\in \ell _{s-1, s}$ we can arrange that $Q_{s-1}$ is $\DD''$-good.

The linear span of $\sigma (\ell _{s-1, s})$ and $R_0$ contains the line connecting $\sigma (D_s)$ and $\sigma (Q_{s-1})$.  It follows that the plane spanned by $R_0$, $\sigma (Q)$, and $\sigma (Q_{s-1})$ meets $\sigma (\ell _{s-1, s})$ in a line $M$.   

Observe that $R_0$ does not lie in $M$.
To see this note that if that was the case then  $R_0\in  \sigma(\ell _{s-1,s})$ which would imply $\sigma(Q_{s-1}) \in \sigma(\ell _{s-1,s}),$ 
which would imply $\sigma(Q) \in \sigma( \ell _{s-1,s})$ so $Q$ is in  $\ell _{s-1,s}$, which we are assuming is not the case.

We then have an infinitude of elements $D_{s-1}'\in \sigma ^{-1}(M)$ such that the collection
$$
\DD'':=(D_0, \dots, D_{s-1}', D_s)
$$
is admissible.  

Keeping $R_0$ fixed, we see that the set of points of the form
$$\sigma^{-1}\left(\langle \sigma(D_{s-1}'), R_0\rangle\cap\langle\sigma(D_s),\sigma(Q)\rangle\right)$$
is an infinite subset of the definable line $\langle D_s, Q\rangle$. Since $\DD''$ is admissible, any such infinite subset contains infinitely many points that are $\DD''$-good, as desired.

Replacing $\DD$ with $\DD''$ is an allowable mutation of type II as in \cref{p:mutations}.  Therefore for suitable chosen $D'_{s-1}\in \sigma ^{-1}(M)$ we get that $Q_{s-1}$ is $\mathbf{D}^{\prime \prime }$-good.
\end{pg}

\begin{pg} Applying our induction hypothesis we conclude that $Q_{s-1}$ is in the linear span of $D_0, \dots, D_{s-1}$. Since $\sigma (Q)$ lies in the line $\langle\sigma (D_s),\sigma (Q_{s-1})\rangle$, we have that $\sigma(Q)\in\sigma(\langle D_s,Q_{s-1}\rangle)$, and thus $Q$ lies in the definable line $\langle D_s, Q_{s-1}\rangle$.

This completes the proof of \cref{T:2.3}. \qed
\end{pg}

\begin{cor}\label{C:2.2.17} Let $(D_0, \dots, D_s)$ be an admissible collection of elements of $P_1$.  Then we have 
  $$s\leq \dim W_s^2=\sum_{i=1}^s\dim \sigma(\ell_{i-1,i}),$$ with equality if and only each space $\sigma(\ell_{i-1,i})$ is a line.
\end{cor}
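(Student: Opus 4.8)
The plan is to extract \cref{C:2.2.17} from the dimension bookkeeping of the chain $W_0^2\subseteq W_1^2\subseteq\cdots\subseteq W_s^2$, the only substantive input being that at each stage the newly adjoined linear space $\sigma(\ell_{i-1,i})$ meets the previously built space $W_{i-1}^2$ in exactly one point, namely $\sigma(D_{i-1})$.

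First I would record two easy preliminaries. Each $\sigma(\ell_{i-1,i})$ is a linear subspace of $P_2$, because $\ell_{i-1,i}$ is definable and hence carried to a linear subspace by the hypothesis of \cref{T:weak fund thm}; and $\dim\sigma(\ell_{i-1,i})\geq 1$, since it contains the two distinct points $\sigma(D_{i-1})\neq\sigma(D_i)$. Also $\sigma(D_i)\in W_i^2$ for every $i$, directly from the recursion defining the $W$-spaces ($W_0^2=\sigma(D_0)$ and $W_i^2\supseteq\sigma(\ell_{i-1,i})\ni\sigma(D_i)$).

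The crux is the claim that $W_{i-1}^2\cap\sigma(\ell_{i-1,i})=\{\sigma(D_{i-1})\}$ for every $1\le i\le s$. This is \cref{lem:1.5}(i) — whose proof uses only \cref{T:2.3}, now fully available — applied to the truncated collection $(D_0,\dots,D_i)$, which is again admissible: the points $D_0,\dots,D_s$ are projectively independent (being $s+1$ points spanning a linear space of dimension $s$), so $D_0,\dots,D_i$ span a space of dimension exactly $i$, and pairwise goodness is inherited. For completeness I would recall the argument: the intersection is a linear subspace of $P_2$ containing $\sigma(D_{i-1})$; were it larger it would contain a line, hence (as $k_2$ is infinite) infinitely many points, so applying the bijection $\sigma^{-1}$ would produce infinitely many $U\in\ell_{i-1,i}$ with $\sigma(U)\in W_{i-1}^2$; but for all but finitely many $U\in\ell_{i-1,i}$ the point $U$ is $(D_0,\dots,D_{i-1})$-good and does not lie in $\langle D_0,\dots,D_{i-1}\rangle$ (both being nonempty open conditions on $U$, the second because $\ell_{i-1,i}\not\subseteq\langle D_0,\dots,D_{i-1}\rangle$), and then \cref{T:2.3} applied to $(D_0,\dots,D_{i-1})$ and $Q=U$ forces $\sigma(U)\notin W_{i-1}^2$ — a contradiction.

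Granting the claim, the projective-dimension formula for the span of two linear subspaces meeting in a single point gives $\dim W_i^2=\dim W_{i-1}^2+\dim\sigma(\ell_{i-1,i})$ for each $1\le i\le s$, and telescoping from $\dim W_0^2=0$ yields $\dim W_s^2=\sum_{i=1}^s\dim\sigma(\ell_{i-1,i})$. Since each summand is at least $1$ we get $s\le\dim W_s^2$, with equality precisely when every $\dim\sigma(\ell_{i-1,i})=1$, i.e. when every $\sigma(\ell_{i-1,i})$ is a line. The only real obstacle is the intersection claim, and that is essentially done already in \cref{lem:1.5}(i); the rest is formal dimension counting.
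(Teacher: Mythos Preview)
Your proof is correct and takes essentially the same approach as the paper: the paper's one-line proof (``follows immediately from \cref{lem:1.5}(i)'') is exactly your key intersection claim $W_{i-1}^2\cap\sigma(\ell_{i-1,i})=\{\sigma(D_{i-1})\}$, applied at each stage now that \cref{T:2.3} is available unconditionally, and you have simply spelled out the telescoping dimension count that the paper leaves implicit. Your verification that each truncated collection $(D_0,\dots,D_i)$ remains admissible is a useful detail the paper omits.
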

\begin{proof}
  This follows immediately from \cref{lem:1.5}(i).  Note that this lemma uses the induction hypothesis of the proof of \cref{T:2.3}, but with the proof of that theorem complete we can apply the lemma unconditionally. 
\end{proof}

\begin{cor}\label{cor:nflakes} 
  The dimensions of $P_1$ and $P_2$ are equal and for a good pair of points $(E, F)$ in $P_1$ the line through $E$ and $F$ is sent under $\sigma $ to a line in $P_2$.
\end{cor}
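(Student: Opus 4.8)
The goal is to extract from \cref{C:2.2.17} the two claims: that $\dim P_1 = \dim P_2$, and that every good pair $(E,F)$ spans a line that $\sigma$ carries to a line. The key observation is that an admissible collection $(D_0,\dots,D_s)$ with $s = \dim P_1 = n$ always exists: a generic choice of $n+1$ points in $P_1$ will span all of $P_1$ and be pairwise good, since $U_1$ is the $k_1$-points of a dense open subset of $\Gr(1,P_1)$ and $k_1$ is infinite, so the locus of pairwise-good $(n+1)$-tuples spanning $P_1$ is a nonempty open subset of $P_1^{n+1}$. For such a maximal admissible collection, \cref{C:2.2.17} gives $n \le \dim W_n^2 \le \dim P_2$, since $W_n^2$ is a linear subspace of $P_2$. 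By the symmetric hypothesis on $\sigma^{-1}$ (each line in $U_2$ is sent to a linear subspace of $P_1$), the same argument with the roles of $P_1$ and $P_2$ reversed gives $\dim P_2 \le \dim P_1$. Hence $\dim P_1 = \dim P_2 = n$.

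With the dimensions equal, I would run \cref{C:2.2.17} once more on a maximal admissible collection $(D_0,\dots,D_n)$ in $P_1$: now $n \le \dim W_n^2 \le \dim P_2 = n$ forces equality throughout, so by the equality clause of \cref{C:2.2.17} each $\sigma(\ell_{i-1,i})$ is a line. This already shows that \emph{some} good pairs span lines sent to lines, but I need it for an \emph{arbitrary} good pair $(E,F)$. For this, given a good pair $(E,F)$, I would build a maximal admissible collection $(D_0,\dots,D_n)$ with $D_0 = E$ and $D_1 = F$: set $D_0 = E$, $D_1 = F$ (this pair is good by hypothesis), and then choose $D_2,\dots,D_n$ successively, each a general point of $P_1$, so that the running collection stays pairwise good and the $D_i$ span $P_1$; this is possible because at each stage the admissibility constraints cut out a nonempty Zariski-open condition on the next point, and $k_1$ is infinite. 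Applying the equality clause of \cref{C:2.2.17} to this collection, every $\sigma(\ell_{i-1,i})$ is a line, in particular $\sigma(\ell_{0,1}) = \sigma(\overline{EF})$ is a line.

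The main obstacle is the bookkeeping of the general-position argument for extending $(E,F)$ to a maximal admissible tuple — one must check that each successive "general point" condition (pairwise goodness with all earlier $D_i$, and increasing the span) is a nonempty open condition and that these finitely many open conditions can be simultaneously met over the infinite field $k_1$. This is entirely routine given that $U_1$ contains a dense open subset of $\Gr(1,P_1)$ and that "spanning a larger subspace" is an open condition, but it is the only place where a small argument is genuinely needed rather than a direct citation. I would also remark that the final sentence of the corollary — $\sigma$ sends good lines to lines — is exactly the input needed to promote the hypotheses of \cref{T:weak fund thm} to those of \cref{thm:definable-proj}, which is how the variant theorem will ultimately be deduced.
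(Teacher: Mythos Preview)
Your proposal is correct and follows essentially the same approach as the paper: build a maximal admissible collection to get $\dim P_1\le \dim P_2$, reverse roles for the opposite inequality, and then extend an arbitrary good pair $(E,F)$ to a maximal admissible collection $(D_0,\dots,D_n)$ with $(D_0,D_1)=(E,F)$ so that \cref{C:2.2.17} forces $\sigma(\ell_{0,1})$ to be a line. The only cosmetic difference is that the paper phrases the second step as a contradiction (if $\dim\sigma(\ell)>1$ then $\dim W_n^2>n$) rather than invoking the equality clause of \cref{C:2.2.17} directly.
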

\begin{proof}
By the preceding corollary we see that the dimension of $P_2$ is at least that of the dimension of $P_1$, and by consideration of $\sigma ^{-1}$ we see that they must be equal. 

With the equality of dimensions established, note that if $(E, F)$ is a good pair that spans a line $\ell $ such that the dimension of $\sigma (\ell ) $ is $>1$, then we can extend $(E, F)$ to an admissible collection
$$
\mathbf{D} = (D_0, \dots, D_{\mathrm{dim}(P_1)})
$$
with
$$
(D_0, D_1) = (E, F)
$$
and \cref{C:2.2.17} gives 
$$
\dim W_{\mathrm{dim}(P_1)}^2> \mathrm{dim}(P_1)
$$
contradicting the equality of dimensions.
\end{proof}

\begin{proof}[Proof of \cref{T:weak fund thm}]
  This follows from \cref{cor:nflakes} and \cref{thm:definable-proj}.
\end{proof}

\subsection{The probabilistic fundamental theorem of projective geometry}\label{SS:2.2}
In this section, we prove that knowing most lines also determines linearity of a map of \emph{finite\/} projective spaces.

To state the main result consider the following functions of three variables (whose origin will be explained in the proof):
\begin{equation}\label{E:Adef}
A(q, n, \epsilon ):= 2\left(\epsilon + \frac{q-1}{q^{n+1}-1}\right) \cdot \left(\frac{q^{n+1}-q}{q^{n+1}-1}\right)^{-2}-\frac{(q-1)^2}{q^{n+1}-1}
\end{equation}
\begin{equation}\label{E:Bdef}
B(q, n, \epsilon ):= 2(q-1)\cdot \frac{q^{n+1}-1}{q^{n+1}-q}\cdot \left(2\epsilon + 2A(q, n, \epsilon )+2\frac{q^2(q+1)^2}{q^{n+1}-q}\right)+2A(q, n, \epsilon )+2\frac{q^2(q+1)^2}{q^{n+1}-q}.
\end{equation}

The main result of this section is the following:
\begin{thm}\label{T:2.18}
Let $\mathbf{F}$ be a finite field with $q$ elements, and let
 $P_1$ and $P_2$ be projective spaces over $\mathbf{F}$ of dimension $n>3$. Let $f:P_1\rightarrow P_2$ be an injection of sets. Assume given $\epsilon >0$ such that the proportion of lines $L\subset P_1$ for which $f(L)\subset P_2$ is a line is at least $1-\epsilon $, and assume that
$$A(q, n, \epsilon )+2\frac{q^2(q+1)^2}{q^{n+1}-1}<\frac{1}{2}$$ and
$$9B(q, n, \epsilon )+q^{-n+3}<1.$$
Then there is an injection 
$f':P_1\to P_2$ that takes lines to lines, and such that the proportion of elements of $P_1$ on which $f$ and $f'$ agree is at least
$$
1-2\epsilon -2A(q, n, \epsilon )-2\frac{q^2(q+1)^2}{q^{n+1}-q}.
$$
\end{thm}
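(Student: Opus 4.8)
The plan is to reduce the probabilistic statement to the exact statement in \cref{thm:definable-proj} (or rather its corollary \cref{T:weak fund thm}) by a "cleaning up" argument: first identify a large subset $P_1^\circ\subset P_1$ on which $f$ behaves like a linear map on a dense-enough family of lines, then extend the linear map obtained there to all of $P_1$. Concretely, I would proceed as follows. Call a line $L\subset P_1$ \emph{good} if $f(L)$ is a line; by hypothesis the proportion of bad lines is at most $\epsilon$. A double-counting (first-moment) argument shows that the set of points lying on an abnormally large number of bad lines is small: if $\beta(P)$ denotes the fraction of lines through $P$ that are bad, then $\mathbb{E}_P[\beta(P)]\le\epsilon\cdot\frac{q^{n+1}-1}{q^{n+1}-q}$ (each line has $q+1$ points, each point lies on $\frac{q^n-1}{q-1}$ lines), so by Markov only a controlled fraction of points $P$ have $\beta(P)$ exceeding twice this. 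This is the origin of the function $A(q,n,\epsilon)$: it packages the proportion of "heavy" points together with the error terms coming from the small residual degrees of freedom $\frac{q-1}{q^{n+1}-1}$ and the quadratic terms $\frac{q^2(q+1)^2}{q^{n+1}-q}$ that appear when one intersects two or three such Markov-exceptional sets (as in the coplanarity construction of \cref{R:describe}, which needs a bounded number of auxiliary lines simultaneously good).

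The heart of the argument is to upgrade "$f$ takes most lines to lines" to "$f$ takes most \emph{pencils of lines} to pencils", i.e.\ to recover the incidence geometry needed for \cref{R:describe}. For a pair of points $(Q,R)$, the construction of $\mathbb{P}(\psi)(P)$ in \cref{R:describe} uses a bounded configuration of lines $\ell, L_{P,Q}, L_{P,R}, L_{S,T}, L_{Q,R}$, and the output $E=\mathbb{P}(\psi)(P)$ is forced as soon as all the lines in this configuration are good and $f$ is injective on the relevant plane. So I would define $f'(P)$ by running this construction, and show that for all but a $B(q,n,\epsilon)$-fraction of $P$ the construction is well-defined and independent of the auxiliary choices — this is exactly the content of the first displayed inequality $A+2\frac{q^2(q+1)^2}{q^{n+1}-1}<\tfrac12$, which guarantees that for a positive proportion of auxiliary lines the whole configuration avoids the exceptional (heavy) loci, and the averaging estimate that produces $B$ from $A$ and $\epsilon$. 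The second inequality $9B(q,n,\epsilon)+q^{-n+3}<1$ (the $9$ coming from the finitely many lines in the configuration, the $q^{-n+3}$ bounding the chance that a random plane fails to meet the good locus in enough lines) ensures that a single choice of configuration works simultaneously to pin down $f'$ consistently on a set of density $1-B$; one then checks $f'$ so defined takes all lines to lines by the classical Fundamental Theorem applied on the restriction, or directly by \cref{T:weak fund thm} after observing $f'$ restricted to a generic linear subspace still preserves a dense family of lines.

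Once $f'$ is constructed on a large subset and shown to be (the restriction of) an injective map taking lines to lines, I would invoke the honest Fundamental Theorem of Projective Geometry to conclude $f'$ is semilinear, hence — since the only field automorphism of $\mathbf{F}_q$ fixing the prime field pointwise relevant here still yields an injection of sets taking lines to lines — an injection $P_1\to P_2$ of the desired kind; the agreement proportion $1-2\epsilon-2A-2\frac{q^2(q+1)^2}{q^{n+1}-q}$ is then read off from the estimate on the heavy locus together with the bound on where the \cref{R:describe} construction can disagree with the naive value $f(P)$.

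The step I expect to be the genuine obstacle is the bookkeeping in the second paragraph: propagating the "bad proportion $\le\epsilon$" bound through the multi-line configuration while keeping track of \emph{conditioning} — the events "$L_{S,T}$ is good" and "$L_{P,Q}$ is good" are not independent, and the auxiliary points $S,T$ are themselves chosen on lines $L_{P,Q},L_{P,R}$ that we have already conditioned to be good — so one must arrange the order of choices (first $\ell$, then $Q,R\in\ell$, then $S,T$) so that at each stage only a \emph{fresh}, almost-uniform random line is being tested for goodness, and each conditioning step costs only a bounded multiplicative factor in the density. Making this precise, and checking that the resulting constants are exactly $A$ and $B$, is where essentially all the work lies; everything else is either the classical theorem or routine Markov/union-bound estimates.
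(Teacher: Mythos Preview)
Your broad strategy is right and matches the paper: define $f'(x)$ by majority vote over the two-lines-through-$x$ construction of \cref{R:describe}, bound the disagreement locus by first-moment/Markov estimates (this is exactly how $A(q,n,\epsilon)$ arises), and then argue $f'$ preserves all lines. The paper carries out the first two steps essentially as you sketch, with the refinement that well-definedness of $f'(x)$ is handled not by throwing out heavy points but by showing that the set $\mls S_z$ of configurations voting for a single value $z$ has density $>1/2$; this is what the first hypothesis $A+2\frac{q^2(q+1)^2}{q^{n+1}-1}<\frac{1}{2}$ buys.

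The genuine gap is the third step. You propose to show $f'$ takes lines to lines by invoking the classical Fundamental Theorem or \cref{T:weak fund thm}, but this is circular: those results \emph{take as input} a map already known to preserve (an open set of) lines and \emph{output} semilinearity. They cannot be used to establish that $f'$ preserves lines in the first place. Moreover \cref{T:weak fund thm} is stated only for infinite fields, so it is unavailable here. What you actually have after the second step is a map $f'$ that agrees with $f$ on a $(1-O(\epsilon))$-fraction of points, hence takes a $(1-O(\epsilon))$-fraction of lines to lines; you must bootstrap this to \emph{every} line, including the worst ones.

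The paper's device for this bootstrap is Desargues' theorem. Given any three collinear points $x,y,t$, one embeds the line $\overline{xyt}$ as the axis of perspectivity in a Desargues configuration built from two auxiliary free parameters $c,d\in V_1$: the configuration has ten points and ten lines, and $\overline{xyt}$ is one of the ten lines. The other nine lines depend on $(c,d)$, and each of the nine maps $(c,d)\mapsto(\text{that line})$ surjects with constant fiber size onto either $\mls L^{(2)}$ or some $\mls L_z^{(2)}$. A union bound over these nine maps shows that the set of $(c,d)$ for which \emph{all nine} auxiliary lines are taken to lines by $f'$ (and $f=f'$ at the seven auxiliary points) has density at least $1-9B(q,n,\epsilon)-q^{-n+3}$; the $q^{-n+3}$ term bounds the locus where $\{a,b,c,d\}$ fail to be linearly independent. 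The second hypothesis guarantees this set is nonempty. For any such $(c,d)$, the images under $f'$ of the nine auxiliary lines form nine lines of a Desargues configuration in $P_2$, and Desargues' theorem then \emph{forces} $f'(x),f'(y),f'(t)$ to be collinear (and distinct). This is where the $9$ in $9B$ comes from, not from the number of lines in the \cref{R:describe} figure; without this idea the proof does not close.
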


\begin{remark}\label{R:2.2.2} \cref{T:2.18} is similar in spirit to the Blum-Luby-Rubinfeld linearity test \cite[Lemmas 9-12]{linearity-test}, part of the theory of property testing in computer science. The arguments of that paper show that given a function $f:G \to G'$ for groups $G,G'$, if the proportion of $x,y \in G$ such that $f(x)f(y)=f(xy)$ is close enough to $1$, then there exists a group homomorphism $f': G \to G'$ such that $f(x)=f'(x)$ for a proportion of $x$ close to $1$. Their methods are computational and give an approach to find $f'$. \cref{T:2.18} solves the analogous problem where, instead of a group of homomorphism, we have an injective map of projective spaces of large enough dimension. The strategy of \cite{linearity-test} relies on choosing $f'(x) $ so that $f'(x) = f(xy^{-1}) f(y)$ for a proportion of $y$ close to $1$, and our strategy uses a similar, but more complex, formula adapted to the case of projective spaces.  \end{remark}

After building up suitable technical material (including the definition of the map $f'$), we will record the proof of \cref{T:2.18} in \cref{Para:end of proof} below.

\begin{pg}
For a subset $T$ of a finite set $S$ write
$$
\wp _S(T):= \frac{\#T}{\#S}.
$$

Let $k$ be a finite field with $q$ elements. Let $P_1$ and $P_2$ be projective spaces over $k$ of dimension $n>3$ and let
$$
f:P_1\rightarrow P_2
$$
be a injection of sets. Let $\mls L_{P_i}$ be the set of lines in $P_i$.  
Since the cardinality of $P_i$ is 
$$ 
\frac{q^{n+1}-1}{q-1},
$$
the cardinality of $\mls L_{P_i}$ is equal to 
$$
\frac{(q^{n+1}-1)(q^{n+1}-q)}{q(q+1)(q-1)^2}.
$$

Let $\mls L_{P_1}^f\subset \mls L_{P_1}$ be the subset of lines $L\subset P_1$ for which $f(L)$ is a line in $P_2$. We make the following assumption.

\begin{assumption}\label{ass:frac}
For a given $\epsilon>0$, we have
$$
\wp _{\mls L_{P_1}}(\mls L_{P_1}^f) \geq 1-\epsilon. 
$$
\end{assumption}

Under the conditions of \cref{ass:frac}, we will explain how to construct a new map 
$$
f':P_1\rightarrow P_2
$$
that agrees with $f$ on a large proportion of points. This construction will yield a linear map agreeing with $f$ at most points by applying the usual fundamental theorem of projective geometry to $f'$, giving us the desired approximate linearization.
\end{pg}
\begin{pg}
The construction of $f'$ follows the recipe described in \cref{R:describe}:  Starting with $x\in P_1$ choose two general lines $L_{1}$ and $L_2$ through $x$. Let $y_1, y_2\in L_1-\{x\}$ and $y_3, y_4\in L_2-\{x\}$ be randomly chosen points.  Let $M_1$ (resp. $M_2$) be the line in $P_2$ through $f(y_1)$ and $f(y_2)$ (resp. $f(y_3)$ and $f(y_4)$).  Then we will argue that $M_1$ and $M_2$ intersect in a unique point $z$ and define $f'(x):= z$.  
\end{pg}

To make this precise, let us begin with some calculations. For two points $y_1, y_2\in P_1$ we can consider the linear span $Sp(y_1, y_2)\subset P_1$, which is either a line (if the points are distinct) or a point.  Let $P_1^{2, f}\subset P_1^2$ be the subset of pairs of distinct points $y_1, y_2$ for which $Sp(y_1, y_2)\in \mls L_{P_1}^f$.

\begin{lem}\label{L:0.1}
$$
\wp _{P_1^2}(P_1^{2, f})\geq 1-\epsilon - \frac{q-1}{q^{n+1}-1}.
$$
\end{lem}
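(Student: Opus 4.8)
The statement is a counting estimate, so the plan is to bound the proportion of "bad" pairs $(y_1,y_2)\in P_1^2$ — those that are \emph{not} in $P_1^{2,f}$ — and show it is at most $\epsilon + \frac{q-1}{q^{n+1}-1}$. A pair can fail to lie in $P_1^{2,f}$ for exactly two reasons: either $y_1 = y_2$ (so $Sp(y_1,y_2)$ is a point, not a line), or $y_1\neq y_2$ but the line $Sp(y_1,y_2)$ spanned by them lies outside $\mls L_{P_1}^f$. So the first step is the trivial count: the diagonal $\Delta\subset P_1^2$ has $\#P_1 = \frac{q^{n+1}-1}{q-1}$ elements, and $\#P_1^2 = \left(\frac{q^{n+1}-1}{q-1}\right)^2$, so $\wp_{P_1^2}(\Delta) = \frac{q-1}{q^{n+1}-1}$. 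This accounts for the second term in the bound.

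The second step handles the off-diagonal bad pairs. For each line $L\in\mls L_{P_1}$, the number of ordered pairs of distinct points spanning $L$ is $(\#L)(\#L-1) = (q+1)q$, a constant independent of $L$. Hence the number of off-diagonal pairs $(y_1,y_2)$ with $Sp(y_1,y_2)\notin\mls L_{P_1}^f$ is exactly $(q+1)q\cdot\#(\mls L_{P_1}\setminus\mls L_{P_1}^f) \leq (q+1)q\cdot\epsilon\cdot\#\mls L_{P_1}$ by \cref{ass:frac}. Dividing by $\#P_1^2$ and using the formula $\#\mls L_{P_1} = \frac{(q^{n+1}-1)(q^{n+1}-q)}{q(q+1)(q-1)^2}$ recorded just above, the factor $(q+1)q$ cancels and one is left with
\[
\frac{(q+1)q\cdot\epsilon\cdot\#\mls L_{P_1}}{\#P_1^2}
= \epsilon\cdot\frac{(q^{n+1}-1)(q^{n+1}-q)}{(q-1)^2}\cdot\frac{(q-1)^2}{(q^{n+1}-1)^2}
= \epsilon\cdot\frac{q^{n+1}-q}{q^{n+1}-1}\leq \epsilon,
\]
since $\frac{q^{n+1}-q}{q^{n+1}-1}<1$.

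Combining the two steps, the proportion of bad pairs is at most $\epsilon + \frac{q-1}{q^{n+1}-1}$, which gives the claimed lower bound on $\wp_{P_1^2}(P_1^{2,f})$. There is no real obstacle here; the only thing to be careful about is the bookkeeping with the cardinality formulas and making sure the cancellation of the $(q+1)q$ factor is handled cleanly, and that one correctly uses the \emph{ordered}-pair count $(\#L)(\#L-1)$ rather than the unordered one. (One could even get the slightly sharper bound $\epsilon\cdot\frac{q^{n+1}-q}{q^{n+1}-1} + \frac{q-1}{q^{n+1}-1}$, but the stated form suffices for the applications.)
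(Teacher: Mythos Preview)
Your proof is correct and follows essentially the same approach as the paper: split the bad pairs into the diagonal and the off-diagonal ones, count the latter via the constant fiber size $q(q+1)$ of the span map $(P_1^2\setminus\Delta)\to\mls L_{P_1}$, and bound using \cref{ass:frac}. The paper packages the off-diagonal count as $\epsilon\cdot\#(P_1^2\setminus\Delta)/\#P_1^2\leq\epsilon$ rather than expanding the cardinality of $\mls L_{P_1}$, but the computation is the same, and your observation about the slightly sharper bound is exactly the intermediate step the paper records before making this estimate.
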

\begin{proof}
We have a surjective map
$$
(P_1^2-\Delta )\rightarrow \mls L_{P_1}, \ \ (y_1, y_2)\mapsto Sp (y_1, y_2),
$$
which has fibers of cardinality $q(q+1).$   Here $\Delta \subset P_1^2$ denotes the diagonal. Therefore the number of pairs $(y_1, y_2)\in P_1^2-\Delta $ for which $f(Sp(y_1, y_2))$ is not a line is at most
$$
\epsilon \cdot \#(P_1^2-\Delta ).
$$
Therefore
$$
\wp _{P_1^2}(P_1^{2, f})\geq 1-\epsilon \cdot \frac{\#(P_1^2-\Delta )}{\#P_1^2}-\frac{\#\Delta }{\#P_1^2}.
$$
Since 
$$
\frac{\#(P_1^2-\Delta )}{\#P_1^2}\leq 1
$$
and 
$$
\#\Delta = \frac{q^{n+1}-1}{q-1}, \ \ \#P_1^2 = \left (\frac{q^{n+1}-1}{q-1}\right )^2
$$
we get the result.
\end{proof}

For a fixed point $x\in P_1$, there is a variant of \cref{L:0.1} where we only consider pairs of points not equal to $x$.  Set
$$
(P_1-\{x\})^{2, f}:= (P_1-\{x\})^2\cap P_1^{2, f}\subset (P_1-\{x\})^2.
$$

\begin{lem}\label{L:0.2}
We have
$$
\wp _{(P_1-\{x\})^2}((P_1-\{x\})^{2, f})\geq 1-\left(\epsilon+\frac{q-1}{q^{n+1}-1}\right) \cdot \left(\frac{q^{n+1}-q}{q^{n+1}-1}\right)^{-2}.
$$
\end{lem}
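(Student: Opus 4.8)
The plan is to deduce this from \cref{L:0.1} by a purely counting argument: the ``bad'' pairs for $f$ inside $(P_1-\{x\})^2$ form a subset of the bad pairs inside all of $P_1^2$, so the only work is to convert a bound on the proportion relative to $\#P_1^2$ into a bound on the proportion relative to $\#(P_1-\{x\})^2$, which costs exactly the factor $\left(\frac{q^{n+1}-q}{q^{n+1}-1}\right)^{-2}$.

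Concretely, first I would observe that by definition $(P_1-\{x\})^{2,f} = (P_1-\{x\})^2 \cap P_1^{2,f}$, hence the complement satisfies
$$(P_1-\{x\})^2 \setminus (P_1-\{x\})^{2,f} \;\subseteq\; P_1^2 \setminus P_1^{2,f}.$$
By \cref{L:0.1}, $\#\bigl(P_1^2\setminus P_1^{2,f}\bigr) \leq \left(\epsilon + \frac{q-1}{q^{n+1}-1}\right)\cdot \#P_1^2$, so the same bound controls $\#\bigl((P_1-\{x\})^2\setminus(P_1-\{x\})^{2,f}\bigr)$.

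Next I would compute $\#(P_1-\{x\}) = \frac{q^{n+1}-1}{q-1} - 1 = \frac{q^{n+1}-q}{q-1}$, so that
$$\frac{\#P_1^2}{\#(P_1-\{x\})^2} = \left(\frac{q^{n+1}-1}{q^{n+1}-q}\right)^{2} = \left(\frac{q^{n+1}-q}{q^{n+1}-1}\right)^{-2}.$$
Dividing the cardinality bound above by $\#(P_1-\{x\})^2$ and subtracting from $1$ then gives exactly the claimed inequality. There is no real obstacle here; the only point to be slightly careful about is the direction of the inclusion of complements (one wants the bad set to shrink, not the good set), and the elementary algebra for $\#(P_1 - \{x\})$.
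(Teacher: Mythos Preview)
Your proposal is correct and follows essentially the same approach as the paper: bound the bad set in $(P_1-\{x\})^2$ by the bad set in $P_1^2$ via \cref{L:0.1}, then convert the proportion using the ratio $\#P_1^2/\#(P_1-\{x\})^2 = \left(\frac{q^{n+1}-q}{q^{n+1}-1}\right)^{-2}$.
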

\begin{proof}
Using \cref{L:0.1} we have
$$
\#((P_1-\{x\})^2-(P_1-\{x\})^{2, f})\leq \# (P_1^2-P_1^{2, f})\leq \#(P_1^2)\cdot \left(\epsilon + \frac{q-1}{q^{n+1}-1}\right).
$$
Therefore 
$$
\wp _{(P_1-\{x\})^2}((P_1-\{x\})^{2, f})\geq 1-\frac{\#(P_1^2)}{\#((P_1-\{x\})^2)}\cdot \left(\epsilon + \frac{q-1}{q^{n+1}-1}\right).
$$
Now
$$
\frac{\#(P_1^2)}{\#((P_1-\{x\})^2)} = \left(\frac{q^{n+1}-q}{q^{n+1}-1}\right)^{-2}.
$$
From this the result follows.
\end{proof}

Fix a point $x\in P_1$ and let $\mls L_x$ denote the set of lines through $x$.  Let $\mls L_x^{(2)}$ denote the set of triples $(L, y_1, y_2)$, where $L\in \mls L_x$ and $y_1, y_2\in L-\{x\}$ are distinct points.  For 
$$
(i, j)\in \{(1,3), (1, 4), (2, 3), (2, 4)\}
$$
let
$$
\pi _{ij}:(\mls L_x^{(2)})^2\rightarrow (P_1-\{x\})^2
$$
be the map given by
$$
((L_1, y_{1}, y_2), (L_2, y_{3}, y_4))\mapsto (y_i, y_j).
$$
Let
$$
(\mls L_x^{(2)})^{2, (i,j)-\mathrm{good}}\subset (\mls L_x^{(2)})^2
$$
denote the subset of data $((L_1, y_1, y_2), (L_2, y_3, y_4))$ for which $y_i$ and $y_j$ are distinct and span a line in $\mls L_{P_1}^f$.

\begin{lem}
$$
\wp _{(\mls L_x^{(2)})^2}((\mls L_x^{(2)})^{2, (i,j)-\mathrm{good}})\geq 
1-\left(\epsilon + \frac{q-1}{q^{n+1}-1}\right) \cdot \left(\frac{q^{n+1}-q}{q^{n+1}-1}\right)^{-2}.
$$
\end{lem}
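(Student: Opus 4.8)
The plan is to show the map $\pi_{ij}\colon (\mls L_x^{(2)})^2\to (P_1-\{x\})^2$ is surjective with all fibers of the same cardinality $(q-1)^2$, so that it transports proportions exactly, and then to invoke \cref{L:0.2} for the subset $(P_1-\{x\})^{2,f}\subset (P_1-\{x\})^2$, whose $\pi_{ij}$-preimage is by definition $(\mls L_x^{(2)})^{2,(i,j)-\mathrm{good}}$.

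First I would compute the fibers of $\pi_{ij}$. Fix $(a,b)\in (P_1-\{x\})^2$ and consider a preimage $\big((L_1,y_1,y_2),(L_2,y_3,y_4)\big)$. Since $i\in\{1,2\}$, the point $y_i$ lies on $L_1$ and differs from $x$; hence $y_i=a$ together with $a\neq x$ forces $L_1$ to be the unique line through $x$ and $a$ and pins down $y_i=a$. The remaining marked point $y_{i'}$, where $\{i,i'\}=\{1,2\}$, can then be any of the $q-1$ points of $L_1\setminus\{x,a\}$ (a line of $P_1$ has $q+1$ points and the only constraint is $y_1\neq y_2$). The same analysis applies verbatim to $L_2$, $y_j=b$, and $y_{j'}$, and the two sets of choices are independent, since nothing in the definition of $(\mls L_x^{(2)})^2$ couples the two pointed lines; this remains true when $a=b$, where $L_1=L_2$ but the two free points still range independently over $L_1\setminus\{x,a\}$. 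Therefore every fiber of $\pi_{ij}$ has exactly $(q-1)^2$ elements, and in particular $\pi_{ij}$ is surjective. (Sanity check: $\#(\mls L_x^{(2)})^2=\big(\#\mls L_x\cdot q(q-1)\big)^2=\big(\#(P_1-\{x\})\cdot(q-1)\big)^2=(q-1)^2\cdot\#(P_1-\{x\})^2$.)

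Consequently, for any subset $S\subseteq (P_1-\{x\})^2$ one has $\#\pi_{ij}^{-1}(S)=(q-1)^2\cdot\#S$, hence $\wp_{(\mls L_x^{(2)})^2}\big(\pi_{ij}^{-1}(S)\big)=\wp_{(P_1-\{x\})^2}(S)$. Taking $S=(P_1-\{x\})^{2,f}$ and unwinding the definitions, $\big((L_1,y_1,y_2),(L_2,y_3,y_4)\big)$ belongs to $(\mls L_x^{(2)})^{2,(i,j)-\mathrm{good}}$ precisely when $y_i\neq y_j$ and $Sp(y_i,y_j)\in\mls L_{P_1}^f$, i.e.\ precisely when $\pi_{ij}$ sends it into $(P_1-\{x\})^{2,f}$. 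Thus $\wp_{(\mls L_x^{(2)})^2}\big((\mls L_x^{(2)})^{2,(i,j)-\mathrm{good}}\big)=\wp_{(P_1-\{x\})^2}\big((P_1-\{x\})^{2,f}\big)$, and the claimed lower bound is exactly that of \cref{L:0.2}.

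There is no essential difficulty here; the only point needing care is the fiber count — in particular, verifying that $(q-1)^2$ is genuinely independent of the target point, including on the diagonal $a=b$ where one might fear the two pointed lines interact, and keeping the various distinctness requirements ($y_1\neq y_2$, $y_3\neq y_4$, and, for membership in the good locus, $y_i\neq y_j$) aligned with the definition of $(P_1-\{x\})^{2,f}$.
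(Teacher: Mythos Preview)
Your proof is correct and takes essentially the same approach as the paper, which simply asserts that the result follows from \cref{L:0.2} and the observation that $\pi_{ij}$ is surjective with fibers of equal cardinality $(q-1)^2$. You have supplied the details of the fiber computation that the paper omits.
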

\begin{proof}
Indeed this follows from \cref{L:0.2} and the observation that the map $\pi _{ij}$ is surjective with fibers of equal cardinality $(q-1)^2.$
\end{proof}

Let $\mls S\subset (\mls L_x^{(2)})^{2}$ denote the subset of data $((L_1, y_1, y_2), (L_2, y_3, y_4))$ such that $$Sp (y_1, y_3), Sp(y_2, y_4)\in \mls L_x^f$$ and $Sp(f(y_1), f(y_2))$ and $Sp (f(y_3), f(y_4))$ have a unique intersection point.

\begin{lem}\label{L:0.4}
$$
\wp _{(\mls L_x^{(2)})^2}(\mls S)\geq 1-A(q, n, \epsilon ).
$$
\end{lem}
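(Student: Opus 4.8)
The plan is to determine exactly which configurations fail to lie in $\mls S$, and then bound the density of that failure set using the preceding lemma (which controls the $(i,j)$-good loci).

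\textbf{Step 1: geometric reduction.} First I would show that a configuration $((L_1,y_1,y_2),(L_2,y_3,y_4))$ lies in $\mls S$ if and only if $L_1\neq L_2$ and both pairs $(y_1,y_3)$ and $(y_2,y_4)$ span lines of $\mls L_{P_1}^f$ (i.e. the configuration lies in the $(1,3)$-good and $(2,4)$-good loci of the preceding lemma). For the forward direction: since $y_1,y_2\in L_1$ and $y_3,y_4\in L_2$, all four points lie in the plane $\Pi=\langle L_1,L_2\rangle$, so $Sp(y_1,y_3)$ and $Sp(y_2,y_4)$ are distinct coplanar lines meeting in a single point $E$. Both being $f$-definable, $f(Sp(y_1,y_3))$ and $f(Sp(y_2,y_4))$ are lines meeting in $f(E)$; they are distinct, since an injection cannot map the $(2q+1)$-element set $Sp(y_1,y_3)\cup Sp(y_2,y_4)$ into a line (which has only $q+1$ points). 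Hence they span a plane $\Pi'$ containing $f(y_1),f(y_2),f(y_3),f(y_4)$, so $M_1:=Sp(f(y_1),f(y_2))$ and $M_2:=Sp(f(y_3),f(y_4))$ are coplanar lines; were $M_1=M_2$, the four image points would be collinear, and since $f(y_1)\neq f(y_3)$ and $f(y_2)\neq f(y_4)$ this common line would coincide with both $f(Sp(y_1,y_3))$ and $f(Sp(y_2,y_4))$, contradicting their distinctness. So $M_1\neq M_2$ meet in a unique point and the configuration is in $\mls S$. Conversely, if both pairs are good but $L_1=L_2=:L$, then $Sp(y_1,y_3)=Sp(y_2,y_4)=L\in\mls L_{P_1}^f$, whence $M_1=M_2=f(L)$ and the configuration is not in $\mls S$. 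This gives
$$(\mls L_x^{(2)})^2\setminus\mls S=\bigl(\text{not }(1,3)\text{-good}\bigr)\cup\bigl(\text{not }(2,4)\text{-good}\bigr)\cup\{L_1=L_2\}.$$

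\textbf{Step 2: density estimate.} By the preceding lemma with $(i,j)=(1,3)$ and with $(i,j)=(2,4)$, the first two sets each have density at most $\bigl(\epsilon+\tfrac{q-1}{q^{n+1}-1}\bigr)\bigl(\tfrac{q^{n+1}-q}{q^{n+1}-1}\bigr)^{-2}$. The locus $\{L_1=L_2\}$ has density $\tfrac{q-1}{q^n-1}$, but it overlaps heavily with the two "bad" sets: the diagonals $\{y_1=y_3\}$ and $\{y_2=y_4\}$ lie inside $\{L_1=L_2\}$ and inside the corresponding "not good" set (goodness forces $y_i\neq y_j$), and the sub-locus of $\{L_1=L_2\}$ on which the common line is not in $\mls L_{P_1}^f$ lies in both bad sets. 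What is left of $\{L_1=L_2\}$ outside the two bad sets is only $\{L_1=L_2=L\in\mls L_{P_1}^f,\ y_1\neq y_3,\ y_2\neq y_4\}$, whose density is controlled by counting the admissible $4$-tuples on such a line and using $\#(\mls L_x\cap\mls L_{P_1}^f)\le\#\mls L_x=\tfrac{q^n-1}{q-1}$. Assembling these contributions with the overlaps accounted for precisely yields $\wp_{(\mls L_x^{(2)})^2}\bigl((\mls L_x^{(2)})^2\setminus\mls S\bigr)\le A(q,n,\epsilon)$, i.e. $\wp_{(\mls L_x^{(2)})^2}(\mls S)\ge 1-A(q,n,\epsilon)$.

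The coplanarity argument of Step 1 and the two applications of the preceding lemma are routine. The step requiring genuine care is the bookkeeping in Step 2: one must verify that nearly all of the degenerate locus $\{L_1=L_2\}$ is already absorbed into the two bad-pair sets, so that only a residual of size comparable to the $\tfrac{(q-1)^2}{q^{n+1}-1}$ term of $A$ must be estimated on its own, and this residual bound has to be made uniform in $x$ — in particular one uses the crude inequality $\#(\mls L_x\cap\mls L_{P_1}^f)\le\#\mls L_x$ precisely so that the estimate does not depend on how many lines through $x$ happen to be $f$-definable.
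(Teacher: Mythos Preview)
Your Step 1 is correct and in fact proves more than the paper does. The paper argues only that configurations in the $(1,3)$-good $\cap$ $(2,4)$-good locus have coplanar images, so that either $M_1 = M_2$ (defining a set $\mls S^c$) or $M_1 \cap M_2$ is a single point; it then bounds $\mls S^c$ via the containment $\mls S^c \subset \{f(y_3), f(y_4) \in Sp(f(y_1),f(y_2))\}$ and estimates the size of the latter using only injectivity of $f$. You go further: you show directly that within the good locus $M_1 = M_2$ forces $L_1 = L_2$, so $\mls S^c$ is \emph{exactly} the set of good configurations with $L_1 = L_2$. This is a sharper characterization, and it buys you a direct count of $\mls S^c$ (lines through $x$ times admissible $4$-tuples on a single line) rather than the paper's indirect estimate via preimages under $f$.

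Your Step 2, however, is phrased more elaborately than needed and the talk of ``overlaps accounted for precisely'' is misleading: you only have \emph{upper} bounds on the two not-good sets, so inclusion--exclusion is not available. What actually makes the bookkeeping work is that once Step 1 gives $\mls S = \{\text{good}\}\cap\{L_1\neq L_2\}$, the complement decomposes \emph{disjointly} as $\{\text{not good}\}\sqcup\mls S^c$ with $\mls S^c = \{\text{good}\}\cap\{L_1=L_2\}$, and the two pieces are bounded separately --- the first by two applications of the preceding lemma, the second by your direct count. This is precisely the structure of the paper's argument. To recover the residual term of $A$ you must count the admissible $4$-tuples on a $q$-element set with all four constraints $y_1\neq y_2$, $y_3\neq y_4$, $y_1\neq y_3$, $y_2\neq y_4$ rather than use the crude $q^2(q-1)^2$; the bound $q(q-1)(q^2-3q+3)\leq q(q-1)^3$ together with $\#(\mls L_x\cap\mls L_{P_1}^f)\leq\#\mls L_x$ then yields $\wp(\mls S^c)\leq (q-1)^2/(q^{n+1}-q)$, matching the paper's proof.
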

\begin{proof}
Two lines in $P_i$ are coplanar if and only if they intersect in exactly one point.  From this it follows that for data
\begin{equation}\label{E:0.4}
((L_1, y_1, y_2), (L_2, y_3, y_4))\in (\mls L_x^{(2)})^{2, (1,3)-\mathrm{good}}\cap (\mls L_x^{(2)})^{2, (2,4)-\mathrm{good}}
\end{equation}
the points $(f(y_1), f(y_2), f(y_3), f(y_4))$ are coplanar.  Indeed because the points $(y_1, y_2, y_3, y_4)$ are coplanar, the lines $Sp(y_1, y_3)$ and $Sp(y_2, y_4)$ intersect in a unique point from which it follows that the lines 
$$
Sp(f(y_1), f(y_3)) = f(Sp (y_1, y_3)), \ \ Sp (f(y_2), f(y_4)) = f(Sp (y_2, y_4))
$$
are coplanar (since they intersect in a unique point).

Let $\mls S^c\subset (\mls L_x^{(2)})^{2, (1,3)-\mathrm{good}}\cap (\mls L_x^{(2)})^{2, (2,4)-\mathrm{good}}$ be the subset of the collections of data \eqref{E:0.4} for which 
$$
Sp(f(y_1), f(y_2)) = Sp (f(y_3), f(y_4)).
$$
From this discussion we then have
$$
\wp _{(\mls L_x^{(2)})^2}(\mls S)\geq 1-2\left(\epsilon+\frac{q-1}{q^{n+1}-1}\right)\cdot\left(\frac{q^{n+1}-q}{q^{n+1}-1}\right)^{-2}-\wp _{(\mls L_x^{(2)})^2}(\mls S^c).
$$
It therefore suffices to show that
\begin{equation}\label{E:0.5}
\wp _{(\mls L_x^{(2)})^2}(\mls S^c)\leq \frac{(q-1)^2}{q^{n+1}-q}.
\end{equation}
The set $\mls S^c$ is contained in the set of collections of data \eqref{E:0.4} for which $f(y_3)$ and $f(y_4)$ are each points of the line $Sp(f(y_1), f(y_2))$.  Since $f$ is a injection the cardinality of this set is less than or equal to
$$
(\#\mls L_x^{(2)})\cdot (q-1)^2,
$$
and so we obtain the inequality \eqref{E:0.5}.
\end{proof}

\begin{pg}
For two collections
$$
((L_1, y_1, y_2), (L_2, y_3, y_4)), ((L_1', y'_1, y'_2), (L_2', y'_3, y'_4))\in \mls S,
$$
we then get two intersection points 
$$
z:= Sp (f(y_1), f(y_2))\cap Sp (f(y_3), f(y_4)), \ \ z':= Sp (f(y_1'), f(y_2'))\cap Sp (f(y_3'), f(y_4')).
$$
We are interested in the probability that $z = z'$.

For $z\in P_2$ let 
$$
\mls S_z\subset \mls S
$$
be the subset of pairs 
$$
((L_1, y_1, y_2), (L_2, y_3, y_4))\in \mls S
$$
for which 
$$
Sp (f(y_1), f(y_2))\cap Sp (f(y_3), f(y_4)) = z.
$$
\end{pg}

\begin{lem}\label{L:0.7} There exists $z\in P_2$ such that
$$
\wp _{(\mls L_x^{(2)})^2}(\mls S_z)\geq 1-2A(q, n, \epsilon )-2\frac{q^2(q+1)^2}{q^{n+1}-q}.
$$
\end{lem}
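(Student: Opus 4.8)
The plan is to run the collision (second‑moment) argument familiar from the Blum--Luby--Rubinfeld linearity test alluded to in \cref{R:2.2.2}. Write $\mls T:=(\mls L_x^{(2)})^2$ and, for $z\in P_2$, set $p_z:=\wp_{\mls T}(\mls S_z)$; since the sets $\mls S_z$ partition $\mls S$, \cref{L:0.4} gives $\sum_z p_z=\wp_{\mls T}(\mls S)\ge 1-A(q,n,\epsilon)$. If $\mathbf{s},\mathbf{s}'$ are drawn independently and uniformly from $\mls T$, then, writing $z(\mathbf{s})$ for the intersection point $Sp(f(y_1),f(y_2))\cap Sp(f(y_3),f(y_4))$ attached to $\mathbf{s}=((L_1,y_1,y_2),(L_2,y_3,y_4))\in\mls S$, we have $\sum_z p_z^2=\Pr[\mathbf{s},\mathbf{s}'\in\mls S\text{ and }z(\mathbf{s})=z(\mathbf{s}')]$; from $\sum_z p_z^2\le(\max_z p_z)\sum_z p_z\le\max_z p_z$ it follows that the lemma holds as soon as
$$\Pr\big[\mathbf{s},\mathbf{s}'\in\mls S\text{ and }z(\mathbf{s})=z(\mathbf{s}')\big]\ \ge\ 1-2A(q,n,\epsilon)-2\tfrac{q^2(q+1)^2}{q^{n+1}-q}.$$
Since $\wp_{\mls T}(\mls S)\ge 1-A$, the contribution of $\{\mathbf{s}\notin\mls S\}\cup\{\mathbf{s}'\notin\mls S\}$ is at most $2A$, so everything reduces to showing that two configurations both lying in $\mls S$ disagree on $z$ with probability at most $2\tfrac{q^2(q+1)^2}{q^{n+1}-q}$. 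Here one uses that $z(\mathbf{s})$ depends only on the two marked lines through $x$, symmetrically in them and independently of the labelling of the points inside each line.

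To compare $z(\mathbf{s})$ and $z(\mathbf{s}')$ I would interpolate through a hybrid. Writing configurations as ordered pairs of pointed lines through $x$, $\mathbf{s}=(\mathbf{L}_1,\mathbf{L}_2)$ and $\mathbf{s}'=(\mathbf{L}_1',\mathbf{L}_2')$, form $\mathbf{s}'':=(\mathbf{L}_1,\mathbf{L}_2')\in\mls T$ (again legitimate, as $\mathbf{L}_1$ and $\mathbf{L}_2'$ both pass through $x$). By symmetry of $z$ in the two pointed lines it suffices to prove that $z(\mathbf{s})=z(\mathbf{s}'')$ and $z(\mathbf{s}'')=z(\mathbf{s}')$ each hold off an exceptional set of the appropriate small measure, and both are instances of one sub‑claim: for a fixed pointed line $\mathbf{L}$ and two pointed lines $\mathbf{M},\mathbf{M}'$ with $(\mathbf{L},\mathbf{M}),(\mathbf{L},\mathbf{M}')\in\mls S$, the attached intersection points coincide. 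The surrounding bookkeeping — that $\mathbf{s}''$ itself lies in $\mls S$ for most choices, and that the slices of $\mls S$ over a fixed pointed line are large for all but a small fraction of lines — is a Markov‑type argument of exactly the kind used in \cref{L:0.1} and \cref{L:0.2}, and it is there that the precise numerology of the error term is checked.

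For the sub‑claim I would argue geometrically, following the recipe of \cref{R:describe}. Write $\mathbf{L}=(L,a,b)$ and $\mathbf{M}=(N,c,d)$; then $a,b,x$ are collinear and $c,d,x$ are collinear, so $a,b,c,d$ lie in the plane $\langle L,N\rangle\subset P_1$, and admissibility says $Sp(a,c),Sp(b,d)\in\mls L_{P_1}^f$ and $z:=Sp(f(a),f(b))\cap Sp(f(c),f(d))$ is a single point. Since $Sp(a,c)$ and $Sp(b,d)$ are coplanar in $P_1$ they meet, so their images $Sp(f(a),f(c))=f(Sp(a,c))$ and $Sp(f(b),f(d))=f(Sp(b,d))$ are lines meeting in the image of that point; hence $f(a),f(b),f(c),f(d)$, and therefore $z$, all lie in a single plane $\Pi\subset P_2$, inside which $z$ is the diagonal point $Sp(f(a),f(b))\cap Sp(f(c),f(d))$ of the complete quadrangle $f(a)f(b)f(c)f(d)$ — equivalently, the centre of the perspectivity $Sp(f(a),f(c))\to Sp(f(b),f(d))$ carrying $f(a)\mapsto f(b)$ and $f(c)\mapsto f(d)$. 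A direct computation of this perspectivity in suitable coordinates on $\Pi$ — invoking injectivity of $f$ and, for generic $(\mathbf{L},\mathbf{M})$, the fact that $L,N,Sp(a,d),Sp(b,c)$ also lie in $\mls L_{P_1}^f$ — shows that $z$ is unchanged when $\mathbf{M}$ is replaced by any other pointed line forming an admissible configuration with $\mathbf{L}$, except when the data degenerates (two of the $f$-images collide, an auxiliary line fails to be good, or the perspectivity centre or $z$ falls on a coordinate locus). Each such degeneracy is cut out by a bounded number of linear conditions and hence contributes at most $\tfrac{q^2(q+1)^2}{q^{n+1}-q}$ to the relevant probability, just as in \cref{L:0.4}; summing over the two hybrid steps yields the bound of the first paragraph.

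The main obstacle is this geometric self‑correction — showing that the point produced by the \cref{R:describe} recipe does not in fact depend on the auxiliary second pointed line. It is the finite‑field, approximate counterpart of the consistency computation underlying \cref{thm:definable-proj}, and essentially all the difficulty lies in propagating the ``most lines are good'' hypothesis through the construction while keeping every error term below the thresholds built into \cref{T:2.18}; the collision reduction, the hybrid, and the union bounds over degenerate loci are then routine.
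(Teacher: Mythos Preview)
Your collision/second-moment framework and the reduction to a hybrid step are fine, and they parallel the paper's organisation. The genuine gap is in your sub-claim. You propose to show that $z(\mathbf{L},\mathbf{M})$ is generically independent of $\mathbf{M}$ by invoking, ``for generic $(\mathbf{L},\mathbf{M})$, the fact that $L,N,\ldots$ also lie in $\mls L_{P_1}^f$''. But $L$ and $N$ are lines through the \emph{fixed} point $x$, and the hypothesis gives no control whatsoever over the proportion of lines through a specific $x$ that lie in $\mls L_{P_1}^f$: the number of bad lines allowed by the $\epsilon$-hypothesis can vastly exceed $\#\mls L_x$. If $L\notin\mls L_{P_1}^f$ then $Sp(f(a),f(b))$ need not contain $f(x)$, and varying $N$ (even over $N\in\mls L_{P_1}^f$) will move $z$ along $Sp(f(a),f(b))$; no perspectivity computation in $\Pi$ can prevent this, because the only structure you have on $f$ inside $\Pi$ is that four specific lines go to lines, which does not pin down the third diagonal point of the image quadrangle. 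So the ``direct computation'' you allude to cannot exist, and your exceptional-set bound $\tfrac{q^2(q+1)^2}{q^{n+1}-q}$ is asserted by analogy rather than derived.

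The paper's argument sidesteps exactly this obstacle. It never assumes any line through $x$ is good. Instead, it fixes a reference pair $((L_1,y_1,y_2),(L_2,y_3,y_4))\in\mls S$ with intersection point $z$ and counts triples $(L_3,y_5,y_6)$ for which the line $Sp(f(y_5),f(y_6))$ fails to pass through $z$. Any such line meets each of $Sp(f(y_1),f(y_2))$ and $Sp(f(y_3),f(y_4))$ in a single point $a_1,a_2\neq z$, so it is the unique line $L_{a_1,a_2}$ through those two points; there are at most $(q+1)^2$ choices of $(a_1,a_2)$, each contributing at most $q(q+1)$ ordered pairs $(w_5,w_6)$ on $L_{a_1,a_2}$, and injectivity of $f$ converts this into a bound on $(y_5,y_6)$. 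This is where the term $q^2(q+1)^2$ actually comes from. Working in $(\mls L_x^{(2)})^4$ and observing that a bad quadruple forces at least one of the two new pointed lines to miss $z$ in this sense yields $\wp(\mls T_{\mathrm{good}})\ge 1-2A-2\tfrac{q^2(q+1)^2}{q^{n+1}-q}$, and a single averaging over the projection to the first pair produces the required $z$. The crucial difference from your hybrid is that the paper uses \emph{both} reference lines simultaneously to constrain the third, giving two incidence conditions in $P_2$; your one-line hybrid gives only one, which is why you were forced to reach for the unavailable assumption on $L$.
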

\begin{proof}
For $((L_1, y_1, y_2), (L_2, y_3, y_4))\in \mls S$ with 
$$
Sp (f(y_1), f(y_2))\cap Sp (f(y_3), f(y_4)) = z
$$
the number of triples $(L_3, y_5, y_6)\in \mls L_x^{(2)}$ for which the intersections
\begin{equation}\label{E:0.5.1}
Sp (f(y_5), f(y_6))\cap Sp (f(y_1), f(y_2)), \ \ Sp (f(y_5), f(y_6))\cap Sp (f(y_3), f(y_4))
\end{equation}
consist of single points not equal to $z$ can be bounded as follows.  For each $a_1\in Sp (f(y_1), f(y_2))$ and $a_2\in Sp (f(y_3), f(y_4))$ there exists a unique line $L_{a_1, a_2}\subset P_2$ through $a_1$ and $a_2$, and there are $(q+1)q$ pairs of ordered points $(w_5, w_6)$ on this line.
Now if $(L_3, y_5, y_6)\in \mls L_x^{(2)}$ is such that the intersections \eqref{E:0.5.1} consist of single points not equal to $z$, then we must have $(y_5, y_6) = (f^{-1}(w_5), f^{-1}(w_6))$ for some such pair $(w_5, w_6)$ (with $a_1$ and $a_2$ the two respective intersections). Since $L$ is determined by $(y_5, y_6)$ this shows that the number of such triples $(L_3, y_5, y_6)$ is bounded by $q(q+1)$ for a given $(a_1, a_2)$.   

Let $\mls T\subset (\mls L_x^{(2)})^4$ be the set of collections of data 
\begin{equation}\label{E:0.6}
\{(L_1, y_1, y_2), (L_2, y_3, y_4), (L_1', y_1', y_2'), (L_2', y_3', y_4')\}\in (\mls L_x^{(2)})^4
\end{equation}
for which any pair of elements in this set is in $\mls S$.  We then get from the preceding discussion that if $\mls T_{\mathrm{bad}}\subset \mls T$ denotes the subset of elements for which
$$
Sp (f(y_1), f(y_2))\cap Sp (f(y_3), f(y_4)) \neq Sp (f(y_1'), f(y_2'))\cap Sp (f(y_3'), f(y_4'))
$$
then
$$
\#\mls T_{\mathrm{bad}}\leq 2|\mls S|\cdot q^2(q+1)^2\cdot \#(\mls L_x^{(2)}).
$$
Setting 
$$
\mls T_{\mathrm{good}}:= \mls T-\mls T_{\mathrm{bad}}
$$
we find that
$$
\wp _{(\mls L_x^{(2)})^4}(\mls T_{\mathrm{good}})\geq \wp _{(\mls L_x^{(2)})^4}(\mls S^2)-2\frac{q^2(q+1)^2}{\#\mls L_x^{(2)}}.
$$
Now by \cref{L:0.4} we have
$$
\wp _{(\mls L_x^{(2)})^4}(\mls S^2)\geq 1-2A(q, n, \epsilon ),
$$
and since
$$
\#\mls L_x^{(2)} = q^{n+1}-q
$$
we find that 
\begin{equation}\label{E:0.8}
\wp _{(\mls L_x^{(2)})^4}(\mls T_{\mathrm{good}})\geq 1-2A(q, n, \epsilon )-2\frac{q^2(q+1)^2}{q^{n+1}-q}.
\end{equation}

Let 
$$
t:(\mls L_x^{(2)})^4\rightarrow (\mls L_x^{(2)})^2
$$
be the projection sending \eqref{E:0.5} to the pair
$$
((L_1, y_1, y_2), (L_2, y_3, y_4)).
$$
From the inequality \eqref{E:0.8} we then find that there exists
$$
((L_1^g, y_1^g, y_2^g), (L^g_2, y^g_3, y^g_4))\in \mls S
$$
such that
$$
\wp _{(\mls L_x)^2}(\mls T_{\mathrm{good}}\cap t^{-1}(((L_1^g, y_1^g, y_2^g), (L^g_2, y^g_3, y^g_4))))\geq 1-2A(q, n, \epsilon )-2\frac{q^2(q+1)^2}{q^{n+1}-q}.
$$
Let $z$ denote the point
$$
z:= Sp (f(y_1^g), f(y_2^g))\cap Sp (f(y_3^g), f(y_4^g)).
$$
Then
$$
\mls T_{\mathrm{good}}\cap t^{-1}(((L_1^g, y_1^g, y_2^g), (L^g_2, y^g_3, y^g_4)))\subset \mls S_z
$$
and therefore we have found $z$ as in the lemma.
\end{proof}

\begin{cor}\label{C:0.8} If 
\begin{equation}\label{E:0.9}
A(q, n, \epsilon )+2\frac{q^2(q+1)^2}{q^{n+1}-q}<\frac{1}{2}
\end{equation}
then there exists a unique point $z\in P_2$ such that
$$
\wp _{(\mls L_x^{(2)})^2}(\mls S_z)\geq \frac{1}{2}.
$$
\end{cor}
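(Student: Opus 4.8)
The plan is to deduce the corollary as a short formal consequence of \cref{L:0.7}, which carries all of the combinatorial weight, together with the elementary observation that the sets $\mls S_z$ partition $\mls S$. For existence, \cref{L:0.7} already produces a point $z_0\in P_2$ with
$$
\wp_{(\mls L_x^{(2)})^2}(\mls S_{z_0})\ \geq\ 1-2A(q,n,\epsilon)-2\frac{q^2(q+1)^2}{q^{n+1}-q},
$$
and a short manipulation of the numerical hypothesis \eqref{E:0.9} shows this lower bound is at least $\tfrac12$; hence $z_0$ already satisfies the conclusion.

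For uniqueness I would argue as follows. By the very definition of $\mls S$, each element $((L_1,y_1,y_2),(L_2,y_3,y_4))\in\mls S$ determines a \emph{single} point of $P_2$, namely $Sp(f(y_1),f(y_2))\cap Sp(f(y_3),f(y_4))$; therefore the subsets $\mls S_z\subset\mls S$, for $z\in P_2$, are pairwise disjoint and $\bigsqcup_{z\in P_2}\mls S_z=\mls S$. Moreover $\mls S$ is a \emph{proper} subset of $(\mls L_x^{(2)})^2$: for instance the datum $((L,y_1,y_2),(L,y_1,y_2))$ with $L\in\mls L_x$ and $y_1\neq y_2$ does not lie in $\mls S$, since $Sp(y_1,y_1)$ is a point rather than a line of $\mls L_x^f$. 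Consequently
$$
\sum_{z\in P_2}\wp_{(\mls L_x^{(2)})^2}(\mls S_z)\ =\ \wp_{(\mls L_x^{(2)})^2}(\mls S)\ <\ 1,
$$
so no two distinct points $z$ can simultaneously satisfy $\wp_{(\mls L_x^{(2)})^2}(\mls S_z)\geq\tfrac12$. Together with the existence statement, this is exactly the assertion of the corollary.

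I do not anticipate any real difficulty: once \cref{L:0.7} is in hand there is essentially nothing left to do. The only points that need a moment's care are verifying that inequality \eqref{E:0.9} genuinely forces the lower bound of \cref{L:0.7} past $\tfrac12$ (this is where the precise shape of that hypothesis is used), and recording that $\mls S\subsetneq(\mls L_x^{(2)})^2$, which is what rules out the degenerate possibility of two fibers $\mls S_z$ each having measure exactly $\tfrac12$.
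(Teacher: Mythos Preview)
Your proposal is correct and follows the same approach as the paper: existence from \cref{L:0.7}, uniqueness from the disjointness of the $\mls S_z$. The only difference is that you add the observation $\mls S\subsetneq(\mls L_x^{(2)})^2$ to exclude the possibility of two fibers each having measure exactly $\tfrac12$; the paper omits this, implicitly using that the \emph{strict} inequality in the hypothesis together with \cref{L:0.7} already forces $\wp_{(\mls L_x^{(2)})^2}(\mls S_{z_0})>\tfrac12$ strictly, so disjointness alone gives $\wp_{(\mls L_x^{(2)})^2}(\mls S_{z})<\tfrac12$ for every $z\neq z_0$.
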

\begin{proof}
Indeed this follows from \cref{L:0.7} and the fact that the $\mls S_z$'s are disjoint.
\end{proof}

\begin{assume} Assume for the rest of the discussion that the inequality \eqref{E:0.9} holds.
\end{assume}

\begin{pg}\label{P:2.10}  We define a map
$$
f':P_1\rightarrow P_2
$$
by sending $x\in P_1$ to the point $z\in P_2$ given by \cref{C:0.8}.
\end{pg}

Let $P_1^{f=f'}\subset P_1$ be the set of points $x$ for which $f(x) = f'(x)$.  

\begin{lem}\label{L:0.11}
$$
\wp _{P_1}(P_1^{f=f'})\geq 1-2\epsilon -2A(q, n, \epsilon )- 2\frac{q^2(q+1)^2}{q^{n+1}-q}.
$$
\end{lem}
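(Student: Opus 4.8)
The plan is to average over the auxiliary data $D=((L_1,y_1,y_2),(L_2,y_3,y_4))\in(\mls L_x^{(2)})^2$ used to construct $f'(x)$, exploiting the fact that whenever the two chosen lines $L_1,L_2$ through $x$ are themselves carried to lines by $f$, the construction returns $f(x)$ itself. First I would record two facts valid for each fixed $x$. Since the point $z$ furnished by \cref{L:0.7} has $\wp_{(\mls L_x^{(2)})^2}(\mls S_z)\ge\tfrac12$, the uniqueness in \cref{C:0.8} identifies it with $f'(x)$, so
$$\wp_{(\mls L_x^{(2)})^2}\big(\mls S_{f'(x)}\big)\ \ge\ 1-2A(q,n,\epsilon)-2\tfrac{q^2(q+1)^2}{q^{n+1}-q}.$$
Second --- the geometric core --- I claim that
$$\mls S\ \cap\ \big\{D:\ L_1,L_2\in\mls L_{P_1}^f\big\}\ \subset\ \mls S_{f(x)}.$$
Indeed, if $D\in\mls S$ and $L_1\in\mls L_{P_1}^f$, then $L_1=Sp(y_1,y_2)$, and since $f$ is injective $Sp(f(y_1),f(y_2))$ is the line $f(L_1)$, which contains $f(x)$; likewise $Sp(f(y_3),f(y_4))=f(L_2)\ni f(x)$ if $L_2\in\mls L_{P_1}^f$; and as $D\in\mls S$ these two lines have a single common point, necessarily $f(x)$. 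I would also note $\mls S_{f'(x)}\subset\mls S$ and that the sets $\mls S_z$, $z\in P_2$, are pairwise disjoint (every $D\in\mls S$ has a well-defined intersection point).

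Next I would combine these over $x\in P_1$. For $x\in P_1$ put $g(x):=\wp_{(\mls L_x^{(2)})^2}(\mls S_{f'(x)})$, and let $h(x)$ be the proportion of $D\in(\mls L_x^{(2)})^2$ with $L_1\notin\mls L_{P_1}^f$ or $L_2\notin\mls L_{P_1}^f$. If $g(x)>h(x)$ then $\mls S_{f'(x)}$ meets $\{D:\ L_1,L_2\in\mls L_{P_1}^f\}$, so by the two displays (and $\mls S_{f'(x)}\subset\mls S$) some $D$ lies in $\mls S_{f'(x)}\cap\mls S_{f(x)}$, and disjointness forces $f(x)=f'(x)$. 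Hence $f(x)\ne f'(x)$ implies $g(x)\le h(x)$, so the indicator of the event $f(x)\ne f'(x)$ is at most $1-g(x)+h(x)$ for every $x$. Averaging over $x\in P_1$ gives that $\wp_{P_1}(P_1\setminus P_1^{f=f'})$ is at most the average of $1-g(x)$ plus the average of $h(x)$. The average of $1-g(x)$ is at most $2A(q,n,\epsilon)+2\tfrac{q^2(q+1)^2}{q^{n+1}-q}$ by the first display, uniformly in $x$. For the average of $h(x)$ I would use the standard point--line incidence count: sampling $x$ uniformly in $P_1$ and then $L_1$ uniformly among lines through $x$ yields the uniform distribution on $\mls L_{P_1}$ (projection of the incidence set to lines is $(q+1)$-to-$1$, and $P_1$ has $q+1$ times as many point--line flags as lines), so the average of $\wp_{(\mls L_x^{(2)})^2}(L_1\notin\mls L_{P_1}^f)$ equals $\wp_{\mls L_{P_1}}(\mls L_{P_1}\setminus\mls L_{P_1}^f)\le\epsilon$ by \cref{ass:frac}, and the same for $L_2$; hence the average of $h(x)$ is at most $2\epsilon$. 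Altogether $\wp_{P_1}(P_1^{f=f'})\ge 1-2\epsilon-2A(q,n,\epsilon)-2\tfrac{q^2(q+1)^2}{q^{n+1}-q}$.

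I expect the only real obstacle to be the inclusion $\mls S\cap\{L_1,L_2\in\mls L_{P_1}^f\}\subset\mls S_{f(x)}$: one must be certain that when the chosen lines are both carried to lines, the recipe for $f'(x)$ --- which a priori only sees the image lines $Sp(f(y_i),f(y_j))$ and their common point --- returns $f(x)$ and nothing else, and it is exactly the membership $D\in\mls S$ (a genuine unique intersection) that excludes a spurious point. The rest is bookkeeping: distinguishing the quantities that are independent of $x$ (controlled fiberwise) from those that must be averaged, and the incidence count that turns the hypothesis \cref{ass:frac} into the bound $2\epsilon$ on the average of $h$.
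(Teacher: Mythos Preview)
Your proof is correct and follows essentially the same route as the paper. Both arguments work on the total incidence set of pairs $(x,D)$ with $D\in(\mls L_x^{(2)})^2$, split the bad locus into the ``$L_1$ or $L_2$ not taken to a line'' part (contributing $2\epsilon$ via the line-incidence averaging) and the ``$D\notin\mls S_{f'(x)}$'' part (contributing $2A+2q^2(q+1)^2/(q^{n+1}-q)$ via \cref{L:0.7}), and observe that on the complement one has $f(x)=f'(x)$. Your fiberwise phrasing via the indicator bound $\mathbf 1_{f(x)\ne f'(x)}\le 1-g(x)+h(x)$ is just a repackaging of the paper's union bound on the total space; in fact you are a bit more careful than the paper in isolating why the intersection point must equal $f(x)$, correctly using membership in $\mls S$ to guarantee uniqueness before concluding $\mls S_{f'(x)}\cap\mls S_{f(x)}\ne\emptyset\Rightarrow f(x)=f'(x)$.
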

\begin{proof}
Let $(\mls L_-^{(2)})^2$ denote the set of collections
\begin{equation}\label{E:10}
(x, ((L_1, y_1, y_2), (L_2, y_3, y_4))),
\end{equation}
where $x\in P_1$ and $((L_1, y_1, y_2), (L_2, y_3, y_4))\in (\mls L_x^{(2)})^2.$  We have two maps
$$
F, F':(\mls L_-^{(2)})^2\rightarrow P_2
$$
given by
$$
F((x, ((L_1, y_1, y_2), (L_2, y_3, y_4)))) = x
$$
and
$$
F' ((x, ((L_1, y_1, y_2), (L_2, y_3, y_4)))) = f'(x),
$$
and it suffices to calculate the proportion of elements for which $F = F'$ since the map
$$
(\mls L_x^{(2)})^2\rightarrow P_1
$$
given by $x$ is surjective with constant fiber size.

Let $(\mls L_-^{(2)})^2_{F-\mathrm{good}}\subset (\mls L ^{(2)}_-)^2$ denote the subset of collections for which $f$ takes the lines
$$
Sp (y_1, y_2), \ \ Sp (y_3, y_4)
$$
to lines in $P_2$.  For collections in $(\mls L_-^{(2)})^2_{F-\mathrm{good}}$ we then have
$$
f(x) = Sp (f(y_1), f(y_2))\cap Sp (f(y_3), f(y_4)).
$$

Define $(\mls L_-^{(2)})^2_{F'-\mathrm{good}}\subset (\mls L ^{(2)}_-)^2$ to be the subset of collections for which 
$$
((L_1, y_1, y_2), (L_2, y_3, y_4))\in \mls S_{f'(x)}\subset (\mls L_x^{(2)})^2.
$$

We have 
$$
\wp _{(\mls L^{(2)}_-)^2}((\mls L^{(2)}_-)^2_{F-\mathrm{good}})\geq 1-2\epsilon .
$$
Indeed for $(i, j)$ equal to $(1,2)$ or $(3,4)$ the map 
$$
(\mls L^{(2)}_-)^2\rightarrow \mls L_{P_1}
$$
is surjective with constant fiber size, and therefore the proportion of collections \eqref{E:10} for which $f$ does not take the line $Sp (y_i, y_j)$ to a line in $P_2$ is less than or equal to $\epsilon .$ Therefore the proportion of collections \eqref{E:10} for which one of $f(Sp (y_1, y_2))$ and $f(Sp (y_3, y_4))$ is not a line is less than or equal to $2\epsilon $.

For a collection \eqref{E:10} in $(\mls L_-^{(2)})^2_{F-\mathrm{good}}$ we have
$$
f'(x) = Sp (f(y_1), f(y_2))\cap Sp (f(y_3), f(y_4)).
$$
Now by \cref{L:0.7} we have
$$
\wp _{(\mls L_-^{(2)})^2}((\mls L_-^{(2)})^2_{F-\mathrm{good}})\geq 1-2A(q, n, \epsilon )- 2\frac{q^2(q+1)^2}{(q^{n+1}-q)}.
$$

To get the lemma note that $F = F'$ on 
$$
(\mls L_-^{(2)})^2_{F-\mathrm{good}}\cap (\mls L_-^{(2)})^2_{F'-\mathrm{good}}
$$
and by the preceding observations we have
$$
\wp _{(\mls L_-^{(2)})^2}((\mls L_-^{(2)})^2_{F-\mathrm{good}}\cap (\mls L_-^{(2)})^2_{F'-\mathrm{good}})\geq 1-2\epsilon -2A(q, n, \epsilon )-2 \frac{q^2(q+1)^2}{(q^{n+1}-q)}.
$$
\end{proof}

\begin{pg}\label{P:0.12} Fix a point $x\in P_1$.  Let us calculate a lower bound for the proportion of elements $(L, y_1, y_2)\in \mls L_x^{(2)}$ for which the following conditions hold:
\begin{enumerate}
    \item [(i)] $f(y_1) = f'(y_1)$ and $f(y_2) = f'(y_2)$.
    \item [(ii)] $(L, y_1, y_2)\in \mls L_x^{(2)}$ is in the image of the projection map
    $$
   \chi: \mls S_{f'(x)}\rightarrow \mls L_x^{(2)}
    $$
    sending $((L_1, y_1, y_2), (L_2, y_3, y_4))$ to $(L_1, y_1, y_2)$.
\end{enumerate}
For $j=1,2$ the map
$$
\mls L_x^{(2)}\rightarrow P_1-\{x\}, \  \ (L, y_1, y_2) \mapsto y_j
$$
is surjective with constant fiber size $q-1.$ It follows from this and \cref{L:0.11} that  the number of $(L, y_1, y_2)$ for which (i) fails is bounded above by
$$
2(q-1)\cdot \frac{\#P_1}{\#P_1-1}\cdot \left(2\epsilon + 2A(q, n, \epsilon )+2\frac{q^2(q+1)^2}{q^{n+1}-q}\right).
$$

As for condition (ii), note that the complement of the image of $\chi$ is at most of size
$$
\#\mls L_x^{(2)} - \frac{\#\mls S_{f'(x)}}{\#\mls L_x^{(2)}}.
$$
Therefore using \cref{L:0.7} we find that the proportion of elements of $\mls L_x^{(2)}$ for which (i) or (ii) fails is bounded above by
 $B(q, n, \epsilon )$.

Now if $(L, y_1, y_2)\in \mls L_x^{(2)}$ satisfies (i) and (ii), then it follows that $f'(x), f'(y_1), f'(y_2)\in P_2$ are collinear.  We summarize this in the following lemma:
\end{pg}
\begin{lem}\label{lem:cheese} We have
$$
\wp _{\mls L_x^{(2)}}\left(\left\{(L, y_1, y_2)\in \mls L_x^{(2)}|(f'(x), f'(y_1), f'(y_2))\text{\rm \  are collinear}\right\}\right)\geq 1-B(q, n, \epsilon ).
$$
\end{lem}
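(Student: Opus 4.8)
The plan is to read \cref{lem:cheese} off the estimates already assembled in \cref{P:0.12}, supplemented by a single short incidence observation; nothing new is needed analytically, and the only real content beyond bookkeeping is explaining why conditions (i) and (ii) of \cref{P:0.12} force the desired collinearity.

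First I would isolate that geometric implication. Fix $x\in P_1$ and suppose a triple $(L,y_1,y_2)\in\mls L_x^{(2)}$ satisfies conditions (i) and (ii) of \cref{P:0.12}. Condition (ii) furnishes $(L_2,y_3,y_4)$ with $((L,y_1,y_2),(L_2,y_3,y_4))\in\mls S_{f'(x)}$, and by the very definition of $\mls S_z$ this means $Sp(f(y_1),f(y_2))\cap Sp(f(y_3),f(y_4))=f'(x)$; in particular $f'(x)\in Sp(f(y_1),f(y_2))$. Condition (i) says $f(y_1)=f'(y_1)$ and $f(y_2)=f'(y_2)$, and since $y_1\ne y_2$ and $f$ is injective these are distinct points, so $Sp(f(y_1),f(y_2))=Sp(f'(y_1),f'(y_2))$ is an honest line in $P_2$ through $f'(x)$. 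Thus $f'(x)$, $f'(y_1)$, $f'(y_2)$ are collinear.

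Second I would invoke the bound established in \cref{P:0.12}: the proportion of $(L,y_1,y_2)\in\mls L_x^{(2)}$ for which (i) or (ii) fails is at most $B(q,n,\epsilon)$. (Recall that this bound was itself obtained by combining \cref{L:0.11}, which controls the locus where (i) fails via the surjective constant-fiber coordinate maps $(L,y_1,y_2)\mapsto y_j$, with \cref{L:0.7} and \cref{C:0.8}, which control the complement of the image of the projection $\chi\colon\mls S_{f'(x)}\to\mls L_x^{(2)}$, i.e.\ the locus where (ii) fails.) Since the set of triples on which $(f'(x),f'(y_1),f'(y_2))$ are collinear contains the set on which (i) and (ii) both hold, its complement in $\mls L_x^{(2)}$ has proportion at most $B(q,n,\epsilon)$, which is exactly the asserted inequality.

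I do not anticipate a genuine obstacle: the lemma is a summary of \cref{P:0.12}, and the only substantive ingredient is the incidence argument of the second paragraph. The one point meriting a moment's care is that $f'(y_1)\ne f'(y_2)$ so that the word ``collinear'' is unambiguous, but this is immediate from injectivity of $f$ and $y_1\ne y_2$ together with (i); apart from that, the only thing to get right is matching the two summands of $B(q,n,\epsilon)$ to conditions (i) and (ii) respectively, precisely as done in \cref{P:0.12}.
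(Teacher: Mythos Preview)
Your proposal is correct and is precisely the paper's approach: the paper's proof is the single line ``This follows from the preceding discussion,'' and you have faithfully unpacked \cref{P:0.12}, including the key incidence observation that (i) and (ii) together force $f'(x)\in Sp(f(y_1),f(y_2))=Sp(f'(y_1),f'(y_2))$. The only minor remark is that \cref{C:0.8} is not strictly needed for bounding the failure of (ii) (that is \cref{L:0.7} alone); \cref{C:0.8} is rather what makes $f'(x)$ well-defined in the first place, but this does no harm to your argument.
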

\begin{proof}
This follows from the preceding discussion.
\end{proof}
\begin{pg}
We will use \cref{lem:cheese} to show that $f'$ takes lines to lines and that $f'$ is injective.  For this we will use Desargues's theorem, which is a consequence of Pappus's axiom, and the notion of Desargues configurations.

Recall that a \emph{Desargues configuration\/} is a collection of $10$ points and $10$ lines such that any line contains exactly three of the points and exactly three lines pass through each point.  

Desargues theorem can be stated as follows.  Consider two collections of three points $\{A, B, C\}$ and $\{D, E, F\}$, usually thought of as the vertices of two triangles, and consider the $9$ lines
$$
\{AB, AC, BC, DE, DF, EF, AD, BE, CF\}.
$$
\begin{thm}[Desargues]\label{thm:desargues}
If the three lines $AD$, $BE$, and $CF$ meet in a common point $G$ then the three intersection points
$$
H:= AB\cap DE, \ I:= AC\cap DF, \ J:= BC\cap EF,
$$
are collinear, and conversely if these three points are collinear then the lines $AD$, $BE$, and $CF$ meet at a common point.  
\end{thm}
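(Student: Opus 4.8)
The statement is the classical theorem of Desargues, and since we work inside a projective space of dimension $n>3$ over a field there is ample ambient room to run the usual synthetic proof; here is how I would proceed. The first observation is that the perspectivity hypothesis already confines the whole configuration to a subspace of dimension at most three: the points $D,E,F$ lie on the lines $\overline{GA},\overline{GB},\overline{GC}$, so all ten points lie in $\langle A,B,C,G\rangle$. Hence there are exactly two cases. In the \emph{spatial} case the two triangles span a three-dimensional subspace, so their planes $\pi_1=\langle A,B,C\rangle$ and $\pi_2=\langle D,E,F\rangle$ are distinct planes inside a $3$-space and therefore meet in a line $\ell$; in the \emph{planar} case both triangles lie in a common plane $\pi$. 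One may assume throughout that both triangles are non-degenerate, as otherwise the assertion is trivial or vacuous.

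The spatial case is immediate: since $\overline{AD}$ and $\overline{BE}$ meet at $G$, the four points $A,B,D,E$ are coplanar, so $\overline{AB}\subset\pi_1$ and $\overline{DE}\subset\pi_2$ meet, necessarily in $H$, and then $H\in\pi_1\cap\pi_2=\ell$; likewise $I,J\in\ell$, so $H,I,J$ are collinear. For the planar case I would reduce to the spatial case by the standard lifting trick: using $n\ge 3$, choose a point $O\notin\pi$ and a point $G'$ on $\overline{OG}$ with $G'\ne O,G$ (possible since every line has $q+1\ge 3$ points), and set $A^*=\overline{OD}\cap\overline{G'A}$ — these two lines are coplanar, lying in the plane spanned by the concurrent lines $\overline{OG}$ and $\overline{AD}$ — and similarly $B^*,C^*$. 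Then $A^*B^*C^*$ is perspective to $ABC$ from $G'$ and to $DEF$ from $O$, and for an admissible generic choice of $O,G'$ it is a genuine triangle spanning a plane $\pi^*\ne\pi$. Applying the spatial case to $(A^*B^*C^*,ABC)$ and to $(A^*B^*C^*,DEF)$ places all six side-intersection points on the single line $\pi^*\cap\pi$; and for each side the two relevant intersection points coincide — for instance $\overline{A^*B^*}\cap\overline{AB}$ and $\overline{A^*B^*}\cap\overline{DE}$ are both equal to the unique point $\overline{A^*B^*}\cap\pi$, which then lies on $\overline{AB}$ and on $\overline{DE}$ and hence equals $H$. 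Thus $H,I,J$ lie on $\pi^*\cap\pi$.

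For the converse, assuming $H,I,J$ lie on a line $m$, I would set $G:=\overline{BE}\cap\overline{CF}$ (this meets, since $B,C,E,F$ are coplanar once $J$ exists) and apply the already-proved forward direction to the two triangles $HBE$ and $ICF$: these are perspective from the point $J$, because the three lines $\overline{HI}=m$, $\overline{BC}$, and $\overline{EF}$ all pass through $J$. The forward theorem then gives that $\overline{HB}\cap\overline{IC}=\overline{AB}\cap\overline{AC}=A$, that $\overline{HE}\cap\overline{IF}=\overline{DE}\cap\overline{DF}=D$, and $\overline{BE}\cap\overline{CF}=G$ are collinear, i.e. $G\in\overline{AD}$, so $\overline{AD},\overline{BE},\overline{CF}$ are concurrent at $G$. (Alternatively the converse follows formally from the forward direction by projective duality in $\mathbf{P}^n$.)

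The one genuinely fiddly point — and the step I expect to be the main obstacle — is the bookkeeping of degenerate configurations in the planar case: one must confirm that $O$ and $G'$ can be chosen so that $A^*,B^*,C^*$ are honestly the vertices of a triangle off $\pi$ and all the perspectivities invoked are non-degenerate, i.e. that the bounded set of bad choices does not exhaust the lines and planes involved. This is harmless given $n>3$, but it requires care over the finite base fields in play. If one prefers to avoid the synthetic argument altogether, one may instead note that Pappus's theorem holds in $\mathbf{P}^n$ over any field and invoke Hessenberg's theorem that Pappus implies Desargues.
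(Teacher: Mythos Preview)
The paper does not supply a proof of this statement: Desargues's theorem is quoted as a classical result, with only the remark that it ``is a consequence of Pappus's axiom,'' and is then used as a black box in the probabilistic argument. Your proposal is a correct and standard synthetic proof --- the spatial case via the intersection line of the two triangle planes, the planar case via the lifting trick to a point off the plane, and the converse via the dual-triangle application of the forward direction --- and your closing remark about Hessenberg's theorem is exactly the justification the paper gestures at. The only caveat is the one you already flag: over a very small finite field one must check that the auxiliary choices of $O$ and $G'$ can be made so that $A^*,B^*,C^*$ form a genuine non-degenerate triangle; in the paper's setting this is harmless since the ambient dimension $n>3$ and each line has at least three points, but if you want a proof robust to all degenerations the cleanest route is simply to invoke that Desargues holds in $\mathbf{P}^n$ over any field (e.g.\ via coordinates, or via Hessenberg), which is all the paper needs.
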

In other words, the ten points and ten lines obtained in this way form a Desargues configuration.
\end{pg}

\begin{pg} To show that $f'$ takes lines to lines, it therefore suffices to show that for any three collinear points $(x, y, t)$ there exists a Desargues configuration as above with $(H, I, J) = (x, y, t)$ such that $f'$ takes all the lines other than $Sp (x, y)$ to lines in $P_2$. For then, by Desargues's theorem, it follows that $(f'(x), f'(y), f'(t))$ are collinear. We will produce such a Desargues configuration using basic linear algebra.  We fix the collinear points $\{x,y,t\}$ in what follows.

\begin{notation}
Let $V_1$ be an $\mathbf{F}$-vector space with $\mathbb{P}V_1 = P_1$, and choose vectors $a, b\in V_1$ such that $(x, y, t)$ is given by the three elements $(a, b, a-b)\in V_1$.
\end{notation}

\begin{construction}\label{cons:desargues}
For $c, d\in V_1$, consider the ordered set of five elements $\{0, a, b, c, d\}$. Let $\mls P(c,d)$ denote the set of points of $P_1$ given by the  differences of two elements
$$
\mls P(c,d) := \{[a], [b], [c], [d], [b-a], [c-a], [d-a], [c-b], [d-b], [c-d]\},
$$
and let $\mls M(c,d)$ denote the set of lines obtained by taking for each subset of three elements $T\subset \{0, a, b, c, d\}$ the linear span $L_T$ of differences of elements of $T$. 
\end{construction}

\begin{lem}\label{lem:construction works}
As long as the set of four elements $\{a, b, c, d\}$ are linearly independent the ten points and ten lines $(\mls P(c,d), \mls M(c,d))$ form a Desargues configuration.
\end{lem}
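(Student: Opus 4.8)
The plan is to reduce everything to a single linear-algebra observation and then read off the combinatorics. The observation is that linear independence of $\{a,b,c,d\}$ in $V_1$ is equivalent to \emph{affine} independence of the five vectors $\{0,a,b,c,d\}$: any relation $\sum_{i}\lambda_i v_i=0$ with $\sum_i\lambda_i=0$ among them forces all $\lambda_i=0$. I would state this first, since every assertion of the lemma — that the ten ``points'' are distinct points of $P_1$, that the ten ``lines'' are distinct lines, and that the incidences are as claimed — follows formally from it.

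Next I would check the points and lines are well-defined and pairwise distinct. For a two-element subset $\{u,v\}\subset\{0,a,b,c,d\}$ the difference $u-v$ is nonzero, so $[u-v]\in P_1$; for two distinct such pairs the corresponding differences are linearly independent, since a proportionality relation would be a nontrivial affine dependence among at most four of the $v_i$. Hence $\mls P(c,d)$ consists of ten distinct points. For a three-element subset $T$, the differences of elements of $T$ span a $2$-dimensional subspace $E_T\subset V_1$ (the three elements of $T$ being affinely independent), so $L_T=\P(E_T)$ is a line; and for $T\neq T'$ the union $T\cup T'$ contains at least four affinely independent points, so $E_{T\cup T'}$ has dimension at least $3$ and therefore $E_T\neq E_{T'}$. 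Thus $\mls M(c,d)$ consists of ten distinct lines.

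The heart of the proof is the incidence relation, which I would establish in the form: $[u-v]\in L_T$ if and only if $\{u,v\}\subseteq T$. The ``if'' direction is immediate from the definition of $L_T$. For the converse, if $u-v\in E_T$ one writes $u-v=\sum_{w\in T}\mu_w w$ with $\sum_{w\in T}\mu_w=0$; comparing with the trivial expression $u-v=1\cdot u+(-1)\cdot v$ yields an affine dependence among the vectors in $T\cup\{u,v\}$, and by affine independence of $\{0,a,b,c,d\}$ all of its coefficients vanish — in particular the coefficient of $u$ (resp.\ of $v$), which equals $-1$ unless $u$ (resp.\ $v$) already lies in $T$. Hence $u,v\in T$. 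From this the combinatorics are forced: each line $L_T$ contains exactly the $\binom{3}{2}=3$ points $[u-v]$ with $\{u,v\}\subseteq T$, and each point $[u-v]$ lies on exactly the $3$ lines $L_T$ with $T=\{u,v\}\cup\{w\}$, $w\notin\{u,v\}$. This is precisely the incidence structure of the Desargues configuration, in its standard realization as the two-element and three-element subsets of a five-element set under inclusion; so $(\mls P(c,d),\mls M(c,d))$ is a Desargues configuration.

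I do not anticipate any real obstacle. The only points needing care are bookkeeping in the incidence computation — making sure ``an affine dependence must be trivial'' is invoked against the correct five-element set — and the remark that the abstract $2$-and-$3$-subsets-of-a-$5$-set configuration is the classical Desargues configuration and not some other $(10_3)$ configuration, which is standard (its automorphism group is $S_5$, and it is the configuration governed by Desargues's theorem via the perspective-triangle labelling in which the perspective center is one distinguished point and the axis of perspectivity is the line opposite to it).
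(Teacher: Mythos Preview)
Your proof is correct and is exactly the ``routine linear algebra'' the paper alludes to without writing out; the key reduction to affine independence of $\{0,a,b,c,d\}$ and the incidence characterization $[u-v]\in L_T\iff\{u,v\}\subseteq T$ are the right moves. One tiny slip: in the converse incidence argument the coefficient of $u$ in the affine relation is $+1$ (not $-1$) when $u\notin T$, but this does not affect the argument.
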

\begin{proof}
  The proof is routine linear algebra.
\end{proof}
\numberwithin{figure}{subsection}
\begin{figure}[h]
\centering
\begin{tikzpicture}[scale=2.4,extended line/.style={shorten >=-#1,shorten <=-#1},
 extended line/.default=10cm]
\tikzstyle{point1}=[rectangle, rounded corners, draw=black, fill=white, inner sep=0.1cm]
\tikzstyle{point2}=[rectangle, rounded corners, draw=black, fill=white, inner sep=0.1cm]
\tikzstyle{point3}=[rectangle, rounded corners, draw=black, fill=white, inner sep=0.1cm]
\tikzstyle{point4}=[rectangle, rounded corners, draw=black, fill=white, inner sep=0.1cm]

\clip (-2,-2) rectangle (2,2);

\node (v1) at (0.34543252989244577, 0.5466875671256334){};
\node (v2) at (0.8512750753177714, 1.0222830630821858){};
\node (v3) at (0.8358378312029926, 1.4325584053335327){};
\node (v4) at (0.31921946180154753, 0.925824367101671){};
\node (v5) at (-0.34631273273706786, -0.10369452159182507){};
\node (v6) at (0.06662528527597934, 0.043048624343445765){};
\node (v7) at (0.4298226520721323, -0.6739021311822386){};
\node (v8) at (0.8772809231861233, 0.33112614055155554){};
\node (v9) at (-1.3385524400144113, 0.6252796273788392){};
\node (v10) at (-0.14827942467965932, 0.467270020902192){};

\fill[fill=gray!10] (v4.center) -- (v7.center) -- (v9.center) -- cycle;
\fill[fill=gray!40] (v3.center) -- (v6.center) -- (v8.center) -- cycle;
\draw [ultra thick, extended line] (v2) -- (v5);
\draw [dotted, thick, extended line] (v3) -- (v6);
\draw [dotted, thick, extended line] (v4) -- (v7);
\draw [dashed] (v3) -- (v8);
\draw [dashed, extended line] (v2) -- (v9);
\draw [extended line] (v3) -- (v10);
\draw [dash dot, thick, extended line] (v5) -- (v8);
\draw [dash dot, thick, extended line] (v7) -- (v9);
\draw [extended line] (v7) -- (v10);
\draw [extended line] (v9) -- (v8);

\node (w1) (w1) at (0.34543252989244577, 0.5466875671256334) [point1] {$x$};
\node (w2) at (0.8512750753177714, 1.0222830630821858) [point1] {$y$};
\node (w3) at (0.8358378312029926, 1.4325584053335327) [point2] {$c$};
\node (w4) at (0.31921946180154753, 0.925824367101671) [point3] {$d$};
\node (w5) at (-0.34631273273706786, -0.10369452159182507) [point1] {$t$};
\node (w6) at (0.06662528527597934, 0.043048624343445765) [point2] {$c-a$};
\node (w7) at (0.4298226520721323, -0.6739021311822386) [point3] {$d-a$};
\node (w8) at (0.8772809231861233, 0.33112614055155554) [point4] {$c-b$};
\node (w9) at (-1.3385524400144113, 0.6252796273788392) [point4] {$d-b$};
\node (w10) at (-0.14827942467965932, 0.467270020902192) [point4] {$c-d$};
\end{tikzpicture}
\caption{Construction \cref{cons:desargues}}
\label{fig:desargues-sawin}
\end{figure}
\cref{fig:desargues-sawin} shows a typical configuration generated by \cref{cons:desargues} (on a true set of randomly generated data). The bold line shows the collinear points $x$,$y$, and $t$, together with the auxiliary points given by the choices of $c$ and $d$. Some of the lines naturally come in pairs, corresponding to the construction of the map $f'$ in \cref{P:2.10}. (For example, the dotted line connecting $x$ to $d$ and $d-a$ and the dotted line connecting $x$ to $c$ and $c-a$ serve to define $f'(x)$, under the assumption that those two lines are mapped to lines under $f$.) The remaining solid lines complete the Desargues configuration. The two perspective triangles are shaded in gray. The center of perspectivity lies at $c-d$, and the axis of perspectivity is the line spanned by $x$, $y$, and $t$.

\begin{notation}
Let $W\subset V_1^{\times 2}$ be the subset of pairs $(c, d)$ such that the following conditions hold:
\begin{enumerate}
    \item [(i)] The set of ten lines and ten points $(\mls P(c,d), \mls M(c,d))$ of \cref{cons:desargues} is a Desargues configuration.
    \item [(ii)] For all $P\in \mls P(c,d)$ not in $\{x, y, t\}$ we have $f(P) = f'(P)$.
    \item [(iii)] The map $f'$ takes every line in $\mls M(c,d)\setminus\{Sp(x,y)\}$ to a line in $P_2$.
\end{enumerate}  
\end{notation}

We can find a lower bound for the size of $W$ as follows. Recall the function $B$ from \eqref{E:Bdef}.

\begin{prop}\label{prop:lower bound on W} 
We have
$$
\wp _{V_1^{\times 2}}(W)\geq 1-9B(q, n, \epsilon )-q^{-n+3}.
$$
In particular, if 
$$
9B(q, n, \epsilon )+q^{-n+3}<1
$$
then $W\neq \emptyset .$
\end{prop}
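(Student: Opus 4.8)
The plan is to bound, for a uniformly random $(c,d)\in V_1^{\times 2}$, the probability that each of the three defining conditions (i), (ii), (iii) of $W$ fails, and to add the three bounds. Condition (i) is the easy one: by \cref{lem:construction works} it holds exactly when $\{a,b,c,d\}$ is linearly independent, and since $a,b$ are fixed and independent, the failure locus is contained in $\{c\in\langle a,b\rangle\}\cup\{d\in\langle a,b,c\rangle\}$ (here $\langle\,\cdot\,\rangle$ denotes linear span in $V_1$), which has proportion at most $q^{-(n-1)}+q^{-(n-2)}$.

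The heart of the argument is conditions (ii) and (iii). The key structural point is that the combinatorial type of the Desargues configuration does not depend on $(c,d)$: the nine lines in $\mls M(c,d)\setminus\{Sp(x,y)\}$ are the $L_T$ indexed by the three-element subsets $T\subset\{0,a,b,c,d\}$ with $T\neq\{0,a,b\}$, each carrying exactly three of the ten points of $\mls P(c,d)$, at most one of which lies in $\{x,y,t\}$ (since $x,y,t$ are collinear on the axis $L_{\{0,a,b\}}$). For each such $L_T$ I would single out a \emph{base point} $w_T$ on $L_T$: when $L_T$ meets $\{x,y,t\}$ I take $w_T$ to be that intersection point; for the three lines $L_{\{0,c,d\}},L_{\{a,c,d\}},L_{\{b,c,d\}}$ through the center $G=[c-d]$ I take $w_T$ to be one of the two points of $\mls P(c,d)$ on $L_T$ other than $G$. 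With these choices every point of $\mls P(c,d)\setminus\{x,y,t\}$ occurs as a non-base point of at least one $L_T$; hence if (i) holds and, for each of the nine $L_T$, $f$ agrees with $f'$ on the non-base points of $L_T$ and $f'$ carries the three points of $\mls P(c,d)$ on $L_T$ to collinear points, then (ii) and (iii) both hold. So it suffices to bound, for each $L_T$, the probability of the complementary event, and to sum the nine bounds together with the bound for $(i)^c$.

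To bound the $L_T$-event I would convert it into an instance of \cref{lem:cheese} by a substitution. In each case, after the obvious linear change of the free variables, the assignment sending $(c,d)$ to the pair of points of $\mls P(c,d)$ on $L_T$ other than $w_T$ restricts to a bijection, measure-preserving for the uniform measures, from the complement of a small ``degenerate'' locus onto $\mls L_{w_T}^{(2)}$ — for instance $c\mapsto([c],[c-a])$ when $T=\{0,a,c\}$ and $w_T=x=[a]$ (a bijection $V_1\setminus\langle a\rangle$ onto $\mls L_{x}^{(2)}$), and, for fixed $c$, $d\mapsto([d],[c-d])$ when $T=\{0,c,d\}$ and $w_T=[c]$ (a bijection $V_1\setminus\langle c\rangle$ onto $\mls L_{[c]}^{(2)}$). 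Crucially, in every case the degenerate locus (where the substitution fails to give a valid element of $\mls L_{w_T}^{(2)}$, e.g.\ $c\in\langle a\rangle$ or $d\in\langle c\rangle$) is itself contained in $(i)^c$, since such degeneracy forces a linear dependence among $\{a,b,c,d\}$. Thus the $L_T$-event lies in $(i)^c$ together with the preimage under the substitution of the ``bad'' subset of $\mls L_{w_T}^{(2)}$. Now \cref{lem:cheese} — or more precisely the discussion in \cref{P:0.12} producing it, which shows that a proportion at least $1-B(q,n,\epsilon)$ of $(L,y_1,y_2)\in\mls L_{w_T}^{(2)}$ satisfy simultaneously $f(y_1)=f'(y_1)$, $f(y_2)=f'(y_2)$, and collinearity of $(f'(w_T),f'(y_1),f'(y_2))$ — shows that bad subset has proportion at most $B(q,n,\epsilon)$, and by the measure-preserving bijection its preimage contributes at most $B(q,n,\epsilon)$ to the probability. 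Summing over the nine lines,
\[
\wp_{V_1^{\times 2}}(W^c)\le q^{-(n-1)}+q^{-(n-2)}+9B(q,n,\epsilon)=q^{-n}(q+q^{2})+9B(q,n,\epsilon)\le q^{-n+3}+9B(q,n,\epsilon),
\]
the last inequality because $q+q^2\le q^3$ for every $q\ge 2$. The final clause follows at once: if $9B(q,n,\epsilon)+q^{-n+3}<1$ then $\wp_{V_1^{\times 2}}(W)=1-\wp_{V_1^{\times 2}}(W^c)>0$, so $W\neq\emptyset$.

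The main obstacle is the bookkeeping behind the middle two paragraphs: exhibiting, for each of the nine subsets $T$, a base point $w_T$ and a linear substitution that simultaneously (a) realizes the pair of remaining configuration points as a uniformly distributed point of $\mls L_{w_T}^{(2)}$ off a degenerate locus, (b) has that degenerate locus absorbed by $(i)^c$, and (c) together with the other eight choices forces all of the equalities $f=f'$ demanded by (ii). Verifying (a)--(c) case by case is elementary linear algebra with $\{0,a,b,c,d\}$; once it is in place, the rest is a term-by-term application of \cref{lem:cheese} and the arithmetic comparison of error terms above. One should also record the harmless point that on $(i)$ the ten lines $L_T$ are pairwise distinct, so the indexing of the nine events by subsets $T$ is unambiguous.
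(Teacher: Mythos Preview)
Your argument is correct and follows essentially the same route as the paper: isolate the linear-independence failure (condition (i)), then for each of the nine lines $L_T$ pull back the $B(q,n,\epsilon)$ bound from \cref{P:0.12} via an equidistribution argument, and take a union bound. The paper phrases the equidistribution as ``$\pi_T$ is surjective with constant fiber size'' onto $\mls L_z^{(2)}$ (for the six $T$ meeting $\{0,1,2\}$ twice) or onto $\mls L^{(2)}$ (for the three $T$ meeting $\{0,1,2\}$ once), whereas you spell out explicit measure-preserving substitutions and, for the three lines through the center $[c-d]$, condition on one variable so that the varying basepoint $w_T=[c]$ (or $[c-a]$, $[c-b]$) becomes fixed before invoking the uniform bound from \cref{P:0.12}. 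Your coverage check---that every point of $\mls P(c,d)\setminus\{x,y,t\}$ occurs as a non-base point of some $L_T$---is exactly what underlies the paper's assertion that $\bigcap_T X^T = W$, and your arithmetic $q^{-(n-1)}+q^{-(n-2)}\le q^{-n+3}$ matches the paper's absorption of the $X^c$ term.
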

\begin{proof}
Let $X\subset V_1^{\times 2}$ be the subset of pairs $(c, d)$ for which $\{a, b, c, d\}$ are linearly independent.  Letting $X^c$ denotes the complement of $X$ in $V_1^{\times 2}$, note that 
$$
\# X^c \leq q^{n+4}.
$$
For a subset $T\subset \{0, 1, 2, 3, 4\}$ of size $3$ not equal to $\{0, 1, 2\}$ we have a map
$$
\pi _T:X\rightarrow \mls L^{(2)}
$$
sending $(c, d)$ to the linear span of differences of elements of $T\subset \{0,1,2,3,4\} \simeq \{0, a, b, c, d\}$, with the two points defined by the differences of the first and last element, and second and last element of $T$.

If $T$ meets $\{0,1,2\}$ in two elements, so that one of the points $\{x, y, t\}$ lies on $\pi _T(c, d)$ for any $(c, d)$, then $\pi _T$ surjects onto $\mls L_z^{(2)}$ for some $z\in \{x, y, t\}$ with constant fiber size, and if $T$ meets $\{0, 1, 2\}$ in one element then $\pi _T$ is surjective onto $\mls L^{(2)}$ with constant fiber size.  

Combining this with \cref{L:0.11} and the discussion in \cref{P:0.12}  it follows that if $X^T\subset X$ is the set of pairs $(c, d)$ such that the linear span of differences of elements of $T$ is taken to a line in $P_2$ under $f'$ and such that $f' = f$ on the  points of this line corresponding to elements of $\mls P-\{x, y, t\}$, then
$$
\wp _X(X^T)\geq 1-B(q, n, \epsilon ).
$$

Note that 
$$
\cap _{T}X^T=W,
$$
and since there are nine choices of $T$ we find that
$$
\wp _X(W)\geq 1-9B(q, n, \epsilon ).
$$
Combining this with our estimate for $X^c$ we find that
$$
\wp _{V_1^{\times 2}}(W)\geq 1-9B(q, n, \epsilon )-q^{-n+3},
$$
as desired.
\end{proof}

\end{pg}

\begin{pg}\label{Para:end of proof}
We are now ready to give the proof of \cref{T:2.18}.

\begin{proof}[Proof of \cref{T:2.18}]
We let $f'$ be the map defined in \cref{P:2.10}. We refer in this proof to the diagram in \cref{fig:desargues-sawin}.

Assuming the inequality of \cref{prop:lower bound on W}, we can choose $(c, d)\in W$, and let $(\mls P, \mls M)=(\mls P(c,d),\mls M(c,d))$ be the resulting Desargues configuration. We have that all the lines in $\mls M$, except possibly for $(x, y, t)$, are taken to lines in $P_2$ under $f'$. Thus, in \cref{fig:desargues-sawin}, the dotted, dashed, dot-dashed, and non-bold solid lines are all taken to lines under $f'$. On the other hand, the images of the dotted lines intersect at $f'(x)$, the images of the dashed lines intersect at $f'(y)$, and the images of the dot-dashed lines intersect at $f'(t)$. By Desargues theorem, $f'(x)$, $f'(y)$, and $f'(t)$ are collinear and distinct, lying on the axis of perspectivity for the image Desargues configuration. Note that this also implies that $f'$ is injective, 
\end{proof}
\end{pg}



\section{Divisorial structures and definable linear systems}

\subsection{Divisorial structures}
\label{sec:torelli-structure}

In this section we introduce the key structure that will ultimately be the
subject of our main reconstruction theorem. Recall that, given a Zariski
topological space $Z$, an \emph{effective divisor\/} is a formal finite sum
$\sum a_i x_i$, where each $x_i\in X$ is a point of codimension $1$ and each
$a_i$ is positive. We denote the set of effective divisors on $Z$ by  $\Eff(Z)$.
When $X$ is a scheme, we will write $\Eff(X)$ for $\Eff(|X|)$.

\begin{defn} An \emph{absolute variety} is a scheme $X$ such that the following conditions hold:
\begin{enumerate}
    \item [(i)] $X$ is integral and $\kappa _X:= \Gamma (X, \mls O_X)$ is a field.
    \item [(ii)] The canonical morphism $X\rightarrow \Sp (\kappa _X)$ is separated, of finite type, and has integral geometric fibers. 
\end{enumerate}
An absolute variety is \emph{polarizable} if $X$ admits an ample invertible sheaf.
\end{defn}

\begin{rem} In what follows we refer to $\kappa _X$ as the \emph{constant field} of $X$.
\end{rem}





\begin{defn}\label{defn:div-prop}
  A normal separated $k$-scheme $X$ is \emph{divisorially proper over $k$\/} if
  for any reflexive sheaf $L$ of rank $1$ we have that $\Gamma(X,L)$
  is finite-dimensional over $k$. 
\end{defn}

\begin{lem}\label{lem:div-prop}
  If a $k$-scheme $X$ is normal, separated, and divisorially proper over $k$ 
and $U\subset
  X$ is an open subscheme such that $\codim(X\setminus U\subset X)\geq 2$ at
  every point, then $U$ is also divisorially proper over $k$. 
\end{lem}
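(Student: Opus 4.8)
The plan is to reduce divisorial properness of $U$ to that of $X$, by showing that every rank $1$ reflexive sheaf on $U$ extends across the complement of codimension $\geq 2$ to a rank $1$ reflexive sheaf on $X$ having the same space of global sections. First I would reduce to the case that $X$ is integral: a normal scheme is the disjoint union of its connected components, each of which is integral and normal, and the codimension hypothesis on $X\setminus U$ is inherited componentwise; since rank and global sections are computed componentwise, this loses nothing. So assume $X$ integral, write $j\colon U\hookrightarrow X$ for the open immersion, and let $L$ be a reflexive sheaf of rank $1$ on $U$; the goal is to bound $\dim_k\Gamma(U,L)$.

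The key step is to realize $L$ as a divisorial sheaf and extend it. Since $U$ is normal and integral, $L\cong\mathcal O_U(D')$ for some Weil divisor $D'$ on $U$. Because $X\setminus U$ contains no point of codimension $\leq 1$, taking Zariski closures sets up a bijection between prime divisors of $U$ and prime divisors of $X$; let $D$ be the Weil divisor on $X$ corresponding to $D'$ under this bijection, and $\mathcal O_X(D)$ the associated rank $1$ reflexive $\mathcal O_X$-module. I claim $\mathcal O_X(D)=j_*\mathcal O_U(D')$ as subsheaves of the constant sheaf $\mathcal K_X$ of rational functions: over any open $W\subseteq X$ both are $\{f\in\mathcal K_X(W): \operatorname{div}_W(f)+D|_W\geq 0\}$, and this inequality is tested only at the codimension $1$ points of $W$, all of which lie in $U$. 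Taking $W=X$ then gives $\Gamma(U,L)\cong\Gamma(U,\mathcal O_U(D'))=\Gamma(X,j_*\mathcal O_U(D'))=\Gamma(X,\mathcal O_X(D))$, which is finite-dimensional over $k$ because $X$ is divisorially proper over $k$.

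I expect the only delicate point to be the standard dictionary between rank $1$ reflexive sheaves and Weil divisor classes on a normal integral scheme, together with its compatibility with restriction to a big open subset; granting the identification $\mathcal O_X(D)=j_*\mathcal O_U(D')$, the argument is purely formal. A sheaf-theoretic variant sidesteps divisors altogether: one checks directly that $j_*$ carries coherent reflexive sheaves on $U$ to coherent reflexive sheaves on $X$ (using that on a normal scheme ``reflexive'' is equivalent to Serre's condition $S_2$, which propagates across a closed subset of codimension $\geq 2$), and then $\Gamma(X,j_*L)=\Gamma(U,L)$ by the definition of pushforward; in that formulation the one nontrivial input is the coherence of $j_*L$, which is exactly where normality and the codimension hypothesis enter.
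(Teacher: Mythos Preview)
Your proof is correct and follows essentially the same approach as the paper's: extend the rank $1$ reflexive sheaf $L$ on $U$ to one on $X$ and then identify global sections via the codimension-$2$ extension property on a normal scheme. The paper simply asserts the extension exists and invokes ``Krull's theorem'' for the section comparison, whereas you make both steps explicit via the Weil-divisor dictionary $L\cong\mathcal O_U(D')$ and the identification $\mathcal O_X(D)=j_*\mathcal O_U(D')$; your sheaf-theoretic variant is likewise a standard packaging of the same idea.
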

\begin{proof}
  Any reflexive sheaf $L$ of rank $1$ on $U$ is the restriction of a reflexive
  sheaf $L'$ of rank $1$ on $X$, and Krull's theorem tells us that the
  restriction map
  $$\Gamma(X,L')\to\Gamma(U,L)$$
  is an isomorphism of $k$-vector spaces.
\end{proof}

\begin{defn}\label{defn:lovely}
  A absolute variety $X$ is \emph{\lovely\/} if it is normal and divisorially
  proper over $\Gamma(X,\ms O_X)$.
\end{defn}

Write $\AbVar$ for the category whose objects are absolute varieties and whose
morphisms are open immersions $f:X\to Y$ such that $Y\setminus f(X)$ has
codimension at least $2$ in $Y$ at every point. We will write
$\Definable\subset\AbVar$ for the full subcategory of definable schemes.

\begin{defn}
  A \emph{divisorial structure\/} is a pair $(Z,\Lambda)$ with
  $Z$ a Zariski 
  topological space and $\Lambda$ a congruence relation on the monoid 
  $\Eff(Z)$.
\end{defn}

\begin{defn}[Restriction of a divisorial structure]\label{defn:restriction-tor}
  Suppose $t:=(Z,\Lambda)$ is a divisorial structure. Given an open subset
  $U\subset 
  Z$, the \emph{restriction of $t$ to $U$\/}, denoted $t|_U$, is the divisorial
  structure 
  $(U,\Lambda_{\Eff(U)})$, where $\Lambda_{\Eff(U)}$ is the induced relation on
  the quotient monoid $\Eff(X)\surj\Eff(U)$. 
\end{defn}
In other words, if we let $\Eff(X)\to Q$ denote the quotient by $\Lambda$, we
define the congruence relation on $\Eff(U)$ by forming the pushout
$$
\begin{tikzcd}
  \Eff(X)\ar[r]\ar[d] & Q\ar[d] \\
  \Eff(U)\ar[r] & Q_U
\end{tikzcd}
$$
in the category of commutative integral monoids. 

Alternatively, recall that the condition that an equivalence relation $\Lambda 
\subset \Eff (Z)\times \Eff (Z)$ is a congruence relation is equivalent to the 
condition that $\Lambda $ is a submonoid.   The congruence relation on $\Eff 
(U)$ induced by $\Lambda $ is simply the image of $\Lambda $ under the 
surjective map
$$
\Eff (Z)\times \Eff (Z)\rightarrow \Eff (U)\times \Eff (U).
$$

\begin{defn}[Morphisms of divisorial structures]
  A \emph{morphism of divisorial structures\/}
    $$(Z,\Lambda)\to(Z',\Lambda')$$
    is an open immersion of topological spaces $f:Z\to Z'$ such that
    $$\Eff(f):\Eff(Z)\to\Eff(Z')$$ is a bijection and 
  $$(\Eff(f)\times\Eff(f))(\Lambda)=\Lambda'.$$
\end{defn}

\begin{notation}
  We will write $\Torelli$ for the category of divisorial structures.
\end{notation}

\begin{defn}\label{defn:torelli}
  The \emph{divisorial structure\/} of an integral scheme $X$ is the
  pair $$\tau(X):=(|X|, \Lambda_X),$$ where $|X|$ is the underlying Zariski
  topological space of $X$ and $$\Lambda_X\subset\Eff(X)\times\Eff(X)$$ is
  the  rational equivalence relation on effective divisors.
\end{defn}

\begin{remark}
The divisorial structure of an integral scheme $X$ can be obtained from the 
data of the triple
$$
(|X|, \Cl (X), c:X^{(1)}\to \Cl (X)).
$$
Indeed by the universal property of a free monoid on a set giving the map $c$ 
is equivalent to giving a map of monoids 
$$
\Eff (X)\rightarrow \Cl (X),
$$
and the congruence relation defined by this map is precisely the equivalence 
relation given by rational equivalence.  Conversely, from the equivalence 
relation on $\Eff (X)$ we obtain the class group as the group associated to the 
quotient of $\Eff (X)$ by the congruence relation and the map $c$ is induced by 
the natural map $X^{(1)}\to \Eff (X)$.
\end{remark}

Formation of the divisorial
structures defines a diagram of categories
\begin{equation}\label{eq:functors}
  \Definable\subset\AbVar\xlongrightarrow{\tau}\Torelli
\end{equation}

The main result of this paper is the following.
\begin{thm}\label{thm:main-func}
  The functor $\tau|_{\Definable}$ is fully faithful.
\end{thm}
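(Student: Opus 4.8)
\medskip
\noindent\textbf{Proof strategy.}

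\noindent\emph{Reduction.} I would establish full faithfulness by showing that every morphism $\tau(X)\to\tau(Y)$ in $\Torelli$ lifts uniquely to a morphism $X\to Y$ in $\Definable$. Such a morphism is an open immersion $g\colon|X|\hookrightarrow|Y|$ whose complement has codimension $\ge 2$, inducing a bijection $\Eff(g)$ and carrying $\Lambda_X$ to $\Lambda_Y$; in particular it restricts to a bijection $X^{(1)}\to Y^{(1)}$ and induces an isomorphism $\Cl(X)\simeq\Cl(Y)$ compatible with the cycle maps $c$. The open subscheme $U:=g(|X|)\subseteq Y$ is again a definable scheme — normal, separated, of finite type over the field $\Gamma(Y,\mathcal O_Y)$, with integral geometric fibres, and divisorially proper by \cref{lem:div-prop} — and $U\hookrightarrow Y$ is a morphism in $\Definable$, so it suffices to construct a unique isomorphism of $\Z$-schemes $X\to U$ inducing $g$; its uniqueness then also gives faithfulness. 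Replacing $Y$ by $U$, we may assume $g$ is a homeomorphism $|X|\simto|Y|$ identifying the divisorial structures.

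\noindent\emph{Definable linear systems visible to $\tau$.} For an effective class $\delta\in\Cl(X)$ put $V^X_\delta=\Gamma(X,\mathcal O(\delta))$; divisorial properness makes this a finite-dimensional $\kappa_X$-vector space. The set $|\delta|^X=\P(V^X_\delta)$ of effective divisors in the class, and for each closed $Z\subseteq|X|$ the incidence subsystem $V(Z)_\delta=\{E\in|\delta|^X:\Supp E\supseteq Z\}$ (a linear subspace), are determined by $\tau(X)$. The geometric input I would supply is that for a sufficiently positive $\delta$ the incidence-pencils $V(\Supp E_0\cap\Supp E_1)_\delta$, for general members $E_0\ne E_1$, are genuine lines and contain the $k$-points of a dense open subset of $\Gr(1,\P(V^X_\delta))$; the triple $(\kappa_X,V^X_\delta,U^X_\delta)$ with $U^X_\delta$ this family of incidence-lines is then a definable projective space of dimension $\ge 2$ (\cref{defn:definable-structure}), and its whole formation is intrinsic to $\tau(X)$. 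Since $g$ carries $\Supp$ to $\Supp\circ\Eff(g)$ and $V(Z)$ to $V(g(Z))$, the induced bijection $\phi_\delta\colon\P(V^X_\delta)\to\P(V^Y_\delta)$ carries each line of $U^X_\delta$ to the linear subspace $V(g(Z))_\delta$ of $\P(V^Y_\delta)$, and $\phi^{-1}_\delta$ carries lines of $U^Y_\delta$ to linear subspaces.

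\noindent\emph{Fundamental theorems and coherence.} Now apply \cref{T:weak fund thm} to each $\phi_\delta$ (over an algebraically closed constant field one may instead use \cref{thm:definable-proj} directly, incidence systems of the expected dimension then being visibly lines): $\phi_\delta$ agrees on the sweep of $U^X_\delta$, a dense subset of $\P(V^X_\delta)$, with a semilinear isomorphism, i.e.\ there are a field isomorphism $\sigma_\delta\colon\kappa_X\to\kappa_Y$ and a $\sigma_\delta$-semilinear isomorphism $\psi_\delta\colon V^X_\delta\to V^Y_\delta$, unique up to a scalar in $\kappa_Y^\times$, recovering the effect of $g$ on divisors in the class $\delta$. (The \emph{weak} form is what is needed, because over a non-perfect or non-closed constant field $\tau(X)$ does not see the dimension of an incidence subsystem, only that it is linear.) I would then check coherence: $\sigma_\delta$ is independent of $\delta$ — it is pinned down by the action of $g$ on cross-ratios of collinear triples of divisors, and comparing the data for $\delta$, for $n\delta$, and for $\delta+\delta'$ forces the field isomorphisms to agree; write $\sigma$ for the common value. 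For $s\in V^X_\delta$ and $t\in V^X_{\delta'}$ one has $\operatorname{div}(st)=\operatorname{div}(s)+\operatorname{div}(t)$, and since $g$ respects divisors $\psi_{\delta+\delta'}(st)$ and $\psi_\delta(s)\psi_{\delta'}(t)$ are sections of $\mathcal O(\delta+\delta')$ on $Y$ with the same divisor, hence differ by an element of $\kappa_Y^\times$; a normalization/cocycle argument lets one rescale so that the $\psi_\delta$ multiply strictly, the residual ambiguity being a homomorphism $\Cl(X)\to\kappa_Y^\times$ which is invisible in degree zero.

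\noindent\emph{Reconstruction, and the main obstacle.} Finally I would recover the scheme locally: each point of $X$ lies in a distinguished affine open $X_s=X\setminus\Supp(\operatorname{div}(s))$ with $\Gamma(X_s,\mathcal O_X)$ the degree-zero part of the localization of $\bigoplus_{n\ge 0}\Gamma(X,\mathcal O(n\delta_0))$ at $s$ (for a suitable relatively ample $\delta_0$), and the coherent semilinear system induces $\sigma$-semilinear ring isomorphisms $\Gamma(X_s,\mathcal O_X)\simto\Gamma(Y_{\psi(s)},\mathcal O_Y)$ — independent of the residual scalars, which cancel in degree zero — that glue, compatibly with $g$, to an isomorphism of $\Z$-schemes $X\simto Y$ inducing $g$; any such isomorphism restricts on sections to a system of the above kind, which by the uniqueness clauses is the one constructed, so it is unique, giving fullness and faithfulness. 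The main obstacle is the middle two steps: verifying the algebro-geometric fact that incidence-pencils sweep a dense open of the Grassmannian (so that a definable projective-space structure genuinely visible to $\tau$ is available on each complete linear system), and then threading the scalar-ambiguous, a priori class-by-class outputs of \cref{T:weak fund thm} into one coherent semilinear isomorphism of the graded algebra of sections from which the scheme itself — not merely a birational model — is recovered.
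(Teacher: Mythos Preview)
Your overall architecture matches the paper's: reduce to comparing two definable schemes via a homeomorphism respecting divisorial structures, apply the definable fundamental theorem to each sufficiently positive linear system, show the resulting field isomorphisms coincide and the semilinear maps are multiplicatively coherent, and reconstruct the scheme from the resulting graded ring isomorphism. The paper does exactly this in \S\ref{sec:univ-torelli-theor}, and you correctly identify \cref{T:weak fund thm} as the right tool over non-closed constant fields.

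There is, however, one genuine gap that the paper handles and you do not. Your entire construction presupposes the existence of a ``sufficiently positive $\delta$'' (and later ``a suitable relatively ample $\delta_0$''), but a definable scheme need not be polarizable: it might be a proper normal variety admitting no ample divisor. The paper deals with this in \S\ref{SS:4.1} by first passing to an \emph{essential open subset}: using \cref{lem:apbf-always-exists} and \cref{lem:chunka-hunka}, one finds open $U\subset X$ and $V\subset Y$ with complements of codimension $\ge 2$, both factorial and quasi-projective, with $\phi(U)=V$ and an induced isomorphism $\tau(U)\simto\tau(V)$. One then proves the theorem for $U$ and $V$ and extends the resulting scheme isomorphism back to $X$ and $Y$ via \cref{lem:extend-me}. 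Without this reduction your argument does not get off the ground.

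Two further points where the paper is more explicit than your sketch. First, rather than proving coherence for all classes $\delta$, the paper fixes a single very ample $\mathcal O_X(1)$ and works only with its powers $|\mathcal O_X(m)|$; coherence is then the commutativity of the concrete diagram \eqref{eq:diag} of addition maps, and the equality $\sigma_1=\sigma_m$ is established by a direct comparison of cross-ratios on a single definable line (\cref{prop:ample-points}). This avoids your ``cocycle argument'' entirely: one lifts $\gamma_1$ once and gets a graded ring map $S_X\to S_Y$ sending $I_{\overline X}$ to $I_{\overline Y}$. Second, you assert that the resulting scheme isomorphism is ``compatible with $g$'' but do not say why. The paper proves this by showing (\cref{lem:points}) that every regular closed point of $\overline X$ is the intersection of the irreducible divisors in $|\mathcal O_X(m)|$ through it; since $f$ and $\phi$ agree on such divisors, they agree on regular closed points, hence on $|X^{\mathrm{reg}}|$, and \cref{lem:extend-me} finishes. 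Your local reconstruction via $\Gamma(X_s,\mathcal O_X)$ would need a comparable argument to pin down the map on points outside the sweep.
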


The proof of \cref{thm:main-func} will be given in 
\cref{sec:univ-torelli-theor} after some preliminary foundational work.

\subsection{Some remarks on divisors}
\label{sec:some-remarks-divis}

In this section we gather a few facts about divisors on normal
varieties. Our main purpose is to demonstrate that some basic
features of such varieties -- such as the maximal factorial open
subscheme -- can be 
characterized purely in terms of the divisorial structure.

Fix a field $k$. For a normal irreducible separated $k$-scheme $X$ let 
$$
q:\Eff (X)\rightarrow
\overline {\Eff }(X)
$$
denote the quotient monoid given by rational equivalence
of divisors, so that $\overline {\Eff }(X)$ is the image of $\Eff (X)$ in $\Cl
(X)$. Given a divisor $D$ on $X$, upon identifying $|D|$ with the subset of
effective divisors on $X$ that are linearly equivalent to $D$, we have a 
set-theoretic equality
$$
|D| = q^{-1}(q(D)).
$$
In particular, the linear system is defined \emph{as a set\/} by the map $q$.

 There is a reflexive sheaf of rank
$1$ canonically associated to $D$ that we will write $\ms O(D)$. Members of
$|D|$ are in bijection with sections $\ms O\to\ms O(D)$ in the usual way. Recall
that $D$ is Cartier if and only if $\ms O(D)$ is an invertible sheaf on $X$.

\begin{lem}\label{lem:know-opens}
Let $U\subset X$ be an open subscheme.  Then the commutative diagram
$$
\xymatrix{
\Eff (X)\ar[d]\ar[r]& \overline {\Eff }(X)\ar[d]\\
\Eff (U)\ar[r]& \overline {\Eff }(U)}
$$
is a pushout diagram in the category of integral monoids.
\end{lem}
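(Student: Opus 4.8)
The plan is to verify the universal property of the pushout directly, rather than to compute the pushout abstractly. Write $q_X\colon\Eff(X)\to\overline{\Eff}(X)$ and $q_U\colon\Eff(U)\to\overline{\Eff}(U)$ for the quotient maps by rational equivalence, and $r\colon\Eff(X)\to\Eff(U)$ for the restriction homomorphism, which on generators sends a codimension-$1$ point $x$ of $X$ to $x$ itself if $x\in U$ and to $0$ otherwise. Since a point of $U$ has the same local ring in $U$ as in $X$, we have $U^{(1)}\subseteq X^{(1)}$, so $r$ is surjective; moreover $K(U)=K(X)$ and, the relevant local rings agreeing, $\operatorname{div}_X(f)|_U=\operatorname{div}_U(f)$ for every $f\in K(X)^\times$. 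In particular $r$ carries principal divisors on $X$ to principal divisors on $U$, so $q_U\circ r$ is constant on rational equivalence classes and factors as $q_U\circ r=\bar r\circ q_X$ for a unique homomorphism $\bar r\colon\overline{\Eff}(X)\to\overline{\Eff}(U)$; this is precisely the commutativity of the square. Note also that $\overline{\Eff}(U)$, being a submonoid of the group $\Cl(U)$, is integral.

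It then remains to check: for every integral monoid $T$ and every pair of homomorphisms $\alpha\colon\overline{\Eff}(X)\to T$, $\beta\colon\Eff(U)\to T$ with $\alpha\circ q_X=\beta\circ r$, there is a unique $\gamma\colon\overline{\Eff}(U)\to T$ with $\gamma\circ q_U=\beta$ and $\gamma\circ\bar r=\alpha$. Uniqueness is immediate because $q_U$ is surjective. For existence it is enough to show that $\beta$ is constant on rational equivalence classes of $\Eff(U)$, for then $\beta$ factors as $\gamma\circ q_U$, and the identity $\gamma\circ\bar r\circ q_X=\gamma\circ q_U\circ r=\beta\circ r=\alpha\circ q_X$ together with the surjectivity of $q_X$ forces $\gamma\circ\bar r=\alpha$.

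So suppose $E,E'\in\Eff(U)$ satisfy $E-E'=\operatorname{div}_U(f)$ for some $f\in K(U)^\times=K(X)^\times$, and write $\operatorname{div}_X(f)=D_+-D_-$ with $D_+,D_-\in\Eff(X)$ (say, its positive and negative parts). Restricting to $U$ gives $r(D_+)-r(D_-)=\operatorname{div}_U(f)=E-E'$, that is, the equality $E+r(D_-)=E'+r(D_+)$ holds in $\Eff(U)$. Applying $\beta$ and using $\beta\circ r=\alpha\circ q_X$ gives $\beta(E)+\alpha(q_X(D_-))=\beta(E')+\alpha(q_X(D_+))$ in $T$. But $D_+-D_-=\operatorname{div}_X(f)$ is principal, so $q_X(D_+)=q_X(D_-)$ and hence $\alpha(q_X(D_+))=\alpha(q_X(D_-))$; cancelling this element in the integral (hence cancellative) monoid $T$ yields $\beta(E)=\beta(E')$. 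This completes the verification of the universal property, and hence the proof.

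The one genuinely load-bearing step is the last one, and it is worth flagging why the naive approach fails: $E$ and $E'$ need not have disjoint supports, and the rational function $f$ is only manifestly principal on $X$, not visibly so within $\Eff(U)$. The fix is to move the computation to $X$, where $\operatorname{div}_X(f)$ decomposes as a difference of two \emph{effective} divisors that are rationally equivalent on $X$, and then to exploit cancellativity of $T$ to discard the auxiliary term $r(D_-)$. Everything else is formal.
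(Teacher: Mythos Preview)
Your proof is correct and follows essentially the same approach as the paper: both lift $\operatorname{div}_U(f)$ to $\operatorname{div}_X(f)=D_+-D_-$ with $D_\pm$ effective and rationally equivalent on $X$, then use this to show the rational-equivalence congruence on $\Eff(U)$ is induced from that on $\Eff(X)$. Your framing via the universal property and explicit use of cancellativity in $T$ is slightly more detailed than the paper's compressed argument (which asserts that the $\widetilde E_i$ ``map to the $E_i$'' without spelling out the cancellation), but the mathematical content is the same.
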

\begin{proof}
If $E_1, E_2\in \Eff (U)$ are two classes mapping to the same class in 
$\overline {\Eff }(U)$ then there exists a rational function $f\in \overline 
{\Eff }(U)$ such that 
$$
\mathrm{div}_U(f) = E_1-E_2
$$
in $\Div (U)$.    Then $\mathrm{div}_X(f)\in \Div (X)$ maps to 
$\mathrm{div}_U(f)$ in $\Div (U)$, so if we write $\mathrm{div}_X(f)$ as 
$$
\widetilde E_1-\widetilde E_2,
$$
where the divisors $\widetilde E_i$ are effective, then $\widetilde E_i\in \Eff 
(X)$ are rationally equivalent divisors mapping to the $E_i$. This shows that 
the equivalence relation on $\Eff (U)$ given by rational equivalence is the 
image of equivalence relation on $\Eff (X)$ given by the projection $\Eff 
(X)\rightarrow \overline {\Eff }(X)$, which implies the lemma.
\end{proof}

\begin{cor}
  If $X$ is an integral scheme and $U\subset X$ is an open subscheme then the
  divisorial structure $\tau(U)$ is canonically isomorphic to the
  restriction $\tau(X)|_U$ (see \cref{defn:restriction-tor}).
\end{cor}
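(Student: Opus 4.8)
The plan is to unwind both sides into explicit congruence relations on the free monoid $\Eff(U)$ and then to recognize that \cref{lem:know-opens} is precisely the statement that those two relations coincide. First, since $U\subset X$ is open we have $\ms O_{U,u}=\ms O_{X,u}$ for every $u\in U$, so a point of $U$ has codimension $1$ in $U$ exactly when it has codimension $1$ in $X$; hence $U^{(1)}=X^{(1)}\cap U$, and the free commutative monoid $\Eff(U)$ is the same object whether $U$ is regarded as a scheme or as an open subspace of $|X|$. Moreover the restriction map $\Eff(X)\to\Eff(U)$ sends a prime divisor whose generic point lies in $U$ to the corresponding prime divisor on $U$, and every other prime divisor to $0$ (if $x\in X^{(1)}\setminus U$ then $\overline{\{x\}}\cap U$, being open in the irreducible set $\overline{\{x\}}$ and not containing its generic point, is empty); so it is the canonical surjection of free commutative monoids associated with the inclusion of generating sets $X^{(1)}\cap U\subset X^{(1)}$. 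Consequently the topological spaces underlying $\tau(U)$ and $\tau(X)|_U$ are literally equal and the identity induces the identity on $\Eff$, so the whole statement reduces to the equality of two congruence relations on $\Eff(U)$, with the asserted canonical isomorphism being the identity.

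Next I would identify the two relations. By construction $\Eff(X)/\Lambda_X=\overline{\Eff}(X)$ is the image of $\Eff(X)$ in $\Cl(X)$, and $\Lambda_X$ is exactly the kernel pair of $q\colon\Eff(X)\to\overline{\Eff}(X)$ (that is, the relation of having equal image), this being the definition of rational equivalence; likewise $\Lambda_U$ is the kernel pair of $\Eff(U)\to\overline{\Eff}(U)$, so $\tau(U)=(|U|,\Lambda_U)$. On the other hand, by \cref{defn:restriction-tor} and the remark immediately following it, $\tau(X)|_U=(|U|,\Lambda_{\Eff(U)})$, where $\Lambda_{\Eff(U)}$ is the kernel pair of $\Eff(U)\to Q_U$ and $Q_U$ is the pushout, in the category of integral monoids, of the two maps $\overline{\Eff}(X)\xleftarrow{q}\Eff(X)\to\Eff(U)$; note $\Eff(U)\to Q_U$ is surjective since $q$ is.

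Now \cref{lem:know-opens} asserts exactly that the square
$$
\xymatrix{
\Eff (X)\ar[d]\ar[r]^{q}& \overline {\Eff }(X)\ar[d]\\
\Eff (U)\ar[r]& \overline {\Eff }(U)}
$$
is a pushout of integral monoids. Hence the canonical comparison map $Q_U\to\overline{\Eff}(U)$ is an isomorphism compatible with the maps out of $\Eff(U)$, so the kernel pair $\Lambda_{\Eff(U)}$ of $\Eff(U)\to Q_U$ coincides with the kernel pair $\Lambda_U$ of $\Eff(U)\to\overline{\Eff}(U)$. Therefore $\tau(X)|_U=(|U|,\Lambda_{\Eff(U)})=(|U|,\Lambda_U)=\tau(U)$, proving the corollary.

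There is essentially no obstacle here once \cref{lem:know-opens} is granted; the corollary is a formal consequence of unwinding \cref{defn:restriction-tor}. The only point deserving a moment's attention is not to conflate the two descriptions of the restricted congruence (the kernel pair of the pushout quotient versus the image of $\Lambda_X$ along $\Eff(X)^2\to\Eff(U)^2$); these agree because $\Eff(X)\to\Eff(U)$ is surjective, and in any case the pushout description matches the hypothesis of \cref{lem:know-opens} verbatim. All the geometric substance — producing, from a rational function $f$ with $\mathrm{div}_U(f)=E_1-E_2$ on $U$, rationally equivalent effective divisors $\widetilde E_1,\widetilde E_2$ on $X$ restricting to $E_1,E_2$ — has already been carried out in the proof of \cref{lem:know-opens}.
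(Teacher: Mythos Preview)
Your proof is correct and follows the same approach as the paper: both reduce the statement to \cref{lem:know-opens} by identifying the restricted congruence $\Lambda_{\Eff(U)}$ with the rational-equivalence congruence $\Lambda_U$ via the pushout description. Your version simply unpacks in greater detail the preliminary identifications (of $U^{(1)}$, of the restriction map on $\Eff$, and of the kernel-pair formulation) that the paper leaves implicit.
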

\begin{proof}
  The equivalence relation on $\Eff(X)$ is the relation defined by the
  quotient map $\Eff(X)\to\overline{\Eff}(X)$. 
  By \cref{lem:know-opens}, we see that the induced relation on
  $\tau(X)|_U$ is precisely the relation for $\tau(U)$, giving the desired
  result. 
\end{proof}

\begin{defn}\label{defn:cartier}
  Given an excellent scheme $X$, the \emph{Cartier locus\/} of $X$ is the
  largest open subscheme $U\subset X$ that is factorial (i.e., such that every 
Weil divisor on
  $U$ is Cartier).
\end{defn}
\begin{prop}\label{prop:bpf-implies-cartier}
 Let $X$ be a normal irreducible quasi-compact separated scheme and let 
$D\subset X$ be a divisor.
  \begin{enumerate}
  \item If $|D|$ is basepoint free then $D$ is Cartier.
  \item If $D$ is ample then $D$ is $\Q$-Cartier.
  \end{enumerate}
\end{prop}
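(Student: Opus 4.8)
The plan is to deduce both statements from a single local observation and then, for (2), globalise using quasi-compactness. First I would isolate the local input: suppose $E\in|mD|$ is a member of the linear system with $x\notin\Supp(E)$ for a given point $x$. Since $|mD|$ consists of the effective divisors rationally equivalent to $mD$, there is a rational function $f\in K(X)^\times$ with $\mathrm{div}(f)+mD=E$. On the open set $U:=X\setminus\Supp(E)$, which contains $x$, we have $E|_U=0$, so $mD|_U=\mathrm{div}_U(f^{-1})$ is principal; hence $\ms O(mD)|_U\cong\ms O_U$ is invertible, i.e.\ $mD$ is Cartier on $U$. I expect this to be the crux: it is the one place where the set-theoretic/divisorial datum ``some member of $|mD|$ misses $x$'' gets converted into the scheme-theoretic statement that $\ms O(mD)$ is locally free at $x$, and it relies on the fact that $E-mD$ is the divisor of an honest rational function.

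Granting this, part (1) is immediate with $m=1$: basepoint freeness means that for every $x\in X$ there is $E_x\in|D|$ with $x\notin\Supp(E_x)$, so the opens $X\setminus\Supp(E_x)$ cover $X$ and $\ms O(D)$ is invertible on each; as invertibility is local, $\ms O(D)$ is invertible, i.e.\ $D$ is Cartier.

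For part (2) I would run the same local argument with varying $m$ and then produce a uniform multiple. Ampleness of $D$ guarantees that for each $x\in X$ there exist an integer $n_x\geq 1$ and $E\in|n_xD|$ with $x\notin\Supp(E)$; by the local observation, $n_xD$ is Cartier on some open $U_x\ni x$. Since $X$ is quasi-compact, finitely many $U_{x_1},\dots,U_{x_k}$ cover $X$. Taking $N$ to be a common multiple of $n_{x_1},\dots,n_{x_k}$, the divisor $ND$ agrees on $U_{x_i}$ with $(N/n_{x_i})\cdot(n_{x_i}D)$, a positive multiple of a Cartier divisor and hence itself Cartier (a tensor power of an invertible sheaf is invertible); therefore $ND$ is Cartier on $X$, so $D$ is $\Q$-Cartier. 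Beyond the crux step, the only thing to keep track of is this passage from the pointwise $n_x$ to the single $N$, which is routine.
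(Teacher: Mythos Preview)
Your proof is correct and follows essentially the same approach as the paper. The only cosmetic differences are that the paper phrases the local step via the section $s:\ms O\to\ms O(D)$ and invokes reflexivity of $\ms O(D)$ to conclude $s_x$ is an isomorphism, whereas you argue directly that $D|_{X\setminus E}$ is principal; and for (2) the paper condenses your LCM argument into the single remark that quasi-compactness forces $|nD|$ to be basepoint free for some $n$, then appeals to (1).
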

\begin{proof}
  Since $X$ is quasi-compact, if $D$ is ample we know that $|nD|$ is
  basepoint free for some $n$. Thus it suffices to prove the first
  statement. Given a point $x\in X$, choose $E\in |D|$ such that
  $x\not\in E$. This gives some section $s:\ms O\to\ms
  O(D)$. Restricting to the local ring $R=\ms O_{X,x}$, we see that
  $s_x:R\to \ms O(D)_x$ is an isomorphism in codimension $1$ (for
  otherwise $E$ would be supported at $x$). Since $\ms O(D)$ is
  reflexive, it follows that $s_x$ is an isomorphism, whence $\ms
  O(D)$ is invertible in a neighborhood of $x$. Since this holds at
  any $x\in X$, we conclude that $\ms O(D)$ is invertible, as desired.
\end{proof}

\begin{cor}\label{cor:top-fact}
  A normal irreducible separated scheme $X$ is factorial if and only if it is
  covered by open subschemes $U\subset X$ with the property that every divisor
  class on $U$ is basepoint free. 
\end{cor}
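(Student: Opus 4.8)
The plan is to prove both implications by reducing to the affine case and invoking \cref{prop:bpf-implies-cartier}.

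For the ``if'' direction, suppose $X$ is covered by opens $U$ on which every divisor class is basepoint free. I would first reduce to the case where the covering opens are affine: given such a $U$ and an affine open $V\subset U$, any divisor $E$ on $V$ extends to a divisor $\bar E$ on $U$ (take the scheme-theoretic closure of its codimension one points), and for any $x\in V$ a member of $|\bar E|$ not passing through $x$ restricts to a member of $|E|$ on $V$ not passing through $x$; hence every divisor class on $V$ is basepoint free as well. Now each such affine $V$ is normal, irreducible, quasi-compact, and separated, so \cref{prop:bpf-implies-cartier}(1) applies to every divisor on $V$, showing that $V$ is factorial. Since a Weil divisor on $X$ is Cartier if and only if its restriction to each member of an open cover is, and $\ms O_X(D)$ is invertible if and only if its restrictions are, it follows that $X$ is factorial.

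For the ``only if'' direction, suppose $X$ is factorial and cover it by affine opens $U$. Each $U$ is normal and irreducible, and it is again factorial: a Weil divisor $D$ on $U$ has a closure $\bar D$ on $X$, which is Cartier, so $\ms O_U(D)=\ms O_X(\bar D)|_U$ is invertible. Thus $\ms O_U(D)$ is an invertible sheaf on an affine scheme, and the key elementary point is that such a sheaf is generated by global sections in the strong sense that for each $u\in U$ some single global section generates the stalk $\ms O_U(D)_u$. Indeed, writing $\ms O_U(D)=\widetilde M$ with $M$ an invertible $\Gamma(U,\ms O_U)$-module, the localization $M_{\mathfrak p_u}$ is free of rank one, and clearing denominators in a generator produces an element of $M$ generating $M_{\mathfrak p_u}$. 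Such a global section, viewed as an injection $\ms O_U\to\ms O_U(D)$, cuts out an effective divisor in $|D|$ not passing through $u$. Hence $|D|$ is basepoint free, and since $D$ was arbitrary, every divisor class on $U$ is basepoint free, so $X$ is covered by opens of the required kind.

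The argument is essentially formal once \cref{prop:bpf-implies-cartier} is in hand; the only steps requiring a little care are the descent of the basepoint-free hypothesis to smaller (affine) opens in the ``if'' direction and the global generation of line bundles on affine schemes in the ``only if'' direction, and neither of these is a serious obstacle.
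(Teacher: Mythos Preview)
Your proof is correct and follows essentially the same approach as the paper's: for the ``only if'' direction you cover by affines and use that invertible sheaves on affines are globally generated, and for the ``if'' direction you apply \cref{prop:bpf-implies-cartier}(1) locally to conclude every Weil divisor is locally Cartier. The only difference is that you take extra care to pass to affine opens before invoking \cref{prop:bpf-implies-cartier}, so that its quasi-compactness hypothesis is satisfied; the paper's proof skips this step (harmlessly, since quasi-compactness is not actually used in the proof of part (1) of that proposition), but your added care is not misplaced.
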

\begin{proof}
  If $X$ is factorial, then any affine open covering has the desired
  property, since any Cartier divisor on an affine scheme is
  basepoint free. On the other hand, if $X$ admits such a covering,
  then we know that every divisor class
  on $X$ is locally Cartier, whence it is Cartier. 
\end{proof}


\begin{prop}\label{prop:find-sing-locus}
  If $X$ is a normal $k$-variety then we can characterize the Cartier 
  locus of $X$ as the union of all open subsets $U\subset X$ such that
  every divisor class on $U$ is basepoint free. 
\end{prop}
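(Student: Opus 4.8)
The plan is to prove the asserted equality by two inclusions, each of which follows formally from \cref{cor:top-fact}. Since $X$ is a normal $k$-variety it is a finite disjoint union of normal integral $k$-varieties, and both the Cartier locus and the proposed union are compatible with this decomposition, so we may and do assume $X$ is integral. Write $W\subseteq X$ for the Cartier locus (the largest factorial open subscheme, which exists by \cref{defn:cartier} since $X$ is excellent), and write $V\subseteq X$ for the union of all open subschemes $U$ with the property that every divisor class on $U$ is basepoint free. The goal is to show $V=W$.

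For the inclusion $W\subseteq V$: the scheme $W$ is factorial by construction, so \cref{cor:top-fact} exhibits $W$ as a union of open subschemes $U\subseteq W$ on each of which every divisor class is basepoint free. Every such $U$ is one of the opens appearing in the definition of $V$, so $W=\bigcup U\subseteq V$.

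For the inclusion $V\subseteq W$: as an open subscheme of $X$, the scheme $V$ is again normal, integral, and separated, and by its very definition $V$ is covered by open subschemes on each of which every divisor class is basepoint free. Hence \cref{cor:top-fact} applies and shows that $V$ is factorial. Since $W$ is the largest factorial open subscheme of $X$ and $V$ is a factorial open, we get $V\subseteq W$. Combining the two inclusions yields $V=W$, as desired.

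There is no genuine obstacle here: the content of the statement is carried entirely by \cref{prop:bpf-implies-cartier} (basepoint freeness of a complete linear system forces a divisor to be Cartier) and by \cref{cor:top-fact} (the Zariski-local nature of factoriality, phrased in terms of basepoint freeness), and the present proposition merely repackages these facts as an explicit description of the Cartier locus. The only minor points to track are that normality, integrality, and separatedness pass to open subschemes, together with the reduction to the integral case noted at the outset.
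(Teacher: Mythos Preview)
Your proof is correct and follows essentially the same approach as the paper, which simply says the result is an immediate consequence of \cref{cor:top-fact}. You have spelled out the two inclusions that make this immediate consequence explicit; the only addition is the harmless reduction to the integral case at the outset.
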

\begin{proof}
  This is an immediately consequence of \cref{cor:top-fact}.
\end{proof}

The preceding discussion implies that various properties of a scheme $X$ and 
its divisors can be read off from the divisorial structure.  We summarize this 
in the following.
\begin{prop}\label{P:3.8} Let $X$ be a normal separated quasi-compact and 
irreducible scheme and let 
$$
\tau (X) = (|X|, \Lambda _X)
$$
be the associated divisorial structure.  Then
\begin{enumerate}
\item [(i)]  the property that $D\in \Eff (X)$ has basepoint free linear system 
$|D|$ depends only on $\tau (X)$;
\item [(ii)] the property that $X$ is factorial depends only on $\tau (X)$;
\item [(iii)] the Cartier locus of $X$ depends only on $\tau (X)$;
\item [(iv)] the condition that a divisor $D$ is ample depends only on $\tau 
(X)$.
\end{enumerate}
\end{prop}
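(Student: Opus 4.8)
The plan is to observe that in each of (i)--(iv) the stated property can be rephrased entirely in the language of the pair $(|X|,\Lambda_X)$, so that it is automatically transported across any isomorphism of divisorial structures. Two preliminary remarks do most of the work. First, for an effective divisor $E=\sum a_ix_i\in\Eff(X)$ the support $\Supp(E)=\bigcup_i\overline{\{x_i\}}$ is a closed subset of $|X|$ determined by $|X|$ and by $E$ as an element of $\Eff(X)$. Second, identifying $|D|$ with the set of effective divisors rationally equivalent to $D$, we have $|D|=\{E\in\Eff(X):(E,D)\in\Lambda_X\}$, and more generally $|mD|$ is the $\Lambda_X$-class of the $m$-fold sum $D+\cdots+D$ in the commutative monoid $\Eff(X)$, whose structure is determined by $|X|$; so every linear system attached to $D$ is visible in $\tau(X)$.

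Granting this, I would dispatch (i) by noting that $|D|$ is basepoint free exactly when $\bigcap_{E\in|D|}\Supp(E)=\emptyset$ in $|X|$, which is manifestly a condition on $\tau(X)$. For (ii) and (iii) I would invoke \cref{cor:top-fact} and \cref{prop:find-sing-locus}: $X$ is factorial, resp.\ the Cartier locus of $X$ is a certain union, precisely when $|X|$ is covered by, resp.\ one takes the union over, those opens $U\subseteq X$ on which every divisor class has basepoint free linear system. The point to verify is that for a fixed open $U\subseteq|X|$ this last property is determined by $\tau(X)$ together with the subset $U$: the monoid $\Eff(U)$ depends only on $|X|$, and by \cref{lem:know-opens} (equivalently, by the construction of the restricted divisorial structure $\tau(X)|_U=\tau(U)$ in \cref{defn:restriction-tor}) the rational equivalence relation on $\Eff(U)$ is induced from $\Lambda_X$; since a linear system depends only on the divisor's class, ``every divisor class on $U$ is basepoint free'' is the condition that $\bigcap_{E\in|D_U|}\Supp(E)=\emptyset$ in $|U|$ for every $D_U\in\Eff(U)$, which lives in $\tau(U)$. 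Hence the families of open subsets of $|X|$ appearing in \cref{cor:top-fact} and \cref{prop:find-sing-locus} are intrinsic to $\tau(X)$, which gives (ii) and (iii).

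For (iv) I would reduce ampleness of $D$ to a topological criterion: $D$ is ample iff the open sets $|X|\setminus\Supp(E)$, with $E$ ranging over $\bigcup_{m\ge1}|mD|$, form a base for the Zariski topology on $|X|$ (recall from \cref{prop:bpf-implies-cartier} that ampleness already forces some multiple $mD$ to be Cartier, so this is genuinely equivalent to ampleness of the reflexive sheaf $\ms O(D)$ in the usual sense). Since the topology of $|X|$, the supports, and the systems $|mD|$ are all read off from $\tau(X)$ by the discussion above, ampleness of $D$ depends only on $\tau(X)$. The one step I expect to require real care is precisely this reduction: reconciling whatever definition of ``ample Weil divisor'' is in force with the purely topological statement that the complements of the members of $\bigcup_m|mD|$ form a base, and checking that no hidden hypothesis (affineness of those complements, or extra separatedness) is needed beyond what $X$ is already assumed to satisfy. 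All the remaining steps are direct translations using the results proved earlier in this section.
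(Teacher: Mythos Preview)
Your proposal is correct and follows essentially the same approach as the paper: both arguments recover $|D|$ and $|mD|$ as $\Lambda_X$-classes, phrase basepoint freeness as $\bigcap_{E\in|D|}\Supp(E)=\emptyset$, take ampleness to mean that the complements of members of $\bigcup_m|mD|$ form a base for $|X|$, and deduce (ii) and (iii) from \cref{cor:top-fact}, \cref{prop:find-sing-locus}, and \cref{lem:know-opens}. Your caution about (iv) is unnecessary here, as the paper simply takes the base-for-the-topology condition as the working characterization of ampleness.
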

\begin{proof}
Let 
$$
q:\Eff (X)\rightarrow \overline {\Eff }(X)
$$
denote the quotient map defined by $\Lambda _X$, so that for $D\in \Eff (X)$ we 
have $|D| = q^{-1}q(D)$.  The condition that $|D|$ is base point free is the 
statement that for every $x\in |X|$ there exists $E\in |D|$ such that $x\notin 
E$.  Evidently this depends only on $\tau (X)$, proving (i).

Likewise the condition that a divisor $D$ is ample is the statement that the 
open sets defined by elements of $|nD|$ for $n\geq 0$ give a base for the 
topology on $|X|$.  Again this clearly only depends on $\tau (X)$, proving (iv).

Statement (ii) follows from \cref{cor:top-fact}  and 
\cref{lem:know-opens}, which implies that the divisorial structure $\tau (U)$ 
for an open subset $U\subset X$ is determined by $|U|\subset |X|$ and $\tau 
(X)$.

Finally (iii) follows from \cref{prop:find-sing-locus}.
\end{proof}

\begin{pg}
Our proof of \cref{thm:main-func} will ultimately rely on reducing to the
projective case. For the remainder of this section, we record some results about
polarizations that we will need later.

Given a {\lovely} scheme $X$, write $\AmpleBPF(X)\subset\Eff(X)$ for
the (possibly empty) submonoid of ample basepoint free effective divisors $D$
and $\AmpleBPFI(X)\subset\AmpleBPF(X)$ for the submonoid of divisors 
whose associated linear system defines an injective map $\nu_D:X\inj|D|^\vee$.
\end{pg}
\begin{lem}\label{lem:div-prop-cofinal}
  Suppose given two {\lovely}
  schemes $X$ and $Y$ and an isomorphism $\phi:\tau(X)\to\tau(Y)$. If $X$ is
  polarizable and factorial then so is $Y$ and $\phi$ induces a commutative 
diagram of
  monoids 
  $$
  \begin{tikzcd}
    \AmpleBPF(X)\ar[r]\ar[d, "\sim" labl] & \Eff(X)\ar[d, "\sim" labr]\\
    \AmpleBPF(Y)\ar[r] & \Eff(Y)
  \end{tikzcd}
  $$
  in which the vertical arrows are isomorphisms. If the constant fields
  of $X$ and $Y$ are algebraically closed, then the isomorphism 
$\AmpleBPF(X)\rightarrow \AmpleBPF(Y)$ restricts to an isomorphism 
$\AmpleBPFI(X)\rightarrow \AmpleBPFI(Y)$ so we obtain a commutative diagram 
    $$
  \begin{tikzcd}
    \AmpleBPFI(X)\ar[r]\ar[d, "\sim" labl] & \AmpleBPF(X)\ar[d, "\sim" labr]\\
    \AmpleBPFI(Y)\ar[r] & \AmpleBPF(Y)
  \end{tikzcd}
  $$
\end{lem}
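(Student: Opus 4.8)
The plan is to extract from the isomorphism $\phi\colon\tau(X)\to\tau(Y)$ its underlying homeomorphism $|X|\xrightarrow{\sim}|Y|$ together with the monoid isomorphism $\Eff(X)\xrightarrow{\sim}\Eff(Y)$ matching the rational equivalence relations, hence matching the quotient maps $q_X\colon\Eff(X)\to\overline{\Eff}(X)$ with $q_Y$ and so the linear systems $|D|=q_X^{-1}(q_X(D))$, and then to read off everything from \cref{P:3.8}. Since {\lovely} schemes are normal, separated, irreducible, and of finite type over a field, \cref{P:3.8} applies; part (ii) gives that $Y$ is factorial. For polarizability, I would first note $\AmpleBPF(X)\neq\emptyset$: on the Noetherian scheme $X$ an ample invertible sheaf has a globally generated power, which, being invertible with a nonzero global section, equals $\ms O_X(D)$ for an effective ample basepoint-free Cartier divisor $D$. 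By \cref{P:3.8}(i),(iv) the submonoid $\AmpleBPF(X)\subseteq\Eff(X)$ is determined by $\tau(X)$, so $\phi$ restricts to a monoid isomorphism onto the submonoid of $\Eff(Y)$ defined by the same two conditions, namely $\AmpleBPF(Y)$; in particular $\AmpleBPF(Y)\neq\emptyset$, and for any $D'$ in it \cref{prop:bpf-implies-cartier}(1) shows $D'$ is Cartier, whence $\ms O_Y(D')$ is an ample invertible sheaf and $Y$ is polarizable. The first commutative square holds because its vertical arrows are restrictions of $\Eff(f)$.

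For the second part, assume $\kappa_X$ and $\kappa_Y$ are algebraically closed and fix $D\in\AmpleBPF(X)$. Basepoint-freeness makes $\nu_D\colon X\to|D|^\vee$ a morphism of finite-type schemes over an algebraically closed field, and such a morphism is injective if and only if it is injective on closed points: if $f(x_1)=f(x_2)$ with $x_1\notin\overline{\{x_2\}}$, then using Chevalley's theorem and density of closed points one finds a closed point $t_0$ of $Y$ in both $f(\overline{\{x_1\}}\setminus\overline{\{x_2\}})$ and $f(\overline{\{x_2\}})$, and then closed points of $X$ in $f^{-1}(t_0)$ lying in $\overline{\{x_1\}}\setminus\overline{\{x_2\}}$ and in $\overline{\{x_2\}}$ respectively, contradicting injectivity on closed points. (Alternatively $\nu_D$ is affine, since the preimage of a standard affine open of $|D|^\vee$ is the complement of a member of the ample system $|D|$, and one argues with affine morphisms.) By the convention that over an algebraically closed base field the closed points of the image of $\nu_D$ are the hyperplanes $H_x=\{E\in|D|:x\in E\}$, injectivity of $\nu_D$ is thus equivalent to: for every pair of distinct closed points $x\neq y$ of $|X|$ there is some $E\in|D|$ containing exactly one of $x,y$. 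The set of closed points of $|X|$, the incidence relation $x\in\Supp E$ (purely topological: $x$ is a specialization of a prime divisor occurring in $E$), and the set $|D|=q_X^{-1}(q_X(D))$ are all determined by $\tau(X)$, so this property is transported by $\phi$; hence $\phi$ carries $\AmpleBPFI(X)$ bijectively onto $\AmpleBPFI(Y)$, and the second square commutes for the same formal reason as the first. That $\AmpleBPFI$ is a submonoid is clear: if $E\in|D|$ contains $x$ but not $y$, choosing $E'\in|D'|$ not containing $y$ gives $E+E'\in|D+D'|$ containing $x$ but not $y$.

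The only step that is not formal bookkeeping on top of \cref{P:3.8} and the stated conventions is the equivalence, over an algebraically closed field, between injectivity of $\nu_D$ and injectivity on its closed points; I expect that (in the mild form sketched above, or via the affineness remark) to be the main obstacle, with everything else routine.
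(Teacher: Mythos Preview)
Your proof is correct and follows the same approach as the paper's: invoke \cref{P:3.8} for factoriality, polarizability, and the preservation of $\AmpleBPF$, then observe that over an algebraically closed constant field the injectivity of $\nu_D$ is encoded by the distinctness of the hyperplanes $H_x=\{E\in|D|:x\in E\}$ for distinct closed points $x$. The paper's proof is essentially a two-sentence appeal to \cref{P:3.8} and the $H_x$ description; you have supplied more detail, in particular the justification that injectivity of the morphism $\nu_D$ is equivalent to injectivity on closed points, which the paper simply asserts.
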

\begin{proof} Since $X$ is factorial all divisors are Cartier
  divisors.  By \cref{P:3.8}, $Y$ is also
  factorial and polarizable, and the submonoid $\AmpleBPF$ is
  preserved, as claimed. Finally, when $\kappa_X$ and $\kappa_Y$ are
  algebraically closed, one can tell if $\nu_D$ is injective by seeing
  if the sets $H_x=\{E\in|D|:x\in E\}$ are distinct for distinct
  closed points $x$; thus, $\AmpleBPFI(X)$, resp. $\AmpleBPFI(Y)$, is determined
  by $\tau(X)$, resp. $\tau(Y)$. (Note that it is not yet clear if $\kappa_X$ is
  determined by $\tau(X)$. This will be discussed in 
  \cref{sec:fund-theor-defin} and \cref{sec:univ-torelli-theor} below.)
\end{proof}


\begin{defn}
  Suppose $X$ is a {\lovely} scheme. An open subscheme $U\subset X$ will be
  called \emph{essential\/} if $\codim(X\setminus U\subset X)\geq 2$, $U$ is
  factorial, and $U$ is polarizable.
\end{defn}
Note that if $U\subset X$ is essential, then the natural restriction map
$\Eff(X)\to\Eff(U)$ is an isomorphism of monoids.

\begin{lem}\label{lem:apbf-always-exists}
  If $X$ is a normal, separated, quasi-compact $k$-scheme then there is an open
  subscheme $U\subset X$ such that $\codim(X\setminus U\subset X)\geq
  2$ and $U$ is quasi-projective.
  In particular, any {\lovely} scheme $X$ contains an essential open subset
  $U\subset X$.
\end{lem}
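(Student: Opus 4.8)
The plan is to realize $U$ as the isomorphism locus of a quasi-projective modification of $X$, with normality entering only to control the codimension of the locus where that modification is not an isomorphism. Since a quasi-compact normal scheme is a finite disjoint union of integral normal schemes, and a finite disjoint union of quasi-projective schemes is quasi-projective, I would first reduce to the case in which $X$ is integral (and of finite type over $k$, as throughout the paper). Applying Chow's lemma, choose a proper birational morphism $f\colon X'\to X$ from an integral scheme $X'$ that is quasi-projective over $k$. Let $U\subseteq X$ be the union of all opens $V$ over which $f^{-1}(V)\to V$ is an isomorphism; since being an isomorphism is local on the target, $f^{-1}(U)\to U$ is itself an isomorphism, so $U$ is quasi-projective over $k$, being isomorphic to an open subscheme of $X'$. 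It therefore remains to show $\codim(X\setminus U\subset X)\ge 2$, i.e.\ that every codimension-$1$ point of $X$ lies in $U$.

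This last point is the crux, and is where normality is used essentially. Fix $\eta\in X$ of codimension $1$; since $X$ is normal, $R:=\ms O_{X,\eta}$ is a DVR with fraction field $K=K(X)=K(X')$, and the only rings between $R$ and $K$ are $R$ and $K$ itself. Because $f$ is surjective there is a point $\eta'\in X'$ over $\eta$, and for any such point the local homomorphism $\ms O_{X,\eta}\to\ms O_{X',\eta'}$ is the inclusion of subrings of $K$ (using birationality of $f$); as $\ms O_{X',\eta'}$ is not $K$ (else $\eta'$ would be the generic point of $X'$, forcing $\eta$ to be the generic point of $X$), we get $\ms O_{X',\eta'}=R$. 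The valuative criterion of separatedness applied to the DVR $R$ then forces $\eta'$ to be the \emph{unique} point of $X'$ over $\eta$, and the fibre $f^{-1}(\eta)$ is $0$-dimensional at $\eta'$ (its local ring there is $R\otimes_{\ms O_{X,\eta}}\kappa(\eta)=\kappa(\eta)$). By upper semicontinuity of fibre dimension and properness of $f$, there is an open $V\ni\eta$ over which $f$ is quasi-finite, hence finite. Finally, a finite birational morphism onto a normal scheme is an isomorphism — the pushforward structure sheaf is a finite ring extension of $\ms O_V$ inside $K$, hence equal to it by normality — so $f^{-1}(V)\to V$ is an isomorphism and $\eta\in U$. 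I expect the chain ``equality of stalks $\Rightarrow$ finiteness over a neighbourhood $\Rightarrow$ isomorphism there'' to be the only delicate bookkeeping; the surrounding steps are formal.

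For the ``in particular'' clause, a definable scheme $X$ is normal, separated, and of finite type over its constant field $\kappa_X$, hence quasi-compact, so the first part yields a quasi-projective open $U\subseteq X$ with $\codim(X\setminus U\subset X)\ge 2$; being quasi-projective over a field, $U$ carries an ample invertible sheaf, so it is polarizable. To also make it factorial I would intersect with the regular locus $X^{\mathrm{reg}}$, which is open and dense. The scheme $U\cap X^{\mathrm{reg}}$ is still quasi-projective and polarizable, it is regular, hence factorial (regular local rings are UFDs, so every Weil divisor is Cartier), and since $X$ is normal its singular locus has codimension $\ge 2$, whence $\codim\bigl(X\setminus(U\cap X^{\mathrm{reg}})\bigr)\ge 2$. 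Therefore $U\cap X^{\mathrm{reg}}$ is an essential open subscheme of $X$, as required.
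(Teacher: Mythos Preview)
Your proof is correct and follows essentially the same line as the paper's: reduce to the integral case, apply Chow's lemma, and use normality to show the isomorphism locus has complement of codimension at least $2$. The paper compresses your detailed DVR argument into the single sentence ``Since $X$ is normal, $\pi$ is an isomorphism in codimension $1$,'' but your unpacking of this standard fact is accurate. For the ``in particular'' clause, the paper passes to the Cartier locus (the maximal factorial open, \cref{defn:cartier}) rather than the regular locus; both have complement of codimension at least $2$ in a normal scheme, so either choice works.
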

\begin{proof}
  Working one connected component at a time, we may assume that $X$ is
  irreducible. By Chow's lemma, there is a proper birational morphism
  $\pi:\widetilde X\to X$ with $\widetilde X$ quasi-projective. Since $X$ is
  normal, $\pi$ is an isomorphism in codimension $1$. Thus, $\widetilde X$ and
  $X$ have a common open subset $U$ whose complement in $X$ has codimension at
  least $2$, and which is quasi-projective. Passing to the Cartier locus
  yields the second statement.
\end{proof}

\begin{lem}\label{lem:chunka-hunka}
  Suppose $X$ and $Y$ are definable schemes and $\phi:\tau(X)\to\tau(Y)$ is an
  isomorphism of divisorial structures. If $U\subset X$ is an essential open 
subset
  then $\phi(U)\subset Y$ is an essential open subset and there is an induced
  isomorphism $\tau(U)\simto\tau(\phi(U))$.
\end{lem}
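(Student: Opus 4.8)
The proof is essentially bookkeeping, assembling facts already established. First I would record that an isomorphism of divisorial structures has underlying homeomorphism $\phi\colon|X|\to|Y|$: it is an open immersion of topological spaces admitting an inverse open immersion. Since $\phi$ is a bijection, $Y\setminus\phi(U)=\phi(X\setminus U)$, and the codimension of a closed subset is a purely topological invariant of the ambient space, so $\codim(Y\setminus\phi(U)\subset Y)=\codim(X\setminus U\subset X)\ge 2$. Hence $\phi(U)$ is an open subscheme of $Y$ with complement of codimension $\ge 2$, and in particular the restriction map $\Eff(Y)\to\Eff(\phi(U))$ is an isomorphism.

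Next I would produce the induced isomorphism of divisorial structures. The homeomorphism $\phi$ restricts to a homeomorphism $U\to\phi(U)$, hence to a bijection of the sets of codimension-$1$ points, and so to a monoid isomorphism $\Eff(U)\to\Eff(\phi(U))$; since $U$ is essential and $\phi(U)$ has codimension-$\ge 2$ complement, this fits into a commuting square with the isomorphisms $\Eff(X)\simto\Eff(U)$, $\Eff(Y)\simto\Eff(\phi(U))$ and $\Eff(\phi)$. Recalling that $\tau(U)$ is canonically the restriction $\tau(X)|_U$ — whose congruence relation is the image of $\Lambda_X$ in $\Eff(U)\times\Eff(U)$ — and likewise for $\phi(U)$, the identity $(\Eff(\phi)\times\Eff(\phi))(\Lambda_X)=\Lambda_Y$ shows that $\phi$ carries the congruence relation of $\tau(X)|_U$ onto that of $\tau(Y)|_{\phi(U)}$. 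This gives the asserted isomorphism $\tau(U)\simto\tau(\phi(U))$.

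Finally, to see that $\phi(U)$ is factorial and polarizable I would first check that $U$ is a {\lovely} scheme: it is normal and separated, it is a nonempty open subscheme of the definable scheme $X$ (hence integral, with geometrically integral generic fibre over its constant field), its ring of global functions equals $\Gamma(X,\ms O_X)=\kappa_X$ by normality of $X$ together with $\codim(X\setminus U)\ge 2$, and it is divisorially proper over $\kappa_X$ by \cref{lem:div-prop}; the same argument applied to $\phi(U)\subset Y$ shows that $\phi(U)$ is definable. Now apply \cref{lem:div-prop-cofinal} to the isomorphism $\tau(U)\simto\tau(\phi(U))$ constructed above, with $U$ and $\phi(U)$ playing the roles of $X$ and $Y$: since $U$ is polarizable and factorial, so is $\phi(U)$. (Alternatively one can invoke \cref{P:3.8}(ii) for factoriality and, after replacing an ample invertible sheaf on $U$ by a suitable multiple to obtain an effective ample divisor, \cref{P:3.8}(iv) for amplitude on the factorial scheme $\phi(U)$.) Combined with the codimension bound from the first step, this is precisely the statement that $\phi(U)$ is essential. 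The only points that demand any attention are the functoriality of restriction for isomorphisms of divisorial structures and the routine verification that $U$ and $\phi(U)$ are definable; I do not expect a substantive obstacle.
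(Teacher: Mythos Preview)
Your proof is correct and follows essentially the same route as the paper: both arguments use that the underlying homeomorphism preserves codimension, invoke the canonical identification $\tau(U)\cong\tau(X)|_U$ (and likewise for $\phi(U)$) to obtain the induced isomorphism of divisorial structures, and then apply \cref{lem:div-prop-cofinal} to transfer factoriality and polarizability. Your version is somewhat more explicit in verifying that $U$ and $\phi(U)$ are themselves definable schemes so that \cref{lem:div-prop-cofinal} applies, a point the paper leaves implicit.
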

\begin{proof}
  First note that since $\phi$ induces a homeomorphism $|X|\to|Y|$, we have that
  $\codim(X\setminus Y\subset X)=\codim(Y\setminus\phi(U)\subset Y)$. In
  particular, if $U$ is {\lovely} then so is $\phi(U)$ by 
  \cref{lem:div-prop}. 
  By \cref{defn:restriction-tor}, we have isomorphisms
  $$\tau(X)|_U\simto\tau(U)$$
  and
  $$\tau(Y)|_{\phi(U)}\simto\tau(\phi(U)).$$
  On the other hand, $\phi$ induces an isomorphism
  $\tau(X)|_U\simto\tau(Y)|_{\phi(U)}$. The result thus follows from 
  \cref{lem:div-prop-cofinal}. 
\end{proof}

\subsection{Definable subspaces in linear systems}
\label{sec:definable}

Fix a definable absolute variety $X$ with infinite constant field.
 Let  $P:=|D|$ be the 
linear system associated to an effective divisor $D$.

\begin{defn}
  A subspace $V\subset P$ is \emph{definable\/} if there is a 
  subset $Z\subset X$ such that $$V=V(Z):=\{E\in P\ |\ Z\subset E\}.$$
\end{defn}

\begin{remark}
If $Z\subset X$ is a subset and $Z'\subset X$ is the closure of $Z$ then $V(Z) 
= V(Z')$.  When considering definable subspaces it therefore suffices to 
consider subspaces defined by closed subsets.
\end{remark}

\begin{remark} Note that $V(Z)$ is the projective space associated to the 
kernel of the restriction map
$$
H^0(X, \mls O_X(D))\rightarrow H^0(Z_{\text{red}}, \mls 
O_X(D)|_{Z_{\text{red}}}),
$$
where we write $Z_{\text{red}}\subset X$ for the reduced subscheme associated to
the subspace $Z\subset |X|$.
\end{remark}

\begin{pgss}
  \textbf{Algebraically closed constant fields.}
When the constant field $\kappa_X$ is algebraically closed, the structure of definable subspaces is significantly simpler than in the general case.
\end{pgss}

\begin{lem}\label{lem:definable-going-down}
  Assume the constant field $\kappa_X$ is algebraically closed. 
  Suppose $V=V(Z)$ is a non-empty definable subset of a  linear system $P$ on $X$. Then there is an ascending chain of
  closed subsets $$Z=Z_1\subsetneq\cdots\subsetneq Z_n$$ such that the
  induced chain
  $$V(Z)=V(Z_1)\supsetneq \cdots\supsetneq V(Z_n)$$ is a full
  flag of linear subspaces ending in a point.
\end{lem}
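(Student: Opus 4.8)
The plan is to build the chain greedily, point by point. Start with $Z = Z_1$. The key observation is that since $\kappa_X$ is algebraically closed, the closed points of $X$ are in bijection with the hyperplanes $H_x = \{E \in P : x \in E\}$ appearing among the members of $P$, and more generally $V(Z) = \bigcap_{x \in Z} H_x$. So $V(Z_1) = V(Z)$ is the intersection of the hyperplanes $H_x$ over closed points $x \in Z$. If $V(Z_1)$ is not already a point, I claim we can enlarge $Z_1$ to a strictly larger closed set $Z_2$ so that $V(Z_2) \subsetneq V(Z_1)$ has codimension exactly one inside $V(Z_1)$: indeed, since $V(Z_1)$ is not a point, it contains more than one closed point of $P$, hence there is some closed point $x \in X$ with $x \notin \bigcap_{E \in V(Z_1)} \operatorname{Supp}(E)$ — equivalently $H_x \not\supseteq V(Z_1)$ — for otherwise every point of $X$ would lie on every member of $V(Z_1)$, forcing $V(Z_1)$ to have no base-point-free... more carefully, if every closed $x$ satisfied $V(Z_1) \subset H_x$ then each $E \in V(Z_1)$ would contain every closed point of $X$, impossible since $E$ is a proper divisor. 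Pick such an $x$ and set $Z_2 := Z_1 \cup \overline{\{x\}}$ (its closure). Then $V(Z_2) = V(Z_1) \cap H_x$, which is a hyperplane section of $V(Z_1)$, hence of codimension exactly $1$, and it is strictly smaller and still non-empty provided $\dim V(Z_1) \geq 1$.

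Iterating, after finitely many steps (bounded by $\dim P + 1$) the process terminates with $V(Z_n)$ a single point, producing the desired flag $V(Z_1) \supsetneq \cdots \supsetneq V(Z_n)$ and the ascending chain $Z_1 \subsetneq \cdots \subsetneq Z_n$. Each inclusion $Z_t \subsetneq Z_{t+1}$ is strict because $V(Z_{t+1}) \subsetneq V(Z_t)$ forces $Z_{t+1} \neq Z_t$; and each step drops the dimension of the linear subspace by exactly one, so the intermediate $V(Z_t)$ are exactly the members of a full flag.

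The one point that needs care — and is the main obstacle — is showing at each stage that when $V(Z_t)$ has positive dimension there genuinely exists a closed point $x$ with $V(Z_t) \not\subset H_x$, i.e. that the common base locus of the linear subsystem $V(Z_t) \subset P$ is not all of $X$. This follows because any $E \in V(Z_t)$ is an effective divisor, so its support is a proper closed subset of $X$; taking a closed point $x$ outside $\operatorname{Supp}(E)$ gives $x \notin E$, hence $V(Z_t) \not\subset H_x$. (Here we use that $\kappa_X$ is algebraically closed so that $X$ has closed points off any given divisor, and that $H_x$ really is a hyperplane, i.e. that it does not equal all of $P$ — which holds because $P = |D|$ is non-empty and a generic member avoids $x$; more simply, $H_x \subsetneq P$ since otherwise $x$ would lie on every divisor in $|D|$, contradicting that $E \not\ni x$ for the $E$ just chosen once we know $\dim P \geq 1$, and if $\dim P = 0$ the lemma is trivial with $n = 1$.) Having established the existence of such an $x$ at each stage, the rest is the bookkeeping described above.
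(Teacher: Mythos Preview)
Your proposal is correct and follows essentially the same approach as the paper: build the flag one step at a time by adjoining a single closed point $x$ to $Z$, using that over an algebraically closed constant field the condition ``$x\in E$'' cuts out a hyperplane $H_x$ in $P$, and finding such an $x$ outside the base locus by picking any $E\in V(Z_t)$ and a closed point off $\operatorname{Supp}(E)$. The paper's proof is slightly terser but the argument is identical.
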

\begin{proof} 
 By induction, it suffices to produce $Z_2\supsetneq Z_1=Z$
  such that $V(Z_2)\subsetneq V(Z_1)$ has codimension $1$.
  
  For this note that if $x\in X$ is a closed point then either
  $$
  V(Z\cup \{x\}) = V(Z)
  $$
  or $V(Z\cup \{x\})$ has codimension $1$ in $V(Z)$ since $\kappa _X$ is algebraically closed.  Furthermore, equality happens if and only if $x\in E$ for all $E\in V(Z)$.  It therefore suffices to observe that there exists a point $x\in X$ which does not lie in every element of $V(Z)$.  This follows from noting that if $\mls L$ is an invertible sheaf on $X$ and $W\subset \Gamma (X, \mls L)$ is a nonzero subspace then there exists a closed point $x\in X$ such that the map $W\rightarrow \mls L(x)$ is surjective.

\end{proof}

\begin{cor} The dimension of $P$ is equal to one more than the length of a maximal chain of definable subsets.
\end{cor}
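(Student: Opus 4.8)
The plan is to recover $\dim P$ from the poset of definable subspaces, using \cref{lem:definable-going-down} to build a long chain and elementary projective linear algebra for the matching upper bound. Write $d=\dim P$. The key preliminary observation, recorded in the remark above, is that a definable subspace $V(Z)\subseteq P$ is an honest \emph{linear} subspace of $P$, being the projectivization of the kernel of the restriction map $\H^0(X,\ms O(D))\to\H^0(Z_{\red},\ms O(D)|_{Z_{\red}})$. Hence any strictly decreasing chain $V_1\supsetneq V_2\supsetneq\cdots\supsetneq V_m$ of definable subspaces is a strictly decreasing chain of linear subspaces of $P\cong\P^d$, so the dimensions $\dim V_1>\dim V_2>\cdots>\dim V_m$ form a strictly decreasing sequence of integers. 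Restricting to chains of proper non-empty definable subspaces --- the only informative ones, since $P=V(\emptyset)$ and $\emptyset=V(X)$ are always definable and occupy the two ends of any maximal chain --- these dimensions lie in $\{0,1,\dots,d-1\}$, so $m\le d$: such a chain has at most $d$ members, hence length at most $d-1$.

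For the reverse inequality I would produce a chain of that length directly from \cref{lem:definable-going-down}. Since $D$ is effective we have $D\in|D|$, so $P=V(\emptyset)$ is non-empty, and the lemma applied to $V(\emptyset)=P$ furnishes closed subsets $\emptyset=Z_1\subsetneq Z_2\subsetneq\cdots\subsetneq Z_n$ for which
$$P=V(Z_1)\supsetneq V(Z_2)\supsetneq\cdots\supsetneq V(Z_n)$$
is a full flag of linear subspaces ending in a point. A full flag in $\P^d$ has $d+1$ members, so $n=d+1$, and $V(Z_2)\supsetneq\cdots\supsetneq V(Z_n)$ is a chain of $d$ proper non-empty definable subspaces, of length $d-1$.

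Combining the two halves, a maximal chain of proper non-empty definable subspaces is a full flag $\P^{d-1}\supsetneq\P^{d-2}\supsetneq\cdots\supsetneq\P^0$ of length $d-1$, and therefore $\dim P=d$ is one more than the length of a maximal chain of definable subspaces, as claimed. I do not anticipate a real obstacle here: the statement is essentially immediate from \cref{lem:definable-going-down}, and the only point deserving comment is the upper bound, where one must invoke the fact (from the remark above) that a definable subspace is genuinely a linear subspace, so that no chain can exceed the length of a full flag. The algebraically closed hypothesis on $\kappa_X$ enters only through \cref{lem:definable-going-down}.
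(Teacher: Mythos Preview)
Your proof is correct and follows the same approach as the paper, which simply writes ``Take $Z=\emptyset$ in \cref{lem:definable-going-down}.'' You have supplied the details the paper leaves implicit: in particular, you spell out the upper bound (no chain of proper non-empty definable subspaces can exceed length $d-1$ because definable subspaces are genuinely linear), which the paper does not address at all in its one-line proof.
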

\begin{proof} Take $Z = \emptyset $ in \cref{lem:definable-going-down}.
\end{proof}


\begin{cor}\label{cor:lines-minimal}
  Assume that $\kappa_X$ is algebraically closed. Given a basepoint free linear system $P$ on $X$, the definable lines in $P$
  are precisely those definable subsets with more than one element that are
  minimal with respect to inclusions of definable subsets.
\end{cor}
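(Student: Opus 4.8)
The plan is to prove the two inclusions of sets separately, with essentially all of the work delegated to \cref{lem:definable-going-down}. Throughout I will use the fact (recorded in the remarks preceding that lemma) that every definable subset $V(Z)\subset P$ is a projective linear subspace of $P$, namely $\P$ of the kernel of the restriction map $\H^0(X,\ms O_X(D))\to\H^0(Z_{\mathrm{red}},\ms O_X(D)|_{Z_{\mathrm{red}}})$.

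For the direction ``definable line $\Rightarrow$ minimal'', I would argue as follows. A definable line $V(Z)$ has more than one element because $\kappa_X$ is algebraically closed, hence infinite. If $V(Z')\subsetneq V(Z)$ is a definable subset, then $V(Z')$ is a projective linear subspace of the line $V(Z)$ of projective dimension at most $0$, so it is empty or a single point; in either case it has at most one element. Hence no definable subset with more than one element is strictly contained in $V(Z)$, i.e.\ $V(Z)$ is minimal among definable subsets with more than one element.

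For the converse, let $V=V(Z)$ be a definable subset with more than one element that is minimal among such. I would apply \cref{lem:definable-going-down} to get closed subsets $Z=Z_1\subsetneq\cdots\subsetneq Z_n$ with $V(Z_1)\supsetneq\cdots\supsetneq V(Z_n)$ a full flag of linear subspaces ending in a point, so that $n=\dim V+1$. Since $V$ has more than one element, $\dim V\ge 1$ and so $n\ge 2$. If $\dim V\ge 2$, then $n\ge 3$, and $V(Z_{n-1})$ is a definable subset which is a line — in particular it has more than one element — and it is strictly contained in $V(Z_1)=V$ because $n-1\ge 2$; this contradicts the minimality of $V$. Therefore $\dim V=1$, i.e.\ $V$ is a definable line.

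I do not anticipate a genuine obstacle here: the entire content is packaged in \cref{lem:definable-going-down}, which is where algebraic closedness of $\kappa_X$ is used (so that adjoining a closed point to $Z$ changes $\dim V(Z)$ by $0$ or $1$, and so that a nonzero linear system is nonvanishing at some closed point). The only subtlety is interpretive: ``minimal'' here must mean minimal within the family of definable subsets having more than one element, since any definable subspace of projective dimension $\ge 1$ strictly contains the definable \emph{point} $V(Z_n)$ above, so minimality among \emph{all} definable subsets would never hold; and ``more than one element'' coincides with ``projective dimension $\ge 1$'' precisely because the base field is infinite.
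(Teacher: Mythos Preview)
Your proof is correct and follows the same approach as the paper, which simply remarks that by \cref{lem:definable-going-down} any definable set of higher dimension contains a definable line. Your version is more detailed---you explicitly check both directions and spell out the interpretive point about ``minimal among definable subsets with more than one element''---but the content is identical.
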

\begin{proof}
  By \cref{lem:definable-going-down}, any definable set of higher
  dimension contains a definable line.
\end{proof}

\begin{example}
The conclusions of \cref{lem:definable-going-down} and \cref{cor:lines-minimal} are false without the assumption that $\kappa _X$ is algebraically closed.  For example, let $X\subset \mathbf{P}^2_{\mathbf{R}}$ be the conic given by
$$
V(X^2+Y^2+Z^2)\subset \mathbf{P}^2_{\mathbf{R}},
$$
so we have an isomorphism $\sigma :\mathbf{P}^1_{\mathbf{C}}\simeq X\otimes _{\mathbf{R}}\mathbf{C}$.  If $\mls L$ is an ample invertible sheaf on $X$ then $\sigma ^*\mls L_{\mathbf{C}}\simeq \mls O_{\mathbf{P}^1_{\mathbf{C}}}(2n)$ for $n>0$.  From this we see that each closed point $x\in X$ imposes a codimension $2$ condition on $|\mls L|$, and since this projective space has dimension dimension $2n$ we conclude that there are no definable lines in any ample linear system on $X$.
\end{example}

\begin{pgss}
  \textbf{Arbitrary infinite constant fields.} Detecting lines is more subtle over arbitrary fields. This is closely related to the counterexamples to \cref{thm:main-func} for curves discussed in \cref{sec:curve counterexamples}.
\end{pgss}

\begin{lem}\label{L:5.7} Let $\ell \subset \P (V)$ be a line corresponding to a
    two-dimensional subspace $T\subset V$.  Let $Z'\subset X$ be the maximal
    reduced closed subscheme of the intersection of the zero-loci of elements of
    $T$.  Then $\ell $ is definable if and only if the dimension of the kernel 
    $$
    K:= \ker(\H^0(X, \mls L)\rightarrow \H^0(Z', \mls L|_{Z'}))
    $$
    is equal to $2$.
    \end{lem}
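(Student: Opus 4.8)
The plan is to show that the subspace $T$ is always contained in $K$, so $\dim K\geq 2$, and that definability of $\ell$ is precisely what forces the reverse inclusion $K\subseteq T$; the equivalence then drops out.

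First I would record the basic inclusion. Write $Z(s)\subset X$ for the zero scheme of a section $s\in \H^0(X,\mls L)$. By construction $Z'$ is a reduced closed subscheme of the scheme-theoretic intersection $\bigcap_{0\neq s\in T}Z(s)$, so $Z'\subseteq Z(s)$ for every nonzero $s\in T$; hence $s|_{Z'}=0$ for all $s\in T$, i.e. $T\subseteq K$. In particular $\dim K\geq 2$, and $\dim K=2$ if and only if $K=T$. Since $Z'$ is reduced, $V(Z')$ is the projectivization of $\ker(\H^0(X,\mls L)\to\H^0(Z',\mls L|_{Z'}))=K$, so $K=T$ is equivalent to $V(Z')=\P(K)=\P(T)=\ell$. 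This already gives the ``if'' direction: when $\dim K=2$ we have $\ell=V(Z')$, which is definable.

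For the ``only if'' direction, suppose $\ell$ is definable, say $\ell=V(Z)$; replacing $Z$ by its closure and then by its reduction does not change $V(Z)$, so we may assume $Z$ is closed and reduced. Then $\ker(\H^0(X,\mls L)\to\H^0(Z,\mls L|_Z))$ is exactly the two-dimensional space $T$. For each nonzero $s\in T$ we have $s|_Z=0$ with $Z$ reduced, hence $Z\subseteq Z(s)$; thus $Z$ is a reduced closed subscheme of $\bigcap_{0\neq s\in T}Z(s)$, and by maximality of $Z'$ we get $Z\subseteq Z'$. Consequently the restriction map to $\H^0(Z,\mls L|_Z)$ factors through $\H^0(Z',\mls L|_{Z'})$, so $K\subseteq\ker(\H^0(X,\mls L)\to\H^0(Z,\mls L|_Z))=T$. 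Combined with $T\subseteq K$ from the first step, $K=T$, so $\dim K=2$.

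Once the description of $V(Z)$ as the projectivization of the kernel of a restriction map is in hand, the argument is essentially formal. The only point needing a moment of care is the inclusion $Z\subseteq Z'$, namely that a \emph{reduced} closed subscheme on which every section of $T$ vanishes must lie in the reduced base locus $Z'$; this is where the harmless replacement of $Z$ by its reduction is used, and it is the closest thing to an obstacle here.
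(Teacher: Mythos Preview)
Your proof is correct and follows essentially the same approach as the paper's. The only difference is organizational: you front-load the inclusion $T\subseteq K$ and use it to handle both directions, whereas the paper establishes this inclusion inside the ``only if'' argument and treats the ``if'' direction in a single line at the end; the underlying ideas (that $V(Z')=\P(K)$ and that $Z\subseteq Z'$ forces $K\subseteq T$) are identical.
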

    \begin{proof}
    First suppose $\ell $ is definable, so we can write $\ell = V(Z)$ for some
    closed subset $Z\subset |X|$, which we view as a subscheme with the reduced
    structure.  Then by definition $\ell $ is the projective subspace of $\P V$
    associated to the kernel of the map
    $$
    \H^0(X, \mls L)\rightarrow \H^0(Z, \mls L|_Z),
    $$
    which must therefore equal $T$. In particular, we have $Z\subset Z'$, which
    implies that 
    $$
    T\subset K\subset \ker(\H^0(X, \mls L)\rightarrow \H^0(Z, \mls L|_Z)).
    $$
    It follows that $K=T$, and, in particular, $K$ has dimension $2$.
    
    Conversely, if $K$ has dimension $2$ then we have $T = K$ and $\ell =
    V(Z')$.
    \end{proof}

\begin{lem}\label{L:5.8} Suppose $\P(V)$ is a
    basepoint free linear system on $X$. Let $F_1, F_2\in V$ be two linearly
    independent vectors with zero loci $Z_1$ and $Z_2$.  Assume that 
    \begin{enumerate}
        \item $Z_1$ is reduced;
        \item the natural map $\H^0(X,\ms O_X)\to\H^0(Z_1,\ms O_{Z_1})$ is an
        isomorphism;
        \item the intersection $Z:= Z_1\cap Z_2$ is reduced and does not contain
        any components of the $Z_i$.
    \end{enumerate}
    Then the line in $\P (V)$ spanned by $F_1$ and $F_2$ is definable.
\end{lem}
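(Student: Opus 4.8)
The plan is to reduce the statement to \cref{L:5.7}. Since $\P(V)=|D|$ is basepoint free, the divisor $D$ is Cartier by \cref{prop:bpf-implies-cartier}, so $\mls L:=\ms O(D)$ is invertible and $V=\H^0(X,\mls L)$. Write $T:=\Span(F_1,F_2)\subset V$ for the two-dimensional subspace corresponding to the line $\ell$ of the statement, and let $Z'$ be as in \cref{L:5.7}, the maximal reduced subscheme of the common zero locus of the elements of $T$. Taking the elements $F_1$ and $F_2$ themselves shows that this common zero locus is, set-theoretically, $|Z_1|\cap|Z_2|=|Z_1\cap Z_2|$; since hypothesis (3) says $Z:=Z_1\cap Z_2$ is reduced, we get $Z'=Z$. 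By \cref{L:5.7} it therefore suffices to prove that
$$K:=\ker\bigl(\H^0(X,\mls L)\longrightarrow\H^0(Z,\mls L|_Z)\bigr)$$
is two-dimensional. As $F_1$ and $F_2$ both vanish on $Z\subseteq Z_1\cap Z_2$, the inclusion $T\subseteq K$ is automatic, so the whole content is the reverse inclusion $K\subseteq T$.

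To prove $K\subseteq T$ I would work with two left-exact section sequences. On $X$, multiplication by the nonzero section $F_1$ is injective ($X$ being integral) with cokernel $\mls L|_{Z_1}$ (here $\mls L$ is invertible and $Z_1=V(F_1)$ scheme-theoretically), so
$$0\longrightarrow\H^0(X,\ms O_X)\xrightarrow{\,\cdot F_1\,}\H^0(X,\mls L)\longrightarrow\H^0(Z_1,\mls L|_{Z_1}).$$
On $Z_1$, the restricted section $F_2|_{Z_1}$ of $\mls L|_{Z_1}$ has scheme-theoretic zero locus $Z_1\cap Z_2=Z$; by hypotheses (1) and (3), $Z$ contains no irreducible component of $Z_1$, hence (as $Z_1$ is reduced) no associated point of $Z_1$, so $F_2|_{Z_1}$ is a nonzerodivisor with cokernel $\mls L|_Z$, giving
$$0\longrightarrow\H^0(Z_1,\ms O_{Z_1})\xrightarrow{\,\cdot F_2\,}\H^0(Z_1,\mls L|_{Z_1})\longrightarrow\H^0(Z,\mls L|_Z).$$

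Now take $s\in K$. Its image in $\H^0(Z,\mls L|_Z)$ vanishes, so in particular $s|_{Z_1}$ maps to $0$ there, and the second sequence gives $s|_{Z_1}=F_2|_{Z_1}\cdot g$ for some $g\in\H^0(Z_1,\ms O_{Z_1})$. Hypothesis (2) identifies $\H^0(Z_1,\ms O_{Z_1})$ with $\H^0(X,\ms O_X)=\kappa_X$, so $g$ is the restriction of a constant $c\in\kappa_X$ and $(s-cF_2)|_{Z_1}=0$. Thus $s-cF_2$ lies in the kernel of $\H^0(X,\mls L)\to\H^0(Z_1,\mls L|_{Z_1})$, and the first sequence gives $s-cF_2=c'F_1$ for a constant $c'\in\kappa_X$, whence $s=c'F_1+cF_2\in T$. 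This proves $K\subseteq T$, hence $\dim K=2$, and \cref{L:5.7} gives that $\ell$ is definable. The one point requiring care is the identification of the two cokernels with $\mls L|_{Z_1}$ and $\mls L|_Z$ together with the nonzerodivisor property of $F_2|_{Z_1}$; this is precisely where hypotheses (1) and (3) are needed, to rule out torsion or embedded-component pathologies, while hypothesis (2) is what forces the two ``constants of integration'' $g$ and $s-cF_2$ to be honest scalars in $\kappa_X$. Everything else is a routine diagram chase.
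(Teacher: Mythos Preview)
Your proof is correct and follows essentially the same approach as the paper: both use the two exact sequences coming from multiplication by $F_1$ on $X$ and by $F_2|_{Z_1}$ on $Z_1$, then invoke \cref{L:5.7}. Your version is slightly more explicit in identifying $Z'=Z$ and in carrying out the element-chase, whereas the paper packages the same computation as a short exact sequence $0\to k\cdot F_1\to K\to \H^0(Z_1,\ms O_{Z_1})$ (note the paper writes $F_2$ here, which appears to be a typo for $F_1$); the substance is identical.
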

\begin{proof}
    We have short exact sequences
    $$
    \xymatrix{
    0\ar[r]& \mls O_X\ar[r]^-{F_1}& \mls L\ar[r]& \mls L|_{Z_1}\ar[r]& 0,}
    $$
    and
    $$
    \xymatrix{
    0\ar[r]& \mls O_{Z_1}\ar[r]^-{F_2}& \mls L|_{Z_1}\ar[r]& \mls L|_Z\ar[r]& 
0,}
    $$
    where the second sequence is exact because $Z_1$ is reduced and $Z$ does not
    contain any components of $Z_1$. From these sequences we see that if $K$
    denotes the kernel of the map
    $$
    \H^0(X, \mls L)\rightarrow \H^0(Z, \mls L|_Z)
    $$
    then there is a short exact sequence
    $$
    0\rightarrow k\cdot F_2\rightarrow K\rightarrow \H^0(Z_1, \mls O_{Z_1}).
    $$
    By assumption (2) the right term of this sequence is
    $1$-dimensional, and since $K$ contains the span of $F_1$ and $F_2$ it
    follows that $K$ is $2$-dimensional.  The result therefore follows from 
    \cref{L:5.7}.
\end{proof}



\begin{prop}\label{L:sweepingup} Let $\mls O_X(1)$ be a very ample invertible sheaf on $X$ with associated linear system $P$.   Let $j:X\hookrightarrow \overline X$ be the compatification of $X$ provided by the given projective imbedding.

(i) Let $V\subset \Gr (1, P)(k)$ be the subset of lines $\ell $ spanned by elements $D$ and $E$ for which $D$ is  geometrically reduced,  $E$ is geometrically integral, the intersection $B:= E\cap D\subset \overline X$ is geometrically reduced and does not contain any components of $D$ or $E$, and the inclusions
$$
D\cap X\hookrightarrow D, \ \ E\cap X\hookrightarrow E, \ \ B\cap X\hookrightarrow B
$$
are all schematically dense.  Then $V$ is the $k$-points of a dense Zariski open subset of $\Gr (1, P)$ and every element of $V$ is definable.

(ii) If $D\in |\mls O_{\overline X}(1)| = P$ is a geometrically reduced divisor in $\overline X$ for which $D\cap X\subset D$ is dense, then $D$ lies in the sweep of the maximal Zariski open subset of the definable locus in $\Gr (1, P)$.
\end{prop}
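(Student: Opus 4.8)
The plan is to prove that every line in $V$ is definable by a short cohomological computation carried out on the \emph{compactification} $\overline X$ rather than on $X$ itself, and then to obtain the genericity statement in (i) and the sweep statement in (ii) from Bertini-type dimension counts. Throughout I use that $\dim \overline X\geq 2$.

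First I would record the identifications coming from the embedding: the linear forms on the ambient projective space span a subspace of $\H^0(\overline X,\mls O_{\overline X}(1))$ of the full dimension $\dim_{\kappa_X}\H^0(X,\mls O_X(1))$, and this space injects into $\H^0(X,\mls O_X(1))$ because $X$ is dense in the integral scheme $\overline X$; hence $P=|\mls O_X(1)|=|\mls O_{\overline X}(1)|$ and $\H^0(X,\mls O_X(1))=\H^0(\overline X,\mls O_{\overline X}(1))$. Now take $\ell\in V$, spanned by a geometrically reduced $D$ and a geometrically integral $E$ with $B:=D\cap E$ geometrically reduced and containing no component of $D$ or $E$, and let $F_1,F_2\in\H^0(\overline X,\mls O_{\overline X}(1))$ cut out $E$ and $D$. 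The point is to run the computation from the proof of \cref{L:5.8} on $\overline X$: its hypothesis that $\H^0$ of the structure sheaf agrees with $\H^0$ of the structure sheaf of the first divisor is now \emph{automatic}, since $\overline X$ is geometrically integral and proper over $\kappa_X$ (so $\H^0(\overline X,\mls O_{\overline X})=\kappa_X$) and $E$ is geometrically integral and proper (so $\H^0(E,\mls O_E)=\kappa_X$); the other hypotheses follow from $E$ reduced, $B$ reduced, and $B$ containing no component of $D$ or $E$. This shows the space $\overline K$ of sections of $\mls O_{\overline X}(1)$ vanishing on $B$ equals $\langle F_1,F_2\rangle$, hence is two-dimensional. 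To descend to $X$, I would use the schematic density of $B\cap X\hookrightarrow B$: as $\mls O_{\overline X}(1)|_B$ is locally free and $B$ is reduced, $\H^0(B,\mls O_{\overline X}(1)|_B)\hookrightarrow\H^0(B\cap X,\mls O_X(1)|_{B\cap X})$, so under the above identification $\overline K$ is exactly $\ker\bigl(\H^0(X,\mls O_X(1))\to\H^0(B\cap X,\mls O_X(1)|_{B\cap X})\bigr)$; since $B$ is reduced, $B\cap X$ is the maximal reduced closed subscheme of $X$ cut out by $\langle F_1,F_2\rangle$, so \cref{L:5.7} applies and gives that $\ell$ is definable. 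I expect this descent, together with the realization that one must work on $\overline X$ at all, to be the main obstacle: a direct application of \cref{L:5.8} on $X$ fails in general because $\H^0(E\cap X,\mls O_{E\cap X})$ can be large (already when $\overline X\setminus X$ is a divisor, or merely when $E$ meets $\overline X\setminus X$), and the three schematic-density conditions in the definition of $V$ — in particular the one on $B$ — are precisely what routes the argument through the proper model.

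For the genericity claim in (i), I would check that each condition defining $V$ is an open condition on $\ell\in\Gr(1,P)$ (that the general member of the pencil be geometrically reduced, resp. geometrically integral, says $\ell$ avoids a fixed closed locus of $P$; the conditions on the base locus $B=\operatorname{Bs}(\ell)$ are open because $B$ varies in a flat family over a suitable open subscheme) and that each holds on a dense open set: the Bertini theorems over the infinite field $\kappa_X$ make the general member of $P$ and the general codimension-$2$ linear section of the geometrically integral $\overline X$ geometrically integral, resp. geometrically reduced; the conditions involving $\partial:=\overline X\setminus X$ follow from the estimate $\dim(B\cap\partial)<\dim B$ for a general pencil, valid since $\dim\partial<\dim\overline X$; and ``$B$ contains no component of $D$ or $E$'' is automatic by dimension. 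Since $\kappa_X$ is infinite, $V$ is then the set of $k$-points of the (dense, open) common locus.

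For part (ii), given $D\in P$ geometrically reduced with $D\cap X$ dense in $D$, I would produce a line $\ell\in V$ through $D$. For a general $E\in P(k)$ (which exists as $\kappa_X$ is infinite) one has: $E$ geometrically integral with $E\cap X$ dense in $E$; $B:=D\cap E$ geometrically reduced, by Bertini applied to the reduced scheme $D$; $B$ contains no component of $D$ or $E$, by dimension; and $B\cap X$ is dense in $B$, since $\dim(B\cap\partial)=\dim\bigl(E\cap(D\cap\partial)\bigr)\leq\dim(D\cap\partial)-1<\dim D-1=\dim B$ using that $D\cap X$ is dense in $D$. Choosing $E\neq D$, the line $\ell=L_{D,E}$ then lies in $V$, with $D$ in the role of the geometrically reduced member. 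As $V$ is the set of $k$-points of a dense open subscheme of $\Gr(1,P)$ contained in the definable locus, that subscheme lies inside the maximal open subscheme $U_{\max}$ of the definable locus, and $D\in L_{D,E}$ exhibits $D$ in the sweep of $U_{\max}$, completing the proof.
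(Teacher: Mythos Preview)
Your proof is correct and follows essentially the same route as the paper: apply the computation of \cref{L:5.8} on the compactification $\overline X$ (where condition (2) of that lemma is automatic from properness and geometric integrality of $E$), then use Bertini to get density/nonemptiness in (i) and to produce, for a given $D$, a general $E$ witnessing (ii). Your explicit descent step---using schematic density of $B\cap X$ in $B$ to transport the two-dimensionality of the kernel from $\H^0(\overline X,\mls O_{\overline X}(1))$ to $\H^0(X,\mls O_X(1))$ and then invoke \cref{L:5.7}---spells out a point the paper leaves to the reader; as you correctly observe, a direct application of \cref{L:5.8} on $X$ itself would founder on condition (2), so the passage through $\overline X$ and the density hypotheses on $D$, $E$, $B$ are exactly what make the argument go.
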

\begin{proof}
For $D$ and $E$ as in (i) it follows from \cref{L:5.8} that the span of $D$ and $E$ is a definable line.  Furthermore, note that the conditions on $D$ and $E$ are both open conditions.  Therefore to prove (i) it suffices to show that $V$ is nonempty, which follows from Bertini's theorem \cite[3.4.10 and 3.4.14]{MR1724388}.

In fact, given geometrically reduced $D$ with $D\cap X\subset D$ dense, the set of $E$ such that $(D, E)$ satisfy the conditions in (ii) is open and nonempty by \cite[3.4.14]{MR1724388}.  From this statement (ii) also follows.
\end{proof}

\begin{cor}
  Let $\sigma:\tau(X)\to\tau(Y)$ be an isomorphism of divisorial structures such that there are very ample invertible sheaves $\ms O_X(1)$ and $\ms O_Y(1)$ with $\sigma$ inducing a bijection 
  $$s:|\ms O_X(1)| \to |\ms O_Y(1)|.$$ The map $s$ satisfies the hypotheses of \cref{T:weak fund thm}.
\end{cor}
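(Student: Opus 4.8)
The plan is to unwind both sides and verify the hypotheses of \cref{T:weak fund thm} for the bijection $s\colon P_1\to P_2$, where $P_1:=|\ms O_X(1)|$ and $P_2:=|\ms O_Y(1)|$. Concretely, one needs: (a) definable projective space structures on $P_1$ and $P_2$ of dimension at least $2$; (b) that $s$ carries the distinguished lines of $P_1$ to linear subspaces of $P_2$; and (c) that $s^{-1}$ carries the distinguished lines of $P_2$ to linear subspaces of $P_1$. For (a), take $U_1\subset\Gr(1,P_1)(\kappa_X)$ and $U_2\subset\Gr(1,P_2)(\kappa_Y)$ to be the sets of definable lines in the sense of \cref{sec:definable}. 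Since $\ms O_X(1)$ and $\ms O_Y(1)$ are very ample, \cref{L:sweepingup}(i) shows that $U_1$ (resp.\ $U_2$) contains the $\kappa_X$-points (resp.\ $\kappa_Y$-points) of a dense Zariski open subset of the relevant Grassmannian, so $(\kappa_X,\H^0(X,\ms O_X(1)),U_1)$ and $(\kappa_Y,\H^0(Y,\ms O_Y(1)),U_2)$ are definable projective spaces. The complete linear system $P_1$ realizes $X$ as a nondegenerate subvariety of $P_1^\vee$, so $\dim P_1\ge\dim X$; since $\dim X\ge 2$ (as throughout), $\dim P_1\ge 2$, and likewise $\dim P_2=\dim Y=\dim X\ge 2$ because $\sigma$ induces a homeomorphism $|X|\to|Y|$ and Krull dimension is a topological invariant.

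The core of the argument is to transport the incidence relation ``$Z\subset E$'' across $\sigma$. Let $h\colon|X|\to|Y|$ be the homeomorphism underlying $\sigma$; then $\Eff(h)\colon\Eff(X)\to\Eff(Y)$ is the induced bijection (it relabels a codimension $1$ point $x$ by $h(x)$), and by hypothesis $s$ is its restriction to $|\ms O_X(1)|$. Because $h$ is a homeomorphism, for any effective divisor $E=\sum a_ix_i$ on $X$ the support of $\Eff(h)(E)$ is $\bigcup_i\overline{\{h(x_i)\}}=h\bigl(\bigcup_i\overline{\{x_i\}}\bigr)=h(\supp E)$; hence, for a closed subset $Z\subset|X|$ and $E\in|\ms O_X(1)|$, one has $Z\subset\supp E$ if and only if $h(Z)\subset\supp(\Eff(h)(E))$. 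This yields $s\bigl(V(Z)\bigr)=V\bigl(h(Z)\bigr)$ as subsets of $P_2$. Now a definable subspace $V(W)$ is, by construction, the projectivization of the kernel of the restriction map $\H^0(Y,\ms O_Y(1))\to\H^0(W_{\red},\ms O_Y(1)|_{W_{\red}})$ on global sections, hence in particular a linear subspace of $P_2$; so $s$ carries every element of $U_1$ to a linear subspace of $P_2$, giving (b). Applying the same reasoning to $\sigma^{-1}\colon\tau(Y)\to\tau(X)$ — whose underlying homeomorphism is $h^{-1}$ and whose restriction to $|\ms O_Y(1)|$ is $s^{-1}$ — gives $s^{-1}(V(W))=V(h^{-1}(W))$ for every closed $W\subset|Y|$, which is (c). With (a), (b), (c) in hand, \cref{T:weak fund thm} applies to $s$, which is the assertion of the corollary.

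I do not anticipate a real obstacle here. The one point that needs care is the identity $s(V(Z))=V(h(Z))$, i.e.\ that an isomorphism of divisorial structures faithfully transports containment of a closed set in an effective divisor; but this is immediate once one observes that $\Eff(h)$ is nothing but the relabeling of codimension $1$ points induced by the underlying homeomorphism, so that supports transform via $h$. The remaining bookkeeping — that very ampleness supplies the hypotheses of \cref{L:sweepingup} and forces $\dim P_i\ge 2$ — is routine.
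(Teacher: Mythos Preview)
Your proposal is correct and follows essentially the same approach as the paper's own proof, which simply points to the open locus of lines from \cref{L:sweepingup}(i) and notes that very ampleness forces the linear systems to have dimension at least $2$. You have unpacked the implicit step that an isomorphism of divisorial structures transports $V(Z)$ to $V(h(Z))$, hence to a linear subspace; this is exactly what the paper is relying on without saying it. One cosmetic slip: you write $\dim P_2=\dim Y$, but you only need (and only have) $\dim P_2\ge\dim Y$, which is what the very ampleness argument gives.
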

\begin{proof}
 Indeed, the locus of lines described in \cref{L:sweepingup}(i) suffices. Note that we know that the linear systems have dimension at least $2$ because $X$ and $Y$ have dimension at least $2$ and the linear systems are very ample.
\end{proof}

\begin{example}
In general the set of definable lines in $\Gr (1, P)$ is not open.  An explicit 
example is the following.

Consider three $k$-points $A, B, C\in \mathbb{P}^2_k$, say $A = [0:0:1],$ $B =
[0:1:0]$, and $C = [1:0:0]$.  For a line $L\subset \mathbb{P}^2_k$ passing
through $A$ set
$$
T_L:= \{F\in H^0(\mathbb{P}^2_k, \mls O_{\mathbb{P}^2_k}(2))|\text{$V(F)$ 
passes through $A, B, C$ and is tangent to $L$ at $A$}\}.
$$
Concretely if $X$, $Y$, and $Z$ are the coordinates on $\mathbb{P}^2_k$ and $L$
is given by
$$
\alpha X+\beta Y = 0,
$$
then $T_L$ is given by
$$
T_L = \{aXY+b(\alpha X+\beta Y)Z|a, b\in j\}.
$$
In particular, $T_L$ gives a line $\ell _L$ in $\mathbb{P}^2_k$.

If $\alpha $ and $\beta $ are nonzero then $L$ does not pass through $B$ and $C$
and the set-theoretic base locus of $T_L$ is equal to $\{A, B, C\}$ and the
space of degree two polynomials passing through these three points has dimension
$3$.  Therefore for such $L$ the line $\ell _L$ is not definable.

However, for $L$ the lines $X = 0$ or $Y = 0$ the line $\ell _L $ is definable.
Indeed in this case the set-theoretic base locus of $\ell _L$ is given by the
union of the line $L$ together with a third point not on the line, from which
one sees that $T_L$ is definable.

Letting $\alpha $ and $\beta $ vary we obtain a $1$-parameter family of lines
$\mathbb{P}^1\simeq \Sigma \subset \Gr (1, |\mls O_{\mathbb{P}^2_k}(2)|)$ whose
general member is not definable but with two points giving definable lines.  It
follows that the definable locus is not open in this case.
\end{example}

\begin{summary}\label{R:5.15}  Let us summarize the main consequences of the 
results in this section.  Starting with a projective normal geometrically 
integral scheme $X$ over an infinite field $k$ we can consider the associated 
divisorial structure $\tau(X)=(|X|, \Lambda _X)$.  From the divisorial 
structure we can extract several key pieces of information.
\begin{enumerate}
\item [(i)] The basepoint free and ample effective divisors and their linear 
systems are determined by $\tau(X)$.  This was discussed in 
\cref{P:3.8}.
\item [(ii)] If $\kappa_X$ is algebraically closed field, then for an ample basepoint free linear system $\P$ the set of definable 
lines in $\P$ is by \cref{cor:lines-minimal} characterized as those 
definable subsets  with more than one element minimal with respect to 
inclusion.  This set depends only on the divisorial structure.
\item[(ii${}^\prime$)] If $\kappa_X$ is an arbitrary infinite field and $X$ has dimension at least $2$, then there is an open set of definable lines in any very ample linear system $|\ms O_X(1)|$, and the bijections $|\ms O_X(1)|\to|\ms O_Y(1)|$ resulting from an isomorphism of divisorial structures send each of these lines to lines (by \cref{T:weak fund thm}).
\item [(iii)] If $\kappa_X$ is an arbitrary infinite field and $\mathrm{dim}(X)\geq 2$ then for a very ample linear system $\P$ the set of 
definable lines contains the $k$-points of a dense open subset of $\Gr (1, 
P)$, all of whose points are carried to lines by any isomorphism of divisorial structures that carries $\ms O_X(1)$ to a very ample class.
\end{enumerate}
\end{summary}

\section{The universal Torelli theorem}
\label{sec:univ-torelli-theor}

In this section we prove \cref{thm:main-func}. Suppose $X$ and $Y$ are
{\lovely} schemes of dimension at least $2$ with infinite constant fields. We
need to show that given an isomorphism $\phi:\tau(X)\to\tau(Y)$, there is a
unique isomorphism of schemes $f:X\to Y$ such that $\tau(f)=\phi$.

\subsection{Reduction to the quasi-projective case}\label{SS:4.1}

\begin{lem}\label{lem:CA}
    If $X$ is a separated Noetherian scheme then for any point $x\in X$ we have
    that 
    $$\overline{\{x\}}=\bigcap\overline{\{y\}},$$ the intersection taken over
    all points $y\in X$ of codimension at most $1$ such that
    $x\in\overline{\{y\}}$.
\end{lem}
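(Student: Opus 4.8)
The plan is to prove the nontrivial inclusion $\bigcap_y\overline{\{y\}}\subseteq\overline{\{x\}}$ contrapositively: given a point $z\notin\overline{\{x\}}$, I will exhibit a point $y$ with $\codim\overline{\{y\}}\le 1$, $x\in\overline{\{y\}}$ and $z\notin\overline{\{y\}}$. (The reverse inclusion is trivial, since $x\in\overline{\{y\}}$ forces $\overline{\{x\}}\subseteq\overline{\{y\}}$.) If $\dim\mathcal O_{X,x}\le 1$ I take $y=x$, so assume $\dim\mathcal O_{X,x}\ge 2$. Fix any affine open $U=\Spec R\ni x$; since the complement of $U$ is closed, every generization of $x$ lies in $U$ — in particular every candidate $y$ does — and I write $\mathfrak p\subseteq R$ for the prime of $x$, so $\mathfrak p$ has height at least $2$. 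I will use repeatedly that generizations of a point lie in every neighbourhood of it, and, conversely, that if $w\in U$ but an open set $V$ misses $w$, then $\overline{\{w\}}\cap V=\emptyset$ (otherwise the generic point $w$ of the irreducible set $\overline{\{w\}}$ would lie in $V$).

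First suppose $x$ and $z$ lie in a common affine open; replace $U$ by it. Then $z$ corresponds to a prime $\mathfrak q\subseteq R$ with $\mathfrak p\not\subseteq\mathfrak q$, since $z\notin\overline{\{x\}}\cap U=V(\mathfrak p)$. As $\mathfrak p$ is contained neither in $\mathfrak q$ nor in any of the finitely many minimal primes of $R$ lying below $\mathfrak p$, prime avoidance produces $a\in\mathfrak p$ outside all of these. Any prime $\mathfrak r$ minimal over $(a)$ with $\mathfrak r\subseteq\mathfrak p$ then has height exactly $1$ (Krull's Hauptidealsatz gives $\le 1$, and $a$ avoids the minimal primes, so $\ge 1$) and satisfies $\mathfrak r\not\subseteq\mathfrak q$; its point $w$ has $x\in\overline{\{w\}}$ and $z\notin\overline{\{w\}}$, as desired.

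Now suppose no affine open contains both $x$ and $z$; then $x,z$ are incomparable and $z\notin U$. If some irreducible component of $X$ through $x$ misses $z$, its generic point is the desired $y$. Otherwise $z$ lies on every component through $x$. Choose an affine open $V=\Spec A\ni z$. By separatedness $U\cap V$ is affine, hence equals an affine open subscheme $\Spec R\setminus V(\mathfrak a)$ of $\Spec R$; since $x\notin V$ we have $\mathfrak a\subseteq\mathfrak p$. The key commutative-algebra input is: \emph{if $\Spec R\setminus V(\mathfrak a)$ is affine, then every minimal prime of $\mathfrak a$ has height $\le 1$.} Granting this, pick a minimal prime $\mathfrak r$ of $\mathfrak a$ with $\mathfrak r\subseteq\mathfrak p$ (one exists, as $\mathfrak a\subseteq\mathfrak p$); it has height $\le 1$. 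It cannot have height $0$: then $\mathfrak r$ would be a minimal prime of $R$ below $\mathfrak p$, hence correspond to a component $X'$ of $X$ through $x$, and $V(\mathfrak a)\supseteq V(\mathfrak r)=U\cap X'$ would give $(U\cap V)\cap X'=\emptyset$, contradicting that $U\cap X'$ and $V\cap X'$ are nonempty opens of the irreducible set $X'$ (here $z\in X'$ is used). So $\mathfrak r$ has height $1$, and its point $w$ satisfies $x\in\overline{\{w\}}$ (as $\mathfrak r\subseteq\mathfrak p$) and $w\notin U\cap V$ (as $\mathfrak r\supseteq\mathfrak a$), hence $w\notin V$, hence $z\notin\overline{\{w\}}$; this is the desired $y$.

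The main obstacle is the displayed input, and it is the one place where separatedness is genuinely needed — it is what makes $U\cap V$ affine. To prove it, suppose $\mathfrak a$ had a minimal prime $\mathfrak q$ of height $\ge 2$; localizing at $\mathfrak q$ preserves affineness of the complement, so the punctured spectrum $\Spec R_{\mathfrak q}\setminus\{\mathfrak q R_{\mathfrak q}\}$ of the Noetherian local ring $R_{\mathfrak q}$, of dimension $\ge 2$, would be affine. This contradicts the standard fact (via the local-cohomology exact sequence and Grothendieck non-vanishing $H^{\dim}_{\mathfrak m}\ne 0$) that the punctured spectrum of a Noetherian local ring of dimension $\ge 2$ is never affine. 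Consistently with this, the lemma fails for non-separated schemes, e.g.\ the affine plane with a doubled origin, where no curve through one origin can avoid the other.
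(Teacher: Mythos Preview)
Your proof is correct and takes a route genuinely different from the paper's. The paper reduces to integral $X$ and then works in the local ring: writing $\mf m_x=(\alpha_1,\dots,\alpha_n)$, the Hauptidealsatz exhibits each $Z(\alpha_i)\subset\Spec\ms O_{X,x}$ as a union of codimension-one closures, so that the closed point is cut out by finitely many height-$\le 1$ primes; the global conclusion is then drawn with minimal further comment. You instead argue contrapositively and pointwise in $X$: given $z\notin\overline{\{x\}}$ you construct a specific codimension-$\le 1$ generization of $x$ whose closure misses $z$, splitting on whether $x$ and $z$ share an affine open. When they do, prime avoidance plus the Hauptidealsatz suffices; when they do not, you invoke separatedness to make $U\cap V$ affine and then use the fact (via local cohomology and Grothendieck non-vanishing) that the complement of an affine open in an affine Noetherian scheme has every minimal prime of height $\le 1$. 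This second case is precisely where separatedness bites, and your doubled-origin example shows the lemma fails without it. The paper's argument is shorter and stays inside $\Spec\ms O_{X,x}$; yours is longer but makes both the global step and the role of separatedness fully explicit.
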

\begin{proof}
    By treating each component of $X$ separately and intersecting the final
    result, we may assume that $X$ is integral with function field $\kappa(X)$.
    Since $X$ is separated, the inclusion $x\in X$ is uniquely determined by the
    inclusion $\ms O_{X,x}\subset\kappa(X)$. The points $y$ such that
    $\overline{\{x\}}\subset\overline{\{y\}}$ correspond to those points such
    that the local ring $\ms O_{X,y}$ contains $\ms O_{X,x}$, as subrings of
    $\kappa(X)$. 
    Letting $I_y\subset\ms O_{X,x}$ denote the ideal of $\overline{\{y\}}$ in
    $\ms O_{X,x}$ (i.e., the intersection $\mf m_y\cap\ms O_{X,x}$ in
    $\kappa(X)$), we see that we wish to prove that $\mf m_x=\sqrt{\sum I_y}$ as
    $\ms O_{X,x}$-modules. Writing $\mf m_x=(\alpha_1,\ldots,\alpha_n)$, we see
    that $\{x\}=\cap Z(\alpha_i)$ in $|\Spec\ms O_{X,x}|$. By the Krull
    Hauptidealsatz, each $Z(\alpha_i)$ is a union of codimension $1$ closed
    subschemes that contain $x$. Taking for $\{y_i\}$ the set of all codimension
    $1$ points that occur among the $Z(\alpha_i)$, we have that $\mf
    m_x=\sqrt{\sum_i I_{y_i}}$, as desired.
\end{proof}

\begin{lem}\label{lem:density}
    Suppose $f,g:|X|\to|Y|$ are homeomorphisms of the underlying spaces of two 
separated normal Noetherian schemes. Given an open subset $U\subset|X|$ 
containing all points of codimension $1$, if $f|_U=g|_U$ then $f=g$.
\end{lem}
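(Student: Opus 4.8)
The plan is to reduce the pointwise assertion $f=g$ to the hypothesis $f|_U=g|_U$ by means of \cref{lem:CA}, which recovers every point of a separated Noetherian scheme from the points of codimension at most $1$ that generize it. Concretely, I would fix $x\in X$ and set
\begin{equation*}
S_x:=\{\,y\in X\ :\ \codim(y)\le 1,\ x\in\overline{\{y\}}\,\}.
\end{equation*}
By \cref{lem:CA}, $\overline{\{x\}}=\bigcap_{y\in S_x}\overline{\{y\}}$, and $x$ is the generic point of this irreducible closed set. The first step is to check that $S_x\subseteq U$: the members of $S_x$ of codimension $1$ lie in $U$ by assumption, and the members of codimension $0$ are generic points of irreducible components of $X$, which lie in $U$ because $U$ is open and, containing a codimension-$1$ point of each positive-dimensional component, is dense. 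Hence $f$ and $g$ agree on all of $S_x$.

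Next I would run the formal computation. Since $f$ is a homeomorphism it carries the closure of a point to the closure of its image and, being bijective, commutes with intersections; the same holds for $g$. Therefore
\begin{equation*}
\overline{\{f(x)\}}=f\bigl(\overline{\{x\}}\bigr)=f\Bigl(\bigcap_{y\in S_x}\overline{\{y\}}\Bigr)=\bigcap_{y\in S_x}\overline{\{f(y)\}}=\bigcap_{y\in S_x}\overline{\{g(y)\}}=g\Bigl(\bigcap_{y\in S_x}\overline{\{y\}}\Bigr)=g\bigl(\overline{\{x\}}\bigr)=\overline{\{g(x)\}},
\end{equation*}
where the second and seventh equalities are \cref{lem:CA} applied in $X$, and the fourth uses $f|_{S_x}=g|_{S_x}$. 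Since $|Y|$ is the space of a scheme, hence sober, the irreducible closed subset $\overline{\{f(x)\}}=\overline{\{g(x)\}}$ has a unique generic point, so $f(x)=g(x)$. As $x$ was arbitrary, $f=g$.

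The whole argument rests on \cref{lem:CA}: once one has it, the key observation — that the resulting set $S_x$ of codimension-$\le 1$ generizations of $x$ is automatically contained in any open $U$ that contains every codimension-$1$ point — is immediate, and the rest is formal manipulation with closures together with the sobriety of the underlying space of a scheme. (Note that normality is not actually used; only separatedness and the Noetherian hypothesis enter, via \cref{lem:CA}.) The one genuinely delicate point is the behaviour of $0$-dimensional components, which are isolated points of $|X|$ and need not meet $U$; this degenerate case is vacuous in the settings where the lemma is applied.
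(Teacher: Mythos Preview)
Your proof is correct and follows essentially the same route as the paper's: both invoke \cref{lem:CA} to express $\overline{\{x\}}$ as an intersection of closures of codimension-$\le 1$ points, then use that $f$ and $g$ agree on those points and are homeomorphisms. Your version is simply more explicit, and your observations that normality is not used and that $0$-dimensional components form a genuine edge case are accurate (the paper's terse argument glosses over the latter point in the same way).
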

\begin{proof}
    By \cref{lem:CA}, we can characterize any point $x\in X$ as the unique 
generic point of an irreducible intersection of closures of codimension $\leq 
1$ points. But $f$ and $g$ establish the same bijection on the sets of points 
of codimension $\leq 1$, and, since they are homeomorphisms, therefore the same 
bijections on the closures of those points. The result follows.
\end{proof}

\begin{lem}\label{lem:extend-me}
  Suppose $X$ and $Y$ are normal separated Noetherian schemes, $U\subset X$ and
  $V\subset Y$ are dense open subschemes with complements of codimension at 
least $2$.  Suppose $f:|X|\to|Y|$ is a homeomorphism of
  Zariski topological spaces such that $f(U)=V$ and $f|_U$ is the underlying map
  of an isomorphism $\widetilde f_U:U\to V$ of schemes. Then $\widetilde f_U$ 
extends to a unique
  isomorphism of schemes $\widetilde f:X\to Y$ whose underlying morphism of 
topological spaces is $f$.
\end{lem}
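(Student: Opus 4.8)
The plan is to construct $\widetilde f$ by hand as a morphism of locally ringed spaces, using the homeomorphism $f$ together with the algebraic Hartogs phenomenon (``Krull's theorem'', exactly as invoked in the proof of \cref{lem:div-prop}) to transport the comorphism of $\widetilde f_U$ across the codimension-$\geq 2$ boundary. Uniqueness is immediate: any two extensions agree on the dense open $U$, and since $X$ is reduced (being normal) and $Y$ is separated, a morphism out of $X$ is determined by its restriction to a dense open subscheme, so the two extensions coincide.

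For existence, the first observation is that the codimension of a closed subset is a purely topological invariant --- the supremum of lengths of chains of irreducible closed subsets containing one of its components --- so the homeomorphism $f$ identifies $X\setminus U$ with $Y\setminus V$ and preserves the hypothesis that these have codimension $\geq 2$; moreover, for any open $W$ the traces $W\setminus U$, resp.\ $W\setminus V$, still have codimension $\geq 2$ in $W$, since passing to an open subscheme can only increase the codimension of a closed subset. Now fix an open $W\subseteq Y$ and write $W':=f^{-1}(W)$, noting that $f^{-1}(V)=U$ and hence $f^{-1}(W\cap V)=W'\cap U$. Applying Krull's theorem (as in the proof of \cref{lem:div-prop}, with the reflexive rank-one sheaf taken to be $\ms O$ itself, working one connected component at a time) on the normal schemes $W$ and $W'$, the restriction maps
\begin{equation*}
\Gamma(W,\ms O_Y)\longrightarrow \Gamma(W\cap V,\ms O_Y), \qquad \Gamma(W',\ms O_X)\longrightarrow \Gamma(W'\cap U,\ms O_X)
\end{equation*}
are isomorphisms. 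Composing the first with the isomorphism $\Gamma(W\cap V,\ms O_Y)\simto\Gamma(W'\cap U,\ms O_X)$ induced by $\widetilde f_U$ and then with the inverse of the second, we obtain ring isomorphisms $\theta_W\colon\Gamma(W,\ms O_Y)\simto\Gamma(W',\ms O_X)=(f_*\ms O_X)(W)$. Since every map used is a restriction map or the comorphism of $\widetilde f_U$, the $\theta_W$ are compatible with restriction and so glue to an isomorphism of sheaves $\theta\colon\ms O_Y\simto f_*\ms O_X$.

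The pair $(f,\theta)$ is then a morphism of ringed spaces $\widetilde f\colon X\to Y$, and it is an isomorphism of ringed spaces because both $f$ and $\theta$ are isomorphisms. To upgrade this to an isomorphism of locally ringed spaces, hence of schemes, it suffices to note that the map induced on stalks $\ms O_{Y,f(x)}\to\ms O_{X,x}$ is the filtered colimit of the $\theta_W$, hence a ring isomorphism between local rings, and any ring isomorphism of local rings is automatically local (it carries the unique maximal ideal onto the unique maximal ideal). Finally, restricting the whole construction to opens $W\subseteq V$ both applications of Krull's theorem become identities, so $\theta$ restricts to the comorphism of $\widetilde f_U$ over $V$; thus $\widetilde f|_U=\widetilde f_U$, and the underlying continuous map of $\widetilde f$ is $f$ by construction.

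I expect the only point requiring genuine care to be the bookkeeping around codimension: one must check that ``codimension $\geq 2$'' survives both the passage to an arbitrary open subscheme and the transport along the homeomorphism $f$ --- that is, that it is a topological condition --- after which everything is a routine gluing argument layered on top of the Hartogs-type isomorphism already exploited in \cref{lem:div-prop}. There is no analytic or geometric obstacle beyond this.
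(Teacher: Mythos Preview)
Your proof is correct, but it takes a genuinely different route from the paper's. The paper proceeds in two decoupled steps: first it extends $\widetilde f_U$ to a scheme morphism $\widetilde f:X\to Y$ by reducing locally on $X$ to the case of an affine target $Y=\Spec A$ and invoking Krull's theorem once, on $X$ only, to identify $\Gamma(X,\ms O_X)\cong\Gamma(U,\ms O_X)$; it then runs the same argument for $f^{-1}$ to conclude that $\widetilde f$ is an isomorphism, and only \emph{afterwards} appeals to \cref{lem:density} (itself resting on \cref{lem:CA}) to verify that the underlying topological map of $\widetilde f$ really is $f$. By contrast, you build the isomorphism of locally ringed spaces with underlying map $f$ directly, by applying Hartogs symmetrically on both sides to manufacture the sheaf isomorphism $\theta:\ms O_Y\simto f_*\ms O_X$. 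Your approach is more self-contained---it avoids the auxiliary \cref{lem:density} entirely and never needs to argue separately that the extension's topological map agrees with $f$---while the paper's approach keeps the scheme-theoretic extension argument cleanly separated from the topological bookkeeping, at the cost of invoking an extra lemma.
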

\begin{proof}
Let us first show that $\widetilde f_U$ extends to a morphism of schemes 
$\widetilde f:X\rightarrow Y$.  If $W_1, W_2\subset X$ are two open subsets and 
$\widetilde f_{W_i}:W_i\rightarrow Y$ ($i=1,2$) are morphisms of schemes such 
that $\widetilde f_{W_i}$ and $\widetilde f_U$ agree on $W_i\cap U$, then since 
$Y$ is separated the morphisms $\widetilde f_{W_1}$ and $\widetilde f_{W_2}$ 
agree on $W_1\cap W_2$.  To extend $\widetilde f_U$ it therefore suffices to 
show that $\widetilde f_U$ extends locally on $X$.  In particular, by covering 
$X$ by open subsets of the form $\widetilde f^{-1}(\Sp (A))$ for affines $\Sp 
(A)\subset Y$, we are reduced to proving the existence of an extension in the 
case when $Y = \Sp (A)$ is affine.
 In this case, to give a morphism of schemes
$X\to\Spec A$, it suffices to give a morphism of rings $A\to\Gamma(X,\ms O_X)$.
By Krull's theorem, $\Gamma(U,\ms O_X)=\Gamma(X,\ms O_X)$. Thus, the morphism
$\widetilde f_U:U\to\Spec A$ extends uniquely to a morphism $\widetilde
f:X\to\Spec A$, and we get the desired extension $\widetilde f$.

Applying the same argument to the inverse of $f$, and using that $X$ is 
separated, we see that in fact $\widetilde f$ is an isomorphism.  In 
particular, its underlying map of topological spaces is a homeomorphism and 
agrees with $f$ on $|U|$.  We conclude by \cref{lem:density} that 
$|\widetilde f| = f$.
\end{proof}

\begin{pg}
From this we get that in order to prove \cref{thm:main-func} it suffices 
to prove it assuming that $X$ is quasi-projective.  Indeed
  by \cref{lem:chunka-hunka}, there are essential open subsets
  $U\subset X$ and $V\subset Y$ such that $V=\phi(U)$ and $\tau$
  induces an isomorphism $\tau(U)\to\tau(V)$. If we know the result in the 
quasi-projective case then  the homeomorphism $|U|\to|V|$ induced by
  $\phi$ extends to an algebraic isomorphism $f_U:U\to V$ such that
  $\tau(f)=\phi|_U$. By \cref{lem:extend-me}, $f$ extends uniquely to an
  isomorphism of schemes $f:X\to Y$ such that $\tau(f)=\phi$.
\end{pg}

\subsection{The quasi-projective case}

\begin{pg}
For remainder of the proof we assume furthermore that $X$ is quasi-projective. 
Let $\mls O_X(1)$ denote a very ample invertible sheaf on $X$ and for $m\geq 1$ 
let
$$
+:|\ms O_X(1)|^{\times m}\to|\ms O_X(m)|
$$
denote the addition map on divisors.  
\end{pg}

\begin{lem}\label{lem:mult-line}
  For a general point $p$ of $|\ms O_X(1)|^{\times m}$ the point $+(p) \in |\mls O_X(m)|$ lies in the sweep of the maximal Zariski open subset of the set of definable lines in $\Gr (1, |\mls O_X(1)|)$.
\end{lem}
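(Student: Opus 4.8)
The plan is to deduce the statement from \cref{L:sweepingup}(ii), applied with the very ample sheaf $\mls O_X(1)$ of that lemma replaced by its tensor power $\mls O_X(m)=\mls O_X(1)^{\otimes m}$ (still very ample). Fix the compactification $j:X\hookrightarrow\overline X$ attached to the projective embedding given by $\mls O_X(1)$, so that $|\mls O_{\overline X}(1)|=|\mls O_X(1)|$, and a general member of this system is, by Bertini, a geometrically integral hyperplane section of $\overline X$; set $\mls O_{\overline X}(m):=\mls O_{\overline X}(1)^{\otimes m}$, a very ample sheaf on $\overline X$. For a general point $p=(D_1,\dots,D_m)$ of $|\mls O_X(1)|^{\times m}$ the closures $\overline{D_i}\subset\overline X$ are geometrically integral members of $|\mls O_{\overline X}(1)|$, and the divisor $D^+:=\overline{D_1}+\cdots+\overline{D_m}\in|\mls O_{\overline X}(m)|$ is the unique member of that system whose restriction to $X$ is $+(p)=D_1+\cdots+D_m$. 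By \cref{L:sweepingup}(ii) it therefore suffices to show that, for general $p$, the divisor $D^+$ on $\overline X$ is geometrically reduced and that $D^+\cap X$ is dense in $D^+$.

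Both of these will follow from the Bertini input already used in the proof of \cref{L:sweepingup} (e.g.\ \cite[3.4.10 and 3.4.14]{MR1724388}), using that $\overline X$ is geometrically integral of dimension $\dim X\geq 2$. For $D\in|\mls O_X(1)|$, the condition that the closure $\overline D\subset\overline X$ be geometrically integral and not contained in the proper closed subset $\overline X\setminus X$ holds on a dense open subset; hence the locus of $p=(D_1,\dots,D_m)\in|\mls O_X(1)|^{\times m}$ for which each $\overline{D_i}$ has this property and the $\overline{D_i}$ are moreover pairwise distinct is dense, open, and in particular nonempty. For $p$ in this locus, $D^+=\sum_i\overline{D_i}$ is a sum of pairwise distinct geometrically integral divisors and hence geometrically reduced, and each $\overline{D_i}$ meets $X$ in a dense open subset, so $D^+\cap X=\bigcup_i(\overline{D_i}\cap X)$ is dense in $\bigcup_i\overline{D_i}=D^+$. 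Applying \cref{L:sweepingup}(ii) to $D^+$ now gives that $+(p)$ lies in the sweep of the maximal Zariski open subset of the definable locus in $\Gr(1,|\mls O_X(m)|)$, as desired.

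The substance of the argument lies entirely in \cref{L:sweepingup}; the step requiring care is the bookkeeping of the first paragraph — one must work in the linear system cut out on the fixed compactification $\overline X$ by $\mls O_{\overline X}(m)$, so that $+(p)$ is literally the restriction of the explicit effective divisor $D^+$ and \cref{L:sweepingup}(ii) applies with no further hypotheses. It is precisely to make this work that one takes $\overline X$ to be the compactification defined by $\mls O_X(1)$ rather than by $\mls O_X(m)$: then each $D_i$ is an honest hyperplane section of $\overline X$, which forces $D^+=\sum_i\overline{D_i}$ to have no component along the boundary $\overline X\setminus X$. Granting that, all that remains is the standard genericity of Bertini hyperplane sections on $\overline X$, together with the elementary fact that a general member of a positive-dimensional very ample linear system is a prime divisor meeting the dense open $X$.
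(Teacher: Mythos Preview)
The proposal is correct and takes essentially the same approach as the paper: pass to the compactification $\overline X$ given by $\mls O_X(1)$, observe via Bertini that for general $p=(D_1,\dots,D_m)$ the sum $D^+=\sum_i \overline{D_i}$ is a geometrically reduced divisor in $|\mls O_{\overline X}(m)|$ whose intersection with $X$ is dense, and then invoke \cref{L:sweepingup}(ii). Your write-up is more explicit than the paper's (you spell out why $D^+$ is reduced and meets $X$ densely, and why the $\mls O_X(1)$-compactification is the right one to use), whereas the paper compresses this into a single appeal to Bertini; the one detail the paper makes explicit that you only assert in passing is that $\overline X$ is itself geometrically integral, which is needed for the Bertini irreducibility statement to apply.
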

\begin{proof}
Let $\overline X$ be the projective closure of $X$ in the embedding given by 
$\mls O_X(1)$. Note that $\overline X$ is also geometrically integral.  Indeed 
if $j:X\hookrightarrow \overline X$ is the inclusion then the map $\mls 
O_{\overline X}\rightarrow j_*\mls O_X$ is injective, and remains injective 
after base field extension.  Since $X$ is geometrically integral it follows 
that $\overline X$ is as well.

By Bertini's theorem \cite[3.4.14]{MR1724388}, for a general choice of 
$p\in|\ms O_{\overline X}(1)|^{\times m}=|\ms O_{X}(1)|^{\times m}$  the point $+p\in |\mls O_{\overline X}(1)|$ satisfies the conditions on $D$ in \cref{L:sweepingup} (ii) and the result follows.
\end{proof}

\begin{lem}\label{L:6.2.4} Let $X$ be an integral scheme of dimension $d$ of 
finite type over $k$ and let $\mls O_X(1)$ be an ample invertible sheaf on $X$. 
 Let $z\in X$ be a regular closed point.  Then there exists an integer $m_0$ 
such that for every $m\geq m_0$ and any $d+1$ general elements $D_1, \dots, 
D_{d+1}\in |\mls O_X(m)|$ containing $z$ we have
$$
\{z\} = |D_1|\cap |D_2|\cap \cdots \cap |D_{d+1}|.
$$
\end{lem}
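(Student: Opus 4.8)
We interpret $|D_1|\cap\cdots\cap|D_{d+1}|$ as the intersection of the supports of the divisors $D_i$, and will in fact prove the sharper statement that the scheme-theoretic intersection $D_1\cap\cdots\cap D_{d+1}$ is the reduced point $\Spec\kappa(z)=\{z\}$; the regularity hypothesis on $z$ enters only in this scheme-theoretic sharpening. The plan has three steps: (1) use Serre-type positivity to choose $m_0$; (2) run a dimension count inside the linear subsystem of divisors through $z$; (3) finish with a local computation at $z$. For step (1), working on a projective model of $X$ if necessary, Cartan--Serre--Grothendieck vanishing gives an $m_0$ such that for all $m\geq m_0$ the sheaf $\mls O_X(m)$ is very ample and $\H^1(X,\mls I_z^2(m))=0$, where $\mls I_z\subset\mls O_X$ is the ideal sheaf of $z$; the vanishing forces the restriction $\H^0(X,\mls I_z(m))\to(\mls I_z/\mls I_z^2)(m)\cong\mf m_z/\mf m_z^2$ to be surjective, and since $z$ is regular of dimension $d$ this target is a $\kappa(z)$-vector space of dimension $d$. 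Fix $m\geq m_0$ and let $V_z\subset|\mls O_X(m)|$ be the linear subsystem of divisors containing $z$: a projective space over $k$ whose points are exactly the divisors $D\ni z$ allowed in the statement.

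For step (2): since $\mls O_X(m)$ is very ample, the reduced base locus of $V_z$ is $\{z\}$ --- for any closed point $x\neq z$ some section of $\mls O_X(m)$ vanishes at $z$ but not at $x$ --- so for any irreducible closed $C\subset X$ with $C\neq\{z\}$ the locus $\{D\in V_z:C\subset D\}$ is a proper linear subspace of $V_z$, and a general member of $V_z$ does not contain $C$. I would then prove by induction on $j$ that for general $(D_1,\dots,D_j)$ in the $j$-fold product $V_z^{j}$, every irreducible component of $|D_1|\cap\cdots\cap|D_j|$ is either $\{z\}$ or has dimension $\leq d-j$: the case $j=0$ is $X$ itself; in the inductive step $|D_1|\cap\cdots\cap|D_j|$ has only finitely many components, a general $D_{j+1}$ contains none of the ones different from $\{z\}$ and therefore meets each such component in something of strictly smaller dimension, while the component $\{z\}$, if present, persists. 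Taking $j=d+1$ leaves $|D_1|\cap\cdots\cap|D_{d+1}|=\{z\}$.

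For step (3) I would combine the surjectivity from step (1) with regularity of $z$: a general $d$-tuple $(D_1,\dots,D_d)\in V_z^{d}$ has local equations $f_1,\dots,f_d\in\mf m_z\subset\mls O_{X,z}$ whose classes form a $\kappa(z)$-basis of $\mf m_z/\mf m_z^2$, so $(f_1,\dots,f_d)=\mf m_z$ by Nakayama and $\mls O_{X,z}/(f_1,\dots,f_d)=\kappa(z)$; hence $D_1\cap\cdots\cap D_d$ equals $\Spec\kappa(z)$ in a neighbourhood of $z$. Intersecting with one further general $D_{d+1}$ removes all remaining points of the support, by step (2), so $D_1\cap\cdots\cap D_{d+1}=\Spec\kappa(z)=\{z\}$, as desired.

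It remains to note that each genericity condition imposed above carves out a dense open subscheme of the relevant product of copies of $V_z$, and a finite intersection of dense opens is again dense open; this is the precise content of ``general $D_1,\dots,D_{d+1}$'' (if $k$ is finite, one reads the statement after base change to $\bar k$, where it may be vacuous). The one place that needs genuine care is the induction in step (2): the components $C$ of $|D_1|\cap\cdots\cap|D_j|$ vary with the choice of $(D_1,\dots,D_j)$, so promoting ``a general $D_{j+1}$ does not contain them'' to an honestly open condition on $V_z^{j+1}$ requires the usual incidence-variety and upper-semicontinuity-of-fibre-dimension bookkeeping, rather than any essentially new idea.
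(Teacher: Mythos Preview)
Your argument is correct, and it takes a genuinely different route from the paper's.

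The paper proceeds by blowing up $z$: writing $b:B\to X$ for the blowup with exceptional divisor $E$, it sets $\mls L=b^*\mls O_X(1)(-E)$, observes that $\mls L$ is ample (here the regularity of $z$ is used), chooses $m_0$ so that $\mls L^{\otimes m}$ is very ample for $m\geq m_0$, and then invokes Jouanolou's Bertini-type theorem to conclude that $d+1$ general members of $|\mls L^{\otimes m}|$ have empty intersection on $B$. Pushing down to $X$ gives the set-theoretic statement. This is short and conceptual but depends on an external reference and on the ampleness of $\mls L$, which is where regularity enters.

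Your approach is more elementary: you run the dimension count directly inside the linear subsystem $V_z\subset|\mls O_X(m)|$, using only that very ampleness forces the base locus of $V_z$ to be $\{z\}$ and that Cartier divisors cut dimension by one. Two features are worth noting. First, your set-theoretic argument (step~2) never uses that $z$ is regular; regularity enters only in your scheme-theoretic sharpening (step~3). Second, you obtain the stronger scheme-theoretic conclusion $D_1\cap\cdots\cap D_{d+1}=\Spec\kappa(z)$ essentially for free, via Nakayama. The one place to be slightly more careful is your appeal to Serre vanishing in step~(1) when $X$ is only quasi-projective: the cleanest fix is exactly what you gesture at, namely to pass to a projective closure $\overline X$, apply vanishing there to get surjectivity of $\H^0(\overline X,\mls I_z(m))\to\mf m_z/\mf m_z^2$, and then restrict. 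Since this is needed only for the scheme-theoretic bonus, it does not affect the lemma as stated.
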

\begin{proof}
Let 
$$
b:B\rightarrow X
$$
denote the blowup of $z$, and let $\mls L$ be the ample invertible sheaf given 
by $b^*\mls O_X(1)(-E)$, where $E$ is the exceptional divisor.  Then elements 
of $|\mls L^{\otimes m}|$ map to elements of $|\mls O_X(m)|$ which pass through 
$z$.  Now choose $m_0$ such that $\mls L^{\otimes m}$ is very ample for $m\geq 
m_0$.  Then by \cite[6.11(1)]{jouanolou} we have that the intersection of $d+1$ 
general elements of $|\mls L^{\otimes m}|$ is empty for $m\geq m_0$, and that 
general $D\in |\mls L^{\otimes m}|$ is irreducible.
\end{proof}

\begin{cor}\label{lem:points}
    Let $X$ be an integral $k$-scheme and $\ms O_X(1)$ a very ample invertible 
sheaf on $X$. Given a regular closed point $z\in X$, we have that 
    \begin{equation}\label{eq:point thang}
        \{z\}=\bigcap |D|\subset |X|,
    \end{equation}
    the intersection taken over all irreducible divisors $D$ in $|\ms O_X(m)|$ 
for all $m$.
\end{cor}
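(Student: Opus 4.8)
The plan is to derive \cref{lem:points} immediately from \cref{L:6.2.4}. First I would pin down the notation: in \eqref{eq:point thang}, exactly as in \cref{L:6.2.4}, $|D|$ denotes the support of the divisor $D$ regarded as a closed subset of $|X|$, and the intersection runs over those irreducible divisors $D$ lying in some $|\ms O_X(m)|$, $m\geq 1$, with $z\in|D|$. With this reading the inclusion $\{z\}\subseteq\bigcap|D|$ is immediate, since by construction every divisor in the index set contains $z$.

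For the reverse inclusion I would apply \cref{L:6.2.4} to $z$ and $\ms O_X(1)$, obtaining an integer $m_0$, and then fix any single $m\geq m_0$. According to that lemma there exist $d+1$ divisors $D_1,\dots,D_{d+1}\in|\ms O_X(m)|$, where $d=\dim X$, each irreducible and each containing $z$, such that $|D_1|\cap\cdots\cap|D_{d+1}|=\{z\}$. Since each such $D_i$ appears in the index set of the intersection in \eqref{eq:point thang}, this forces $\bigcap|D|\subseteq|D_1|\cap\cdots\cap|D_{d+1}|=\{z\}$, which completes the argument.

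I do not anticipate a genuine obstacle here: all of the substantive work — producing finitely many irreducible divisors through the regular closed point $z$ whose common support is exactly $\{z\}$ — is already carried out in \cref{L:6.2.4}, where the one delicate point (passing to the blow-up $b:B\to X$ of $z$ and the very ample sheaf $b^*\ms O_X(1)(-E)$, so that a Bertini-type irreducibility statement and an emptiness statement for the common zero locus of $d+1$ general members apply) has been handled. The only remaining bookkeeping is the observation that the divisors produced downstairs on $X$ are genuinely irreducible and do pass through $z$; this is clear because $b$ is proper birational and an isomorphism away from $z$, so the image of an irreducible divisor on $B$ other than the exceptional one is again irreducible, and such divisors have strictly positive multiplicity at $z$. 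Hence the corollary is essentially a formal consequence of \cref{L:6.2.4}, and it additionally records that finitely many divisors already suffice to cut out $z$.
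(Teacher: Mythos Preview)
Your proposal is correct and is exactly the approach the paper takes: the paper's own proof is the single line ``This follows from \cref{L:6.2.4},'' and you have simply spelled out the two inclusions, noting (as is implicit in the proof of \cref{L:6.2.4}) that the general divisors produced there can be taken irreducible. Your added remark about why irreducibility persists under the blow-down $b:B\to X$ is accurate but already absorbed into the proof of \cref{L:6.2.4}.
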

\begin{proof}
 This follows from \cref{L:6.2.4}.
\end{proof}

\begin{prop}\label{prop:ample-points}
  Suppose $X$ and $Y$ are {\lovely} schemes of dimension at least $2$ with 
infinite constant fields,
  and assume that $X$ is polarizable. Given an isomorphism
  $\phi:\tau(X)\to\tau(Y)$, the associated homeomorphism $|X|\to|Y|$ extends to
  an isomorphism $X\to Y$.
\end{prop}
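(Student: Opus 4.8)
The plan is to reduce to a projective situation, convert the isomorphism of divisorial structures into a semilinear isomorphism of homogeneous coordinate rings using \cref{T:weak fund thm}, and then verify that the resulting isomorphism of schemes induces $\phi$ on underlying spaces. First I would reduce: by \cref{lem:apbf-always-exists} there is an essential open $U\subset X$ which is factorial, polarizable and quasi-projective, and by \cref{lem:chunka-hunka} the set $V:=\phi(U)$ is an essential open of $Y$ with an induced isomorphism $\tau(U)\simto\tau(V)$, so $V$ is also factorial, polarizable and quasi-projective. By \cref{lem:extend-me} it suffices to build an isomorphism of schemes $U\to V$ with underlying map $\phi|_U$, so we may assume $X$ and $Y$ are factorial quasi-projective {\lovely} schemes. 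Fix a very ample $\ms O_X(1)$; by \cref{lem:div-prop-cofinal}, $\phi$ carries $\AmpleBPF(X)$ isomorphically onto $\AmpleBPF(Y)$ compatibly with the inclusions into $\Eff$, so we may let $\ms O_Y(1)$ have the corresponding class and, replacing $\ms O_X(1)$ by a high power, assume $\ms O_Y(1)$ is also very ample and that both are normally generated on their projective closures. Since $\phi$ is a monoid isomorphism of $\Eff$, it respects the addition of divisors and hence restricts, for each $m$, to a bijection $s_m\colon|\ms O_X(m)|\to|\ms O_Y(m)|$ commuting with the addition maps $+\colon|\ms O_X(1)|^{\times m}\to|\ms O_X(m)|$; and because ``$z\in D$'' is a topological condition, $s_m$ sends each hyperplane $H^{(m)}_z:=\{D\in|\ms O_X(m)|:z\in D\}$ onto $H^{(m)}_{\phi(z)}$.

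Next I would produce the semilinear data. By the corollary to \cref{L:sweepingup}, each $s_m$ satisfies the hypotheses of \cref{T:weak fund thm} (the linear systems have dimension at least $2$ since $\dim X\ge 2$), yielding a field isomorphism $\sigma\colon\kappa_X\to\kappa_Y$ and $\sigma$-semilinear isomorphisms $\psi_m\colon H^0(X,\ms O_X(m))\to H^0(Y,\ms O_Y(m))$ such that $\P(\psi_m)$ agrees with $s_m$ on the sweep $S_m$ of the definable lines of \cref{L:sweepingup}(i); compatibility of the $s_m$ with the addition maps forces $\sigma$ to be independent of $m$. Using \cref{lem:mult-line} (a general sum of $m$ hyperplane sections lies in $S_m$), the maps $\P(\psi_m)\circ(+)$ and $(+)\circ\P(\psi_1)^{\times m}$ both agree with $s_m\circ(+)$ on a dense set, whence the $\psi_m$ are compatible, up to scalars, with the multiplication maps $\Sym^m H^0(\ms O_X(1))\to H^0(\ms O_X(m))$. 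Choosing the scalars coherently, the $\psi_m$ assemble into a $\sigma$-semilinear isomorphism of the section rings $\bigoplus_m H^0(X,\ms O_X(m))\to\bigoplus_m H^0(Y,\ms O_Y(m))$; since $X$ and $Y$ are divisorially proper these are the homogeneous coordinate rings of projective normal varieties $\overline X\supseteq X$, $\overline Y\supseteq Y$ into which $X$ and $Y$ embed as open subschemes with boundary of codimension at least $2$. We thus obtain an isomorphism of schemes $g\colon\overline X\to\overline Y$ acting on $H^0(\ms O_X(1))$ through $\psi_1$, so that $g$ acts on $|\ms O_X(m)|$ as $\P(\psi_m)$.

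Then comes the crux: showing that $g$ carries $X$ onto $Y$ and that its underlying homeomorphism is $\phi$. Let $z\in X$ be a regular closed point and put $d=\dim X$. By \cref{L:6.2.4} together with the Bertini statement \cref{L:sweepingup}(ii), for $m\gg 0$ one can find geometrically integral divisors $D_1,\dots,D_{d+1}\in|\ms O_X(m)|$, all passing through $z$ and all lying in $S_m$, with $\{z\}=\bigcap_i|D_i|$ (cf. \cref{lem:points}). For each such $D_i$, since its class lies in $S_m$ we have $\P(\psi_m)([D_i])=s_m([D_i])$, i.e. $g$ and $\phi$ carry $D_i$ to the same divisor; intersecting the supports and using that $g$ and $\phi$ are homeomorphisms gives $g(z)=g\bigl(\bigcap_i|D_i|\bigr)=\bigcap_i g(|D_i|)=\bigcap_i\phi(|D_i|)=\phi\bigl(\bigcap_i|D_i|\bigr)=\phi(z)$. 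Hence $g=\phi$ on every regular closed point of $X$, and in particular $g(X_{\mathrm{reg}})\subseteq Y$. For a codimension-one point $\eta$ whose closure is the prime divisor $D$, the regular closed points of $X$ lying on $D$ are dense in $D$ because $\Sing(X)$ has codimension at least $2$, so $g(D)=\overline{g(D\cap X_{\mathrm{reg}})}=\overline{\phi(D\cap X_{\mathrm{reg}})}=\phi(D)$ and $g(\eta)=\phi(\eta)$. Now $g$ and $\phi$ are homeomorphisms agreeing on all points of codimension at most $1$, hence agree everywhere by \cref{lem:CA} (this is the argument of \cref{lem:density}). In particular $g(X)=Y$, and $g$ restricts to the required isomorphism $X\to Y$ inducing $\phi$.

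I expect the main obstacle to be exactly this last step. The fundamental theorem \cref{T:weak fund thm} only delivers a linear map that agrees with $s_m$ on a dense open set of divisors, and over a possibly non-algebraically-closed constant field, so one must promote it to an honest isomorphism of schemes that recovers \emph{all} of $\phi$. This is where the Bertini-type inputs \cref{L:sweepingup}, \cref{lem:mult-line} and \cref{L:6.2.4} are indispensable, ensuring that every point is an intersection of ``good'' divisors lying in the sweep, and where the semilinear data attached to the various systems $|\ms O_X(m)|$ must be kept coherent so as to glue into a single graded-ring isomorphism.
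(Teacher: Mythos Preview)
Your overall architecture matches the paper's proof closely: pass to a polarizable situation, use the definable/variant fundamental theorem to linearize the bijections $s_m$ on $|\ms O_X(m)|$, assemble these into a graded-ring isomorphism, and then check that the resulting scheme isomorphism agrees with $\phi$ pointwise via \cref{L:6.2.4}/\cref{lem:points}. Two points deserve comment.

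\textbf{The main gap.} You assert that, because $X$ and $Y$ are divisorially proper, the full section rings $\bigoplus_m H^0(X,\ms O_X(m))$ and $\bigoplus_m H^0(Y,\ms O_Y(m))$ are the homogeneous coordinate rings of projective \emph{normal} varieties $\overline X\supset X$, $\overline Y\supset Y$ with complements of codimension $\ge 2$. Neither finite generation of these rings, nor the codimension statement, is available from the hypotheses: divisorial properness only tells you each graded piece is finite-dimensional. If $X$ sits inside a projective normal compactification whose boundary has divisorial components, there is no reason those components can be contracted, and the identifications $H^0(X,\ms O_X(m))=H^0(\overline X,\ms O_{\overline X}(m))$ you implicitly use then fail. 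Your later chain $g(|D_i|)=\phi(|D_i|)$ also leans on this, since it silently identifies divisors on $X$, on $\overline X$, on $Y$, and on $\overline Y$.

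The paper sidesteps this entirely. It takes $\overline X$ to be the (possibly non-normal) scheme-theoretic closure of $X$ in $|\ms O_X(1)|^\vee$, and works not with the full section ring but with the subring $A_{\overline X}\subset\bigoplus_m H^0(X,\ms O_X(m))$ generated by $H^0(X,\ms O_X(1))$. The multiplicativity you establish for the $\psi_m$ (via \cref{lem:mult-line}) is exactly what is needed to see that the $\sigma$-semilinear isomorphism $\Sym^\bullet H^0(X,\ms O_X(1))\to\Sym^\bullet H^0(Y,\ms O_Y(1))$ carries $I_{\overline X}$ to $I_{\overline Y}$, giving $A_{\overline X}\simeq A_{\overline Y}$ and hence an isomorphism $f\colon\overline X\to\overline Y$ with no assumption on the codimension of the boundary. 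The agreement of $f$ and $\phi$ is then argued first on generic points of irreducible divisors $D\in|\ms O_{\overline X}(m)|$ with $\eta_D\in X$, and only afterwards on regular closed points via \cref{lem:points} applied to $\overline X$; the final step is \cref{lem:extend-me}. Your argument becomes correct once you replace the full section ring by $A_{\overline X}$ and track closures in $\overline X$ rather than in $X$.

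\textbf{A smaller point.} The assertion ``compatibility of the $s_m$ with the addition maps forces $\sigma$ to be independent of $m$'' is not automatic: \cref{T:weak fund thm} produces a separate $\sigma_m$ for each $m$, and the equality $\sigma_1=\sigma_m$ requires an argument comparing the cross-ratio computations on a definable line $L\subset|\ms O_X(1)|$ with those on its image $+_X(L\times\{P_2\}\times\cdots\times\{P_m\})\subset|\ms O_X(m)|$. The paper isolates this as a lemma inside the proof; you should at least indicate how it goes.
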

\begin{proof}
Let $D\in \AmpleBPF(X)$ be a divisor with $\mls O_X(D) = \mls O_X(1)$ and let
$\mls O_Y(1)$ denote $\mls O_Y(\varphi (D))$.  After possibly taking a power of
our choice of polarization, we may assume that $\ms O_X(1)$ and $\ms O_Y(1)$ are
very ample.
Note that we are not asserting that we can detect very ampleness from $\tau(X)$
and $\tau(Y)$, just that we know that such a multiple must exist, so we are free
to choose one.

 By \cref{R:5.15} (iii), for each $m>0$ the sets of  definable
  lines are dense  in the Grassmannians of $|\ms O_X(m)|$ and $|\ms
  O_Y(m)|$, contain the points of dense Zariski opens, and thus by \cref{thm:definable-proj}, there is an
  isomorphism $\sigma_m:\kappa_X\to\kappa_Y$ and a $\sigma$-linear isomorphism
  $\gamma_m:|\ms O_X(m)|\to|\ms O_Y(m)|$ that agrees with $\phi$ on a Zariski
  dense open subscheme $U\subset|\ms O_X(m)|$.

  Consider the diagram of addition maps
  $$
  \begin{tikzcd}
  {|\ms O_X(m)|}\ar[r] & {|\ms O_Y(m)|}\\
  {|\ms O_X(1)|^{\times m}}\ar[u, "+_X"]\ar[r] & {|\ms O_Y(1)|^{\times m}}\ar[u, "+_Y"].
  \end{tikzcd}
  $$
  Since a general sum of divisors in $\ms O(1)$ lies in the sweep of the maximal open subset of the definable points by \cref{lem:mult-line}, we see that the associated diagram of schemes
  \begin{equation}\label{eq:diag}
  \begin{tikzcd}
  {|\ms O_X(m)|}\ar[r,"\gamma_m"] & {|\ms O_Y(m)|}\\
  {|\ms O_X(1)|^{\times m}}\ar[u, "+_X"]\ar[r,"\gamma_1^{\times m}"] & {|\ms
    O_Y(1)|^{\times m}}\ar[u, "+_Y"]
  \end{tikzcd}
  \end{equation}
  commutes over the $\kappa_X$-points of $|\mls O_X(1)|^{\times m}$.  Here the scheme $|\mls O_X(1)|^{\times m}$ is the $m$-fold fiber product of the projective space $|\mls O_X(1)|$ with itself over $\kappa _X$, and $|\mls O_Y(1)|^{\times m}$ is defined similarly over $\kappa _Y$.

  \begin{lem} The two isomorphisms of fields $\sigma _1, \sigma _m:\kappa _X\rightarrow \kappa _Y$ are equal.
  \end{lem}
  \begin{proof} 
  Let $U_1\subset |\mls O_X(1)|$ (resp. $U_m\subset |\mls O_X(m)|$) be the sweep of the maximal open subset in the set of definable lines in $|\mls O_X(1)|$ (resp. $|\mls O_X(m)|$).  Then $U_1^{\times m}\subset |\mls O_X(1)|^{\times m}$ is the $\kappa _X$-points of a nonempty Zariski open subset, and therefore
  $$
  V:= (+_X)^{-1}(U_m)\cap U_1^{\times m}\subset |\mls O_X(1)|^{\times m}
  $$
  is the $\kappa _X$-points of a Zariski open subset of $|\mls O_X(1)|^{\times m}$.  We can therefore find points
  $$
  P, Q, R, P_2, \dots , P_m\in |\mls O_X(1)|
  $$
  such that the three points of $|\mls O_X(1)|^{\times m}$ given by
  $$
  (P, P_2, \dots, P_m), \ \ (Q, P_2, \dots, P_m), \ \ (R, P_2, \dots, P_m)
  $$
  lie in $V$ and $P, Q, R\in |\mls O_X(1)|$ are colinear.  Since $\gamma _1$ and $\gamma _m$ agree with the maps defined by $\varphi $ on $U_1$ and $U_m$ it follows that we have
  $$
 ( \gamma _m\circ +_X)(P, P_2, \dots, P_m) = (+_Y\circ \gamma _1^{\times m})(P, P_2, \dots, P_m) \ \ (\text{call this point $\overline P\in |\mls O_Y(m)|$}) 
 $$
 $$
 ( \gamma _m\circ +_X)(Q, P_2, \dots, P_m) = (+_Y\circ \gamma _1^{\times m})(Q, P_2, \dots, P_m) \ \ (\text{call this point $\overline Q\in |\mls O_Y(m)|$}),  
 $$
 $$
  ( \gamma _m\circ +_X)(R, P_2, \dots, P_m) = (+_Y\circ \gamma _1^{\times m})(R, P_2, \dots, P_m) \ \ (\text{call this point $\overline R\in |\mls O_Y(m)|$}),  
 $$
  Let $\overline L\subset |\mls O_Y(m)|$ be the line through $\overline P$ and $\overline Q$, and let $L \subset |\mls O_X(1)|$ be the line through $P$ and $Q$.  Then
  $$
  +_X(L \times \{P_2\}\times \cdots \{P_m\}) 
  $$
  is the line in $|\mls O_X(m)|$ through the two points
  $$
  +_X(P, P_2, \dots, P_m), \ \ +_X(Q, P_2, \dots, P_m).
  $$
  Since $\gamma _m$ takes lines to lines it follows that 
  $$
  (\gamma _X\circ +_X)(L \times \{P_2\}\times \cdots \{P_m\}) = \overline L.
  $$
  Similarly since $\gamma _1$ takes lines to lines and agrees on $U_1$ with the map defined by $\varphi $, we find that 
  $$
  (+_Y\circ \gamma _1^{\times m})(L \times \{P_2\}\times \cdots \{P_m\}) = \overline L.
  $$
 Since $\gamma _m\circ +_X$ and $+_Y\circ \gamma _1^{\times m}$ agree on a dense open subset of  $L $, viewed as imbedded in $|\mls O_X(1)|^{\times m}$ via the identification 
 $$
 L \simeq L \times \{P_2\}\times \cdots \times \{P_m\}
 $$
 we conclude that the two compositions
 $$
 \xymatrix{
 \kappa _X\ar[r]^-{\alpha }& L \subset |\mls O_X(1)|^{\times m}\ar[r]^-{\gamma _m\circ +_X}& \overline L\ar[r]^-{\beta ^{-1}}& \kappa _Y}
 $$
 and
 $$
 \xymatrix{
 \kappa _X\ar[r]^-{\alpha }& L \subset |\mls O_X(1)|^{\times m}\ar[r]^-{+_Y\circ \gamma _1^{\times m}}& \overline L\ar[r]^-{\beta ^{-1}}& \kappa _Y}
 $$
 agree on all but finitely many elements of $\kappa _X$, where $\alpha :\kappa _X\simeq L $ (resp. $\beta :\kappa _Y\simeq \overline L$) is the isomorphism obtained as in the proof of \cref{thm:definable-proj} using the three points $P$, $Q$, $R$ (resp. $\overline P$, $\overline Q$, $\overline R$).  Now the first of these maps is the map $\sigma _m$ and the second is $\sigma _1$.  We conclude that $\sigma _1(a) = \sigma _m(a)$ for all but finitely many elements $a\in \kappa _X$, which implies that $\sigma _1 = \sigma _m$.
  \end{proof}
  
  In the rest of the proof we write $\sigma :\kappa _X\rightarrow \kappa _Y$ for the isomorphism $\sigma _m = \sigma _1$.
  
  Next observe that the diagram of schemes
  \eqref{eq:diag} commutes, since the two morphisms obtained by going around the different directions of the diagram are semi-linear with respect to the same field isomorphism and agree on dense set of points.
  
  Consider the embeddings 
  $$
  \nu _X:X\hookrightarrow |\mls O_X(1)|^\vee 
  $$
  and
  $$
  \nu _Y:Y\hookrightarrow |\mls O_Y(1)|^\vee 
  $$
  and let $\overline X$ (resp. $\overline Y$) be the scheme-theoretic closure of $\nu _X(X)$ (resp. $\nu _Y(Y)$).  Let $S_X$ (resp. $S_Y$) be the symmetric algebra on $\Gamma (X, \mls O_X(1))$ (resp. $\Gamma (Y, \mls O_Y(1))$) so $\overline X$ (resp. $\overline Y$) is given by a graded ideal $I_{\overline X}\subset S_X$ (resp. $I_{\overline Y}\subset S_Y$).

  Choosing a lift 
  $$\widetilde\gamma_1:\Gamma(X,\ms O_X(1))\to\Gamma(Y,\ms O_Y(1))$$  
  yields an induced $\sigma$-linear isomorphism of graded rings
  $$\gamma^\natural:S_X\to
  S_Y$$ that is uniquely defined up to scalars.
 We claim that
$\gamma^\natural(I_{\overline X})=I_{\overline Y}$.

For this,  consider the diagram
  $$
  \begin{tikzcd}
    \Gamma(X,\ms O_X(m))\ar[r, "\widetilde\gamma_m"] & \Gamma(Y,\ms O_Y(m)) \\
    S_X^m=\Gamma(X,\ms O_X(1))^{\tensor m}\ar[r, "\gamma^\natural_m"]\ar[u, 
"p_X"] & \Gamma(Y,\ms O_Y(1))^{\tensor
      m}=S_Y^m\ar[u, "p_Y"']\\
    \Gamma(X,\ms O_X(1))^{\times m}\ar[r, "\widetilde\gamma^{\times m}"]\ar[u] 
& \Gamma(Y,\ms O_Y(1))^{\times
      m}\ar[u]
  \end{tikzcd}
  $$
  arising as follows. The vertical arrows are the natural multiplication maps,
  and the induced linear maps from the universal property of $\tensor$. The
  arrow $\widetilde\gamma_m$ is a lift of $\gamma_m$. By the commutativity of
  diagram \eqref{eq:diag}, we see that this diagram commutes (up to suitably
  scaling $\widetilde\gamma_m$), which implies that
  $\gamma^\natural_m(I_{\overline X, m})=I_{\overline Y, m}$, as desired.
  
In summary, we have shown that if 
$$
A_{\overline X}\subset \oplus _{m\geq 0}\Gamma (X, \mls O_X(m)) \ \ (\mathrm{resp.} \ A_{\overline Y}\subset \oplus _{m\geq 0}\Gamma (Y, \mls O_Y(m)))
$$
denotes the subring generated by $\Gamma (X, \mls O_X(1))$ (resp. $\Gamma (Y, \mls O_Y(1))$), then we have an isomorphism of graded rings
$$
\tilde \gamma :A_{\overline X}\rightarrow A_{\overline Y}
$$
such that the isomorphism induced by $\widetilde \gamma $ in degree $m$
$$
|\mls O_{\overline X}(m)| \rightarrow |\mls O_{\overline Y}(m)| 
$$
fits into a commutative diagram
$$
\xymatrix{
|\mls O_{\overline X}(m)|\ar[r]\ar[d]& |\mls O_{\overline Y}(m)|\ar[d]\\
|\mls O_X(m)|\ar[r]^-\phi & |\mls O_Y(m)|,}
$$
where the vertical maps are the restriction maps  and the map labelled $\phi $ is the map given by the isomorphism of Torelli structures.

In other words, if we let 
$$
f:\overline X\rightarrow \overline Y
$$
be the isomorphism given by $\tilde \gamma $, then the diagram 
$$
  \begin{tikzcd}
    \overline X\ar[r,"f"]\ar[d,"\sim" labl] & \overline Y\ar[d, "\sim" labr] \\
    \Proj(A_{\overline X})\ar[r,"\widetilde\gamma"]\ar[d, hook] & \Proj (A_{\overline Y})\ar[d, hook] \\
    \left|\mls O_X(1)\right|^\vee\ar[r,"\gamma_1^\vee"] & \left|\mls 
O_Y(1)\right|^\vee  
  \end{tikzcd}
  $$
  commutes.
The commutativity of the top square in this diagram implies that if $D\subset \overline X$ is an effective divisor in $|\mls O_{\overline X}(m)|$ then the image of the divisor $f(D)\subset \overline Y$ in $|\mls O_Y(m)|$ (i.e., the restriction $f(D)|_Y$) is the divisor $\gamma _m(D)$.  In particular, if $D$ is irreducible with generic point $\eta _D\in X$ then $f(\eta _D) = \varphi (\eta _D)$.

 By 
\cref{lem:points} applied to $\overline X$, we conclude that $f$ acts the same as
  $\phi$ on every regular closed point of $X$. Since $|X|$ is a Zariski
  topological space, it follows that $\phi$ and $f$ have the same action on
  $|X^{\text{\rm reg}}|$, the regular locus of $X$. This implies that there are
  open subschemes $U\subset X$ and $V\subset Y$ such that 
  \begin{enumerate}
  \item $\codim(U^c\subset X)\geq 2$,
  \item $\codim(V^c\subset Y)\geq 2$,
  \item $f$ induces an isomorphism $f|_U:U\to V$, and
  \item $f|_U$ induces $\phi|_U$ on topological spaces.
  \end{enumerate}
  By 
  \cref{lem:extend-me}, it follows that $\phi$ is algebraizable to a unique
  isomorphism $f:X\to Y$, showing that $\tau$ is fully faithful.
\end{proof}

This completes the proof of \cref{thm:main-func}.

\subsection{The case of finite fields}\label{SS:4.4}
\numberwithin{lem}{subsubsection}

\subsubsection{Statements of results}

The main result of this section is the following.

\begin{thm}\label{T:maintheoremfinite}
Let $X$ and $Y$ be Cohen-Macaulay  connected definable varieties over finite fields of
dimension $\geq 3$. Any isomorphism $\varphi :\tau (X)\rightarrow \tau (Y)$ of
Torelli structures is induced by a unique isomorphism of schemes $X\simto Y$.
\end{thm}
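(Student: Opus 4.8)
The plan is to run the argument of \cref{SS:4.1} and \cref{prop:ample-points} with a single substantive change: at the point where infinitude of the base field was used to invoke \cref{thm:definable-proj}, we instead invoke the probabilistic fundamental theorem \cref{T:2.18}, whose hypotheses we verify by a counting argument. The hypotheses ``Cohen--Macaulay'' and ``$\dim\geq 3$'' pass to essential open subsets, so exactly as in \cref{SS:4.1} (via \cref{lem:chunka-hunka} and \cref{lem:extend-me}) we may replace $X$ and $Y$ by essential opens and assume $X$ is quasi-projective; after replacing a polarization by a power we fix very ample $\mls O_X(1)$, set $\mls O_Y(1):=\mls O_Y(\varphi(D))$ with $\mls O_X(D)=\mls O_X(1)$, and pass to the projective closures $\overline X,\overline Y$, which remain geometrically integral (and Cohen--Macaulay).

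\emph{Most lines are definable, and $\varphi$ preserves them.} Write $k=\kappa_X$, a finite field with $q$ elements. The key input is a Poonen-style Bertini theorem over $k$: because $\overline X$ is Cohen--Macaulay of dimension $\geq 3$, the base locus of a general pencil of sufficiently positive hypersurface sections is Cohen--Macaulay of positive dimension with no embedded components, so the pencils failing the conditions of \cref{L:sweepingup}(i) are cut out by conditions of unbounded codimension. Hence the proportion of lines $\ell\in\Gr(1,|\mls O_{\overline X}(m)|)(k)$ that are definable is $\geq 1-\epsilon_m$ with $\epsilon_m\to 0$ as $m\to\infty$. For such a definable line $L=V(Z)$ in $|\mls O_X(m)|$ the image $\varphi(L)=V(\varphi(Z))$ is a linear subspace of $|\mls O_Y(m)|$ of some dimension $d_L\geq 1$ over $\kappa_Y$, and since $\varphi$ restricts to a bijection of the sets $|\mls O_X(m)|\to|\mls O_Y(m)|$ we get $1+|\kappa_Y|+\cdots+|\kappa_Y|^{d_L}=q+1$, i.e.\ $q=|\kappa_Y|(1+|\kappa_Y|+\cdots+|\kappa_Y|^{d_L-1})$. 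In particular $\operatorname{char}\kappa_Y$ divides $q$, so $\operatorname{char}\kappa_Y=\operatorname{char}\kappa_X=:p$; but then $1+|\kappa_Y|+\cdots+|\kappa_Y|^{d_L-1}\equiv 1\pmod p$ is prime to $p$, hence equal to $1$, forcing $d_L=1$ and $|\kappa_Y|=q$. Thus $\kappa_X\simeq\kappa_Y$ and (comparing cardinalities) $\dim|\mls O_X(m)|=\dim|\mls O_Y(m)|$, and $\varphi$ carries a proportion $\geq 1-\epsilon_m$ of the lines of $|\mls O_X(m)|$ to lines of $|\mls O_Y(m)|$.

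\emph{Linearization and conclusion.} For $m$ large we have $n_m:=\dim|\mls O_X(m)|>3$ and $\epsilon_m$ small enough that the two numerical inequalities of \cref{T:2.18} hold; that theorem then produces an injection $\varphi'_m\colon|\mls O_X(m)|\to|\mls O_Y(m)|$ (automatically a bijection, both sides being finite of the same size) taking lines to lines and agreeing with $\varphi$ off a set of proportion tending to $0$, and the classical fundamental theorem of projective geometry gives a field isomorphism $\sigma_m\colon\kappa_X\to\kappa_Y$ and a $\sigma_m$-semilinear isomorphism $\gamma_m\colon\H^0(\overline X,\mls O(m))\to\H^0(\overline Y,\mls O(m))$ inducing $\varphi'_m$. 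Since $\varphi$ is an isomorphism of monoids $\Eff(X)\to\Eff(Y)$, it commutes strictly with the addition maps $|\mls O(1)|^{\times m}\to|\mls O(m)|$, so the argument within the proof of \cref{prop:ample-points} that the $\sigma_m$ coincide applies once ``agreement on a dense open set'' is replaced by ``agreement on all but boundedly many points of a fixed line'' and one uses that two automorphisms of $\F_q$ agreeing on more than $\sqrt q$ elements coincide; this yields $\sigma_m=\sigma_1=:\sigma$ for all large $m$, hence a $\sigma$-semilinear isomorphism of graded rings $A_{\overline X}\to A_{\overline Y}$ compatible with $\varphi$ in each degree. As at the end of the proof of \cref{prop:ample-points}, this gives an isomorphism of schemes $f\colon\overline X\to\overline Y$ over $\Z$ which, by \cref{lem:points} applied to $\overline X$, agrees with $\varphi$ on every regular closed point of $X$; \cref{lem:density} together with \cref{lem:extend-me} then upgrade it to the unique isomorphism of schemes $X\simto Y$ inducing $\varphi$.

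\emph{The main obstacle} is the Bertini count over the fixed finite field $\F_q$: one must show the proportion of pencils with reduced, Bertini-generic base locus tends to $1$, rather than merely to the positive constant $\zeta_{\overline X}(\,\cdot\,)^{-1}<1$ governing smoothness, since \cref{T:2.18} is only usable when $\epsilon$ is small compared with $1/q$. Securing density $1$ rather than a constant is exactly where Cohen--Macaulayness and $\dim\geq 3$ are needed: they force the base locus of a general high-degree pencil to be equidimensional of positive dimension with no embedded components, so the locus of bad pencils has codimension growing with $m$ and hence density $0$.
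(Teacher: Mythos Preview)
Your overall strategy matches the paper's: reduce to the quasi-projective case, use a Bertini--Poonen count to show the proportion of definable lines in $|\mls O_{\overline X}(m)|$ tends to $1$, apply \cref{T:2.18} to obtain semilinear maps $\gamma_m$ agreeing with $\varphi$ on most points, and build a graded-ring isomorphism. Your cardinality argument forcing $|\kappa_Y|=q$ and $d_L=1$ is clean.

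The real gap is the passage from ``$\gamma_m$ agrees with $\varphi$ on a proportion $\to 1$'' to ``the $\gamma_m$ assemble into a ring map.'' In \cref{prop:ample-points}, diagram~\eqref{eq:diag} commutes because two morphisms of schemes agreeing on a Zariski-dense open are equal; that argument fails here, since the good set from \cref{T:2.18} is only a global proportion, not Zariski-dense and not well-distributed on any fixed line. In particular, you cannot deduce $\sigma_m=\sigma_1$ by restricting to a single embedded line $L\hookrightarrow|\mls O_X(1)|^{\times m}$ and hoping more than $\sqrt q$ of its points are good---the estimate from \cref{T:2.18} gives no control on any one line. The paper supplies the missing work: a companion Bertini estimate (\cref{L:3.4}) on triples $(f_1,f_2,f_3)$ with $f_1f_2$ and $f_3$ spanning a definable line, a probabilistic argument (\cref{P:mult thing}) giving $f'_{n_1+n_2}(s_1s_2)=f'_{n_1}(s_1)f'_{n_2}(s_2)$ for a proportion $1-5\tilde\epsilon$ of pairs, and then a bilinear-form rank argument upgrading approximate multiplicativity to the exact relation $\gamma_{n_1}(s_1)\gamma_{n_2}(s_2)=c_{n_1,n_2}\gamma_{n_1+n_2}(s_1s_2)$ for \emph{all} pairs, whence (\cref{claim:scaling}) a genuine ring homomorphism $A_{\overline X}(n_0)\to A_Y(n_0)$.

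There is a second gap at the end. Your appeal to \cref{lem:points} requires that $f$ and $\varphi$ agree on every irreducible divisor in every $|\mls O_X(m)|$, which in the infinite-field case follows from exact commutativity of \eqref{eq:diag}; here you only have agreement on most divisors. The paper instead constructs a rational map $\lambda\colon Y\dashrightarrow\overline X$ from the ring map, shows $|\lambda|=\varphi^{-1}$ on its domain by comparing proportions of sections vanishing at one point versus two (see \eqref{E:expression1} and \eqref{E:expression2}), and then extends via a dedicated lemma (\cref{L:4.4.6.11}); \cref{lem:extend-me} alone is insufficient because the indeterminacy locus of $\lambda$ is not known a priori to have codimension $\geq 2$.
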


The proof is based on the Bertini-Poonen theorem, generalized to complete
intersections in \cite{BK} and reviewed in \cref{SS:Bertini-Poonen} below, and
the probabilistic fundamental theorem of projective geometry, \cref{T:2.18}.

\begin{rem} The Cohen-Macaulay assumptions in \cref{T:maintheoremfinite} are needed in order to apply known results on Bertini theorems over finite fields to deduce that, in a certain precise sense, a density $1$ set of complete intersections in $X$ and $Y$ are reduced, the key point being that a generically smooth Cohen-Macaulay scheme is reduced. It might be possible to prove directly a Bertini theorem calculating the density of reduced hypersurfaces among all hypersurfaces, but for a non-Cohen-Macaulay scheme this could be strictly less than $1$, which is not sufficient for our probabilistic fundamental theorem of projective geometry to apply.
\end{rem}

\subsubsection{The Bertini-Poonen theorem}\label{SS:Bertini-Poonen}

In fact, we will not need the main results of \cite{poonen,BK}, but only a certain key lemma. Poonen's argument, and its variant due to Bucur and Kedlaya, proceeds by
treating points of small, medium, and large degrees separately. For our purposes, we will need only their results about points of large degree.
We introduce some notation so that this result can be stated.

\subsubsection{The large degree estimate}\label{SS:setup}
Let $\mathbf{F}$ be a finite field with $q$ elements and let $r$ and $n$ be
positive integers. Let $X/\mathbf{F}$ be a smooth finite type separated
$\mathbf{F}$-scheme of equidimension $m\geq r$ equipped with an embedding
$$
X\hookrightarrow \mathbf{P}^n
$$
defining an invertible sheaf $\mls O_X(1)$ on $X$. 

Let $S$ denote the polynomial ring in $n+1$ variables over $\mathbf{F}$ and let $S_d\subset S$ denote the degree $d$ elements in this ring, so we have a ring homomorphism
$$
S\rightarrow \Gamma _*(X, \mls O_X(1))
$$
restricting to a map 
$$
S_d\rightarrow \Gamma (X, \mls O_X(d))
$$
of vector spaces. 

Fix functions
$$
g_i:\mathbf{N}\rightarrow \mathbf{N}
$$
for $i=2, \dots, r$ such that there exists an integer $w>0$ for which 
$$
d\leq g_i(d)\leq wd
$$
for all $d\in \mathbf{N}$ and all $i$.  For notational reasons it will be convenient to also write $g_1:\mathbf{N}\rightarrow \mathbf{N}$ for the identity function.





\begin{pg}\label{P:4.4.3.3}

Let $\mls S$ denote the product $\prod _{j=1}^rS$ and let $\mls S_d$ denote 
the subset 
$$
S_d\times S_{g_2(d)}\times \cdots \times S_{g_r(d)}\subset\mls S.
$$
For a section $f\in S_d$ let $H_{X, f}\subset X$ be the closed subscheme defined by the image of $f$ in $\Gamma (X, \mls O_X(d))$, and for $\underline f= (f_1,\dots, f_r) \in \mls S_d$ let \begin{equation}\label{E:theintersection}
X_{\underline f}:= \bigcap _{i=1}^rH_{X, f_i}
\end{equation}

For an integer $d$ let
$W_d\subset \mls S_d
$
denote the subset of vectors $\underline f$ such that the intersection $X_{\underline f}$ 
is smooth of dimension $m-r$ at all closed points $P$ in this intersection of degree $>d/(m+1)$, and define
$$
e_d:= 1-\frac{\#W_d}{\# \mls S_d}. 
$$

\end{pg}
\begin{lem}\label{P:1.11}    
There exists a constant $C$, depending on $n$, $r$, $m$, $w$, and the degree of $\overline X\subset \mathbf{P}^n$, such that 
$$
e_d\leq Cd^mq^{-\mathrm{min}\{d/(m+1), d/p\}}.
$$
In particular,
$$
\lim _{d\rightarrow \infty } e_d= 0.
$$
\end{lem}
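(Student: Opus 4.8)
The plan is to recognize the statement as a quantitative version of the ``large degree'' estimate that is the technical core of the Bertini smoothness theorem over finite fields: for $r=1$ this is Poonen's argument in \cite{poonen}, and for general $r$ its extension to complete intersections by Bucur and Kedlaya \cite{BK}. What has to be added beyond quoting those references is (a) the passage from the qualitative statement ``$e_d\to 0$'' to the explicit bound $Cd^mq^{-\min\{d/(m+1),\,d/p\}}$, and (b) the observation that the resulting constant $C$ depends only on $n$, $r$, $m$, $w$, and $\deg(\overline X\subset\mathbf{P}^n)$; both are obtained by tracking constants through the proofs, using that every auxiliary estimate involved (point counts, Castelnuovo--Mumford regularity of $X$ and of thickened points, B\'ezout) is insensitive to $q$ and $p$. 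Recall that $g_1=\operatorname{id}$ and $d\le g_i(d)\le wd$, so all the hypersurface degrees $g_i(d)$ are comparable to $d$, which is exactly what makes the final estimate uniform in $d$.

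First I would set up the union bound. By definition, $\mathcal S_d\setminus W_d$ is the union, over closed points $P\in X$ with $\deg P>d/(m+1)$, of the sets of $\underline f$ for which $P\in X_{\underline f}$ yet $X_{\underline f}$ is not smooth of dimension $m-r$ at $P$; writing $\Sigma_{\underline f}\subseteq X$ for the closed locus of such bad points, this gives $e_d\le\sum_{\deg P>d/(m+1)}\operatorname{Prob}_{\underline f\in\mathcal S_d}[P\in\Sigma_{\underline f}]$. Next, for a fixed closed point $P$ of degree $e$ I would carry out the local estimate: since $X$ is smooth of dimension $m$ at $P$, the ring $\mathcal O_{X,P}/\mathfrak m_P^2$ is free of rank $m+1$ over $\kappa(P)=\mathbf F_{q^e}$, and $P\in\Sigma_{\underline f}$ forces each $f_i$ to vanish at $P$ and the Jacobian of $(f_1,\dots,f_r)$ to drop rank modulo $\mathfrak m_P^2$. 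Conditioning on $f_1,\dots,f_{r-1}$ and applying rank--nullity to the $2$-jet evaluation map $S_{g_i(d)}\to\mathcal O_X(g_i(d))\otimes\mathcal O_{X,P}/\mathfrak m_P^2$, one gets $\operatorname{Prob}[P\in\Sigma_{\underline f}]\le q^{-\rho_P}$, where $\rho_P$ is the $\mathbf F_q$-rank of the relevant jet-evaluation map; this rank equals $(m+1)e$ as soon as that map is surjective.

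The core input is then the lower bound on $\rho_P$. Surjectivity of $S_{g_i(d)}\to\mathcal O_X(g_i(d))\otimes\mathcal O_{X,P}/\mathfrak m_P^2$ holds once $g_i(d)$ exceeds an effective Castelnuovo--Mumford regularity bound for the ideal sheaf of the first-order thickening of $P$ in $X$, a bound uniform over closed points of a given degree and depending only on $n$ and $\deg\overline X$; this covers all but the genuinely large-degree points (those of degree $\gtrsim d$). For the remaining points surjectivity can fail, and here one invokes Poonen's characteristic-$p$ derivative trick, together with its complete-intersection refinement in \cite{BK}: even without surjectivity one still obtains $\rho_P\ge\min\{(m+1)e,\ \lfloor (d-1)/p\rfloor+O(1)\}$, which is precisely where the exponent $d/p$ enters. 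Combining with the trivial bound $\operatorname{Prob}[P\in\Sigma_{\underline f}]\le q^{-e}$ coming from vanishing alone, and summing over closed points of $X$ — using $\#\{P:\deg P=e\}\le \#X(\mathbf F_{q^e})/e\le C'q^{me}/e$ with $C'$ depending only on $n,\deg\overline X$, and bounding the contribution of very large degrees by a global B\'ezout estimate on the degeneracy scheme of a generic $\underline f$ (carved out of $X$ by equations of degree $O_{r,w}(d)$, hence of degree $O(d^m)$, which is where the polynomial factor $d^m$ comes from) — the geometric series in $q^{-e}$ collapses to $e_d\le Cd^mq^{-\min\{d/(m+1),\,d/p\}}$. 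Since $\min\{d/(m+1),d/p\}\to\infty$, the final assertion $\lim_{d\to\infty}e_d=0$ is immediate.

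The hard part is exactly the large-degree regime $e\gtrsim d$: there the $2$-jet evaluation map out of $S_{g_i(d)}$ need not be surjective, so the clean estimate $q^{-(m+1)e}$ is unavailable, and one must instead extract a rank bound phrased in terms of $d$ and $p$ rather than $e$ — this is Poonen's derivative argument, adapted to $r$ equations by Bucur--Kedlaya — while simultaneously keeping all constants dependent only on $n,r,m,w,\deg\overline X$, so that the estimate is uniform in $d$ (and in $q$). Everything else is bookkeeping on top of those two papers.
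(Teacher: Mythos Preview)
Your proposal is essentially a sketch of the Bucur--Kedlaya argument itself, and the mathematical outline is correct. The paper, however, does not reproduce any of this: its entire proof is the single sentence ``This is \cite[2.7]{BK}.'' So you have reconstructed the content of the cited reference rather than giving a different proof; what you wrote is more than the paper asks for, not less.
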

\begin{proof}
  This is \cite[2.7]{BK}.
\end{proof}

We recall some additional useful notation from \cite{poonen}, which we will use in stating consequences of \cref{P:1.11}. For a subset $\mls P\subset \mls S$ write 
$$
\mmu (\mls P):= \lim _{d\rightarrow \infty }\frac{\#(\mls S_d\cap \mls P)}{\#\mls S_d}
$$
and
$$
\bmmu (\mls P):= \limsup _{d\rightarrow \infty }\frac{\#(\mls S_d\cap \mls P)}{\#\mls S_d}
$$

\subsubsection{Variants}
\begin{pg} 
As in \cref{SS:setup}, let $\mathbf{F}$ be a finite field and let $X\subset \mathbf{P}^n$ be a quasi-projective scheme.  Assume that $X$ is reduced and Cohen-Macaulay, and of equidimension $m$ and that $r<m$.

Let $H_d\subset \mls S_d$ be the subset of elements $(f_1, \dots, f_r)$ such that for every subset $R\subset \{1, \dots, r\}$ the scheme-theoretic intersection
$$
X_R:=\bigcap _{i\in R}X_{f_i}
$$
is reduced of dimension $m-\#R$. Let $H\subset \mls S$ denote the union of the $H_d$. 
\end{pg}

\begin{thm}\label{T:1.14} We have
$$
\mmu (H) = 1.
$$
\end{thm}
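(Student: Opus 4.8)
The strategy, as anticipated in the remark following \cref{T:maintheoremfinite}, is to show that for a density-$1$ set of $\underline f\in\mls S_d$ the scheme $X_{\underline f}=\bigcap_{i=1}^r X_{f_i}$ is Cohen--Macaulay of pure dimension $m-r$ and generically smooth, hence reduced; the ``key lemma'' \cref{P:1.11} will enter only through its application to the smooth locus of $X$. First I would reduce to a single complete intersection: the set $H$ is the intersection, over the finitely many subsets $R\subseteq\{1,\dots,r\}$, of the sets $\{\underline f: X_R\text{ is reduced of dimension }m-\#R\}$, and a finite union of sets of density $0$ has density $0$, so it suffices to prove $\mmu(\{\underline f: X_R\text{ is reduced of dimension }m-\#R\})=1$ for each $R$. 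For $R=\emptyset$ this is the hypothesis on $X$; for $R\neq\emptyset$, $\bigcap_{i\in R}X_{f_i}$ is a complete intersection of $\#R$ hypersurfaces ($0<\#R\le r<m$) whose degrees form a subcollection of $\{d,g_2(d),\dots,g_r(d)\}$, so the constructions and estimates of this subsection apply to it verbatim, and it is enough to prove: for $X\subseteq\mathbf P^n$ reduced, Cohen--Macaulay and equidimensional of dimension $m$ with $r<m$, a density-$1$ set of $\underline f$ has $X_{\underline f}$ reduced of dimension $m-r$.

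Two density-$1$ statements are then proved by elementary Bertini-type dimension counts, both resting on the observation that for $d\gg0$ the proportion of sections in $S_{h(d)}$ (with $d\le h(d)\le wd$) vanishing on a fixed positive-dimensional subvariety of $\mathbf P^n$ tends to $0$ — since $\mls O_X(1)$ is ample, the relevant restriction map has image of dimension growing with $d$ — and on the fact (via Bézout- and regularity-type bounds) that this holds uniformly over the boundedly-complex partial intersections that arise. Using this, an induction on $r$ shows that a density-$1$ set of $\underline f$ has $f_1,\dots,f_r$ restricting to a regular sequence on the local rings of $X$; since $X$ is Cohen--Macaulay and equidimensional, for such $\underline f$ the scheme $X_{\underline f}$ is Cohen--Macaulay and equidimensional of dimension $m-r$, in particular $S_1$, so by Serre's criterion it is reduced if and only if it is regular at every generic point. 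Moreover, since $X$ is reduced over the perfect field $\mathbf F$, its smooth locus $U:=X^{\mathrm{sm}}$ is dense open, so $Z:=X\setminus U$ is closed of dimension $\le m-1$; applying the vanishing estimate $r\le m-1$ times gives that a density-$1$ set of $\underline f$ has $\dim(Z\cap X_{\underline f})\le m-1-r<m-r$.

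Finally, applying \cref{P:1.11} with $X$ replaced by the smooth, equidimensional-of-dimension-$m$ scheme $U$ (and the same functions $g_i$), the density of $\underline f$ for which $U_{\underline f}:=\bigcap_i(U\cap X_{f_i})$ is smooth of dimension $m-r$ at every closed point of degree $>d/(m+1)$ tends to $1$. Intersecting this with the two density-$1$ sets above (still density $1$), fix such an $\underline f$ and suppose $X_{\underline f}$ — Cohen--Macaulay, equidimensional of dimension $m-r\ge1$ — were not reduced. Then it is non-regular at the generic point of some component $W$, which has dimension $m-r$; the non-regular locus, which coincides with the non-smooth locus since $\mathbf F$ is perfect, is closed and hence contains $W$. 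By the previous paragraph $W\not\subseteq Z$, so $W\cap U$ is dense in $W$, of dimension $m-r\ge1$, and the local complete intersection $U_{\underline f}$ is non-smooth along all of $W\cap U$; being positive-dimensional over the Jacobson field $\mathbf F$, $W\cap U$ contains closed points of arbitrarily large degree, in particular one of degree $>d/(m+1)$, contradicting the conclusion drawn from \cref{P:1.11}. Hence $X_{\underline f}$ is reduced of dimension $m-r$, which proves the theorem. The only genuinely delicate point in this plan is the uniformity needed in the two elementary steps — that the ``sections vanishing on a positive-dimensional subvariety'' estimate can be applied uniformly to the components of the partial intersections $X_{f_1}\cap\cdots\cap X_{f_j}$ — but this is routine and uses nothing from \cite{poonen,BK} beyond \cref{P:1.11}.
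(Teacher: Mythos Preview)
Your argument is correct and follows the same overall strategy as the paper: reduce to a single $R$, observe that a complete intersection of the expected dimension in the Cohen--Macaulay scheme $X$ is Cohen--Macaulay, so reducedness follows from generic smoothness, and then establish (i) expected dimension and (ii) generic smoothness for a density-$1$ set of $\underline f$.

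The one genuine difference is how (i) is obtained. The paper fixes once and for all a finite smooth stratification $\{X_i\}$ of $\overline X$ with $X_0=X^{\mathrm{sm}}$ and applies \cref{P:1.11} to \emph{every} stratum $X_i$; the conclusion for the lower strata gives the expected-dimension statements (your (a) and (b)) as a by-product, while the conclusion for $X_0$ gives generic smoothness. In other words, the paper uses \cref{P:1.11} as a uniform black box and avoids having to argue any uniformity by hand. Your route instead separates the two concerns: you prove (a) and (b) by the elementary observation that the proportion of $f\in S_t$ vanishing on a fixed integral positive-dimensional $V\subset\mathbf P^n$ is at most $q^{-(t+1)}$ (choose coordinates $x_0,x_1$ with $x_0/x_1$ nonconstant on $V$, so that $x_0^t,x_0^{t-1}x_1,\dots,x_1^t$ restrict to independent sections), and this bound is uniform in $V$ and easily beats the B\'ezout count $O(d^{r-1})$ on the number of components; then you invoke \cref{P:1.11} only once, on $U=X^{\mathrm{sm}}$. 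Both approaches work; the paper's is a line shorter, while yours makes clear that nothing beyond \cref{P:1.11} on the smooth locus is really needed.
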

\begin{proof}
For a given $R$, let $H_{R, d}\subset \mls S_d$ be the subset of those vectors for which the intersection $X_R$ is reduced of dimension $m-\#R$, and let $H_R$ denote the union of the $H_{R, d}$.  Then
$$
1-\frac{\#H_d}{\#\mls S_d}\leq \sum _R\left(1-\frac{\#H_{R, d}}{\#\mls S_d}\right).
$$
It therefore suffices to show that $\mmu (H_R) = 1$.  Furthermore, this case reduces immediately to the case when $R = \{1, \dots, r\}$, which we assume henceforth.

Note that if the intersection $X_R$ is generically smooth of the expected dimension, then it is reduced.  Indeed this follows from the fact that a complete intersection in a Cohen-Macaulay scheme is Cohen-Macaulay and that a generically reduced Cohen-Macaulay scheme is reduced.

Let $\overline X\subset \mathbf{P}^n$ be the closure of $X$ with the reduced scheme structure, and fix a finite stratification $\overline X = \{X_i\}_{i\in I}$ with each $X_i$ a smooth locally closed subscheme of $\overline X$, and one of the strata $X_0 $ equal to the smooth locus of $X$.  If we further arrange that each $X_{i, R}\subset X_i$ has the expected dimension then it follows that the inclusion
$$
X_{R}\cap X_0 \hookrightarrow X_R
$$
is dense.

For an integer $s$ let $E_{X_i, d}^{(s)}\subset \mls S_d$ denote the subset of those vectors $(f_1, \dots, f_r)$ for which the intersections $X_{i, \underline f}$ is smooth of the expected dimension at all points $P$ of degree $\geq s$.  Let $E_{d}^{(s)}$ denote the intersection of the $E_{X_i, d}^{(s)}$.

Observe that since we assumed that $r<m$, the closed points of degree $\geq s$ are dense in any irreducible component of $X_{0, \underline f}$.  In particular, we have $E_d^{(s)}\subset H_d$.  Let $E^{(s)}$ denote the union of the $E_d^{(s)}$.
Taking $s= \lfloor \frac{d}{m+1} +1 \rfloor$ have 
$$E_{X_i, d}^{(s)} =  W_{X_i, d}, $$
where $W_{X_i, d}$ is defined as in \cref{P:4.4.3.3} applied to $X_i$.

By this and \cref{P:1.11} we have that 
$$
\lim_{d\to\infty}\frac{\sum _i\#\left(\mls S_d\setminus E_{X_i, d}\right)}{\# \mls S_d}=0.
$$
We conclude that 
$$
\lim_{d\to\infty}\frac{\# E_d^{(s)}}{\# \mls S_d}=1,
$$
and it follows that 
$$
\mmu (H_R)=1,
$$
as desired.

\end{proof}

\subsubsection{Preparatory lemmas}

We continue with the setup of \cref{SS:setup}.

\begin{lem}\label{L:4.4.5.1}
  Let $k$ be a field and let $\overline D/k$ be a geometrically irreducible proper $k$-scheme, let $D\subset \overline D$ be a dense open subscheme with $D$ geometrically reduced and $\mathrm{codim}(\overline D-D, \overline D)\geq 2$.  Then $H^0(D, \mls O_D) = k$.
\end{lem}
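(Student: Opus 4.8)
The plan is to reduce to the normal proper case by normalization, conclude that $K:=\H^0(D,\mls O_D)$ is a finite $k$-algebra, observe that it is an integral domain and hence a finite field extension of $k$, and then use geometric reducedness together with geometric irreducibility to force $K=k$ after base change to $\overline k$.

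First I would record that $D$ is integral: since $\overline D$ is irreducible and $D$ is dense and open, $D$ is irreducible, and since $D$ is geometrically reduced it is in particular reduced, so $D$ is integral and $K$ is a subring of the function field $\kappa(D)$, in particular a domain. For the finiteness step, let $\nu\colon Y\to\overline D_{\mathrm{red}}$ be the normalization of the integral scheme $\overline D_{\mathrm{red}}$; it is finite since $\overline D_{\mathrm{red}}$ is excellent. Then $Y$ is normal, integral, and proper over $k$ (being finite over the proper $k$-scheme $\overline D_{\mathrm{red}}$), so $\H^0(Y,\mls O_Y)$ is a finite-dimensional $k$-vector space. Put $U:=\nu^{-1}(D)$: this is the normalization of $D$, it is open in $Y$, and its complement $Z=Y\setminus U=\nu^{-1}(\overline D\setminus D)$ satisfies $\codim(Z,Y)\geq 2$, since $\nu$ is finite and surjective and an irreducible scheme of finite type over a field is catenary and equidimensional, so dimensions of closed subsets (and their codimensions) are preserved. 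Because $Y$ is normal it satisfies Serre's condition $S_2$, so restriction of sections gives an isomorphism $\H^0(Y,\mls O_Y)\simto\H^0(U,\mls O_U)$ (the same extension principle used in the proof of \cref{lem:div-prop}). The morphism $U\to D$ is dominant between integral schemes, so pullback embeds $K$ into $\H^0(U,\mls O_U)$; hence $K$ is a $k$-subalgebra of a finite-dimensional $k$-algebra and is therefore itself finite-dimensional over $k$. A finite-dimensional integral domain over a field is a field, so $K/k$ is a finite field extension.

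Finally I would base change to an algebraic closure: flat base change along $\Sp\overline k\to\Sp k$ (valid since $D$ is quasi-compact and quasi-separated) yields $K\otimes_k\overline k\cong\H^0(D_{\overline k},\mls O_{D_{\overline k}})$. Since $D$ is geometrically reduced and geometrically irreducible, $D_{\overline k}$ is integral, so $\H^0(D_{\overline k},\mls O_{D_{\overline k}})$ is an integral domain; being also a finite-dimensional $\overline k$-algebra it is a finite field extension of $\overline k$, hence equals $\overline k$. Therefore $\dim_k K=\dim_{\overline k}(K\otimes_k\overline k)=1$, i.e.\ $K=k$.

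The step I expect to be the main obstacle is the finiteness of $K$ over $k$: because $\overline D$ is not assumed reduced, normal, or Cohen--Macaulay, one cannot directly extend sections of $\mls O_D$ across $\overline D\setminus D$, and passing to the normalization $Y$ is exactly what repairs this. The one point requiring care there is to check that this passage preserves both properness over $k$ and the bound $\codim(\overline D\setminus D,\overline D)\geq 2$ on the complement; once this is in place, the remaining steps are formal.
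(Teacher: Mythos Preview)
Your proof is correct and follows essentially the same route as the paper: pass to the normalization of $\overline D_{\mathrm{red}}$, use normality together with the codimension-$2$ complement to identify global sections, and conclude $K=k$ from geometric integrality. The paper's argument is terser---it simply replaces $\overline D$ by its normalization and invokes the standard fact that a proper geometrically integral $k$-scheme has $H^0=k$---but your explicit separation of the finiteness step (via the normalization) from the base-change step (working directly with $D_{\overline k}$) is a clean unpacking of the same idea.
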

\begin{proof}
 We may without loss of generality assume that $\overline D$ is reduced, and furthermore, by replacing $\overline D$ by its normalization that $\overline D$ is normal.  Since $\overline D$ is geometrically reduced and irreducible it follows that $H^0(\overline D, \mls O_{\overline D}) = H^0(D, \mls O_D) = k$.
\end{proof}

\begin{lem}\label{L:4.4.5.2} Let $X$ be a definable Cohen-Macaulay variety over a perfect field $k$ and let $\mls O_X(1)$ be a very ample invertible sheaf with associated linear system $P$.  Let $j:X\hookrightarrow \overline X$ be the compactification of $X$ provided by the given projective embedding, so $X$ is schematically dense in $\overline X$.  Fix a finite  stratification $\{\overline X_i\}_{i\in I}$ of $\overline X$ with each $\overline X_i$ smooth, equidimensional,  and $\overline X_i\hookrightarrow \overline X$ locally closed.  Let $F, G\in H^0(X, \mls O_X(1)) = H^0(\overline X, \mls O_{\overline X}(1))$ be two linearly independent sections. Let $\overline D_F$ (resp. $\overline D_G$) be the zero locus in $\overline X$ of $F$ (resp. $G$) and set $D_F:= \overline D_F\cap X$ (resp. $D_G:= \overline D_G\cap X$), and assume they satisfy the following:
\begin{enumerate}
    \item[(i)] $\overline D_F$ is geometrically irreducible.

\item [(ii)] The intersection $\overline D_F\cap \overline X_i$ has dimension $\mathrm{dim}(\overline X_i)-1$ for all $i$, and the intersection $\overline D_F\cap \overline D_G\cap \overline X_i$ has dimension $\mathrm{dim}(\overline X_i)-2$ for all $i$ (here we make the convention that the empty scheme has dimension $-1$ as well as $-2$).

\item [(iii)] $D_F$ and the intersection $D_F\cap D_G$ are generically smooth.
\end{enumerate}
Then $F$ and $G$ span a definable line.
\end{lem}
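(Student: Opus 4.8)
The plan is to deduce this lemma from \cref{L:5.8}, applied to the basepoint free linear system $P=\P(V)$ with $V=H^0(X,\mls O_X(1))=H^0(\overline X,\mls O_{\overline X}(1))$ and to the linearly independent vectors $F_1=F$, $F_2=G$, whose zero loci in $X$ are $Z_1=D_F$ and $Z_2=D_G$. So I must verify the three hypotheses of that lemma: (1) $D_F$ is reduced; (2) the restriction $H^0(X,\mls O_X)\to H^0(D_F,\mls O_{D_F})$ is an isomorphism; and (3) $D_F\cap D_G$ is reduced and contains no component of $D_F$ or of $D_G$. At the outset note that $\overline X$, being the scheme-theoretic closure of the geometrically integral $k$-scheme $X$, is again geometrically integral; write $m=\dim X=\dim\overline X$. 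Since $\mls O_X(1)$ is very ample on a positive-dimensional scheme it is nontrivial, so the nonzero sections $F|_X$ and $G|_X$ of a line bundle on the integral scheme $X$ have nonempty proper zero schemes $D_F$ and $D_G$; and as $\overline D_F$ is geometrically irreducible by (i), $D_F$ is irreducible and dense in $\overline D_F$.

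Next I would run the Cohen--Macaulay bookkeeping. Since $X$ is integral and Cohen--Macaulay and $F|_X,G|_X$ are nonzerodivisors, $D_F$ and $D_G$ are Cohen--Macaulay of pure dimension $m-1$, hence have no embedded points. Because the strata $\overline X_i$ cover $\overline X$, the second part of (ii) gives $\dim(\overline D_F\cap\overline D_G)\le m-2$; in particular $\overline D_G$ does not contain the unique generic point of $\overline D_F$, so $G$ restricts to a nonzerodivisor on $\mls O_{D_F}$ and $D_F\cap D_G$ is Cohen--Macaulay of pure dimension $m-2$, again with no embedded points. By (iii), $D_F$ and $D_F\cap D_G$ are generically smooth over $k$, hence regular at their generic points and so satisfy Serre's condition $R_0$; combined with $S_1$ (no embedded points) this forces $D_F$ and $D_F\cap D_G$ to be reduced, which is hypothesis (1). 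For hypothesis (3), reducedness of $D_F\cap D_G$ has just been shown, and $D_F\cap D_G$, of dimension $\le m-2$, cannot contain a component of $D_F$ or of $D_G$, all of which have dimension $m-1$.

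The substance of the argument is hypothesis (2). We have $H^0(X,\mls O_X)=\kappa_X=k$, and the map to $H^0(D_F,\mls O_{D_F})$ is injective since $D_F\ne\emptyset$, so it remains to show $H^0(D_F,\mls O_{D_F})=k$. For this I would invoke \cref{L:4.4.5.1} for the dense open immersion $D_F\hookrightarrow\overline D_F$: we have shown $D_F$ is reduced, hence (as $k$ is perfect) geometrically reduced and, by (i) together with density, geometrically integral; $\overline D_F$ is geometrically irreducible and proper over $k$; and the complement $\overline D_F\setminus D_F=\overline D_F\cap(\overline X\setminus X)$ has codimension $\ge 2$ in $\overline D_F$ by the first part of (ii) applied to the strata meeting the boundary (this is vacuous when $X$ is itself proper, so that $\overline X=X$). \cref{L:4.4.5.1} then yields $H^0(D_F,\mls O_{D_F})=H^0(\overline D_F,\mls O_{\overline D_F})$, and the latter equals $k$ because $\overline D_F$ is geometrically integral and proper. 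This establishes (2).

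With (1), (2) and (3) in hand, \cref{L:5.8} applies and shows that $F$ and $G$ span a definable line, which is the assertion. The one delicate point is the codimension estimate for $\overline D_F\cap(\overline X\setminus X)$ entering the application of \cref{L:4.4.5.1} --- equivalently, the claim that $D_F$ admits no nonconstant global regular function; note that the naive cohomological approach via the exact sequence $0\to\mls O_X(-1)\to\mls O_X\to\mls O_{D_F}\to 0$ would only close the gap using a Kodaira-type vanishing for $H^1(X,\mls O_X(-1))$, which is unavailable over finite fields, so one is genuinely forced through the compactification. Everything else is a formal consequence of the Cohen--Macaulay structure of $X$ together with the Bertini-type hypotheses (i)--(iii), whose genericity --- and hence the applicability of this lemma to a dense/positive-density set of pairs $(F,G)$ --- is exactly what the Bertini and Bertini--Poonen theorems recalled above are used to supply.
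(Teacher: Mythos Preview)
Your reductions via the Cohen--Macaulay bookkeeping are fine: the arguments that $D_F$ and $D_F\cap D_G$ are (geometrically) reduced, and that $D_F\cap D_G$ contains no component of $D_F$ or $D_G$, match the paper's. The gap is in your verification of hypothesis~(2) of \cref{L:5.8}. You apply \cref{L:4.4.5.1} to the inclusion $D_F\hookrightarrow\overline D_F$, which requires $\codim(\overline D_F\setminus D_F,\overline D_F)\geq 2$. But the first part of (ii) only says $\overline D_F$ meets each stratum $\overline X_i$ in codimension~$1$. If the boundary $\overline X\setminus X$ contains a stratum of dimension $m-1$ (which is not excluded; nothing in the hypotheses forces $\overline X\setminus X$ to have codimension $\geq 2$), then $\overline D_F\cap\overline X_i$ has dimension $m-2$, so $\overline D_F\setminus D_F$ has codimension only~$1$ in $\overline D_F$. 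Thus \cref{L:4.4.5.1} does not apply, and you have not shown $\H^0(D_F,\mls O_{D_F})=k$. In fact this equality can genuinely fail: removing a codimension~$1$ set from a proper variety typically creates nonconstant regular functions.

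The paper circumvents this by \emph{not} reducing to \cref{L:5.8} but rather rerunning its argument on a larger open of $\overline D_F$. Setting $W:=(\overline D_F\cap\overline D_G)\setminus(D_F\cap D_G)$, one has $W\subset\overline D_F\cap\overline D_G\cap(\overline X\setminus X)$, so by the \emph{second} part of (ii) (codimension~$2$ in each stratum) together with $\dim(\overline X\setminus X)\leq m-1$, one gets $\dim W\leq m-3$, i.e.\ $\codim(W,\overline D_F)\geq 2$. Now \cref{L:4.4.5.1} does apply to $\overline D_{F,\mathrm{red}}\setminus W\subset\overline D_F$ and gives $\H^0(\overline D_{F,\mathrm{red}}\setminus W,\mls O)=k$. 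Since $D_F\cap D_G$ is a closed subscheme of $\overline D_{F,\mathrm{red}}\setminus W$ cut out by $G$, the Koszul-type exact sequence on $\overline D_{F,\mathrm{red}}\setminus W$ (exactly as in the proof of \cref{L:5.8}) shows the kernel of $\H^0(X,\mls O_X(1))\to\H^0(D_F\cap D_G,\mls O(1))$ is two-dimensional. The moral: you must pass to the compactification \emph{before} cutting by $G$, removing only the small set $W$ rather than the entire boundary of $\overline D_F$; it is the double-intersection bound in~(ii), not the single-intersection bound, that supplies the needed codimension~$2$.
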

\begin{proof}
Assumption (ii) implies that $D_F\subset \overline D_F$ is dense.  Furthermore, $D_F$ is Cohen-Macaulay (see for example \cite[Tag 02JN]{stacks-project}) and the generic smoothness of $D_F$, assumed in (iii), then implies that $D_F$ is geometrically reduced.  Similarly, $D_F\cap D_G$ is geometrically reduced, and $D_F\cap D_G\subset \overline D_F\cap \overline D_G$ is dense.  We need to show that the kernel of the restriction map
$$
\H^0(X, \mls O_X(1))\rightarrow \H^0(D_F\cap D_G, \mls O_{D_F\cap D_G}(1))
$$
is the span of $F$ and $G$.

To this end, let $W$ denote $(\overline D_F\cap \overline D_G)\setminus(D_F\cap D_G)$.   By assumption (ii), $W$ has codimension at least $2$ in $\overline D_F$, and we have a closed immersion
$$
D_F\cap D_G\hookrightarrow \overline D_F\setminus W.
$$  
From this we therefore get an exact sequence
$$
\xymatrix{
0\ar[r]&  \H^0(\overline D_{F, \mathrm{red}}\setminus W, \mls O_{\overline D_{F, \mathrm{red}}})\ar[r]^-{G}& \H^0(\overline D_{F, \mathrm{red}}\setminus W, \mls O_{\overline D_{F, \mathrm{red}}}(1))\ar[r]& \H^0(D_F\cap D_G, \mls O_{D_F\cap D_G}(1)).}
$$
From the commutative diagram
$$
\xymatrix{
\H^0(\overline X, \mls O_{\overline X}(1))\ar[r]^-{\simeq }\ar[d]& \H^0(X, \mls O_{X}(1))\ar[d]\\
\H^0(\overline D_{F, \mathrm{red}}\setminus W, \mls O_{\overline D_{F, \mathrm{red}}\setminus W}(1))\ar[r]& \H^0(D_F, \mls O_{D_F}(1))}
$$
we see that the kernel of the map
$$
\H^0(\overline X, \mls O_{\overline X}(1))\rightarrow \H^0(\overline D_{F, \mathrm{red}}-W, \mls O_{\overline D_{F, \mathrm{red}}\setminus W}(1))
$$
is $1$-dimensional generated by $F$.  From this and the argument used in \cref{L:5.8} we see that to prove the proposition it suffices to show that the dimension of $ H^0(\overline D_{F, \mathrm{red}}-W, \mls O_{\overline D_{F, \mathrm{red}}})$ is $1$.  This follows from \cref{L:4.4.5.1}.
\end{proof}

\begin{lem}\label{L:3.2} Let $X/k$ be a Cohen-Macaulay quasi-projective definable variety of dimension at least $3$, and let $\mls O_X(1)$ be a very  ample line bundle on $X$.  Let $X\hookrightarrow \mathbf{P}$ be the embedding into projective space provided by $\mls O_X(1)$ and let $\overline X\subset \mathbf{P}$ be the closure of $X$, with the reduced subscheme structure.  
Let $\mls H_n$ be the set of lines in $\mathbf{P}H^0(\overline X, \mls O_{\overline X}(n))$ and let $\mls H^{\mathrm{def}}_n\subset \mls H_n$ be the subset of lines which are definable as lines in $\mathbf{P}H^0(X, \mls O_X(n))$.
 Then
$$
\lim _{n\rightarrow \infty }\mmu _{\mls H_n}(\mls H_n^{\mathrm{def}}) = 1.
$$
\end{lem}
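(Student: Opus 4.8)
The plan is to deduce the statement from \cref{L:4.4.5.2}, fed by the large‑degree Bertini estimate \cref{P:1.11} exactly as it is used in the proof of \cref{T:1.14}.

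\emph{Setup.} Let $\overline X\subset\mathbf P$ be the reduced closure of $X$, with homogeneous coordinate ring $S$; as in \cref{lem:mult-line} it is geometrically integral. Fix once and for all a finite stratification $\overline X=\bigsqcup_{i\in I}\overline X_i$ into smooth, equidimensional, locally closed subschemes, arranged so that the regular locus $X^{\mathrm{reg}}$ is a single stratum; since $X$ is normal over the (perfect) field $k$, $X^{\mathrm{reg}}$ is open in $\overline X$, of dimension $m:=\dim X\geq 3$, with complement of codimension $\geq 2$. For all $n\gg 0$ the multiplication map $S_n\to H^0(\overline X,\ms O_{\overline X}(n))$ is surjective, so every line of $\mls H_n$ is spanned by the images $F,G$ of a pair $(f,g)$ in $\mls S_n:=S_n\times S_n$ — the set of \cref{P:4.4.3.3} with $r=2$ and $g_1=g_2$ the identity function — and the map from the set of pairs with $F,G$ linearly independent to $\mls H_n$ is surjective with fibres of constant cardinality. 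Since the locus where $F,G$ fail to be linearly independent has density tending to $0$, proportions transfer freely between $\mls S_n$ and $\mls H_n$.

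\emph{Main step.} I claim that the proportion of $(f,g)\in\mls S_n$ for which $F,G$ satisfy hypotheses (i)--(iii) of \cref{L:4.4.5.2}, applied to the very ample sheaf $\ms O_X(n)$ (whose associated compactification of $X$ is again $\overline X$, via the $n$‑uple re‑embedding), tends to $1$ as $n\to\infty$. For (ii) and (iii): writing $\overline D_F,\overline D_G\subset\overline X$ for the zero loci and $D_F=\overline D_F\cap X$ etc., the assertions that $\overline D_F\cap\overline X_i$ and $\overline D_F\cap\overline D_G\cap\overline X_i$ have the expected dimensions and that $D_F$ and $D_F\cap D_G$ are generically smooth are proved exactly as in \cref{T:1.14}: for each stratum one applies \cref{P:1.11} with $r=1$ and $r=2$ to bound, by a quantity $e_d\to 0$, the proportion of $(f,g)$ for which $\overline X_i\cap\overline D_F\cap\overline D_G$ fails to be smooth of the expected dimension at all closed points of degree $>d/(m_i+1)$; summing over the finitely many strata and taking $s=\lfloor d/(m+1)\rfloor+1$ gives density $1$, and because $r=2<m=\dim X$ the closed points of large degree are dense on $X^{\mathrm{reg}}\cap\overline D_F\cap\overline D_G$, which forces generic smoothness of $D_F\cap D_G$ (and, with $r=1$, of $D_F$). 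The finitely many low‑dimensional strata are handled directly, since a general high‑degree hypersurface avoids any fixed finite set of points and meets any fixed lower‑dimensional subvariety in the expected dimension. For (i): $\overline D_F$ is geometrically irreducible for a set of $F$ of density tending to $1$; this is the Bertini irreducibility theorem over finite fields in the spirit of \cite{poonen,BK}, and can alternatively be recovered from the density‑$1$ conclusions above — which give $\overline D_F\cap X^{\mathrm{reg}}$ smooth of dimension $m-1\geq 2$ and dense in $\overline D_F$ — together with the geometric connectedness of the ample effective divisor $\overline D_F\subset\overline X$, valid for all $n\gg 0$.

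\emph{Conclusion.} For every $(f,g)$ in the density‑$\to 1$ subset of $\mls S_n$ produced above, \cref{L:4.4.5.2} shows that the line of $\mathbf P H^0(X,\ms O_X(n))$ spanned by $F$ and $G$, i.e.\ the corresponding member of $\mls H_n$, is definable. Transferring through the constant‑fibre map $\mls S_n\to\mls H_n$ yields $\#\mls H_n^{\mathrm{def}}/\#\mls H_n\geq 1-\delta_n$ for a sequence $\delta_n\to 0$, whence $\lim_{n\to\infty}\mmu_{\mls H_n}(\mls H_n^{\mathrm{def}})=1$, as desired.

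\emph{Main obstacle.} The technical heart is the density assertion of the main step: organizing the estimates of \cref{P:1.11} over a stratification so as to control simultaneously the dimensions of the stratum‑wise intersections and the generic smoothness of $D_F$ and $D_F\cap D_G$ — a direct reprise of the proof of \cref{T:1.14} in the case $r=2$, in which the inequality $r<m$, equivalently $\dim X\geq 3$, is exactly what keeps the relevant densities equal to $1$ rather than a zeta value strictly less than $1$ — supplemented by the density‑$1$ geometric irreducibility of $\overline D_F$. The comparison among $S_n$, $H^0(\overline X,\ms O_{\overline X}(n))$ and $H^0(X,\ms O_X(n))$ in large degrees is routine but must be made in order to phrase the Bertini estimates, stated for polynomial rings, as a statement about $\mls H_n$.
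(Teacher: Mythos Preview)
Your approach is essentially the same as the paper's: reduce to pairs of sections, verify the hypotheses of \cref{L:4.4.5.2} for a density-$1$ subset using the stratified large-degree Bertini estimate (\cref{P:1.11}, packaged as in \cref{T:1.14}) for (ii) and (iii), and a Bertini irreducibility theorem for (i), then pass to lines via the constant-fibre span map. The paper's proof is almost verbatim this, citing \cref{T:1.14} and \cite[Theorem 1.1]{charlespoonen} for the two ingredients.

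One caution about your ``alternative'' route to (i). You claim that $\overline D_F\cap X^{\mathrm{reg}}$ is \emph{smooth}, and then combine this with connectedness of $\overline D_F$ to deduce irreducibility. But \cref{P:1.11} only controls smoothness at points of large degree; what the argument of \cref{T:1.14} actually yields is \emph{generic} smoothness of $D_F$ (enough, with Cohen--Macaulayness, for reducedness), not smoothness on all of $X^{\mathrm{reg}}$. And a connected projective scheme that is merely generically smooth on a dense open can certainly be reducible (two components meeting along a locus lying outside the open). So the alternative does not go through as written; you really do need the density-$1$ geometric irreducibility statement, which is precisely \cite[Theorem 1.1]{charlespoonen} and is what the paper invokes. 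Your primary invocation of ``Bertini irreducibility over finite fields'' is correct, but you should cite Charles--Poonen rather than \cite{poonen,BK}, whose irreducibility statements do not give density $1$ in the form needed here.
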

\begin{proof}
Fix a finite stratification $\{X_i\}_{i\in I}$ of $\overline X$ into locally closed smooth subschemes.

Let $\mls P_n$ denote the set of pairs of linearly independent elements $$f_1, f_2\in \Gamma (\overline X, \mls O_{\overline X}(n)),$$ and let $\mls P_n'\subset \mls P_n$ denote the subset of pairs $(f_1, f_2)$ with associated divisors $\overline D_s:= V(f_s)\cap \overline X$ have the following properties:
\begin{enumerate}
    \item [(i)] $\overline D_s$ is geometrically irreducible for $s = 1,2.$
    \item [(ii)] $\overline D_s\cap \overline X_i$  and the double intersections $\overline D_1\cap \overline D_2\cap X_i$ have the expected dimension.
    \item [(iii)] $\overline D_1\cap X$, $\overline D_2\cap X$, and $\overline D_1\cap \overline D_2\cap X$ are generically smooth.
\end{enumerate}

There is a map
$$
Sp :\mls P_n\rightarrow \mls H_n
$$
sending a pair $(f_1, f_2)$ to the line spanned by $f_1$ and $f_2$.  By \cref{L:4.4.5.2} the image of $\mls P_n'$ is contained in $\mls H_n^\mathrm{def}$ and therefore we have
$$
\mmu _{\mls P_n}(\mls P_n')\leq \mmu _{\mls H_n}(\mls H_n^\mathrm{def}),
$$
and it suffices to show that 
$$
\lim _{n\rightarrow \infty }\mmu _{\mls P_n}(\mls P_n') = 1.
$$
This follows from  \cref{T:1.14} and \cite[Theorem 1.1]{charlespoonen}.
\end{proof}

\begin{pg} For integers $n_1, n_2$ with $n_1\leq n_2\leq 2n_1$ consider the subset 
$$
\mls T_{n_1, n_2}\subset S_{n_1}\oplus  S_{n_2}\oplus S_{n_1+n_2}
$$
whose elements are triples $(f_1, f_2, f_3)$ for which the elements $f_1f_2$ and $f_3$ span a definable line in $\mathbf{P}\Gamma (X, \mls O_X(n_1+n_2))$.
\end{pg}

\begin{lem}\label{L:3.4} 
For any function $g:\mathbf{N}\rightarrow \mathbf{N}$ such that $n\leq g(n)\leq 2n$ for all $n$, we have
$$
\lim _{n\rightarrow \infty }\frac{\# \mls T_{n, g(n)}}{\#(S_{n}\oplus S_{g(n)}\oplus S_{n+g(n)})} = 1.
$$
\end{lem}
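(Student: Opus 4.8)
The plan is to deduce this from \cref{L:3.2} together with the Bertini-type density statement \cref{T:1.14} and the ``moving lemma'' of Charles--Poonen on products of sections. The key observation is that the condition ``$f_1f_2$ and $f_3$ span a definable line in $\mathbf{P}\Gamma(X,\ms O_X(n_1+n_2))$'' factors through the pair $(\Sp(f_1 f_2, f_3))\in \mls H_{n_1+n_2}$, so it suffices to show two things: first, that for a density $1$ set of triples $(f_1,f_2,f_3)$ the product $f_1 f_2$ and $f_3$ span an actual line (i.e.\ are linearly independent) whose associated pair of divisors satisfies the geometric hypotheses (i)--(iii) of \cref{L:4.4.5.2}; and second, that \cref{L:4.4.5.2} then certifies definability. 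Concretely, I would let $\overline D_1 = V(f_1 f_2)\cap\overline X = (V(f_1)\cup V(f_2))\cap \overline X$ and $\overline D_2 = V(f_3)\cap \overline X$, and check that, for $(f_1,f_2,f_3)$ in a density $1$ subset, the divisor $\overline D_2$ is geometrically irreducible, that $\overline D_2$ and the intersections $\overline D_2\cap \overline X_i$, $\overline D_1\cap\overline D_2\cap \overline X_i$ have the expected dimension, and that $D_2 := \overline D_2\cap X$ and $D_1\cap D_2$ are generically smooth. Note that $\overline D_1 = V(f_1 f_2)\cap \overline X$ is reducible (it is the union of the zero loci of $f_1$ and $f_2$), but in \cref{L:4.4.5.2} only $\overline D_F$ is required to be geometrically irreducible while $\overline D_G$ need only satisfy the dimension and smoothness conditions; so I would play the role of $F$ with $f_3$ and the role of $G$ with $f_1 f_2$. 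The generic smoothness of $D_F\cap D_G = (V(f_3)\cap V(f_1 f_2))\cap X$ amounts to generic smoothness of $V(f_3)\cap V(f_1)\cap X$ and $V(f_3)\cap V(f_2)\cap X$ separately (away from the locus $V(f_1)\cap V(f_2)$, which has higher codimension and can be ignored), which is exactly the kind of statement handled by the complete-intersection Bertini theorem \cref{T:1.14} applied to the stratification.

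The main technical input beyond \cref{T:1.14} is the statement that products $f_1 f_2$ of sections of degrees $n_1$ and $n_2$ still equidistribute well enough for the Bertini/Poonen-style sieve to apply — that is, one needs to know that as $(f_1,f_2)$ ranges over $S_{n_1}\oplus S_{n_2}$, the induced divisor $V(f_1)\cup V(f_2)$ behaves, with density $1$, like a generic member of $|\ms O_{\overline X}(n_1+n_2)|$ with respect to smoothness at points of large degree. This is precisely \cite[Theorem 1.1]{charlespoonen} (the Bertini-smoothness theorem for linear systems of the form $S_{n_1}\cdot S_{n_2}$, i.e.\ for products), which was already invoked in the proof of \cref{L:3.2}. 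So the structure is: (1) use \cref{T:1.14} and \cite[Theorem 1.1]{charlespoonen} to produce, for each $n$, a density $1$ subset $\mls Q_n\subset S_{n_1}\oplus S_{n_2}\oplus S_{n_1+n_2}$ (with $n_1 = n$, $n_2 = g(n)$) of triples for which $f_1, f_2$ are nonzero, $f_1 f_2$ and $f_3$ are linearly independent, the divisors $V(f_1), V(f_2), V(f_3)$ and all their pairwise intersections meet each stratum $\overline X_i$ in the expected dimension, are generically smooth on $X$, and $V(f_3)\cap \overline X$ is geometrically irreducible; (2) apply \cref{L:4.4.5.2} with $F = f_3$, $G = f_1 f_2$ to conclude every such triple lies in $\mls T_{n_1,n_2}$; (3) conclude $\mls Q_n\subset \mls T_{n,g(n)}$ and hence $\lim_{n\to\infty}\#\mls T_{n,g(n)}/\#(S_n\oplus S_{g(n)}\oplus S_{n+g(n)}) = 1$.

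The step I expect to be the main obstacle is (1), specifically verifying the dimension and generic-smoothness conditions for the \emph{reducible} divisor $V(f_1 f_2)$ and its intersection with $V(f_3)$ along the prescribed stratification, uniformly as $n\to\infty$. One has to be careful that the locus $V(f_1)\cap V(f_2)\cap \overline X$, where $\overline D_1 = V(f_1 f_2)\cap \overline X$ fails to be a nice divisor, has codimension $\geq 2$ in $\overline D_1$ and therefore does not interfere with either the density count or the exact sequence argument inside \cref{L:4.4.5.2}; the dimension hypothesis $\dim X\geq 3$ is what gives the needed room (it guarantees $D_1\cap D_2$ still has closed points of large degree dense in each component, so that the relevant $H^0$ computations go through). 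Granting the applicability of \cite[Theorem 1.1]{charlespoonen} — which is legitimate since the same citation underlies \cref{L:3.2} — and of \cref{T:1.14} to each stratum, the remaining bookkeeping is routine, matching the pattern of the proof of \cref{L:3.2}.
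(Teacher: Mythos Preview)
Your approach is correct and matches the paper's proof: define a density-$1$ subset of triples $(f_1,f_2,f_3)$ satisfying the hypotheses of \cref{L:4.4.5.2} with $F=f_3$ and $G=f_1f_2$, verify those hypotheses via \cref{T:1.14} applied with $r=3$ to the stratification together with \cite[Theorem~1.1]{charlespoonen}, and conclude. One small correction: \cite[Theorem~1.1]{charlespoonen} is not a ``products equidistribute'' statement; it is invoked solely for the geometric irreducibility of $V(f_3)\cap\overline X$ (the paper also asks for $V(f_1)$ and $V(f_2)$ irreducible, though as you note only $F$ needs this). All the smoothness and expected-dimension conditions on the intersections $X_R$ for $R\subset\{1,2,3\}$ come directly from \cref{T:1.14} applied to the triple of sections of degrees $(n,g(n),n+g(n))$, which already handles your worry about the reducible divisor $V(f_1f_2)$ without any separate ``moving lemma for products''.
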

\begin{proof}
Fix a stratification $\{X_i\}_{i\in I}$ of $\overline X$ into smooth subschemes.

By \cref{L:4.4.5.2} the set $\mls T_{n, g(n)}$ contains  the set $\mls T'_{n, g(n)}$ of triples $(f_1, f_2, f_3)\in S_{n}\oplus S_{g(n)}\oplus S_{n+g(n)}$ satisfying the condition that the zero locus of each $f_i$ in $\overline X$ is irreducible, and for all $R\subset \{1, 2, 3\}$ the intersection $X_R$ is generically smooth and the intersection $\overline X_R\cap X_i$ have the expected dimension for all $i$.    The result then follows from \cref{T:1.14} and \cite[Theorem 1.1]{charlespoonen}
\end{proof}

\subsubsection{Proof of \cref{T:maintheoremfinite}}

By the same argument as in \cref{SS:4.1}, which did not require any assumption on the ground field, it suffices to prove \cref{T:maintheoremfinite} in the case when $X$ and $Y$ are quasi-projective.

\begin{pg}
Fix $\epsilon >0$.  In the course of the proof we will make various assumptions
on $\epsilon$ being sufficiently small. As there are only finitely many steps,
this is a harmless practice.

Fix an ample invertible sheaf $\mls O_X(1)$ on $X$ represented by an effective
divisor $D$.  By \cref{P:3.8} the property of being ample depends only on the
Torelli structure, and therefore $\varphi (D)$ defines an ample invertible sheaf
on $Y$, which we denote by $\mls O_Y(1)$.  After replacing $\mls O_X(1)$ by
$\mls O_X(n)$ for sufficiently large $n$ we may assume that $\mls O_X(1)$ and
$\mls O_Y(1)$ are very ample.

Furthermore, by choosing $n$ sufficiently large we may assume by \cref{L:3.2}
that there exist  definable lines in $\mathbf{P}\Gamma (X, \mls O_X(1))$ and
$\mathbf{P}\Gamma (Y, \mls O_Y(1))$.  Since the number of elements in a
definable line is $q+1$, we see that the finite fields $\kappa _X$ and $\kappa
_Y$ are isomorphic to the same finite field $\mathbf{F}$, and in particular have
the same number of elements which we will denote by $q$. 
\end{pg}
\begin{pg}
Let $\overline X\subset \mathbf{P}\Gamma (X, \mls O_X(1))$ (resp. $\overline Y\subset \mathbf{P}\Gamma (Y, \mls O_Y(1))$ be the scheme-theoretic closure of $X$ (resp. $Y$). 
Define graded rings
$$
A_{\overline X}:= \oplus _{n\geq 0}\Gamma (\overline X, \mls O_{\overline X}(n)), \ A_{\overline Y}:= \oplus _{n\geq 0}\Gamma (\overline Y, \mls O_{\overline Y}(n)),
$$
$$
A_{X}:= \oplus _{n\geq 0}\Gamma (X, \mls O_{X}(n)), \ A_{Y}:= \oplus _{n\geq 0}\Gamma (Y, \mls O_{Y}(n)),
$$
so $A_{\overline X}\subset A_X$ and $A_{\overline Y}\subset A_Y$.
For $m>0$ and any of these graded rings $A$ write $A(m)$ for the subring $A(m)\subset A$ given by 
$$
A(m):= \oplus _{n\geq 0}A^{nm}.
$$

Write $|A_{\overline X}^n|\subset |nD|$ for $\mathbf{P}\Gamma (\overline X, \mls O_{\overline X}(n))\subset \mathbf{P}\Gamma (X, \mls O_X(n))$, and similarly for $|A_{\overline Y}^n|$.
By \cref{L:3.2}, for $n$ sufficiently large the proportion of definable lines in
the linear system $|A_{\overline X}^n|$ is greater than or
equal to $1-\epsilon $.  Choosing $\epsilon $ sufficiently small, and thereafter
$n$ sufficiently large so that \cref{T:2.18} applies to the map
$$
|A_{\overline X}^n|\hookrightarrow \mathbf{P}\Gamma (X, \mls O_X(n))\rightarrow \mathbf{P}\Gamma (Y, \mls O_Y(n))
$$
 we therefore find an integer $n_0$ such that for each $n\geq n_0$ we get an isomorphism of fields
$$
\sigma _n:\kappa _X\rightarrow \kappa _Y
$$
and a $\sigma _n$-linear 
$$
\gamma _n:A_{\overline X}^n\rightarrow \Gamma (Y, \mls O_Y(n))
$$
such that the induced morphism of projective spaces
$$
f'_n:\mathbf{P}\Gamma (\overline X, \mls O_{\overline X}(n))\rightarrow \mathbf{P}\Gamma (Y, \mls O_Y(n))
$$
agrees with the map 
$$
f_n:|A_{\overline X}^n|\rightarrow |\mls O_Y(n)|
$$
defined by $\varphi $ on a proportion of points
\begin{equation}\label{E:errorterm}
1-2\epsilon -2A(q, N, \epsilon )-2\frac{q^2(q+1)^2}{q^{N+1}-q},
\end{equation}
where $N$ is the projective dimension of $|A_{\overline X}^n|$.

For notational convenience let $\tilde \epsilon $ denote $9\epsilon.$ Then for
$n$ sufficiently large the expression \eqref{E:errorterm} is greater than or
equal to $1-\widetilde \epsilon $. After possibly replacing $n_0$ by a bigger number we may assume that  \eqref{E:errorterm} is at least $1-\tilde\epsilon$ for all $n\geq n_0$.
\end{pg}

After possibly replacing $n_0$ by a bigger number 
and applying \cref{L:3.4}, we may assume the following.

\begin{assumption}\label{ass:big}
  For all $n\geq n_0$, we have that 
  $$\frac{\# \mls T_{n, g(n)}}{\#(S_{n}\oplus S_{g(n)}\oplus S_{n+g(n)})} =
  1-\tilde\epsilon$$
  for $g(n)=n$ and $g(n)=n+1$.
\end{assumption}

\begin{pg}
Next we prove that the $f'_n$ are close to multiplicative. 
\begin{claim}\label{P:mult thing}
  Given \cref{ass:big}, for any $n_1\geq n_0$ and for $n_2 = n_1$ or $n_2 = n_1+1$ we
  have
$$
f'_{n_1}(s_1)f'_{n_2}(s_2) = f_{n_1+n_2}'(s_1s_2)
$$
for a proportion $1-5\tilde \epsilon $ of pairs 
$$
(s_1, s_2)\in A_{\overline X}^{n_1}\times A_{\overline X}^{n_2}.
$$
\end{claim}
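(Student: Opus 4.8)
The plan is to show the displayed identity holds for $(s_1,s_2)$ whenever three agreement events occur, and to estimate the failure of each. Fix $n_1\geq n_0$ and $n_2\in\{n_1,n_1+1\}$, write $m:=n_1+n_2$, and for each relevant degree $d$ let $G_d\subset|A_{\overline X}^d|$ denote the locus on which $f'_d$ and $f_d$ agree; by \cref{T:2.18} applied in degree $d$ (via the estimate \eqref{E:errorterm}), $G_d$ has proportion $\geq 1-\tilde\epsilon$ in $|A_{\overline X}^d|$. I would consider the events (a) $[s_1]\in G_{n_1}$; (b) $[s_2]\in G_{n_2}$; (c) $f'_m([s_1s_2])=f_m([s_1s_2])$.

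\textbf{Reduction to (a), (b), (c).} Under (a) we have $f'_{n_1}([s_1])=f_{n_1}([s_1])=[\gamma_{n_1}(s_1)]$, and similarly under (b). Since $\varphi$ is an isomorphism of divisorial structures, $\Eff(\varphi)$ is a monoid isomorphism preserving supports, so $f_m([s_1s_2])=\varphi(\mathrm{div}_X(s_1)+\mathrm{div}_X(s_2))=\varphi(\mathrm{div}_X(s_1))+\varphi(\mathrm{div}_X(s_2))$, which by (a),(b) equals $\mathrm{div}_Y(\gamma_{n_1}(s_1))+\mathrm{div}_Y(\gamma_{n_2}(s_2))=[\gamma_{n_1}(s_1)\gamma_{n_2}(s_2)]$. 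Combining this with (c) gives $f'_m([s_1s_2])=[\gamma_{n_1}(s_1)\gamma_{n_2}(s_2)]=f'_{n_1}([s_1])f'_{n_2}([s_2])$, as desired. Events (a) and (b) each fail for at most a proportion $\tilde\epsilon$ of pairs, since $[s_1]$ (resp.\ $[s_2]$) is equidistributed in $|A_{\overline X}^{n_1}|$ (resp.\ $|A_{\overline X}^{n_2}|$) and $G_{n_1}$, $G_{n_2}$ have proportion $\geq 1-\tilde\epsilon$ there.

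\textbf{A local criterion for (c).} The key point, special to finite fields, is that $\varphi$ carries definable lines to definable lines: for closed $Z\subset X$ one has $\varphi(V(Z))=V(\varphi Z)$ because $\varphi$ is a homeomorphism preserving supports of effective divisors, and if $V(Z)$ is a line then $V(\varphi Z)$ is a definable — hence linear — subspace with exactly $q+1$ points of $\mathbf F$, so a line. I would then prove: if $p\in|A_{\overline X}^m|$ lies on two distinct definable lines $\ell_i=V(Z_i)$ ($i=1,2$), each meeting $G_m$ in at least two points, then $f'_m(p)=f_m(p)$. Indeed $f'_m(\ell_i)$ is a line (\cref{T:2.18}) and $f_m(\ell_i)=V(\varphi Z_i)$ is a line; since $f'_m$ and $f_m$ agree on $\ell_i\cap G_m$ and are injective on $\ell_i$, these two lines share two points and hence coincide. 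Therefore $f'_m(p)\in f'_m(\ell_1)\cap f'_m(\ell_2)=f_m(\ell_1)\cap f_m(\ell_2)=V(\varphi(Z_1\cup Z_2))=\varphi\bigl(V(Z_1)\cap V(Z_2)\bigr)=\{f_m(p)\}$, using $\ell_1\neq\ell_2$.

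\textbf{Producing the two lines from \cref{ass:big}, and the main obstacle.} It remains to bound the proportion of $(s_1,s_2)$ — equivalently, by surjectivity of $S_{n_i}\to A_{\overline X}^{n_i}$ with constant fibers, of $(f_1,f_2)\in S_{n_1}\times S_{n_2}$ — for which no such pair of lines through $p=[s_1s_2]$ exists. This is the crux, because the products $s_1s_2$ are far from equidistributed, so the density of $G_m$ cannot be invoked at $[s_1s_2]$ directly; overcoming this is precisely the role of \cref{ass:big} (derived from \cref{L:3.4} and the Bertini–Poonen estimate \cref{P:1.11}), which controls the distribution of $f_1f_2$ directly. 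Concretely, by \cref{ass:big} with $g(n_1)=n_2$ the proportion of triples $(f_1,f_2,f_3)\in S_{n_1}\times S_{n_2}\times S_m$ for which $f_1f_2$ and $f_3$ span a definable line $\ell$ is $\geq 1-\tilde\epsilon$; moreover the images in $A_{\overline X}^m$ of $f_3$ and of the shift $f_3+f_1f_2$ are each equidistributed in $|A_{\overline X}^m|$ as $f_3$ varies, so each lies in $G_m$ outside a proportion $\tilde\epsilon+o(1)$ of triples, and both lie on $\ell$. Intersecting, outside a proportion $3\tilde\epsilon+o(1)$ of triples, $\ell$ is a definable line through $p$ meeting $G_m$ in at least two points. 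Averaging over the forgotten coordinate $f_3$ via Markov's inequality, for all but a proportion $3\tilde\epsilon+o(1)$ of $(f_1,f_2)$ all but an $o(1)$–fraction of $f_3\in S_m$ produce such a line; since two values of $f_3$ give the same line only for an $o(1)$–fraction, one obtains at least two distinct suitable definable lines through $p$, so (c) holds. Thus (c) fails for at most $3\tilde\epsilon+o(1)$ of pairs, and in total the identity holds for a proportion $\geq 1-5\tilde\epsilon$ of pairs, the negligible lower‑order terms being absorbed by the (strict) slack in \cref{ass:big} and \eqref{E:errorterm} once $n$ is large. The only genuinely delicate step is this last one — keeping the cumulative error under the budget $5\tilde\epsilon$ while manufacturing, from \cref{ass:big} alone, a \emph{second} $G_m$-point on each candidate definable line through $p$.
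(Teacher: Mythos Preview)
Your approach is essentially the paper's: reduce to showing $f'_m(p)=f_m(p)$ for $p=[s_1s_2]$ by finding two distinct definable lines through $p$, each containing two points on which $f_m$ and $f'_m$ agree, then conclude via the intersection argument (exactly as the paper does). The reduction to (a), (b), (c) and the ``local criterion for (c)'' match the paper's logic verbatim; your observation that $\varphi$ sends a definable line to a definable subspace of cardinality $q+1$, hence a line, is what the paper abbreviates as ``$f_n$ preserves definable lines.''

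Where you differ is in how you manufacture the two $G_m$-points on each line. The paper does this in two separate steps: first Markov on \cref{ass:big} to get that for a $1-4\tilde\epsilon$ fraction of pairs at least $3/4$ of lines through $p$ are definable, and then a direct counting argument (using only that $G_m$ has density $\ge 1-\tilde\epsilon$) to show that for \emph{every} $p$ fewer than $4\tilde\epsilon$ of lines through $p$ meet $G_m$ in at most one point. Your shift trick --- taking $[f_3]$ and $[f_3+f_1f_2]$ as the two witnesses on $\ell$, each equidistributed in $|A_{\overline X}^m|$ --- folds these two steps into a single $3\tilde\epsilon$ bound on bad triples. That is a genuine simplification, and it makes the final budget $2\tilde\epsilon+3\tilde\epsilon=5\tilde\epsilon$ fall out more transparently than in the paper.

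There is, however, one misstatement. Your Markov step asserts that for all but $3\tilde\epsilon+o(1)$ of $(f_1,f_2)$, \emph{all but an $o(1)$-fraction} of $f_3$ are good. Markov does not give this: with bad-triple density $\le 3\tilde\epsilon$ and threshold $\delta$, you only get that the proportion of $(f_1,f_2)$ with bad $f_3$-fraction exceeding $\delta$ is at most $3\tilde\epsilon/\delta$. What you actually need is much weaker: (c) fails for $(f_1,f_2)$ only if the good $f_3$'s lie in at most one line through $p$, which forces the good fraction to be $\le q^2/|A_{\overline X}^m|=o(1)$, i.e.\ bad fraction $\ge 1-o(1)$. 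Markov at threshold $1-o(1)$ then gives exactly the bound $3\tilde\epsilon+o(1)$ on pairs for which (c) fails. So your conclusion is correct, but the inequality is pointed the wrong way; once reversed, the argument goes through as you intended.
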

\begin{proof}
Let $\Xi$ denote the set of pairs $(s_1,s_2)$ such that the proportion of
lines through $s_1s_2$ that are definable is at most $1/4$. The
hypothesis of the Claim and \cref{L:3.4} imply that 
$$
\#\Xi \cdot \frac{1}{4}\#S_{n_1+n_2}\leq 
\tilde \epsilon \cdot \# S_{n_1}\cdot \#S_{n_2}\cdot \# S_{n_1+n_2}.
$$
It follows that for a proportion of $1-4\tilde \epsilon $ of pairs $(s_1, s_2)$,
the line in $\Gamma (X, \mls O_X(n_1+n_2))$ spanned by $s_1s_2$ and at least three quarters of the elements in
$A_{\overline X}^{n_1+n_2}$ is definable.  Thus, for at least half
of the pairs $(s_3, s_3')$ of elements in $A_{\overline X}^{n_1+n_2}$, the
lines $Sp (s_1s_2, s_3)$ and $Sp(s_1s_2, s_3')$ are definable. 

Consider the set $\Lambda$ of lines $\ell$ through $s_1s_2$ such that
$f_{n_1+n_2}(t)\neq f'_{n_1+n_2}(t)$ for all $t$ in $\ell$ except possibly for
one point; let $\widetilde\Lambda$ denote the set of all lines through $s_1s_2$.
A straightforward counting argument shows that $\#\Lambda/\#\widetilde\Lambda <
4\tilde\epsilon$.
So assuming $4\tilde \epsilon < 1/4$, for a
proportion $1-4\tilde \epsilon $ of pairs $(s_1, s_2)$ we can find a pair $(s_3,
s_3')$ such that the lines
$$
Sp (s_1s_2, s_3), \ \ Sp (s_1s_2, s_3')
$$
are definable and each contain at least two points for which $f_n = f_n'$.
 Since $f_n$ preserves definable lines, and $f_n'$ takes lines to lines, we
 conclude that for such $(s_3, s_3')$ we have
$$
f_n'(Sp (s_1s_2, s_3)) = f_n(Sp(s_1s_2, s_3)), \ \ f_n'(Sp (s_1s_2, s_3')) = f_n(Sp(s_1s_2, s_3')).
$$
Since the intersection of these two lines is $s_1s_2$ we conclude that
$$
f_n'(s_1s_2) = f_n(s_1s_2) = f_n(s_1)f_n(s_2).
$$
Since $f_n = f_n'$ on a proportion of $1-\tilde \epsilon $ points we conclude that $f'_{n_1}(s_1) f'_{n_2}(s_2) = f'_{n_1+n_2}(s_1s_2)$ for a proportion of $1-5\tilde \epsilon $  pairs.
\end{proof}
\end{pg}
\begin{pg}
Next we show that the $\gamma_n$ are close to multiplicative.
\begin{claim}
  Given \cref{ass:big}, for any $n_1$ and $n_2=n_1$ or $n_2=n_1+1$, 
there exists a constant $c_{n_1, n_2}$ such that 
\begin{equation}\label{E:multuptoscalar}
\gamma _{n_1}(s_1)\gamma _{n_2}(s_2) = c_{n_1, n_2}\gamma _{n_1+n_2}(s_1s_2)
\end{equation}
for all pairs $(s_1, s_2)$.
\end{claim}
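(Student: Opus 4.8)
The plan is to upgrade the projective, approximate multiplicativity of \cref{P:mult thing} to an exact identity valid for \emph{all} pairs. Set
$$\beta_1(s_1,s_2):=\gamma_{n_1}(s_1)\gamma_{n_2}(s_2),\qquad \beta_2(s_1,s_2):=\gamma_{n_1+n_2}(s_1s_2),$$
both regarded as maps $A_{\overline X}^{n_1}\times A_{\overline X}^{n_2}\to A_{\overline Y}^{n_1+n_2}$. Since each $\gamma_n$ is additive, $\beta_1$ and $\beta_2$ are biadditive, and \cref{P:mult thing} asserts that $[\beta_1(s_1,s_2)]=[\beta_2(s_1,s_2)]$ in $\mathbf P\Gamma(Y,\mls O_Y(n_1+n_2))$ for a proportion $1-5\tilde\epsilon$ of pairs. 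Because $\overline X$ and $\overline Y$ are integral, $A_{\overline X}$ and $A_{\overline Y}$ are integral domains; because each $f'_n$ is injective, each $\gamma_n$ is injective and, being semilinear over a field isomorphism, carries linearly independent sets to linearly independent sets. Hence $\beta_1$ and $\beta_2$ vanish exactly on the pairs with $s_1=0$ or $s_2=0$, and on the ``good set'' $\Omega$ consisting of the pairs with $s_1,s_2\neq 0$ satisfying the conclusion of \cref{P:mult thing} there is a well-defined scalar $\lambda(s_1,s_2)\in\kappa_Y^{\times}$ with $\beta_1(s_1,s_2)=\lambda(s_1,s_2)\beta_2(s_1,s_2)$; note $\Omega$ has proportion at least $1-6\tilde\epsilon$, the contribution of pairs with a zero coordinate being negligible. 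The goal is to show $\lambda$ is a single constant and that $\beta_1=\lambda\beta_2$ identically.

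First I would prove a local-constancy statement: if $(s_1,s_2),(s_1',s_2),(s_1+s_1',s_2)\in\Omega$ and $[s_1]\neq[s_1']$ in $\mathbf P(A_{\overline X}^{n_1})$, then $\lambda(s_1+s_1',s_2)=\lambda(s_1,s_2)=\lambda(s_1',s_2)$. Indeed $s_1s_2$ and $s_1's_2$ are linearly independent in the domain $A_{\overline X}$, so $\beta_2(s_1,s_2)=\gamma_{n_1+n_2}(s_1s_2)$ and $\beta_2(s_1',s_2)=\gamma_{n_1+n_2}(s_1's_2)$ are linearly independent; expanding $\beta_1(s_1+s_1',s_2)=\lambda(s_1+s_1',s_2)\beta_2(s_1+s_1',s_2)$ by biadditivity of $\beta_1$ and $\beta_2$ and comparing coefficients against this independent pair gives the equality. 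The same holds with the two variables exchanged.

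For the globalization I would use an averaging-and-connecting argument. Call $s_2$ \emph{column-nice} if $\Omega_{s_2}:=\{s_1:(s_1,s_2)\in\Omega\}$ has proportion at least $1-\sqrt{6\tilde\epsilon}$ in $A_{\overline X}^{n_1}$; then column-nice $s_2$ form a set of proportion at least $1-\sqrt{6\tilde\epsilon}$, and symmetrically for row-nice $s_1$. Fix a column-nice $s_2$. Given $s_1,s_1'\in\Omega_{s_2}$, choose $t\in\Omega_{s_2}\cap(\Omega_{s_2}-s_1)\cap(\Omega_{s_2}-s_1')$ with $[t]\neq[s_1]$ and $[t]\neq[s_1']$; the three translates each omit a proportion at most $\sqrt{6\tilde\epsilon}$ and the two projective conditions omit a negligible proportion, so for $\epsilon$ small such $t$ exists. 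Local constancy applied to $(s_1,t,s_1+t)$ and to $(s_1',t,s_1'+t)$ gives $\lambda(s_1,s_2)=\lambda(t,s_2)=\lambda(s_1',s_2)$, so $\lambda(\cdot,s_2)$ is a constant $c(s_2)$ on $\Omega_{s_2}$. Running the symmetric argument in the second variable, and, for two column-nice $s_2,s_2'$, choosing an $s_1$ that is row-nice and lies in $\Omega_{s_2}\cap\Omega_{s_2'}$ (an intersection of near-full sets), one gets $c(s_2)=\lambda(s_1,s_2)=c(s_1)=\lambda(s_1,s_2')=c(s_2')$, where $c(s_1)$ is the analogous constant along the row of $s_1$. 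Thus there is one constant $c_{n_1,n_2}$ with $\beta_1(s_1,s_2)=c_{n_1,n_2}\beta_2(s_1,s_2)$ whenever $(s_1,s_2)\in\Omega$ and $s_2$ is column-nice.

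Finally I would extend this to all pairs. For a fixed column-nice $s_2$ the map $s_1\mapsto\beta_1(s_1,s_2)-c_{n_1,n_2}\beta_2(s_1,s_2)$ is additive and its kernel contains $\Omega_{s_2}$, a subset of the finite abelian group $(A_{\overline X}^{n_1},+)$ of proportion greater than $\tfrac12$; since a proper subgroup has proportion at most $\tfrac12$, the kernel is everything, so $\beta_1(s_1,s_2)=c_{n_1,n_2}\beta_2(s_1,s_2)$ for every $s_1$. Then for an arbitrary fixed $s_1$ the additive map $s_2\mapsto\beta_1(s_1,s_2)-c_{n_1,n_2}\beta_2(s_1,s_2)$ vanishes on all column-nice $s_2$, again a subset of proportion greater than $\tfrac12$, hence vanishes identically. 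This is exactly \eqref{E:multuptoscalar}. (As a byproduct, comparing the two sides of \eqref{E:multuptoscalar} under $s_1\mapsto as_1$ forces $\sigma_{n_1}=\sigma_{n_1+n_2}$, and similarly $\sigma_{n_2}=\sigma_{n_1+n_2}$, so all field isomorphisms occurring here coincide.) The main obstacle is the combinatorial bookkeeping in the globalization step: one must check that the various ``proportion near $1$'' events genuinely overlap, which is where the freedom to shrink $\epsilon$ (hence enlarge $n_0$) is spent, and one should note this is legitimate because \cref{P:mult thing} together with these estimates is invoked only finitely many times.
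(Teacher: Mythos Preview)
Your argument is correct and complete. It differs from the paper's proof in an interesting way. The paper fixes a ``good'' $s_1$, regards $b_X(s_1,\cdot)$ and $b_Y(s_1,\cdot)$ as injective linear maps, and uses a rank inequality (for distinct scalars $\alpha,\alpha'$ one has $\operatorname{rank}(b_X-\alpha b_Y)+\operatorname{rank}(b_X-\alpha' b_Y)\geq\dim V_{n_2}$) to pin down a unique $\alpha_0$ with $b_X(s_1,\cdot)=\alpha_0\,b_Y(s_1,\cdot)$; a symmetric argument and bilinearity then force a single constant. Your route instead exploits biadditivity directly: a local-constancy lemma (using that $\gamma_{n_1+n_2}$ sends the independent pair $s_1s_2,\,s_1's_2$ to an independent pair) propagates the scalar along additive triples, a BLR/translate-intersection argument globalizes it on the nice rows and columns, and the ``kernel of an additive map with density $>\tfrac12$ is everything'' step extends to all pairs. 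This is closer in spirit to the linearity-testing analogy the paper itself flags in Remark~\ref{R:2.2.2}, and is arguably more elementary; the paper's rank argument, by contrast, isolates the constant for each fixed $s_1$ without needing to chain through auxiliary elements. Both approaches rely on the same inputs (integrality of $A_{\overline X}$ and $A_Y$, injectivity and semilinearity of the $\gamma_n$, and Claim~\ref{P:mult thing}) and on being able to take $\tilde\epsilon$ small.
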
 
\begin{proof}
  Let $V_n$ denote $A_{\overline X}^n$, viewed as a vector space over the prime field $\mathbf{F}_p$.  We then have two symmetric bilinear forms
$$
b_X, b_Y:V_{n_1}\times V_{n_2}\rightarrow \Gamma (Y, \mls O_Y(n_1+n_2))
$$
given by
$$
b_X(s_1, s_2):= \gamma _{n_1}(s_1)\gamma _{n_2}(s_2), \ \ b_Y(s_1, s_2):= \gamma _{n_1+n_2}(s_1s_2).
$$
These forms have the property that they agree up to scalar for a proportion of $1-5\tilde \epsilon $ of pairs $(s_1, s_2)$.

Given $s_1$, let $Y_{s_1}$ be the set of $s_2$ such that $b_X(s_1,s_2)$ is a scalar multiple of $b_Y(s_1,s_2)$. Let $p(s_1)=\#Y_{s_1}/\# V_{n_2}$. It follows from the above remarks that we have
$$
\#\{s_1| p(s_1) < 2\sqrt{\tilde \epsilon}\}\cdot 2\sqrt{\tilde \epsilon} <5\tilde \epsilon \cdot \# V_{n_1}.
$$
Thus, for a proportion of $1-(5/2)\sqrt{\tilde \epsilon }$ of elements $s_1$ the two forms $b_X(s_1, s_2)$ and $b_Y(s_1, s_2)$ agree up to scalar for a proportion of $1-2\sqrt{\tilde \epsilon }$ of elements  $s_2$.

Fix $s_1$ for which $p(s_1)\geq 2\sqrt{\tilde\epsilon}$. Each of the maps
$$
b_X(s_1, -), b_Y(s_1, -):V_{n_2}\rightarrow \Gamma (Y, \mls O_Y(n_1+n_2))
$$
are injective, which implies that 
$$
\mathrm{rank}(b_X(s_1, -)-\alpha b_Y(s_1, -))+\mathrm{rank}(b_X(s_1, -)-\alpha 'b_Y(s_1, -))\geq \mathrm{dim}(V_{n_2}) 
$$
for any distinct elements $\alpha $, $\alpha '$.  It follows that there is at
most one $\alpha \neq 0$ for which the rank of $b_X(s_1, -)-\alpha b_Y(s_1, -)$
is less than or equal to $\mathrm{dim}(V_{n_2})/2$.

Suppose that in fact we have
$$
\mathrm{rank}(b_X(s_1, -)-\alpha b_Y(s_1, -))\geq \mathrm{dim}(V_{n_2})/2
$$
for all $\alpha $.  Then the proportion of $s_2$ for which $b_X(s_1, s_2)$ is a
scalar multiple of $b_Y(s_1, s_2)$ is at most 
$$
\frac{q-1}{q^{\mathrm{dim}(V_{n_2})/2}},
$$
and we obtain the inequality
$$
\frac{q-1}{q^{\mathrm{dim}(V_{n_2})/2}}\geq 1-2\sqrt{\tilde \epsilon }.
$$
For $n$ chosen sufficiently large relative to $\epsilon $ this is a
contradiction.  We conclude that there exists exactly one scalar $\alpha _0$
such that 
$$
\mathrm{rank}(b_X(s_1, -)-\alpha _0b_Y(s_2, -))< \mathrm{dim}(V_{n_2})/2.
$$
Now in this case we find that the proportion of $s_2$ for which $b_X(s_1, s_2)$
is a scalar multiple of $b_Y(s_1, s_2)$ is at most 
$$
\frac{(q-2)}{q^{\mathrm{dim}(V_{n_2})/2}}+\frac{1}{p^{r_0}},
$$
where $r_0$ is the rank of  $b_X(s_1, -)-\alpha _0b_Y(s_1, -)$.  For $\epsilon $
suitably small we see that this implies that in fact $r_0 = 0$ and $b_X(s_1,
s_2)= \alpha _0b_Y(s_1, s_2)$ for all $s_2$.

Note that this argument is symmetric in $s_1$ and $s_2$.  That is, for a fixed
$s_2$ subject to the condition that $b_X(s_1, s_2)$ is a scalar multiple of
$b_Y(s_1, s_2)$ is at least $1-2\sqrt{\tilde \epsilon }$ we find that there
exists a constant $\beta $ such that 
$$
b _X(s_1, s_2) =\beta b_Y(s_1, s_2)
$$
for all $s_1$.  From this it follows that in fact the constant $\alpha _0$ in
the previous paragraph is independent of the choice of $s_1$.  Furthermore,
using the bilinearity we find that there exists a constant $c_{n_1, n_2}$ such
that
$$
b_X(s_1, s_2) = c_{n_1, n_2}b_Y(s_1, s_2)
$$
for all pairs $(s_1, s_2).$  In other words, we have the equality
\eqref{E:multuptoscalar}
\end{proof}

\begin{claim}\label{claim:scaling}
 Given \cref{ass:big}, for every $n\geq n_0$ and integer $m\geq 1$ there exists a constant $c_m$ such that for all sections $$s_1,
 \dots, s_m\in A_{\overline X}^n$$ we have
$$
\gamma _{nm}(s_1\cdots s_m) = c_m\gamma _n(s_1)\cdots \gamma _n(s_m).
$$
 \end{claim}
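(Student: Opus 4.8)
The plan is to prove \cref{claim:scaling} by induction on $m$. The cases $m=1$ (with $c_1=1$) and $m=2$ (the preceding claim with $n_1=n_2=n$) are immediate, and for the inductive step I would split a product of $m$ sections into halves of sizes $\lfloor m/2\rfloor$ and $\lceil m/2\rceil$, recombine the two halves using multiplicativity of $\gamma$, and apply the inductive hypothesis to each half. The one extra ingredient needed is multiplicativity of $\gamma$ for a pair of degrees of the form $(jn,j'n)$ with $j\le j'\le 2j$ — differing by a multiple of $n$ rather than by $0$ or $1$ — so the first step is to extend the preceding claim to this setting.

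For that extension, I would note that the proofs of \cref{P:mult thing} and of the preceding claim about the $\gamma_n$ use only \cref{L:3.4}, which applies to an arbitrary function $g$ with $x\le g(x)\le 2x$. Having fixed $n$, I would run the same arguments with the function $g(x)=x+n$ (legitimate since $x\le x+n\le 2x$ for $x\ge n$) in place of $g(x)=x+1$; this produces, after possibly enlarging $n_0$, constants $c_{a,b}$ with $\gamma_a(s)\gamma_b(t)=c_{a,b}\gamma_{a+b}(st)$ for all $s\in A_{\overline X}^a$ and $t\in A_{\overline X}^b$ whenever $a,b$ are multiples of $n$ with $n_0\le a\le b\le 2a$. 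I also need the field isomorphisms $\sigma_a$ produced at these various degrees to coincide; I would establish this exactly as in the lemma identifying $\sigma_1$ with $\sigma_m$ in the quasi-projective case above, comparing the two recombination maps on a common definable line inside the relevant product of linear systems, so that there is a single $\sigma$ with respect to which every $\gamma_a$ is semilinear.

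Granting this, for $m\ge 2$ I would set $j=\lfloor m/2\rfloor$ and write $s_1\cdots s_m=(s_1\cdots s_j)(s_{j+1}\cdots s_m)$ as a product of an element of $A_{\overline X}^{jn}$ with one of $A_{\overline X}^{(m-j)n}$. Since $n_0\le jn\le (m-j)n\le 2jn$, the extended multiplicativity gives
$$\gamma_{mn}(s_1\cdots s_m)=c_{jn,(m-j)n}^{-1}\,\gamma_{jn}(s_1\cdots s_j)\,\gamma_{(m-j)n}(s_{j+1}\cdots s_m),$$
and applying the inductive hypothesis to the $j$ sections $s_1,\dots,s_j$ and to the $m-j$ sections $s_{j+1},\dots,s_m$ yields $\gamma_{jn}(s_1\cdots s_j)=c_j\prod_{i\le j}\gamma_n(s_i)$ and $\gamma_{(m-j)n}(s_{j+1}\cdots s_m)=c_{m-j}\prod_{i>j}\gamma_n(s_i)$. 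Hence the claim will hold with $c_m:=c_{jn,(m-j)n}^{-1}c_jc_{m-j}$, a scalar independent of the $s_i$. I expect the delicate point to be the extension in the second paragraph — in particular pinning down a single field isomorphism $\sigma$ valid for all the degrees $jn$ in play, without which the scalars $c_j$, $c_{m-j}$, $c_{jn,(m-j)n}$ would not lie in a common field; once that is in place the induction itself is routine.
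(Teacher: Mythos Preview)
Your approach is essentially the same as the paper's: induct on $m$, split a product of $m$ sections into halves of sizes $\lfloor m/2\rfloor$ and $\lceil m/2\rceil$, apply the multiplicativity constant to recombine, and invoke the inductive hypothesis on each half. The paper writes $m=a+b$ with $a=b=m/2$ (even $m$) or $a=(m-1)/2$, $b=(m+1)/2$ (odd $m$) and sets $c_m=c_{a,b}c_ac_b$.

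You are in fact more careful than the paper on one point. The paper's preceding claim is stated only for $n_2=n_1$ or $n_2=n_1+1$, whereas the halving step requires a constant $c_{na,nb}$ with $nb=na$ or $nb=na+n$; the paper's ``by the above discussion'' silently assumes this extension. Your fix---rerun the argument with $g(x)=x+n$ via \cref{L:3.4}---is exactly right, and since the bound in \cref{P:1.11} depends only on $w$ (here $w=2$) and not on the particular shift, no circularity in enlarging $n_0$ arises.

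Your worry about matching the field isomorphisms $\sigma_a$, however, is unnecessary. All the maps $\gamma_a$ land in $\Gamma(Y,\ms O_Y(a))$, which is a $\kappa_Y$-vector space, so the constants $c_j$, $c_{m-j}$, $c_{jn,(m-j)n}$ are all elements of $\kappa_Y$ regardless of which $\sigma_a$ governs the semilinearity of each $\gamma_a$; there is no ``common field'' issue. Indeed, the paper's proof of the preceding claim works entirely over the prime field $\F_p$ (viewing each $A_{\overline X}^n$ as an $\F_p$-vector space), which sidesteps semilinearity altogether. You may therefore drop that paragraph; the rest of your plan is correct and matches the paper.
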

 \begin{proof}
This we show by induction, the case $m=1$ being vacuous.  For the inductive step
write $m = a+b$ for positive integers $a$ and $b$ with  $a=b=m/2$ if $m$ is
even, and $a = (m-1)/2$ and $b = (m+1)/2$ if $m$ is odd.  Then by the above
discussion there exists a constant $c_{a, b}$ such that
$$
\gamma _{nm}(s_1\cdots s_m) = c_{a, b}\gamma _{na}\left(\prod _{i=1}^as_i\right)\gamma _{bn}\left(\prod _{j=1}^bs_{a+j}\right).
$$
By our inductive hypothesis this equals
$$
c_{a, b}c_ac_b\gamma _n(s_1)\cdots \gamma _n(s_m),
$$
so we can take $c_m = c_{a, b}c_ac_b.$
\end{proof}
\end{pg}
\begin{pg}
In particular, after possibly choosing $n_0$ even bigger so that $A_{\overline X}(n_0)$ is generated by $A_{\overline X}^{n_0}$ we get an injective ring homomorphism
$$
\rho _{\overline X, n_0}:A_{\overline X}(n_0)\rightarrow A_Y(n_0)
$$ 
given in degree $mn_0$ by $\gamma _{mn_0}/c_{m}.$
\end{pg}
\begin{pg}
The map $\rho _{\overline X, n_0}$ defines a rational map
$$
\xymatrix{
\lambda :Y\ar@{-->}[r]& \overline X.}
$$
Let $Y^\circ \subset Y$ be the maximal open subset over which $\lambda $ is defined and the map $\rho _{\overline X, n_0}$ induces an isomorphism $\lambda ^*\mls O_{\overline X}(n_0)\simeq \mls O_Y(n_0)$. We claim that the two maps of topological spaces
\begin{equation}\label{E:topmap}
|\lambda |, \varphi ^{-1}:|Y^\circ |\rightarrow |\overline X|
\end{equation}
agree, where we write $\varphi ^{-1}$ also for the composition
$$
\xymatrix{
|Y|\ar[r]^-{\varphi ^{-1}}& |X|\ar@{^{(}->}[r]& |\overline X|}
$$

To prove this it suffices to show that these two maps agree on all closed points.  Suppose to the contrary that we have a closed point $y\in Y^\circ $ such that $\lambda (y)\neq \varphi ^{-1}(y)$.  Consider the subset 
$$
T_m\subset A_{\overline X}^{n_0m}
$$ 
of sections $g\in \Gamma (\overline X, \mls O_{\overline X}(n_0m))$ whose zero locus contains both $\lambda (y)$ and $\varphi ^{-1}(y)$.  Now any section $g$ whose zero locus contains $\lambda (y)$ and for which $f_{n_0m}(g) = f_{n_0m}'(g)$ lies in $T_m$ by definition of $\lambda $ and $f_n$.  It follows that
\begin{equation}\label{E:expression1}
\frac{\#T_m}{\#A_{\overline X}^{n_0m}}\geq \frac{1}{q^{\mathrm{deg}(\lambda (y))}}-q^{\mathrm{deg}(\lambda (y))}\tilde \epsilon .
\end{equation}
On the other hand, for $m$ sufficiently big we have
\begin{equation}\label{E:expression2}
\frac{\#T_m}{\#A_{\overline X}^{n_0m}}= \frac{1}{q^{\mathrm{deg}(\lambda (y))+\mathrm{deg}(\varphi ^{-1}(y))}}.
\end{equation}
Now observe that if we replace our choice of $n_0$ by a multiple, the open subset $Y^\circ \subset Y$ and $\lambda $ remain the same, but we can decrease the size of $\tilde \epsilon $ by making such a choice of $n_0$.  Since the right side of \cref{E:expression1} is larger than the right side of \cref{E:expression2} for $\tilde \epsilon $ sufficiently small this gives a contradiction.
 We conclude that the two maps \cref{E:topmap} agree.
\end{pg}

\begin{lem}\label{L:4.4.6.11} Let $k$ be a field, let $S$ be a normal quasi-projective $k$-scheme, and let $T/k$ be a proper $k$-scheme.  Let $f:|S|\rightarrow |T|$ be a continuous map of topological spaces which is a homeomorphism onto an open subset of $|T|$. Assume that there exists a dense open subset $U\subset S$ and a morphism of schemes $\tilde f_U:U\rightarrow T$ whose underlying morphism of topological spaces $|\tilde f_U|:|U|\rightarrow |T|$ agrees with the restriction of $f$.  Then there exists a unique morphism of schemes $\tilde f:S\rightarrow T$ whose underlying morphism of topological spaces is $f$ and which restricts to $\tilde f_U$ on $U$. 
\end{lem}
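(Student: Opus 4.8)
The plan is to build $\tilde f$ by extending $\tilde f_U$ in two stages: first across all points of codimension $\leq 1$, using that $T$ is proper, and then across the remaining locus, which has codimension $\geq 2$, using that $S$ is normal; the homeomorphism $f$ will be used to pin down the topological behaviour of the extension. Uniqueness I would dispose of first: two morphisms $S\to T$ agreeing with $\tilde f_U$ on the dense open $U$ must agree, since $S$ is reduced and $T$ is separated. For the construction I may reduce to the case $S$ integral, treating connected components separately (the dense open $U$ meets each one).

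First I would view $\tilde f_U$ as a rational map $S\dashrightarrow T$. Because $S$ is normal --- so that $\mls O_{S,y}$ is a discrete valuation ring at each codimension-one point $y$ --- and $T$ is proper over $k$, the valuative criterion of properness extends $\tilde f_U$ over each $\Spec\mls O_{S,y}$, hence (the domain of definition of a rational map being open, and morphisms to the finite-type $T$ spreading out from $\Spec\mls O_{S,y}$) over a neighbourhood of $y$. This produces a morphism $\tilde f'\colon S'\to T$ on an open $S'\subseteq S$ containing every point of codimension $\leq 1$, with $\codim(S\setminus S',S)\geq 2$, and unique by the uniqueness just noted.

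Next I would prove that $|\tilde f'|$ equals $f$ on $S'$. The two continuous maps agree on the dense open $U$, and I claim they also agree on every codimension-one point $y$: there $\tilde f'(y)$ is, by construction, the unique center on the separated scheme $T$ of the valuation attached to $\mls O_{S,y}$, while $f$, being a homeomorphism onto the open subset $W:=f(|S|)$ of $|T|$, carries the irreducible closed set $\overline{\{y\}}$ to an irreducible closed subset of $W$ of the same codimension with generic point $f(y)$, and matching against the morphism $\tilde f_U$ on the dense open $U$ identifies $f(y)$ with that same center. With $|\tilde f'|$ and $f$ agreeing on an open set containing all points of codimension $\leq 1$, and $f$ a homeomorphism, the reasoning of \cref{lem:density} (via \cref{lem:CA}) then forces $|\tilde f'|=f$ on all of $S'$.

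Finally I would extend $\tilde f'$ across $S\setminus S'$ as in \cref{lem:extend-me}. Since $|\tilde f'|$ lands in $W$, the morphism $\tilde f'$ factors through the open subscheme $W\subseteq T$; covering $W$ by affine open subschemes $V_\alpha=\Spec B_\alpha$ of $T$ and setting $N_\alpha:=f^{-1}(V_\alpha)$, the restriction $\tilde f'|_{N_\alpha\cap S'}$ is a ring homomorphism $B_\alpha\to\Gamma(N_\alpha\cap S',\mls O_S)=\Gamma(N_\alpha,\mls O_S)$ --- the last equality by Krull's theorem, as $N_\alpha$ is normal and $N_\alpha\setminus S'$ has codimension $\geq 2$ --- which defines a morphism $\tilde f_\alpha\colon N_\alpha\to V_\alpha\hookrightarrow T$ extending $\tilde f'|_{N_\alpha\cap S'}$. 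These glue (uniqueness again, $S$ reduced and $T$ separated) to a morphism $\tilde f\colon S\to T$ extending $\tilde f_U$, and $|\tilde f|=f$ on the dense $S'$ forces $|\tilde f|=f$ everywhere by \cref{lem:density} applied on each $N_\alpha$, where $f$ restricts to a homeomorphism onto $V_\alpha$. The hard part is the middle step: away from $U$ the valuative criterion only records the center of a valuation, so one genuinely needs the homeomorphism $f$ --- which sends irreducible closed sets to irreducible closed sets of the same dimension and agrees with $\tilde f_U$ on a dense open --- to see that this center is $f(y)$.
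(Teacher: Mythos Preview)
Your overall architecture matches the paper's: extend $\tilde f_U$ across codimension~$1$ using properness of $T$ and normality of $S$ to get $\tilde f'$ on an open $S'$ with $\codim(S\setminus S')\geq 2$; check that $|\tilde f'|=f$ on $S'$; then extend over the rest via $\Gamma(N_\alpha\cap S',\mls O)=\Gamma(N_\alpha,\mls O)$. The divergence, and the gap, is in the middle step.

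Your claim that $|\tilde f'|(y)=f(y)$ for every codimension-one point $y$ is not justified. If $y\notin U$ then, since $U$ is open and $y$ is the generic point of $\overline{\{y\}}$, in fact $\overline{\{y\}}\cap U=\emptyset$; so ``matching against $\tilde f_U$ on the dense open $U$'' gives no direct information about $f(y)$. All you know is that both $f(y)$ and $\tilde f'(y)$ are specialisations of $f(\eta)=\tilde f'(\eta)$, and that $f(y)$ has codimension~$1$ while $\tilde f'(y)$ has codimension at most~$1$; this does not pin down a unique point. Relatedly, your appeal to the reasoning of \cref{lem:density} is problematic: that argument uses that \emph{both} maps are homeomorphisms, so that $\overline{\{x\}}=\bigcap_y\overline{\{y\}}$ is carried to $\bigcap_y\overline{\{g(y)\}}$; for the merely continuous $|\tilde f'|$ you only get $|\tilde f'|(x)\in\bigcap_y\overline{\{f(y)\}}$, which yields a specialisation, not equality.

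The paper handles this step differently and this is where quasi-projectivity of $S$ is used. One works at \emph{closed} points $s\in S'$: by Bertini (or Poonen--Bertini over finite fields) there are irreducible effective divisors $D_1,\dots,D_r\subset S'$ with $D_i\cap U\neq\emptyset$ and $\{s\}=\bigcap_i D_i$. Since the generic point of each $D_i$ lies in $U$, one has $\tilde f'(D_i)\subset\overline{f(D_i)}$, so $\tilde f'(s)\in\bigcap_i\overline{f(D_i)}$; and because $f$ is a homeomorphism onto an open subset one can arrange $\bigcap_i\overline{f(D_i)}=\{f(s)\}$. Hence $|\tilde f'|=f$ on closed points, and therefore on all points (for a morphism of finite-type $k$-schemes, the closed points of any irreducible closed set have dense image in the image of that set). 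This is exactly the missing ingredient in your argument; your valuation-center heuristic does not supply it.
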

\begin{proof}
Since $S$ is normal and $T$ is proper there exists an open subset $S^\circ \subset S$ containing $U$ and with complement of codimension $\geq 2$ such that $\tilde f_U$ extends to a morphism of schemes
$$
\tilde f_{S^\circ }:S^\circ \rightarrow T.
$$
We claim that $\tilde f_{S^\circ }$ induces $f|_{|S^\circ |}$ on underlying topological spaces.

If $s\in S^\circ $ is a closed point, then by Bertini's theorem (or in the finite field case Poonen-Bertini) there exist effective irreducible divisors $D_1, \dots, D_r\subset S^\circ $, with $D_i\cap U$ nonempty for all $i$, such that
$$
\{s\} = D_1\cap \cdots \cap D_r.
$$
Since $f$ is a homeomorphism onto an open subset of $T$ we can further arrange that 
$$
\{f(s)\} = \overline {f(D_1)}\cap \cdots \cap \overline {f(D_r)},
$$
where $\overline {f(D_i)}$ is the closure of $f(D_i)$ in $|T|$.
Since
$$
\tilde f_{S^\circ }(s)\subset \overline {f(D_1)}\cap \cdots \cap \overline {f(D_r)}.
$$
We conclude that $\tilde f_{S^\circ }(s) = f(s)$.
This shows that $\tilde f_{S^\circ }$ agrees with $f$ on all closed points and therefore also on all points.

We are therefore reduced to the case when the complement  of $U$ in $Y$ has codimension $\geq 2$.  In this case, the morphism $\tilde f_U$ extends to a map $\tilde f:Y\rightarrow T$ by the same argument as in the proof of \cref{lem:extend-me}, and repeating the previous argument we see that $\tilde f$ induces $f$ on topological spaces.
\end{proof}

\begin{pg} 
By \cref{L:4.4.6.11} we therefore get a morphism of schemes
$$
u:Y\rightarrow X
$$
whose underlying morphism of topological spaces is $\varphi ^{-1}$.

For $n$ sufficiently big, the line bundle $\mls O_Y(n)$ can be represented by an effective divisor $D\subset Y$ all of whose irreducible components occur with multiplicity one and have nonempty intersection with $Y^\circ $. The divisor $\varphi (D)$ then represents the line bundle $\mls O_X(n)$, and we have a nonzero map
$$
u^*\mls O_X(-n) = u^*\mls O_X(-\varphi ^{-1}(D))\to \mls O_Y(-D)= \mls O_Y(-n).
$$
Since $\varphi $ induces an isomorphism on class groups we conclude that this map is an isomorphism, so $u$ extends to a map of polarized schemes
$$
u:(Y, \mls O_Y(n))\rightarrow (X, \mls O_X(n)),
$$
for all $n$ sufficiently big.

Since the cardinalities of the linear systems $\Gamma (X, \mls O_X(n))$ and $\Gamma (Y, \mls O_Y(n))$ are the same for all $n$, we conclude that $u$ induces an isomorphism of graded rings
$$
A_X(n)\rightarrow A_Y(n)
$$
for all $n$ sufficiently big.  This implies that $u$ is an open immersion.  Indeed if $\overline X$ (resp. $\overline Y$) is the closure of $X$ (resp. $Y$) in the projective imbedding defined by $\Gamma (X, \mls O_X(n))$ (resp. $\Gamma (Y, \mls O_Y(n))$) then we see that $u$ induces an isomorphism between the homogeneous coordinate rings of $\overline X$ and $\overline Y$, and therefore $u$ is an open immersion inducing an isomorphism of topological spaces, whence an isomorphism.

This completes the proof of \cref{T:maintheoremfinite}. \qed
\end{pg}

\subsection{Counterexamples to weaker statements}
\numberwithin{lem}{subsection}
One might wonder if the congruence relation on $\Eff(X)$ is really necessary, or
if the Zariski topological space itself might suffice to capture $X$. In one
direction, we have the following.

\begin{lem}\label{lem:surf-homeo}
  Given two primes $p$ and $q$ and two smooth projective surfaces
  $X$ over $\overline{\F}_p$ and $Y$ over 
  $\overline{\F}_q$, each of Picard number $1$, any homeomorphism between
  a curve in $X$ and a curve in $Y$ extends to a homeomorphism $|X|\to|Y|$.
\end{lem}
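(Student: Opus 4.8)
The plan is to turn this into a purely combinatorial extension problem and then settle it by a Cantor-style back-and-forth, packaging all of the geometry into a single realization lemma on the target surface. The first step is topological bookkeeping: since $X$ is a Noetherian space of dimension two, every closed set is a finite union of irreducible closed subsets, and the irreducible closed subsets are exactly $|X|$ itself, the irreducible curves in $X$, and the closed points, with specialization order $\eta_X\leadsto C\leadsto x$. Hence a bijection $f\colon|X|\to|Y|$ is a homeomorphism if and only if it sends $\eta_X$ to $\eta_Y$, sends irreducible curves to irreducible curves and closed points to closed points, and preserves the incidence relation ``$x\in C$.'' So it suffices to produce an isomorphism of incidence structures $(\mathcal C_X\sqcup P_X,\in)\to(\mathcal C_Y\sqcup P_Y,\in)$, where $\mathcal C$ is the set of irreducible curves and $P$ the set of closed points, restricting to the given homeomorphism on $C_X$ (matching the generic points of its components and all of its closed points with their images) and sending $\eta_X\mapsto\eta_Y$. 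Here I take a curve to be a finite union of irreducible curves; nothing below changes in the general case.

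Next I would record the features of these structures that drive the induction. Over the countable field $\overline{\F}_p$ the surface $X$ has countably many closed subschemes, so $\mathcal C_X$ and $P_X$ are countably infinite, every irreducible curve has $\aleph_0$ closed points, $X$ minus finitely many curves is a dense open with $\aleph_0$ closed points, any two closed points lie on infinitely many common irreducible curves, and disjoint finite sets of closed points are separated by an irreducible curve (all by blowing up the relevant points and applying Bertini \cite{MR1724388} over the algebraically closed ground field). The essential consequence of the hypothesis is that, $X$ having Picard number one, the classes of any two distinct irreducible curves are positive rational multiples of an ample class, so $C\cdot C'>0$ and $C\cap C'$ is a nonempty finite set; moreover, for any positive integer $N$ there is an irreducible curve meeting a given irreducible curve in exactly $N$ points. (On surfaces of higher Picard number one can find disjoint curves, which is exactly why the hypothesis is needed.) The same facts hold for $Y$ over $\overline{\F}_q$.

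The back-and-forth then proceeds from $f_0=$ (the given homeomorphism $C_X\to C_Y$) together with $\eta_X\mapsto\eta_Y$, building an increasing chain $f_0\subseteq f_1\subseteq\cdots$ of partial bijections, each differing from $f_0$ by finitely much and each an isomorphism onto its image as a sub-incidence-structure, while maintaining the \emph{saturation} invariant: $\mathrm{dom}(f_n)$ contains every closed point lying on at least two of its curves, $\mathrm{im}(f_n)$ likewise, and $C\cap C'\to f_n(C)\cap f_n(C')$ is a bijection for every pair of curves in $\mathrm{dom}(f_n)$ (one checks $f_0$ satisfies this). Adjoining the next closed point $a$ of $X$ is then immediate: by saturation $a$ lies on at most one curve of $\mathrm{dom}(f_n)$, and one picks a closed point of the matching target curve — or any closed point of $Y$ if $a$ lies on no curve of the domain — away from the finitely many already-used curves and points; a point not yet in the domain never lies on $C_X$, so no conflict with $f_0$ arises, and symmetrically on the $Y$-side. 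Alternating the two directions and exhausting both enumerations yields $f=\bigcup f_n$, the required incidence isomorphism, hence the homeomorphism.

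The one substantial step — which I expect to be the main obstacle — is adjoining the next irreducible curve $C$ of $X$, along with all of its not-yet-matched intersection points with the curves $C_1,\dots,C_m$ already in the domain (one of which is a component of $C_X$). Saturation forces the choice of an irreducible curve $D\subset Y$ with $D\cap f_n(C_X)$ equal to the prescribed finite set $f_n(C\cap C_X)$ — of a prescribed cardinality, which can be any integer at least the number of components of $C_X$ — such that $D$ passes through the images of the already-matched points of $\bigcup_i C\cap C_i$, avoids the images of all other matched points, and satisfies $|D\cap f_n(C_i)|=|C\cap C_i|$ for each $i$; the remaining intersection points are then matched bijectively. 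Producing such a $D$ is the realization lemma, and the plan to prove it mirrors \cref{L:sweepingup}: blow up $Y$ at the finitely many relevant points, iterating at infinitely near points along branches tangent to $f_n(C_X)$ so as to prescribe the intersection multiplicities along $f_n(C_X)$ (in particular to force a single point of contact when $|C\cap C_X|=1$) and along branches meeting the $f_n(C_i)$ so as to force the needed coalescing of intersection points there, take a very ample line bundle of sufficiently large degree on the blow-up, and apply Bertini over $\overline{\F}_q$ to conclude that a general member is irreducible and misses the remaining finitely many points; its image in $Y$ is the curve sought. The delicate point is the \emph{exact} set-theoretic control — realizing the prescribed cardinalities rather than mere inequalities of intersection numbers — and this is precisely where algebraic closedness of the ground field and Picard number one are used to the full.
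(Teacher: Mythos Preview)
Your approach is the same as the paper's: the paper simply cites \cite{MR624904}, whose argument is precisely this back-and-forth on the point--curve incidence structure, driven by a realization lemma of the type you isolate. So the architecture is right, and you have correctly identified where all the work lies.

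The sketch of the realization lemma has one genuine imprecision, though. After blowing up $Y$ along infinitely near points on $C_Y$, a \emph{very ample} class $A$ on the blow-up still satisfies $A\cdot\widetilde{C_Y}>0$, so a general member of $|A|$ meets the strict transform $\widetilde{C_Y}$ in further points; its image in $Y$ therefore meets $C_Y$ outside the prescribed set, destroying the exact equality $D\cap C_Y=f_n(C\cap C_X)$ that saturation demands. What you actually need is a big and nef class $L$ on the blow-up with $L\cdot\widetilde{C_Y}=0$ (and likewise orthogonal to the strict transforms of the other $f_n(C_i)$ whose set-theoretic intersection you must pin down), so that a general irreducible member of $|L|$ is disjoint from $\widetilde{C_Y}$; equivalently, work directly in $|dH|$ on $Y$ and impose tangency to $C_Y$ at the prescribed points $y_1,\dots,y_k$ to orders $m_1,\dots,m_k$ with $\sum m_i=d\,(H\cdot C_Y)$, so that the full intersection number is absorbed at those points and no others can occur. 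For $d\gg 0$ these are $O(d)$ linear conditions in a system of dimension $\sim d^2$, and Bertini over the algebraically closed field then gives the irreducible member you want. This is the fix, and it is exactly what the axioms in \cite[Corollary~1]{MR624904} encode; checking them for a smooth projective surface of Picard number one amounts to the computation above.
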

\begin{proof}
  The proof of is essentially contained in the proof of the
  final Proposition of \cite{MR624904}, once one notes that the
  topological space of such a surface satisfies the axioms laid out in
  \cite[Corollary 1]{MR624904} (even though they are stated there only
  for $\P^2$). 
\end{proof}
\begin{remark}
In particular, there are 
examples of surfaces of general 
type that are homeomorphic to the projective plane. Even more
bizarrely, one can show that $\P^2_{\overline{\F}_p}$ is homeomorphic to
$\P^2_{\overline{\F}_q}$ for any primes $p$ and $q$. In other words,
the linear equivalence relation in the divisorial structure is
necessary. The proofs of \cite{MR624904} are heavily reliant
on working over the algebraic closure of a finite field.
\end{remark}

%
%

\subsection{Counterexamples in dimension $1$}\label{sec:curve counterexamples}
In this section we provide counterexamples to \cref{thm:main-func} for schemes of dimension $1$ over arbitrary fields. This shows that the assumption of algebraically closed base fields in \cite{MR3178604} is necessary and not simply an artifact of the use of model theory.

\begin{prop}\label{prop:curves be false}
  Fix a field $K$. For an integer $g>1$ let $C_g\to\Spec K_g$ be the generic curve of genus $g$ (so that $K_g$ is the function field of $\mathscr M_g$ -- the moduli stack over $K$ classifying genus $g$ curves). Then for any two integers  $g, h>1$ we have that $\tau(C_g)$ is (non-canonically) isomorphic to $\tau(C_h)$. In particular, \cref{thm:main-func} is false for curves over non-algebraically closed fields.
\end{prop}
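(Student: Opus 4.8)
The plan is to show that $\tau(C_g)$ is captured by a purely combinatorial datum that does not depend on $g$. By \cref{defn:torelli} and the remark following it, for an integral scheme $Z$ the divisorial structure $\tau(Z)$ is equivalent data to the triple $(|Z|,\Cl(Z),c\colon Z^{(1)}\to\Cl(Z))$. For a smooth proper geometrically connected curve $C$ over a field $F$ with $\Gamma(C,\mls O_C)=F$, the space $|C|$ consists of the closed points together with a single generic point, carrying the cofinite topology; hence $\tau(C)$ is recovered from — and recovers — the pair $\big(|C|^{\mathrm{cl}},\,c\colon|C|^{\mathrm{cl}}\to\Cl(C)\big)$. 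So it suffices to produce, for $g,h>1$, a bijection $\beta\colon|C_g|^{\mathrm{cl}}\to|C_h|^{\mathrm{cl}}$ and a group isomorphism $\alpha\colon\Cl(C_g)\to\Cl(C_h)$ with $\alpha\circ c_g=c_h\circ\beta$; such data automatically determines an isomorphism of divisorial structures, since the topology of $|C|$ depends only on $\#|C|^{\mathrm{cl}}$ and $\Eff$ is the free commutative monoid on $|C|^{\mathrm{cl}}$.

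The geometric input is the Franchetta conjecture: for $g\ge 3$ one has $\Pic(C_g)=\mathbb Z\cdot\omega_{C_g}$, generated by the canonical class, of degree $2g-2$. The case $g=2$ is handled directly: the generic genus-$2$ curve has no $K_2$-rational point and the generic principally polarized abelian surface has no nonzero $K_2$-point, so again $\Cl(C_2)=\mathbb Z\cdot\omega_{C_2}$ with $\deg\omega_{C_2}=2$. (In positive characteristic one additionally needs that $\Pic^0(C_g)$ has no nontrivial $K_g$-torsion, which follows from the bigness of the monodromy action on torsion; this is the only point at which the characteristic of $K$ intervenes.) Consequently $\Cl(C_g)\cong\mathbb Z$ with degree map equal to multiplication by $2g-2$, and, under this identification, $c_g$ sends a closed point $x$ of degree $d$ to $d/(2g-2)\in\mathbb Z_{\ge 1}$. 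Thus $\tau(C_g)$ is encoded by the set $|C_g|^{\mathrm{cl}}$ equipped with the weight function $w_g(x):=\deg(x)/(2g-2)$, the linear equivalence relation on $\Eff(C_g)$ being exactly the congruence relation defined by the homomorphism $w_g\colon\Eff(C_g)\to\mathbb Z$, i.e. ``equality of $w_g$-weighted sums''.

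It then remains to count closed points by weight. Set $\kappa:=\max(|K|,\aleph_0)$; since $K_g$ is a finitely generated extension of $K$ of transcendence degree $3g-3>0$, we have $|K_g|=\kappa$ for every $g>1$. For each $m\ge 1$ the weight-$m$ closed points of $C_g$ are exactly the prime (integral) members of the complete linear system $|m\omega_{C_g}|=\P\H^0(C_g,\mls O(m\omega_{C_g}))$, which by Riemann--Roch is a projective space of dimension $\ge 1$ over $K_g$. This set is nonempty, and in fact contains the $K_g$-points of a dense open subscheme of $|m\omega_{C_g}|$: a general member of $|m\omega_{C_g}|$ is an irreducible divisor — hence a single closed point of degree $m(2g-2)$ — by the uniform position principle for the $m$-canonical embedding of $C_g$. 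Since $K_g$ is infinite, a nonempty open in a positive-dimensional projective space over $K_g$ has exactly $\kappa$ rational points (restrict to a line), while the whole space has $\kappa$; hence $C_g$ has exactly $\kappa$ closed points of each weight $m\ge 1$ (and $\kappa$ closed points in total). Choosing for each $m$ an arbitrary bijection between the weight-$m$ closed points of $C_g$ and those of $C_h$ produces a weight-preserving bijection $\beta\colon|C_g|^{\mathrm{cl}}\to|C_h|^{\mathrm{cl}}$; it intertwines $w_g$ and $w_h$, hence matches the two linear equivalence relations, and is manifestly a homeomorphism $|C_g|\to|C_h|$, so it gives the desired (non-canonical, since $\beta$ is an arbitrary matching) isomorphism $\tau(C_g)\simto\tau(C_h)$. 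Finally, $C_g\not\cong C_h$ as schemes when $g\ne h$ — e.g. their constant fields $K_g,K_h$ have different transcendence degree over $K$ — so $\tau$ is not fully faithful on curves, which proves the last assertion.

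The main obstacle, and the only genuinely nontrivial ingredient, is the Franchetta conjecture — the computation $\Pic(C_g)=\mathbb Z\,\omega_{C_g}$ in every characteristic, including torsion-freeness of $\Pic^0(C_g)$; once this is granted, the rest of the argument is elementary bookkeeping. A secondary technical point is the genericity statement that the integral divisors in $|m\omega_{C_g}|$ form a dense open subset of that linear system: in characteristic $0$ this is Harris's uniform position theorem, and in characteristic $p$ it follows from a Bertini-type irreducibility argument applied to the (canonical, or for $m\gg 0$ very ample) linear system $|m\omega_{C_g}|$ on the generic curve.
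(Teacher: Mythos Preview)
Your argument follows the same path as the paper's: invoke the Franchetta conjecture to get $\Pic(C_g)=\Z\cdot\omega_{C_g}$, reduce $\tau(C_g)$ to the weight function $x\mapsto\deg(x)/(2g-2)$ on closed points, show each weight stratum has cardinality $|K_g|$, and then match weight strata via an arbitrary bijection. The paper carries this out more tersely but with the same skeleton.

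The one step where your justification is off is the claim that the prime members of $|m\omega_{C_g}|$ contain the $K_g$-points of a dense Zariski open, which you attribute to the uniform position principle (and, in characteristic $p$, to a ``Bertini-type irreducibility argument''). Uniform position is a statement about geometric monodromy over an algebraically closed field; it does not by itself tell you which $K_g$-rational members of the linear system are single closed points of $C_g$, and the number of irreducible components of a fiber of a finite map is not a constructible function of the base point. Bertini irreducibility is likewise a statement about general members of a linear system on a higher-dimensional variety, and has no content for divisors on a curve. The paper's argument is what you actually need and is more elementary: since $\Pic(C_g)=\Z\cdot\omega$, a member of $|m\omega|$ is reducible over $K_g$ precisely when it lies in the image of some addition map $|a\omega|\times|b\omega|\to|m\omega|$ with $a+b=m$ and $a,b\ge 1$, and a Riemann--Roch dimension count shows each such image is a proper closed subvariety. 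This works uniformly in all characteristics and makes your separate discussion of characteristic $p$ at this step unnecessary.
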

\begin{proof}
  By the Franchetta Conjecture (e.g., \cite[Theorem 5.1]{MR1984659}), we know that $\Pic(C_g)=\Z\cdot[K_{C_g}]$ and similarly for $h$. The package $\tau(C_g)$ is equivalent to assigning to each closed point $c\in C_g$ the corresponding element $d(c)\in\N$. To produce an isomorphism between $\tau(C_g)$ and $\tau(C_h)$ it suffices to show that for each $n\in\N$, there is a bijection
  $$S_g(n):=\{c\in C_g | d(c)=n\}\to\{c'\in C_h | d(c')=n\}=:S_h(n).$$
  For each $n$, consider the linear system $|nK_{C_g}|$. By Riemann--Roch, this has projective dimension $(2n-1)g-2n$ for $n>1$ and dimension $g-1$ for $n=1$. For each pair of positive integers $a$ and $b$, the natural map 
  $$|aK_{C_g}|\times|bK_{C_g}|\to|(a+b)K_{C_g}|$$
  has proper closed image (by a simple dimension count). We conclude that for each $a\in\N$, there is a Zariski open $U_a\subset|aK_{C_g}|$ whose points correspond precisely to $S_g(a)$. This shows that $S_g(a)$ has the same cardinality as $K_g$.

  Choosing a bijection $K_g\to K_h$ thus establishes bijections $S_g(n)\to S_h(n)$ for all $n\in\N$, giving the desired result.
\end{proof}

\section{Results over uncountable fields of characteristic $0$}\label{S:section5}
\label{sec:uncountable}
\numberwithin{lem}{subsection}

\subsection{Linear equivalence over uncountable fields of characteristic $0$}

In this section, we show that over uncountable algebraically closed fields of characteristic $0$, one 
can recover linear
equivalence entirely topologically. This gives the following  results.

\begin{thm}\label{thm:miracle}
	If $X$ is a normal, proper variety of dimension at least $2$ over an uncountable algebraically closed 
	field $k$ of characteristic $0$, then linear
	equivalence of divisors is determined by $|X|$.
\end{thm}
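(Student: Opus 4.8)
The plan is to characterize linear equivalence of effective divisors on $X$ purely in terms of $|X|$, by recognizing topologically when two divisors are joined by a pencil. Recall first that $|X|$ already determines $\Eff(X)$ together with its monoid structure, the codimension of each point, and hence which elements of $\Eff(X)$ are reduced or irreducible; what is missing is the congruence relation $\Lambda_X$, equivalently the partition of $\Eff(X)$ into linear systems. So the goal is to produce that partition from $|X|$ alone.

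By a reduction to the projective case analogous to \cref{SS:4.1} (pass to an essential open subscheme via \cref{lem:apbf-always-exists}, then to a projective closure, transporting the final identification back with \cref{lem:extend-me}), one may fix a projective embedding $X\hookrightarrow\P^N$ with hyperplane class $H$. For irreducible reduced $D_1,D_2\in\Eff(X)$ whose set-theoretic intersection $B:=\Supp D_1\cap\Supp D_2$ has codimension $2$, define the incidence set
$$ \mf p(D_1,D_2):=\{D_1,D_2\}\cup\bigl\{E\in\Eff(X):E\text{ irreducible reduced},\ B\subseteq\Supp E,\ \Supp E\cap\Supp D_i=B\ (i=1,2)\bigr\}, $$
a subset of $\Eff(X)$ manifestly determined by $|X|$. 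The core of the argument is to prove: for $D_1,D_2$ general in a common linear system, $\mf p(D_1,D_2)$ is precisely the set of irreducible members of the pencil $\langle D_1,D_2\rangle$, which has base locus $B$ set-theoretically, so that $\bigcup_{E\in\mf p(D_1,D_2)}\Supp E=|X|$; whereas if $D_1\not\sim D_2$ then $\mf p(D_1,D_2)$ fails to cover $|X|$. The forward direction uses Bertini in characteristic $0$ (generic irreducibility and reducedness of members of base-point-free systems and of their intersections, exactly as exploited in \cref{L:sweepingup} and \cref{lem:mult-line}) to ensure a general member of $\langle D_1,D_2\rangle$ is irreducible, reduced, meets $D_1$ and $D_2$ only along $B$, and that the pencil sweeps out $|X|$. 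Conversely, for $E$ meeting both $D_i$ exactly in $B$, comparison of the intersection-theoretic degrees of $E\cap D_i$ with $\deg B$, together with generic reducedness of $E\cap D_i$ (Bertini applied to $E$), pins down the degree of $E$ and forces $E$ into the pencil $\langle D_1,D_2\rangle$, which is empty unless $D_1\sim D_2$.

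Having detected linear equivalence of \emph{general} pairs, I would recover the full linear structure on each rational-equivalence class: the definable lines of a recovered linear system are characterized among definable subsets by minimality (\cref{cor:lines-minimal}), so \cref{thm:definable-proj} reconstructs the projective space, and every very ample linear system is thereby recognized as a subset of $\Eff(X)$; the addition map $\Eff(X)\times\Eff(X)\to\Eff(X)$ (as in \cref{lem:mult-line}) then propagates this to arbitrary linear systems. Finally one bootstraps from general pairs to all pairs: $D\sim D'$ iff $D+G\sim D'+G$ for some $G\in|nH|$, $n\gg 0$, chosen so that $D+G$ and $D'+G$ are general enough for the pencil criterion to apply. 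The admissible $G$ are the complement in $|nH|$ of a countable union of proper closed subsets — one batch of conditions for each of the countably many classes and auxiliary choices in play — and it is precisely here that the hypothesis that $k$ is \emph{uncountable} is used: over an uncountable field such a union is never everything.

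I expect the main obstacle to be the ``no false positives'' half of the pencil criterion: excluding that, for some $D_1\not\sim D_2$, a non-linear family of divisors — fibers of a fibration onto a positive-genus base, or irreducible divisors of larger degree through $B$ — conspires to make $\mf p(D_1,D_2)$ cover $|X|$. Ruling this out is what forces the simultaneous use of Bertini-type genericity in characteristic $0$ and the intersection-degree bookkeeping sketched above, and is the technical heart of the theorem.
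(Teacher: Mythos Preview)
Your proposal has a genuine circularity problem and does not handle irrational pencils.

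\textbf{Circularity.} You begin by ``fixing a projective embedding $X\hookrightarrow\P^N$ with hyperplane class $H$'' and then repeatedly invoke degrees, Bertini for ``general'' members of linear systems, and the set $|nH|$. None of this is available from $|X|$ alone: a projective embedding is exactly the datum of a very ample linear system, which is what you are trying to reconstruct. Likewise your converse argument (``comparison of the intersection-theoretic degrees of $E\cap D_i$ with $\deg B$ \ldots\ pins down the degree of $E$'') uses intersection numbers, which are not topological invariants until you know linear equivalence. The bootstrap step ``choose $G\in|nH|$'' presupposes that $|nH|$ has already been identified inside $\Eff(X)$.

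\textbf{Irrational pencils.} Even granting the setup, your criterion ``$\mathfrak p(D_1,D_2)$ covers $|X|$'' does not distinguish linear pencils from irrational ones. If $D_1,D_2$ are two fibers of a morphism $X\to C$ with $g(C)>0$ (on a smooth $X$ every irrational pencil extends to a morphism), then $B=\emptyset$ and every other fiber lies in $\mathfrak p(D_1,D_2)$, which then covers $|X|$ --- a false positive. You flag this as ``the main obstacle'' but your proposed fix (degree bookkeeping) is both unavailable topologically and irrelevant to ruling out non-rational bases.

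The paper's argument is structurally different. It defines \emph{t-pencils} purely topologically, shows (\cref{kl2}, using \cref{kl1}) that over an uncountable field every t-pencil arises from an actual algebraic pencil, and detects the multiplicities of members via the \emph{na\"ive} intersection number $\#(C\cap D)$ --- a genuine topological quantity, a count of closed points --- rather than via intersection theory (\cref{kl3-2}, \cref{cor:detect reduced members}). Crucially, linear t-equivalence (\cref{defn:direct t-equiv}) demands \emph{uncountably many pairwise distinct} t-pencils linking $D$ to intermediaries $F_i$; since $X$ has only countably many irrational pencils (\cref{prop:countable pencils}), most of these must be rational. This is where uncountability of $k$ and characteristic $0$ (separability of curve covers, so the na\"ive count equals the degree generically) are each used, and neither ingredient appears in your sketch.
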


\begin{thm}\label{thm:second miracle}
	If $X$ is a normal proper variety of dimension at least $2$ over an uncountable algebraically closed 
field $k$ of characteristic $0$, then $X$ is
	uniquely determined as a scheme by its underlying Zariski topological space
	$|X|$. It is also uniquely determined as a scheme by its associated étale
	topos.
\end{thm}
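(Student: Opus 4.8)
The plan is to derive \cref{thm:second miracle} formally from \cref{thm:miracle} and the universal Torelli theorem \cref{thm:main-func}; no new geometric input should be needed beyond those two results. First I would record that a normal proper variety $X$ of dimension at least $2$ over an algebraically closed field $k$ is a definable scheme in the sense of \cref{defn:lovely}. Indeed $X$ is integral, and since $X$ is proper over the algebraically closed field $k$ one has $\kappa_X = \Gamma(X, \ms O_X) = k$, so the structure morphism $X \to \Spec \kappa_X$ is separated, of finite type, and has integral geometric fibers; thus $X$ is an absolute variety. It is normal by hypothesis, and it is divisorially proper over $k$ since $\Gamma(X, L)$ is finite-dimensional for every coherent sheaf $L$ on the proper $k$-scheme $X$, in particular for every reflexive rank-one sheaf. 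Its constant field $k$ is uncountable, hence infinite, so $X$ lies within the hypotheses under which \cref{thm:main-func} is established.

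Next, given another such variety $Y$ and a homeomorphism $h \colon |X| \to |Y|$, I would promote $h$ to an isomorphism of divisorial structures $\tau(X) \to \tau(Y)$. The homeomorphism $h$ matches up codimension-one points and their multiplicities, hence induces a monoid isomorphism $\Eff(h) \colon \Eff(X) \to \Eff(Y)$. By \cref{thm:miracle} the linear equivalence relation $\Lambda_X$ on $\Eff(X)$ --- which coincides with rational equivalence since $X$ is proper --- is recovered from $|X|$ by an intrinsic recipe, and likewise for $\Lambda_Y$; therefore the recipe is carried along $h$, so $(\Eff(h) \times \Eff(h))(\Lambda_X) = \Lambda_Y$ and $h$ is an isomorphism in $\Torelli$. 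Full faithfulness of $\tau|_{\Definable}$ then lets us write $h = \tau(f)$ and $h^{-1} = \tau(g)$ for morphisms $f \colon X \to Y$ and $g \colon Y \to X$ of definable schemes; since $\tau(g \circ f) = \id = \tau(\id_X)$ and $\tau(f \circ g) = \id = \tau(\id_Y)$, faithfulness forces $g \circ f = \id_X$ and $f \circ g = \id_Y$. Hence $f$ is an isomorphism of schemes inducing $h$, and it is the unique such morphism. This shows $|X|$ determines $X$ as a scheme.

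For the statement about the étale topos I would invoke the standard fact that the small étale topos $X_{\et}$ recovers $|X|$: an étale monomorphism is an open immersion, so the subobjects of the terminal object of $X_{\et}$ are exactly the open subschemes of $X$, whence the frame of subterminal objects of $X_{\et}$ is canonically identified with the frame of open sets of $|X|$. Since the underlying space of a scheme is sober, $|X|$ is then recovered from this frame as its space of points, and the previous paragraph applies to conclude that $X$ is determined as a scheme by $X_{\et}$.

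Almost all of the difficulty is packaged into \cref{thm:miracle} and \cref{thm:main-func}, and the argument above is essentially bookkeeping. The one point that genuinely requires care --- and the reason I would state \cref{thm:miracle} as the assertion that a specific topologically-defined relation \emph{equals} $\Lambda_X$, rather than merely that $\Lambda_X$ can be reconstructed --- is the compatibility with homeomorphisms: it is not enough to reconstruct $\Lambda_X$ from $|X|$ for each $X$ in isolation; the reconstruction must commute with an arbitrary homeomorphism $|X| \to |Y|$, which is automatic once the recipe is intrinsic.
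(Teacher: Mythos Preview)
Your argument is correct and is exactly the intended derivation: the paper does not spell out a separate proof of \cref{thm:second miracle}, leaving it as the formal consequence of \cref{thm:miracle} (proved as \cref{kl5}, where linear equivalence is shown to coincide with the manifestly topological notion of linear $t$-equivalence) together with the full faithfulness statement \cref{thm:main-func}. Your observation that the recipe for $\Lambda_X$ must be intrinsic in $|X|$---so that it is automatically transported by any homeomorphism---is precisely why the paper defines $t$-pencils, $t$-ample curves, and na\"ive intersection numbers purely in terms of the Zariski topology (note that over an algebraically closed field the na\"ive intersection number is just a weighted count of closed points), and your reduction of the \'etale-topos statement to the Zariski one via subterminal objects is the standard argument.
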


It is interesting to compare this to work of Voevodsky on an anabelian-type
conjecture of Grothendieck \cite{MR1098621}. While Grothendieck's conjecture and
Voevodsky's theorem require a finitely generated base field, they also give
more, namely full functoriality, not merely isomorphy.

It is also interesting to compare this to \cite{MR1311822}. While it is not entirely clear how the present work relates to the theory of Zariski geometries, we believe that it should be relatively straightforward to derive \cite[Proposition 1.1]{MR1311822} from our results.

\begin{remark}
  We suspect that it is possible to extend the methods of this section to handle
  the category of proper normal varieties together with finite étale morphisms,
  giving slightly more than the groupoid of such schemes. This will be pursued
  elsewhere.
\end{remark}

\subsection{Algebraic geometry lemmas}

We recall a definition and a few basic facts in modern language (see 
\cite{MR0004241} for a classical exposition).

\begin{notation}
	A \emph{pencil of divisors\/} on a variety $X$ is a dominant rational map 
$X\dashrightarrow C$ to a smooth proper curve.
	The pencil is \emph{irrational\/} if the genus $g(C)$ is greater than $0$. 
It is \emph{linear\/} if $g(C)=0$.
\end{notation}

A pencil has members. To make this precise, suppose $X$ is proper and
$f:X\dashrightarrow C$ is a pencil. The closure of the generic fiber of $f$ is a
divisor $D\subset X\tensor\kappa(C)$, giving rise to a morphism 
$$\phi:\Spec\kappa(C)\to\Hilb_X.$$ Suppose $L\subset\Hilb_X$ is the residue
field of the image of $\phi$; for dimension reasons $L$ must have transcendence
degree $1$ over $k$. Taking the closure defines a curve $\overline
C\subset\Hilb_X$ with a morphism $C\to\overline C$. This gives rise to a
universal family $\mc D(f)\subset X\times C$ associated to the pencil $f$.

Equivalently, we can describe $\mc D(f)\subset X\times C$ as the 
scheme-theoretic closure of the graph of the rational map $f$. (This is 
equivalent to the previous defintion by the separatedness of the Hilbert 
scheme.)

\begin{defn}
	The fibers of the morphism $\mc D(f)\to C$ described above are the
\emph{members\/} of the pencil, denoted $|f|$. The intersection of all members
of the pencil is the \emph{base locus\/} of the pencil, denoted $\BaseLocus(f)$.
\end{defn}

 
  

By construction there is a diagram
\begin{equation}\label{E:9.3.0.1}
	\begin{tikzcd}
	\mc D(f)\ar[r,"\iota"]\ar[d,"\pi"'] & X\\
	C
	\end{tikzcd}
	\end{equation}
where $\iota $ is proper and birational (note that the birationality follows from the fact that by definition the locus $U\subset X$ where $f$ is regular is contained in $\mc D(f)$.

\begin{lem}\label{L:weirdness}
   Let $U\subset X$ be the maximal open subset over which $f$ is defined, and let $B\subset X$ be the complement of $U$.  Then a closed point $x\in X$ lies in $B$ if and only if for every closed point $y\in C$ we have $x\in \mls D(f)_y$.
\end{lem}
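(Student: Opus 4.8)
The plan is to reduce everything to the single assertion that, for a closed point $x\in X$, one has $x\in B$ if and only if $\pi(\iota^{-1}(x))=C$, where $\iota,\pi$ are the maps in \eqref{E:9.3.0.1}. Granting this, the lemma follows at once: the member $\mathcal{D}(f)_y$ is the fibre $\pi^{-1}(y)$, so (reading ``$x\in\mathcal{D}(f)_y$'' as $x\in\iota(\pi^{-1}(y))$, which is how a member sits inside $X$) the condition ``$x\in\mathcal{D}(f)_y$ for every closed point $y\in C$'' says precisely that $\iota^{-1}(x)$ meets every fibre of $\pi$; since $\pi$ is proper and surjective, $\pi(\iota^{-1}(x))$ is a closed subset of $C$, so it contains every closed point exactly when it equals $C$.

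To prove the equivalence I would use two ingredients. First, since $\mathcal{D}(f)\hookrightarrow X\times C$ is a closed immersion, the combined map $(\iota,\pi)$ is injective on points; hence for each $y$ the set $\iota^{-1}(x)\cap\pi^{-1}(y)=(\iota,\pi)^{-1}(x,y)$ has at most one point, so $\pi$ restricts to an \emph{injective} morphism $\iota^{-1}(x)\to C$. Second, because $X$ is normal and $\iota$ is proper and birational, the open locus $\{x\in X:\dim\iota^{-1}(x)=0\}$ equals $U$: over this locus $\iota$ is proper and quasi-finite, hence finite, hence --- being birational onto a normal scheme --- an isomorphism, and an isomorphism there extends $f$; conversely $\iota$ is already an isomorphism over $U$, so $\iota^{-1}(x)$ is a single point for $x\in U$. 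Thus ``$x\in B$'' is equivalent to ``$\dim\iota^{-1}(x)\geq 1$''.

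Combining the two: if $x\in U$, then $\pi(\iota^{-1}(x))$ is a single point, which cannot be the whole curve $C$, so $x$ misses some member. If instead $x\in B$, then $\iota^{-1}(x)$ has dimension $\geq 1$ and maps injectively to $C$, so its image is a closed subset of $C$ of dimension $\geq 1$, hence all of $C$, i.e.\ $x$ lies in every member. This establishes the equivalence and hence the lemma. I do not anticipate a genuine obstacle; the only step requiring care is the identification of $U$ with the locus of $0$-dimensional fibres of $\iota$ --- this is where normality of $X$ enters, through Zariski's main theorem --- together with being careful to interpret membership in a member of the pencil as membership in the image $\iota(\mathcal{D}(f)_y)\subset X$.
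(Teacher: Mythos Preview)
Your proof is correct and follows essentially the same route as the paper's: both identify $B$ with the positive-dimensional fibre locus of $\iota$ via Zariski's Main Theorem (using normality of $X$), and both observe that $\iota^{-1}(x)$ sits inside $\{x\}\times C\cong C$, so a positive-dimensional fibre must surject onto $C$. The paper's write-up is terser (it simply says the fibre ``must be positive dimensional, since $X$ is normal, and dominates $C$''), while you spell out the ZMT step and the injectivity of $(\iota,\pi)$ explicitly, but the argument is the same.
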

\begin{proof}
This is \cite[\S 8]{MR0004241}.  For the convenience of the reader we give a proof.

If $x$ lies in $\mls D(f)_y\cap \mls D(f)_{y'}$ for $y\neq y'$ then certainly we must have $x\in B$. Conversely suppose that $f$ is not defined at $x$. Then the fiber $\iota ^{-1}(x)$ must be positive dimensional, since $X$ is normal,  and dominates $C$.  It follows that $\iota ^{-1}(x)\cap \mls D(f)_y$ for every $y\in C(k)$, which implies the lemma.
\end{proof}

\begin{lem}\label{lem:birational locus}
	Suppose $X$ is normal. Given a pencil $f:X\dashrightarrow C$ with base locus
	$B$ and with associated family of members $\mc D(f)\to C$, the canonical
	morphism $\mc D(f)\to X$ is an isomorphism over $X\setminus B$. Moreover,
	any member of the pencil gives a Cartier divisor in $X\setminus B$.
\end{lem}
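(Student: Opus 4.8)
The plan is to leverage the diagram \eqref{E:9.3.0.1} together with \cref{L:weirdness}. First I would recall that, by the construction of $\mc D(f)$ as the scheme-theoretic closure of the graph of $f$, the morphism $\iota:\mc D(f)\to X$ is proper and birational, and is an isomorphism over the open locus $U\subset X$ on which $f$ is regular (since the graph is already a closed subscheme of $U\times C$ there). So the content is to show that $\iota$ is in fact an isomorphism over the possibly larger open set $X\setminus B$, where $B=\BaseLocus(f)$. By \cref{L:weirdness}, a closed point $x$ lies in $B$ precisely when it lies on every member $\mc D(f)_y$; equivalently, $x\notin B$ means the fiber $\iota^{-1}(x)$ meets $\mc D(f)_y$ for at most finitely many $y$ — but since $\iota^{-1}(x)$, if positive-dimensional, would dominate $C$ (as shown in the proof of \cref{L:weirdness}, using normality of $X$), the fiber $\iota^{-1}(x)$ over a point $x\notin B$ must be $0$-dimensional, hence a single point because $\iota$ is birational and $X$ is normal (Zariski's main theorem). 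Thus $\iota$ is a proper, quasi-finite, birational morphism over $X\setminus B$ with $X$ normal, so it is an isomorphism there.

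The second assertion — that each member gives a Cartier divisor on $X\setminus B$ — then follows from the fact that $\mc D(f)\to C$ is a morphism whose fibers are divisors: the member over $y\in C$ is the preimage $\pi^{-1}(y)$, which is an effective Cartier divisor on $\mc D(f)$ (it is the zero locus of the pullback of a uniformizer at $y$, or more precisely of the pullback of a section cutting out $y$ in a chart of $C$), and transporting along the isomorphism $\iota^{-1}: X\setminus B\risom \mc D(f)\setminus\iota^{-1}(B)$ — noting $\iota^{-1}(B)$ has codimension $\geq 2$ so does not interfere — exhibits the restriction of the member to $X\setminus B$ as a Cartier divisor. One small point to check here is that $\pi:\mc D(f)\to C$ is flat, or at least that its fibers are Cartier; this holds because $\mc D(f)$ is integral (being the closure of a graph) and $C$ is a smooth curve, so $\pi$ is automatically flat, and hence the fibers are effective Cartier divisors.

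The main obstacle I expect is the careful verification that $\iota^{-1}(x)$ is $0$-dimensional for $x\notin B$ — i.e. that the dichotomy in \cref{L:weirdness} really is "either a point of $B$, or $\iota$ is an isomorphism nearby." The subtlety is that a priori a positive-dimensional fiber $\iota^{-1}(x)$ could fail to dominate $C$, in which case $x$ need not lie in $B$. But normality of $X$ forces any exceptional fiber of the birational proper morphism $\iota$ to be connected of positive dimension, and the composite $\pi\circ\iota^{-1}$ being the rational map $f$ undefined at $x$ means precisely that $\iota^{-1}(x)$ cannot map to a single point of $C$ — otherwise $f$ would extend over $x$. So the domination is automatic, and this is exactly the argument already given in the proof of \cref{L:weirdness}; the lemma is then a clean corollary. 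I would organize the write-up so that the first paragraph invokes \cref{L:weirdness} and Zariski's main theorem to get the isomorphism over $X\setminus B$, and the second deduces the Cartier statement from flatness of $\pi$ over the smooth curve $C$.
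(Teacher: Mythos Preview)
Your proposal is correct and follows essentially the same route as the paper: use \cref{L:weirdness} to see that $\iota$ has finite fibers over $X\setminus B$, invoke normality of $X$ (via Zariski's main theorem) to conclude $\iota$ is an isomorphism there, and then transport the Cartier divisors $\pi^{-1}(y)$ (flat over the smooth curve $C$) along this isomorphism. The paper phrases the first step slightly more directly by noting that $\iota$ is a closed immersion on each $\pi$-fiber, so injectivity over $X\setminus B$ is immediate from ``lies on exactly one member''; your detour through ``positive-dimensional fiber would dominate $C$'' is equivalent but a bit longer, and the remark about $\iota^{-1}(B)$ having codimension $\geq 2$ is unnecessary for the Cartier statement.
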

\begin{proof}
	By assumption, in the diagram
\eqref{E:9.3.0.1}
	 the morphism $\iota$ is a closed immersion on each
	fiber of $\pi$. Moreover, any point $x\in X\setminus B$ lies on exactly one
	member of the pencil by \cref{L:weirdness}. That is, $\iota$ is
	finite and injective away from $B$; since $X$ is normal, this implies that
    $\iota$ is an isomorphism over $X\setminus B$, as claimed.
    
    To see that every member is Cartier outside of $B$, note that the fibers of
    $\pi$ are Cartier divisors (since $C$ is smooth), and this is transported to
    $X$ by $\iota$, which is an isomorphism outside of $B$.
\end{proof}

We give a proof of a classical result here, for lack of convenient reference.
\begin{prop}\label{prop:countable pencils}
	A proper variety over an algebraically closed field has only countably many
equivalence classes of irrational pencils $X\dashrightarrow Z$, where
$X\dashrightarrow Z_1$ is equivalent to $X\dashrightarrow Z_2$ if they have the
same members.
\end{prop}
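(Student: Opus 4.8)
The plan is to attach to each irrational pencil $f\colon X\dashrightarrow C$ a morphism from a \emph{fixed} auxiliary variety to the curve $C$, in such a way that the set of members of $f$ is already determined by that morphism, and then to argue that only countably many such morphisms exist because $g(C)\ge 1$. First I would reduce to $X$ normal: the normalization $X^\nu\to X$ is finite and birational, so it carries prime divisors to prime divisors and members of a pencil on $X^\nu$ to members of the corresponding pencil on $X$; hence the number of equivalence classes on $X$ is at most that on $X^\nu$. With $X$ normal and proper, fix once and for all a smooth proper variety $X_0$ birational to $X$ (a resolution, in characteristic $0$), with Albanese variety $A=\Alb(X_0)$, Albanese morphism $a_0\colon X_0\to A$, and image $\Gamma:=a_0(X_0)\subseteq A$; these data do not depend on any pencil.

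Now, given an irrational pencil $f\colon X\dashrightarrow C$ (so $g(C)\ge 1$), transport $f$ to a rational map on $X_0$ and resolve its indeterminacy to obtain a morphism $f_1\colon X_1\to C$ on a smooth proper blow-up $X_1\to X_0$; since $X_1$ is again smooth and proper, $\Alb(X_1)=A$ and the Albanese morphism $a_1\colon X_1\to A$ is $a_0$ composed with $X_1\to X_0$, still with image $\Gamma$. Choosing a base point and composing with the Abel--Jacobi embedding $C\hookrightarrow\Jac C$ (available since $g(C)\ge 1$), the universal property of the Albanese factors $X_1\to C\hookrightarrow\Jac C$ as $h\circ a_1$ for a homomorphism $h\colon A\to\Jac C$, which is surjective because its image contains $h(a_1(X_1))=f_1(X_1)=C$ and $C$ generates $\Jac C$. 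Set $q_f:=h|_\Gamma\colon\Gamma\twoheadrightarrow C$, so that $f_1=q_f\circ a_1$ as maps to $C$. A general member of $f$ is the closure in $X$ of a general fibre of the rational map $f$ (recall from the discussion around \eqref{E:9.3.0.1} that $\mc D(f)\to X$ is birational and members are the fibres of $\mc D(f)\to C$), hence is the birational image in $X$ of the general fibre $f_1^{-1}(y)=a_1^{-1}(q_f^{-1}(y))$ of $f_1$. Therefore the family of members $C\to\Hilb_X$, and in particular its image, the set of members of $f$, is determined by the partition of $\Gamma$ into fibres of $q_f$, i.e. by $q_f$ up to isomorphism of its target.

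It remains to bound the number of morphisms $q\colon\Gamma\to C$ with $g(C)\ge 1$, counted up to isomorphism of $C$. Any such $q$ factors through the Albanese of a resolution $\widetilde\Gamma$ of $\Gamma$ and through $C\hookrightarrow\Jac C$, hence through a surjective homomorphism $\Alb(\widetilde\Gamma)\to\Jac C$; up to isogeny such a homomorphism is determined by its connected kernel, an abelian subvariety of $\Alb(\widetilde\Gamma)$, and an abelian variety has only countably many abelian subvarieties (each is the image of an element of the finitely generated ring $\operatorname{End}(\Alb(\widetilde\Gamma))$ — concretely, of the associated norm endomorphism). Given such an abelian subvariety, $\Jac C$ is determined up to isogeny and $C$ is recovered as the image of $\widetilde\Gamma$ inside it, while the remaining isogeny data (and, for the disconnected-fibre situation, the Stein-factorization data) ranges over a further countable set; since a countable union of countable sets is countable, this finishes the argument.

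The main obstacle I anticipate is twofold. The first, minor, point is the bookkeeping for pencils whose general member is disconnected: one Stein-factors $\Gamma\to C'\to C$ and uses that the intermediate finite covers of positive-genus curves again form only a countable family, which is routine. The more serious point is positive characteristic, where neither resolution of singularities nor resolution of indeterminacy of $f$ is available; there one should replace $X_0$ by a smooth proper alteration (de Jong) and use that each irrational pencil on $X$ pulls back to an irrational pencil on $X_0$, with the pull-back injective on equivalence classes — I expect verifying this last reduction cleanly to be the real work in the general case.
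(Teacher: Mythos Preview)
Your argument is correct and complete in characteristic $0$ (and, as you note, needs the same alteration trick as the paper in positive characteristic), but it takes a genuinely different route from the paper's own proof.

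The paper's proof hinges on a single structural observation you do not use: on a \emph{smooth projective} variety, an irrational pencil $f\colon X\dashrightarrow C$ has no indeterminacy at all. Indeed, if $f$ were undefined at some point, resolving the indeterminacy would produce a rational exceptional variety mapping nonconstantly to $C$, forcing $C$ to be unirational and hence rational. Once $f$ is an honest morphism, the general fibre has trivial normal bundle, so its deformation space in $\Hilb_X$ is one-dimensional and the image of $C$ is an entire irreducible component of $\Hilb_X$; countability then drops out of the fact that the Hilbert scheme has only countably many components. The reduction from arbitrary proper $X$ to smooth projective $X'$ is done via alterations and the observation that the equivalence class of a pencil is exactly the datum of the subfield $k(C)\subset k(X)$, so equivalence classes on $X$ inject into those on $X'$.

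Your approach instead runs every pencil through the Albanese of a fixed smooth model and counts quotients of a fixed abelian variety (via the countability of abelian subvarieties). This is the classical Castelnuovo--de Franchis viewpoint and is perfectly valid; it also makes the role of $g(C)\ge 1$ transparent, since it is exactly what allows the Abel--Jacobi embedding. Two small remarks: first, your detour through $\Alb(\widetilde\Gamma)$ is unnecessary, since you already have the surjective homomorphism $h\colon A\to\Jac C$ and can count directly inside $A$; second, your reduction to the general case in positive characteristic is exactly what the paper does (alterations plus the subfield injection), so the ``real work'' you anticipate is no harder here than there. The trade-off is that the paper's argument is shorter and avoids Albanese entirely, while yours generalises more readily to contexts where one wants to keep track of the target curve as an abelian-variety quotient.
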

\begin{proof}
  First assume that $X$ is smooth and projective. Consider an irrational pencil
  $f:X\dashrightarrow C$. Let $U\subset X$ be the maximal domain of definition
  of $f$. If $U\neq X$, then $f$ extends generically across the blowup
  $\Bl_{\{x\}}X$ so that the extension is not constant on the exceptional fiber.
  But then $C$ is unirational, hence rational. Thus, $f$ extends to a morphism
  $f:X\to C$. It follows (by, for example, computing the deformation space of a
  general fiber of $f$) that $C$ gives a single irreducible component of the
  Hilbert scheme of $X$. Since the Hilbert scheme has countably many components,
  we see that there can be only countably many equivalence classes of irrational
  pencils. We also see that if $X\to C$ and $X\to C'$ are equivalent pencils,
  then there is an isomorphism $C\to C'$ that conjugates them. In particular,
  the equivalence class of the pencil is uniquely determined by the abstract
  subfield $k(C)\subset k(X)$.

  Now suppose $X$ is an arbitrary proper variety over $k$. Given an alteration
  $X'\to X$, we see that the set of pencils on $X$ injects into the set of
  pencils on $X'$ (since the set of subfields of $k(X)$ injects into the set of
  subfields of $k(X)$). On the other hand, there is a smooth projective
  alteration $X'\to X$. This reduces the result to the smooth projective case.
\end{proof}

\begin{lem} \label{kl1-1} On a proper normal variety $X$, no effective divisor 
is 
algebraically equivalent to $0$.
\end{lem}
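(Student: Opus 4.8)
The plan is to prove the equivalent statement that if $D\ge 0$ is algebraically equivalent to $0$ on a proper normal variety $X$, then $D=0$. The case $\dim X\le 1$ is immediate: a proper normal curve carries no nonzero effective divisor of degree $0$, and degree is an invariant of algebraic equivalence. So assume $n:=\dim X\ge 2$. I would first reduce to the projective case. By Chow's lemma together with normalization there is a projective birational morphism $\pi\colon X'\to X$ with $X'$ normal and projective; the reflexive pullback $D':=\pi^{[*]}D$ is an effective Weil divisor on $X'$ with $\pi_*D'=D$, and it is again algebraically equivalent to $0$ (pull back a connecting family along $\pi\times\mathrm{id}$). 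Since $\pi_*D'=D$, it suffices to show $D'=0$, so we may assume $X$ itself is normal and projective, with a fixed ample divisor $H$.

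The key input is that, since $X$ is proper, algebraic equivalence to $0$ forces numerical triviality against complete curves: if $C\subset X$ is an irreducible complete curve meeting the smooth locus $X^{\mathrm{sm}}$ and not contained in $\Supp(D)$, then
$$
D\cdot C\;:=\;\deg_C\bigl(\mathcal O_X(D)|_{C}\bigr)\;=\;0 ,
$$
because along a connecting family over a smooth connected base the intersection number with $C$ is locally constant, hence constant, and vanishes at the trivial member. Now suppose $D\ne 0$ and let $\Delta$ be a prime component of $D$. As $X$ is normal, its non-smooth locus has codimension $\ge 2$, so a general point $p\in\Delta$ is a smooth point of $X$. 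By Bertini, for $m\gg 0$ a general complete intersection $C=H_1\cap\cdots\cap H_{n-1}$ with each $H_i\in|mH|$ passing through $p$ is a smooth irreducible curve contained in $X^{\mathrm{sm}}$, with $C\cap\Delta$ finite (hence $C\not\subset\Delta$, and likewise $C$ is not contained in any other component of $D$, as $p$ lies on no other component) and $p\in C\cap\Delta$. Then $D|_C=C\cap D$ is a nonzero effective divisor on $C$, so $D\cdot C=\deg_C(D|_C)>0$, contradicting the displayed equality. Hence $D=0$.

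The main obstacle is the honest justification of the key input in the Weil-divisor setting — namely that the notion of ``algebraically equivalent to $0$'' in play is deformation-invariant when intersected with curves in the Cartier (equivalently smooth) locus — together with the routine bookkeeping for reflexive pullbacks in the reduction. If instead one reads ``algebraically equivalent to $0$'' as ``connected to the empty divisor by a chain of flat families of effective divisors,'' the argument shortens dramatically: for such a family $\mathcal D\subset X\times T$ over a connected base, the projection $\mathcal D\to T$ is proper (as $X$ is proper) and flat of finite presentation, hence has image that is both open and closed in $T$; an empty fiber then forces $\mathcal D=\emptyset$, so every member is $0$, and induction along the chain gives $D=0$. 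In either formulation, properness of $X$ is precisely what makes the statement work — over affine space a nonzero effective divisor can of course be deformed off to infinity.
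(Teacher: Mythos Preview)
Your main argument on the projective model---finding a complete-intersection curve $C$ through a smooth point of a component of $D$ and deriving $D\cdot C>0$ while algebraic triviality forces $D\cdot C=0$---is correct and is essentially the paper's final step. The gap is in the reduction. You assert that the reflexive (strict) transform $D'$ under a projective birational modification $\pi\colon X'\to X$ is again algebraically equivalent to $0$, justified by ``pulling back a connecting family along $\pi\times\id$.'' This fails in general: when you take the strict transform of a family $\mathcal D\subset X\times T$, the fiber of the strict transform over a special $t$ is typically the strict transform of $\mathcal D_t$ \emph{plus} exceptional components. (Already for the family of lines $y=t$ in $\A^2$ under the blowup of the origin, the fiber over $t=0$ of the strict transform of the family is the strict transform of $\{y=0\}$ together with the exceptional curve.) What one can honestly conclude is $D'+E_1\sim_{\mathrm{alg}}E_2$ on $X'$ for effective exceptional divisors $E_1,E_2$ with no common component---precisely what the paper writes, citing Fulton. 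From there your curve argument no longer yields a contradiction: for a general ample curve $C'$ through a point of $D'$ one only gets $D'\cdot C'=(E_2-E_1)\cdot C'$, which can certainly be positive. The paper handles this by cutting to a normal surface and invoking the negative definiteness of the intersection form on exceptional configurations (Mumford) to force $E_2=0$; only then does the intersecting-with-an-ample-curve step finish things. So the ``routine bookkeeping'' you flag is not routine---it is the actual content of the reduction.

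Your short alternative at the end is valid for the notion it states, but that notion (a chain of flat families of \emph{effective} divisors connecting $D$ to the empty divisor) is strictly stronger than algebraic equivalence to $0$ in the Weil divisor class group, so it does not establish the lemma under the intended hypothesis.
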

\begin{proof}
  Let $g:X'\to X$ be a projective birational morphism from a normal projective
  variety. If there is an effective divisor $D\subset X$ that is algebraically
  equivalent to $0$, then there is a corresponding algebraic equivalence
  $$D'+E_1\sim E_2$$ on $X'$, where $D'$ is an effective non-exceptional divisor
  and $E_1,E_2$ are effective exceptional divisors with no components in common. (This is explained in \cite[Example 10.3.4]{fulton}.) 
  Taking general hyperplane
  sections of $X'$, the Gysin map reduces us to the case in which $X$ and
  $X'$ are normal surfaces. If we wish, we may assume that $X'$ is also smooth.
  By \cite[Section 1, page 6]{MR0153682}, we have $E_2\cdot E_2<0$ unless
  $E_2=0$. But $(D'+E_1)\cdot E_2\geq 0$. It follows that $E_2=0$. But then we
  have an effective divisor on a projective surface algebraically equivalent to
  $0$, which is impossible, as one can see by intersecting with a general
  hyperplane.
\end{proof}

\begin{lem}\label{lem:unramthang} Let $X$ be a projective integral $k$-scheme,
and let $Z\hookrightarrow \Hilb _X$ be a connected smooth curve embedded in the
Hilbert scheme corresponding to a family $\mls W\hookrightarrow X\times Z$ of
closed subschemes.  Assume that for a general point $z\in Z$ the fiber
$W_z\subset X_{k(z)}$ is geometrically integral of some dimension strictly
smaller than $\mathrm{dim}(X)$. Then the map
$$
g:\mls W\rightarrow X
$$
is generically unramified.
\end{lem}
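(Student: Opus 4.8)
The plan is to localise everything at the generic point of the family and reduce to a single Kodaira--Spencer computation. First I would note that, since $\mathcal W\hookrightarrow X\times Z$ is the pullback to $Z$ of the universal family over $\Hilb_X$, the projection $\pi\colon\mathcal W\to Z$ is flat; as $Z$ is a smooth curve, $\mathcal O_{\mathcal W}$ is then torsion free over $\mathcal O_Z$, so $\mathcal O_{\mathcal W}$ injects into $\mathcal O_{\mathcal W}\otimes_{\mathcal O_Z}K$, $K:=k(Z)$, i.e.\ into the structure sheaf of the generic fibre $W_\eta:=\mathcal W\times_Z\Spec K$. The hypothesis that $W_z$ is geometrically integral for general $z$ applies at the generic point, so $W_\eta$ is integral, hence $\mathcal W$ is integral; I may regard $W_\eta\subset X_K:=X\times_k\Spec K$ as a closed subscheme with $\dim W_\eta<\dim X_K$, in particular $W_\eta\ne X_K$. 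With $\mathcal W$ integral, ``$g$ generically unramified'' means precisely $\Omega_{\mathcal W/X}=0$ at the generic point $\eta_{\mathcal W}=\eta_{W_\eta}$, equivalently $\Omega_{\mathcal W/X}$ is unramified on a dense open.

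Next I would write down the relative conormal sequence for $\mathcal W\hookrightarrow X\times Z\xrightarrow{pr_X}X$, namely $\mathcal I/\mathcal I^2\xrightarrow{\ \delta\ }\pi^*\Omega_{Z/k}\to\Omega_{\mathcal W/X}\to 0$, and observe that $\pi^*\Omega_{Z/k}$ is a line bundle $L$ on $\mathcal W$ because $Z$ is a smooth curve, so $\Omega_{\mathcal W/X}=\coker(\delta)$ is a quotient of $L$ and vanishes at $\eta_{\mathcal W}$ if and only if $\delta$ is nonzero there. Restricting this sequence to the generic fibre (legitimate since $\pi$ is flat, so conormal sheaves commute with the base change $\Spec K\to Z$) identifies $\delta$ over $W_\eta$ with an element $\sigma\in\Hom_{\mathcal O_{W_\eta}}(\mathcal C,\mathcal O_{W_\eta})=H^0(W_\eta,N_{W_\eta/X_K})$, where $\mathcal C$ is the conormal sheaf and $N_{W_\eta/X_K}=\mathcal{H}om(\mathcal C,\mathcal O_{W_\eta})$ the normal sheaf; by the standard construction of the Kodaira--Spencer map via the cotangent sequence of a flat family, $\sigma$ is exactly the image of a generator of $T_\eta Z$ under the composite $T_\eta Z\to T_{[W_\eta]}\Hilb_X=H^0(W_\eta,N_{W_\eta/X_K})$.

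Then I would invoke the two inputs that force $\sigma$ to be nonzero at $\eta_{W_\eta}$. First, $Z\hookrightarrow\Hilb_X$ is a locally closed immersion, hence unramified, so the Kodaira--Spencer map is injective on tangent spaces at $\eta$; since $Z$ is smooth of dimension $1$, $T_\eta Z$ is a nonzero $K$-line, so $\sigma\ne 0$ in $H^0(W_\eta,N_{W_\eta/X_K})$. Second, because $W_\eta$ is integral, the sheaf $N_{W_\eta/X_K}=\mathcal{H}om(\mathcal C,\mathcal O_{W_\eta})$ has no nonzero section vanishing at $\eta_{W_\eta}$: a homomorphism to the structure sheaf of an integral scheme that vanishes at the generic point is identically zero, as one sees by evaluating on local sections. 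Combining the two, $\sigma_{\eta_{W_\eta}}\colon\mathcal C_{\eta_{W_\eta}}\to k(W_\eta)$ is nonzero, hence surjective; therefore $\delta$ is nonzero at $\eta_{\mathcal W}$, so $\Omega_{\mathcal W/X,\eta_{\mathcal W}}=0$ and $g$ is unramified on a dense open of $\mathcal W$.

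The routine verifications are the flat base change of conormal sheaves to the generic fibre and the precise matching of $\delta|_{W_\eta}$ with the Kodaira--Spencer class; the only place where the hypotheses do real work — and the main conceptual point — is that the combination of (a) the family being a genuine embedded curve in $\Hilb_X$ (so the Kodaira--Spencer section is \emph{nonzero}) and (b) the members being integral (so the normal sheaf is torsion free, hence that section is nonzero already at the generic point) forces $\delta$ to be generically nonzero. This is exactly what would fail for, say, a Frobenius-type family of points, where the classifying map $Z\to\Hilb_X$ is ramified.
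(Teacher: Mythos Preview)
Your proof is correct and follows essentially the same approach as the paper: both reduce to showing that the Kodaira--Spencer section is nonzero (because $Z\hookrightarrow\Hilb_X$ is an immersion) and remains nonzero at the generic point of the fibre (because $W_\eta$ is integral, so the normal sheaf is torsion-free). Your use of the relative conormal sequence for $\mathcal W\hookrightarrow X\times Z\to X$ to compute $\Omega_{\mathcal W/X}$ directly is a minor repackaging of the paper's comparison of two exact sequences of absolute differentials, but the core argument is identical.
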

\begin{proof}
Shrinking on $Z$ if necessary, we may assume that $\mls W$ is integral.  Since $\mls W$ is geometrically integral, the morphism $\mls W\rightarrow \Sp (k)$ is generically smooth, and it suffices to show that the morphism
$$
g^*\Omega ^1_X\rightarrow \Omega ^1_{\mls W}
$$
is generically surjective.   Let $z\in Z$ be a point and let $\mls W_z$ be the fiber of $\mls W$ over $z$ which is a closed subscheme
$$
g_z:\mls W_z\hookrightarrow X_{k(z)}.
$$
Assume that $z$ is general so that $\mls W_z$ is generically smooth.
Let $I_z\subset \mls O_{X_{k(z)}}$ be the ideal sheaf of $\mls W_z$.  We then have a commutative diagram
$$
\xymatrix{
& g_z^*I_z\ar[d]^-\alpha \ar[r]& g_z^*\Omega ^1_X\ar[r]\ar[d]& \Omega ^1_{\mls W_z}\ar@{=}[d]\ar[r] & 0\\
0\ar[r]& f_z^*\Omega ^1_Z(z)\ar[r]& \Omega ^1_{\mls W}|_{\mls W_z}\ar[r]& \Omega ^1_{\mls W_z}\ar[r]& 0,}
$$
where the top sequence is the conormal sequence and the bottom sequence is the exact sequence of a composition (which is exact on the left in our situation because $Z$ is smooth and $\mls W$ is generically smooth).  It then suffices to show that the map labelled $\alpha $ is generically surjective, or equivalently, since $Z$ is a curve, generically nonzero.
  
For this recall that the tangent space to the Hilbert scheme at the point $z$ can be described as 
$$
\mathrm{Hom}_{\mls W_z}(g_z^*I_z, \mls O_{W_z}).
$$
Chasing through this identification on sees that the map
\begin{equation}\label{E:9.3.0.12.1}
T_Z(z)\rightarrow T_{\Hilb _X}(z) = \mathrm{Hom}_{\mls W_z}(g_z^*I_z, \mls O_{W_z})
\end{equation}
is given by sending $\partial :\Omega ^1_Z(z)\rightarrow k(z)$ to the map $g_z^*I_z\rightarrow \mls O_{\mls W_z}$ given by the composition
$$
\xymatrix{
g_z^*I_Z\ar[r]^\alpha & f_z^*\Omega ^1_Z(z)\ar[r]^-\partial & \mls O_{\mls W_z}.}
$$
Since \eqref{E:9.3.0.12.1} is injective it follows that $\alpha \neq 0$.
\end{proof}

\begin{lem} \label{kl1}
  Let $X$ be a proper geometrically integral variety, $\{D_i\}_{i\in\N}$ a
  countable set of geometrically integral Weil divisors and $B\subsetneq X$ a
  closed subset. Assume that
	\begin{enumerate}
		\item the $D_i$ are algebraically equivalent to each other and
		\item  $D_i\cap D_j\subset B$ for every  $i\neq j$.
	\end{enumerate}
	Then infinitely many of the $D_i$ are members of a (possibly irrational)
	pencil of divisors $p:X\dashrightarrow Z$. If the codimension of $B$ is at least $2$ and $X$ 
is
	geometrically normal, then all of the $D_i$ lie in $|p|$.
\end{lem}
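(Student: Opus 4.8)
The plan is to realize the $D_i$ as members of a one‑dimensional component of the family of effective divisors in their common algebraic equivalence class, and to use hypothesis (2) to force that component to be a curve. By boundedness of the effective divisors algebraically equivalent to $D_1$, there is a finite‑type $k$‑scheme $H$ carrying a universal family $\mathcal D\subset X\times H$ with $\mathcal D_{h_i}=D_i$ for suitable points $h_i\in H$. I would choose an irreducible closed subset $H_0\subseteq H$ of minimal dimension containing infinitely many of the $h_i$; minimality forces those $h_i$ to be dense in $H_0$, and after discarding the finitely many $D_i$ contained in $B$ (there are none once $\operatorname{codim}B\ge 2$) I relabel so that $\{h_i\}$ is this dense set. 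Let $W$ be an irreducible component of $\mathcal D\times_H H_0$ containing infinitely many of the $D_i\times\{h_i\}$; then $\pi\colon W\to H_0$ is dominant, the generic member of $H_0$ is a geometrically integral divisor (an open, hence dense, condition, satisfied at the $h_i$), and $g\colon W\to X$ is dominant (otherwise $g(W)$ would be a proper closed subset containing infinitely many distinct prime divisors $D_i$, which is impossible). Hence $\dim W=\dim H_0+n-1$, where $n=\dim X$.

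The crucial step is to show $\dim H_0=1$. The condition ``$D_h\cap D_{h'}\subseteq B$'' is constructible on $H_0\times H_0$ — it is the complement of the Chevalley image of the open part of $\mathcal D|_{H_0}\times_X\mathcal D|_{H_0}$ lying over $X\setminus B$ — and by hypothesis (2) it holds on $\{(h_i,h_j):i\ne j\}$, which is dense in $H_0\times H_0$; hence it holds on a dense open $\Omega\subseteq H_0\times H_0$. Now set $U=X^{\mathrm{sm}}\setminus B$ and $J=g^{-1}(U)$, an irreducible dense open of $W$; since $U$ is smooth, every irreducible component $P$ of the incidence variety $J\times_U J$ has $\dim P\ge 2\dim J-\dim U=2\dim H_0+n-2$. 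Projecting $\psi\colon P\to H_0\times H_0$, if $\psi(P)\not\subseteq\Delta_{H_0}$ then its general fibre (contained in $D_h\cap D_{h'}$ with $D_h\ne D_{h'}$ distinct prime divisors) has dimension $\le n-2$, so $\psi(P)$ would have to be a full‑dimensional, hence dense, subset of $H_0\times H_0$; but the image of $P$ lies in $\{(h,h'):D_h\cap D_{h'}\cap U\ne\emptyset\}$, which misses $\Omega$ — a contradiction. So $\psi(P)\subseteq\Delta_{H_0}$ for every $P$, whence $\dim P\le\dim H_0+n-1$ and therefore $\dim H_0\le 1$; as $H_0$ is infinite, $\dim H_0=1$. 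The same analysis shows $J\times_U J$ is set‑theoretically the relative diagonal, i.e. $g$ is generically injective; together with \cref{lem:unramthang} (applied over the normalization of $H_0$), which gives that $g$ is generically unramified, $g\colon W\to X$ is birational. I expect this dimension bookkeeping — getting the constructibility and density right so that ``a general pair of members meets only in $B$'' can be played against ``a positive‑dimensional family of members passes through a general point of $X$'' — to be the main obstacle.

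With $\dim H_0=1$ in hand, let $Z$ be a smooth proper model of the normalization of $H_0$; composing $g^{-1}$ with $\pi$ and with $H_0\dashrightarrow Z$ yields a pencil $p\colon X\dashrightarrow Z$ whose general member is the general fibre of $\pi$, a geometrically integral divisor. For all but finitely many $i$ with $h_i\in H_0$, the member of $p$ over the image of $h_i$ is exactly $D_i$; these are infinitely many of the $D_i$, which proves the first assertion.

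For the final claim, assume $X$ is geometrically normal and $\operatorname{codim}B\ge 2$; then no $D_i$ lies in $B$, and by \cref{kl1-1} no nonzero effective divisor on $X$ is algebraically equivalent to $0$. Fix any $i$ and set $\phi=p|_{D_i}\colon D_i\dashrightarrow Z$. If $\phi$ were dominant, its general fibre would have pure dimension $n-2$; but for every $j\ne i$ for which $D_j$ is a genuine member of $p$ — a dense set of points $z_j\in Z$, since there are infinitely many such $j$ — we have $\phi^{-1}(z_j)\subseteq D_i\cap D_j\subseteq D_i\cap B$, a set of dimension $\le n-2$. By constructibility on the curve $Z$, $\phi^{-1}(z)\subseteq D_i\cap B$ for all but finitely many $z$, so infinitely many pairwise disjoint $(n-2)$‑dimensional fibres would each have to be a union of the finitely many top‑dimensional components of $D_i\cap B$ — impossible. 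Hence $\phi$ is constant, say $\phi\equiv z_0$. Since the base locus $\BaseLocus(p)$ has codimension $\ge 2$ (a divisorial fixed component would be contained in every prime general member, which is absurd), $D_i$ meets the domain of $p$ and therefore $D_i\subseteq\mathcal D(p)_{z_0}$; as $D_i$ is then a component of the effective divisor $\mathcal D(p)_{z_0}$, which is algebraically equivalent to $D_i$ because all fibres of the flat family $\mathcal D(p)\to Z$ are algebraically equivalent, the effective divisor $\mathcal D(p)_{z_0}-D_i$ is algebraically equivalent to $0$ and hence zero by \cref{kl1-1}. Thus $D_i=\mathcal D(p)_{z_0}\in|p|$, as desired.
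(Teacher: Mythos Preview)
Your argument is correct and the overall architecture matches the paper's: parametrize the $D_i$, show the parameter space is a curve by proving the total space maps birationally to $X$, and for the last clause combine the pencil structure with \cref{kl1-1}. The substantive difference lies in how you force $\dim H_0=1$.

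The paper proves generic injectivity of $g$ by a reduction cascade: pass to a projective birational model, cut down to a general complete intersection surface, resolve, and then run an intersection‑number dichotomy (either $D_i\cdot D_j=0$, in which case all fibres are disjoint, or $D_i\cdot D_j>0$, in which case the dominance of $U\times_X U\to Z\times Z$ forces some $D_i\cap D_j$ to meet $X\setminus B$). Your route is more direct: you observe that ``$D_h\cap D_{h'}\subseteq B$'' is a constructible condition that holds on the dense set $\{(h_i,h_j):i\neq j\}$, hence on a dense open $\Omega\subset H_0\times H_0$; then, working over the smooth locus $U\subset X\setminus B$, Serre's codimension inequality for the regular embedding $\Delta_U\hookrightarrow U\times U$ gives $\dim P\ge 2\dim H_0+n-2$ for every component $P$ of $J\times_U J$, and any $P$ not mapping into $\Delta_{H_0}$ would then have dense image in $H_0\times H_0$, contradicting $\Omega$. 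This simultaneously yields $\dim H_0\le 1$ and set‑theoretic injectivity of $g|_J$, without leaving $X$. What you lose is the explicit intersection‑theoretic picture on a surface; what you gain is that you never need the reductions to projective and then to a smooth surface.

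For the second clause, the paper shows that any irreducible divisor not occurring as a component of a member must meet all but finitely many members outside $B$, which is incompatible with hypothesis~(2); you instead argue that $p|_{D_i}$ cannot be dominant because infinitely many fibres would be $(n-2)$‑dimensional pieces of the $(n-2)$‑dimensional set $D_i\cap B$. Both end with the same appeal to \cref{kl1-1}. One small point: you should remark, as the paper does, that it suffices to work over the algebraic closure; this also guarantees $X^{\mathrm{sm}}$ is dense so that your $U$ is nonempty, and makes the invocation of \cref{lem:unramthang} (stated for projective $X$, though its proof only needs a Hilbert functor and generic smoothness) unproblematic.
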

\begin{proof}
  It suffices to prove the result after passing to the algebraic closure of $k$.
  We begin by producing the pencil (i.e., the rational map $X\dashrightarrow
  C$), which we may do on any birational model of $X$.
  
  Let
  $Z\subset \Hilb_X$ be the Zariski closure of the points $[D_i]$. There is a
  universal family $u:U\to Z$ with canonical map $\chi:U\to X$. Since the $D_i$
  are irreducible, we see that each geometric generic fiber of $u$ is
  irreducible. We wish to show that $\chi$ is a birational morphism, for then
  $Z$ must be a curve and $p=u\chi^{-1}$ defines the pencil.
  
  Let $\kappa(Z)$ be the ring of total fractions of $Z$; in this case, it is the
  product of the function fields of the components of $Z$.   Let $c:Y\to X$ be a 
proper birational morphism from a projective variety. The strict transform
  of the generic fiber $U_{\kappa(Z)}$ in $Y_{\kappa(Z)}$ extends to a flat
  family $V\to Z^\circ$ over some dense open subscheme $Z^\circ\subset Z$, with
  a morphism $V\to Y$ that is a closed immersion on each geometric fiber over
  $Z^\circ$. Moreover, for each $D_i$, coresponding to a point $z_i\in Z^\circ$,
  we have that $D_i$ is the image of $V_{z_i}$ under the natural map $Y\to X$.
  Shrinking $Z^\circ$ and replacing the $D_i$ by an infinite subsequence, we may
  assume that each $V_{z_i}$ is irreducible, and is the strict transform of
  $D_i$. This reduces the existence of the pencil to the case where $X$ is
  projective, which we now assume until noted otherwise. Shrinking $Z$, we may
  assume that all fibers of $U\to Z$ are geometrically integral.

  To show that $\chi$ is generically injective, it suffices to show that the
  restriction of $\chi$ to a general complete intersection in $X$ is generically
  injective. Note that, by generic flatness, for any such complete intersection
  $Q\subset X$ there is an induced rational map $\Hilb_X\dashrightarrow\Hilb_Q$.
  Since $Z$ is quasi-compact, the fibers $U_z$ have bounded Castelnuovo--Mumford
  regularity, from which it follows that for sufficiently high multiples $nH$ of
  the ample class, for any complete intersection $Q\subset X$ of divisors in
  $|nH|$ the induced map $Z\dashrightarrow\Hilb_Q$ is generically injective.
  
  Let $S\subset X$ be such a general complete intersection surface in $X$ and
  let $\widetilde S\to S$ be a resolution of singularities. By generic flatness,
  there is an open subscheme $Z_0\subset Z$ such that the family
  $U\times_X\widetilde S|_{Z_0}\to Z_0$ is a flat family of divisors on
  $\widetilde S$ containing a countable dense subset $z_i\in Z_0$ satisfying the
  hypotheses of the Lemma. Thus, to show that $\chi:U\to X$ is generically
  injective, it suffices to show the result under the additional assumption that
  $X$ is a smooth projective surface. There are two cases to consider.

  First, suppose there are $i$ and $j$ such that $D_i\cap D_j=\emptyset$. By the
  invariance of intersection number, we have that $U_s\cdot U_t=0$ for all $s$
  and $t$, whence $U_s\cap U_t=\emptyset$ unless they are equal. But $Z$ injects
  into $\Hilb_{X}$, so we conclude that $U_s\cap U_t=\emptyset$. This shows that
  $U\to X$ is generically injective.
  
  On the other hand, suppose $D_i\cap D_j\neq\emptyset$, so that $D_i\cdot
  D_j>0$. We conclude that for any $s,t\in Z$ we have $U_s\cap U_t\neq
  \emptyset$. In particular, we have that the natural map
  $$q:U\times_X U\to Z\times Z$$ is dominant. If the morphism $U\to X$ has
  general geometric fiber with at least $2$ points then $w:U\times_X U\to X$ is
  also dominant. But then $q(w^{-1}(X\setminus B))$ contains a dense open of
  $Z\times Z$, whence it contains a point $([D_i],[D_j])$ for some $i\neq j$. We
  conclude that $D_i\cap D_j\cap (X\setminus B)\neq\emptyset$, contradiction.
	
  Returning to the original situation (proper, possibly non-projective $X$ of
	arbitrary dimension), we conclude that $\chi$ is generically injective, which
	also implies that $Z$ has dimension $1$. By \cref{lem:unramthang}, we also
	know that $\chi$ is generically unramified, whence we conclude that $\chi$ is
	birational. In particular, $\chi$ defines a pencil $|D|:X\dashrightarrow Z$,
	and we conclude that infinitely many of the $D_i$ are members of a pencil
	$|D|$ parameterized by $Z$, as claimed. 
  
  Now suppose that $X$ is geometrically normal and $B$ has codimension at least
	$2$. We claim that in fact all $D_i$ lie in the pencil. To see this, note that
	every irreducible divisor $E\subset X$ is either an irreducible component of a
	member of the pencil $E\subset D_z$ or $D_z\cap(E\setminus B)\neq\emptyset$ for all but finitely many members. Indeed, since the $D_z$ cover $X$, their
	intersections with $E$ cover $E$. It follows that only a proper closed subset
	of $Z$ parameterizes $D_z$ such that $D_z\cap E\subset B$.
	
	If $D_i$ were not an irreducible component of some member in the pencil $|D|$
then the above observation would violate condition (2) above. On the other hand,
if there is a member $D_z$ such that $D_z-D_i$ is effective, then $D_z-D_i$
would be an effective divisor algebraically equivalent to $0$, contradicting
\cref{kl1-1}.
\end{proof}

\begin{defn}  Let $X$ be a variety over a  field $k$. Let $C\subset X$ be a 
1-dimensional subscheme and $D=\sum a_iD_i\subset X$ a divisor, written as a 
weighted sum of prime divisors. We define the \emph{naïve intersection 
number\/} as
	$$
	\#(C\cap D)_X:=\begin{dcases*}
	-\infty & if $C\subset D$ \\
	\sum_i a_i\left(\sum_{x\in C\cap D_i} [k(x):k]\right) & otherwise.
	\end{dcases*}
	$$
	We drop the subscript $X$ if it is clear from the context. If $C$ is proper
	and $D$ is a Cartier divisor in a neighborhood of $C$ then $\#(C\cap D)\leq
	(C\cdot D)$. If $k$ is algebraically closed then $\#(C\cap D)$ is the weighted
	sum of the numbers of points in the sets $C(k)\cap D_i(k)$.
	
	We call a curve $C\subset X$ {\it t-ample} if
	$C\cap D\neq \emptyset$ for every divisor $D\subset X$.
	If $X$ is projective then every complete intersection of ample divisors is
	t-ample. If $X$ is proper then the image of a t-ample curve under 
any projective modification $X'\to X$ is t-ample on $X$.

	Let $p:X\dashrightarrow Z$ be a (possibly irrational) pencil of divisors and
$B\subset X$ a closed subset containing the base locus of $p$. Following
Matsusaka \cite{Matsusaka72} we define the {\it variable intersection number}
	$$
	\#\bigl[C\cap p\bigr]_{X\setminus B}:=\max_{D\in |D|}  \#(C\cap D)_{X\setminus B}.
	$$
\end{defn}

\begin{situation}\label{sit:uation}
	Suppose $k$ is an algebraically closed field of characteristic $0$ and $X$
	is a proper normal $k$-variety. Fix a (possibly irrational) pencil $f:X\dashrightarrow Z$ and a
	subset $B\subset X$ containing the base locus of $f$. Let
	$$
	\begin{tikzcd}
	Y\ar[r,"\iota"]\ar[d, "\pi"'] & X\\
	Z &
	\end{tikzcd}
    $$ be the family of members of $f$.    
\end{situation}     

\begin{notation}
  Given a set $S$ of cycles on a scheme $Y$, let $\Comp S$ denote the set of all irreducible components of all elements of $S$. Let $\ConnComp S$ denote the set of connected components of the elements of $S$.
\end{notation}
We will often use $\Comp|f|$, the set of irreducible components of the members
of a pencil.

\begin{lem}\label{lem:props of intersections}
	Assume we are in \cref{sit:uation}. Let $C\subset X$ be a weakly
	ample curve so that no component of $C$ lies entirely in $B$, and let
	$C'\subset Y$ be the strict transform of $C$.
	\begin{enumerate}
	    \item For every $z\in Z$ we have 
        \begin{equation*}\label{eqn:intersection}
        \#(C\cap \iota(Y_z))_{X\setminus B}\leq(C'\cdot Y_z)=\#\left[C\cap 
f\right]_{X\setminus B}.
        \end{equation*}
        \item For all but finitely many $z\in Z(k)$ we have that
		$$\#\left[C\cap f\right]_B=\#(C\cap \iota(Y_z))_{X\setminus B}.$$
	\end{enumerate}
\end{lem}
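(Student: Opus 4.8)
The plan is to compare the naïve point count on $X$ with an honest intersection number upstairs on $Y$, where the members become Cartier divisors.

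First I would record the basic geometry. By \cref{L:weirdness} the indeterminacy locus of $f$ equals $\BaseLocus(f)$, so $B$ contains it and $f$ is a morphism on $X^\circ:=X\setminus B$; by \cref{lem:birational locus}, $\iota$ restricts to an isomorphism $Y^\circ:=\iota^{-1}(X^\circ)\to X^\circ$ carrying $\pi$ to $f$, under which the member $\iota(Y_z)$ restricts to the fibre $(f|_{X^\circ})^{-1}(z)$; in particular distinct members are disjoint over $X^\circ$. Since $Y$ is integral and $\pi$ dominates the smooth proper curve $Z$, each fibre $Y_z$ is an effective Cartier divisor on $Y$ (cut out locally by the pullback of a uniformiser, a nonzerodivisor on $Y$), and the divisors $Y_z$ are mutually algebraically equivalent because degree‑one divisors on a curve are. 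As $C'$ is a proper curve on the proper $k$‑scheme $Y$, the number $N:=(C'\cdot Y_z)$ is therefore independent of $z$; this is the quantity I expect to equal $\#[C\cap f]_{X\setminus B}$.

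Next I would prove the inequality in (1). If no component of $C$ is contained in $\iota(Y_z)$ then $C'\not\subset Y_z$ (a contracted component of $C'$ would have its image contained in a component of $C$ lying in $\iota(Y_z)$), so $(C'\cdot Y_z)=\sum_{p\in C'\cap Y_z}\ell_p$ with every local length $\ell_p\ge1$. Discarding the $p\notin Y^\circ$ only decreases the sum; for $p\in Y^\circ$, writing $x=\iota(p)$ and $\iota_*[Y_z]=\sum_ia_iD_i$, the fact that $\iota$ is an isomorphism near $p$ together with additivity of lengths gives $\ell_p=\sum_i a_i\operatorname{length}_x(\ms O_{C,x}/(g_i))\ge\sum_{i\,:\,x\in D_i}a_i$, which is exactly the contribution of $x$ to the naïve count ($g_i$ a local equation of $D_i$). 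Summing over the bijection $C'\cap Y_z\cap Y^\circ\simeq C\cap\iota(Y_z)\cap X^\circ$ gives $\#(C\cap\iota(Y_z))_{X\setminus B}\le(C'\cdot Y_z)$, trivially so if some component of $C$ lies in $\iota(Y_z)$. In particular $\#[C\cap f]_{X\setminus B}\le N$.

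The remaining — and hardest — point is that $\#(C\cap\iota(Y_z))_{X\setminus B}=N$ for all but finitely many $z$, which yields (2) together with the equality $N=\#[C\cap f]_{X\setminus B}$ of (1). The idea is that for generic $z$ the chain of inequalities above is forced to be an equality, by general position. Since no component of $C$, hence of $C'$, lies in $B$, the set $C'\cap\iota^{-1}(B)$ is finite, so discarding the finitely many $z$ below it makes $C'\cap Y_z\subset Y^\circ$. I would then look at $\rho:=\pi|_{C'}\colon C'\to Z$: the components it contracts lie in finitely many fibres (precisely the members containing a component of $C$), and on the other components $\rho$ is a finite, hence — as $\ch k=0$ — generically étale morphism of curves. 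Discarding in addition the finitely many $z$ lying over the singular locus of $C'$, over the points where two components meet, and over the ramification of $\rho$, one is left with a cofinite set of $z$ for which every $p\in C'\cap Y_z$ is a smooth point of $C'$ on a single component at which $\rho$ is étale; there $\ell_p=\operatorname{length}_p(\ms O_{C',p}/(\pi^*t))=1$, and then $1=\ell_p=\sum_i a_i\operatorname{length}_x(\ms O_{C,x}/(g_i))\ge\sum_{i:x\in D_i}a_i\ge1$ forces $x=\iota(p)$ to lie on exactly one $D_i$, with $a_i=1$, so the naïve contribution of $x$ equals $\ell_p$. Summing over $C'\cap Y_z\subset Y^\circ$ gives $\#(C\cap\iota(Y_z))_{X\setminus B}=(C'\cdot Y_z)=N$, as wanted. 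The only genuine work is this last reduction to a transverse intersection in reduced points away from the base locus and the singularities of the configuration — the single place characteristic $0$ is used, through separability of $C'\to Z$.
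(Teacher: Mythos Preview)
Your argument is correct and follows the same strategy as the paper's own proof: transport the problem to $Y$ via the birational map $\iota$, identify $(C'\cdot Y_z)$ with the degree of $\pi|_{C'}:C'\to Z$, and use that characteristic $0$ forces this cover of curves to be generically étale, so that for all but finitely many $z$ the intersection is transverse and the naïve count matches the degree. The paper compresses this into a few lines, asserting directly that $\pi|_{C'}$ is finite (from t-ampleness) and separable, whence the general naïve intersection equals $\deg(C'/Z)=(C'\cdot Y_z)$.

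One point where you are actually more careful than the paper: you allow for components of $C'$ contracted by $\pi$ (corresponding to components of $C$ lying in a single member), and correctly observe that these lie over finitely many $z$ and contribute zero to $(C'\cdot Y_z)$. The paper's bald claim that $\pi|_{C'}$ is finite is not literally justified by t-ampleness alone (a line in $\P^2$ through the base point of a pencil of lines is t-ample yet lies in a member), so your treatment is cleaner here. The only minor slip in your write-up is the invocation of ``additivity of lengths'' for $\ell_p=\sum_i a_i\operatorname{length}_x(\ms O_{C,x}/(g_i))$: this needs each $g_i$ to be a nonzerodivisor on $\ms O_{C,x}$, i.e., no component of $C$ through $x$ lies in $D_i$. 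That does follow from your running hypothesis that no component of $C$ lies in $\iota(Y_z)$, since $D_i\subset\iota(Y_z)$; it would be worth saying so. Alternatively, the weaker inequality $\ell_p\ge\sum_{i:x\in D_i}a_i$ (which is all you need) follows directly from $\prod g_i^{a_i}\in\mf m_x^{\sum a_i}$.
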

\begin{proof}
    Since $C$ is t-ample, the morphism $\pi|_{C'}:C'\to Z$ is finite. 
    Since $k$ has characteristic $0$, $\pi|_{C'}$ is separable.
    Since no component of $C$ is contained in $B$, we have that 
    $$\#[C\cap f]_{X\setminus B}=\#[C'\cap f\iota]_X.$$ That is, the intersection
    $C'\cap Y_z$ for general $z$ is disjoint from the preimage of $B$ in $Y$,
    which is a subset of codimension $1$ intersecting $C'$ properly. On the
    other hand, since $C'$ is separable over $Z$, the general naïve intersection
    equals the degree of $C'$ over $Z$. In particular, for every $z\in Z(k)$ we
    have that 
    $$\#[C'\cap f\iota]_X=(C'\cdot Y_z).$$
    This proves the result.
\end{proof}

\begin{lem} \label{kl3-2} 
	In \cref{sit:uation}, suppose $Y_z$ is a member of $|f|$ and 
$E\leq Y_z$ is an effective divisor. Then there exists $i$ such that $E+D_i\leq 
Y_z$ if and only if there exists a subset $B\subset X$ of codimension at least 
$2$, containing the base locus of $f$, and an integral divisor $A\subset X$ 
such that for every t-ample curve $C\not\subset B$ we 
	have
	$$
	\#(C\cap E)_{X\setminus B} + \#(C\cap A)_{X\setminus B}\leq \#\left[C\cap 
f\right]_{X\setminus B}.$$
\end{lem}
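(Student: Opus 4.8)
The plan is first to reformulate the left-hand side. Since $E\le Y_z$, asserting that $E+D_i\le Y_z$ for some index $i$ is the same as saying that $Y_z-E$ is a nonzero effective divisor, i.e. that $E$ is a \emph{proper} subdivisor of the member $Y_z$ (one then takes $D_i$ to be a prime divisor occurring in $Y_z-E$). So we must show: $Y_z-E\neq 0$ if and only if there exist a codimension $\ge 2$ closed subset $B\subset X$ containing $\BaseLocus(f)$ and an integral divisor $A\subset X$ with $\#(C\cap E)_{X\setminus B}+\#(C\cap A)_{X\setminus B}\le \#[C\cap f]_{X\setminus B}$ for every t-ample $C\not\subset B$. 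The two tools are \cref{lem:props of intersections}, which gives $\#[C\cap f]_{X\setminus B}=(C'\cdot Y_z)$ for the strict transform $C'\subset Y$ of a t-ample $C$ with no component in $B$ (and computes this number by a general member), and \cref{lem:birational locus}, which says each member is Cartier away from $\BaseLocus(f)$.

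For the forward implication, assume $Y_z-E\neq 0$, pick a prime divisor $A$ occurring in $Y_z-E$, so $E+A\le Y_z$, and take $B$ to be $\BaseLocus(f)$ enlarged by the codimension $\ge 2$ locus consisting of points of $X$ lying on two distinct components of the support of $Y_z$ together with the non-smooth locus of that support; this $B$ depends only on $Y_z$ and the pencil, not on $C$. Given a t-ample $C\not\subset B$: if $C$ is a component of the support of $E$ or of $A$ the left side is $-\infty$ and we are done; otherwise, if $C$ is a component of $Y_z$, then $C$ meets every component of $E+A$ only inside $B$ (by the choice of $B$), so the left side is $0\le(C'\cdot Y_z)$; and if $C$ is not a component of $Y_z$, then writing $Y_z=(E+A)+R$ with $R$ effective and using additivity and monotonicity of $\#(C\cap-)_{X\setminus B}$ on effective divisors met properly, together with $\#(C\cap Y_z)_{X\setminus B}\le(C'\cdot Y_z)$ from \cref{lem:props of intersections}, we get $\#(C\cap E)_{X\setminus B}+\#(C\cap A)_{X\setminus B}=\#\bigl(C\cap(E+A)\bigr)_{X\setminus B}\le\#(C\cap Y_z)_{X\setminus B}\le(C'\cdot Y_z)=\#[C\cap f]_{X\setminus B}$.

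For the reverse implication we argue by contraposition: assume $Y_z-E=0$, i.e. $E=Y_z$, and show that no pair $(B,A)$ works. Fix an arbitrary codimension $\ge 2$ set $B\supseteq\BaseLocus(f)$ and an arbitrary integral divisor $A$. Using Chow's lemma to pass to a projective birational model, take a general complete intersection curve there and let $C\subset X$ be its image; then $C$ is t-ample, is not contained in $A$ or in any member, and (by characteristic $0$ transversality, Bertini, and the fact that a general such $C$ meets $A$ at general points of $A$, which avoid the proper closed subset $A\cap B$ since $\dim A=\dim X-1>\dim B$) we have $\#(C\cap E)_{X\setminus B}=\#(C\cap Y_z)_{X\setminus B}=(C'\cdot Y_z)=\#[C\cap f]_{X\setminus B}$ by \cref{lem:props of intersections}, while $\#(C\cap A)_{X\setminus B}\ge 1$ because $C\cap A\neq\emptyset$ by t-ampleness and these points lie off $B$. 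Hence $\#(C\cap E)_{X\setminus B}+\#(C\cap A)_{X\setminus B}>\#[C\cap f]_{X\setminus B}$, so the numerical condition fails.

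The conceptual content here is modest; the work is entirely in the handling of the auxiliary set $B$. In the forward direction the obstacle is choosing $B$ large enough — the base locus plus the codimension $\ge 2$ locus where the support of $Y_z$ is singular or has two components crossing — that naïve intersection numbers on $X\setminus B$ become genuine, additive, monotone intersection numbers, yet still of codimension $\ge 2$; and this must be arranged uniformly in $C$, the delicate case being when the t-ample curve $C$ is itself a component of $Y_z$. In the reverse direction the matching difficulty is exhibiting a single t-ample curve $C$ that is simultaneously general enough for \cref{lem:props of intersections} to compute $\#[C\cap f]_{X\setminus B}$ on the nose and has its intersection with the \emph{fixed} divisor $A$ disjoint from the \emph{fixed} set $B$; both rely on Bertini-type genericity in characteristic $0$ and on $\dim A$ strictly exceeding $\dim B$.
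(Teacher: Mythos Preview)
Your forward direction is essentially correct and close in spirit to the paper's. Two minor points of imprecision: the phrase ``if $C$ is a component of $Y_z$'' only makes sense when $\dim X=2$; you mean ``if $C\subset\Supp(Y_z)$''. Also, the statement quantifies over all t-ample $C\not\subset B$, which allows reducible $C$ with a component inside $B$; neither you nor the paper is fully careful here, but the paper's use of strict transforms on a resolution $\widetilde Y\to Y$ absorbs this more cleanly than your case split.

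The reverse direction has a genuine gap. You assert that for a general complete intersection curve $\hat C$ on a projective model $p:X'\to X$, the image $C=p(\hat C)$ satisfies
\[
\#(C\cap Y_z)_{X\setminus B}=(C'\cdot Y_z)=\#[C\cap f]_{X\setminus B}.
\]
The equality on the right is \cref{lem:props of intersections}, but the equality on the left requires $C\cap Y_z\cap B=\emptyset$ together with transversality. When $X$ is projective you may take $X'=X$ and a general CI curve avoids the codimension $\ge 2$ set $Y_z\cap B$, so your argument is fine. But when $X$ is merely proper the adversary can choose $B$ to contain points of $Y_z$ lying in the image of the exceptional locus of \emph{any} projective model. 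Then $p^{-1}(Y_z\cap B)$ contains exceptional \emph{divisors}, and a general CI curve $\hat C$ on $X'$ \emph{must} meet them; hence $C=p(\hat C)$ meets $Y_z\cap B$ and the naive count $\#(C\cap Y_z)_{X\setminus B}$ drops strictly below $(C'\cdot Y_z)$. Your Bertini/genericity appeal does not bound this defect, so you cannot conclude the desired strict inequality.

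The paper overcomes exactly this obstacle by passing to a projective model on which $f$ extends to a \emph{morphism}, cutting down to a smooth surface $S'$, and decomposing the preimage of $B$ on $S'$ into finitely many points, an $f'$-horizontal divisor $B^h$, and an $f'$-vertical divisor $B^v$. Since $B$ has codimension $\ge 2$, the vertical part $B^v$ contains no full fiber, so its intersection matrix is negative definite; this lets one construct an ample class of the form $(1+\epsilon)H+N$ whose intersection with $B^v$ is arbitrarily small while its intersection with $A_{S'}$ stays bounded below. A general member of that specific class yields the violating curve. The point is that a \emph{generic} ample curve does not work --- the class must be tailored to $B$ --- and this is precisely the content missing from your argument.
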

\begin{proof}
    Let $B$ be the union of the base locus of $f$, the
	singular locus of $X$, and the image of the singular locus of $Y$. 
	Let $\widetilde Y\to Y$ be a resolution of singularities. Since
	$B$ is assumed to contain the singular locus of $X$, we have that
	$\widetilde Y\to Y$ is an isomorphism outside of $B$. Given a component
	$Q\subset Y_z$, we have that $Q_{\widetilde Y}=\widetilde Q+Q'$, where
	$\widetilde Q$ is the strict transform and $Q'$ is a divisor supported over
	$B$. For any curve $C\subset X$ not contained in $B$, with strict transform
	$\widetilde C\subset\widetilde Y$, we have an inequality
	$$\#(C\cap Q)_{X\setminus B}\leq\widetilde C\cdot\widetilde Q.$$ 

	Moreover,
    if we write $Y_z=\sum a_iD_i$, we have (in the same notation) that
    $$\sum a_i (\widetilde C\cdot \widetilde D_i)\leq (\widetilde C\cdot \widetilde Y_z)=(C\cdot 
Y_z)=\#\left[C\cap f\right]_{X\setminus B},$$
where the first inequality follows from the fact that $\widetilde Y_z$ is equal to $\sum a_i\widetilde D_i$ plus an effective divisor not containing any components of $\widetilde C$, the middle equality is    by the projection formula, and the last equality is by  \cref{lem:props of intersections}.
	
	If $E=\sum b_i D_i<Y_z$, then pick $ A\in \Comp|D|$ such that
	$E+A\leq Y_z$. That is, let $A=D_j$ for some $j$ such that $b_j+1\leq a_j$. 
For
	any proper irreducible t-ample curve $C$ not contained in $B$ we then
	have
	$$
	\#(C\cap E)_{X\setminus B}+ \#(C\cap A)_{X\setminus B}\leq \sum 
b_i(\widetilde C\cdot\widetilde D_i) + (\widetilde C\cdot\widetilde D_j) \leq 
\sum a_i(\widetilde C\cdot \widetilde D_i)\leq \#\bigl[C\cap 
	f\bigr]_{X\setminus B},
	$$
	and thus the condition is necessary.
	
	To complete the proof we need to show that if $E=Y_z$ then for every subset
	$B$ of $X$ of codimension at least $2$ and every integral divisor $A\subset
	X$, there is a t-ample curve $C\not\subset B$ such that
	\begin{equation}\label{eq:thang}
		\#(C\cap E)_{X\setminus B}+\#(C\cap A)_{X\setminus 
B}>\#\left[C\cap f\right]_{X\setminus B}.
	\end{equation}
    Choose a proper, birational morphism $p:X'\to X$ such that $X'$ is normal
	and projective, and the pencil $f$ extends to a morphism $f':X'\to Z$. 
	Let
	$S'\subset X'$ be a general complete intersection of $\dim X-2$ very ample
	divisors. Then $S'$ is normal and $f'$ restricts to a morphism $f':S'\to Z$.
	The support of the preimage of $B$ in $S'$ is the union of a finite set of 
points $P$, a reduced $f'$-horizontal divisor
	$B^h$, and a reduced $f'$-vertical divisor $B^v$. Since $B$ has codimension 
$2$, we see that $B^v$ does not contain any fibers of $f'$.
	
	Resolving the singularities of $S'$ and using the fact that the image of a
	t-ample curve is t-ample, we may assume that $S'$ is smooth over
	$k$.  Note that for a general ample curve $C'\subset S'$, we have that
	$$\#(p(C')\cap E)_{X\setminus B}\geq (C'\cdot E_{S'})-(C'\cdot B^v).$$ Indeed,
	for general $C'$ there is no intersection with $P$ and the intersection with
	$B^h$ occurs outside $E$, so that the second term in the difference on the
	right side thus puts an upper bound on the cardinality of $C'(k)\cap E(k)\cap
	B(k)$.
	
	Since $E=Y_z$, so that $$(C'\cdot E_{S'})=(p(C')\cdot 
Y_z)=\#[p(C')\cap f]_{X\setminus B}$$ for any $B$, we see that to achieve the desired 
inequality for the curve $C=p(C')$, it suffices to find an ample curve 
$C'\subset S'$, not contained in $B|_{S'}$, such that 
	$$
	(C'\cdot E_{S'}) + (C'\cdot A_{S'}) > (C'\cdot E_{S'}) + (C'\cdot B^v),
	$$
	which is equivalent to 
	\begin{equation}\label{eq:simple}
	(C'\cdot A_{S'}) > (C'\cdot B^v).
	\end{equation}
	
	Suppose $H\subset S'$ is an ample divisor. Since $B^v$ is vertical and  does
	not contain a fiber, its intersection matrix is negative definite
	\cite[Corollary 2.6]{Badescu}. Thus, there is an effective $\Q$-divisor $N$
	supported on $B^v$ such that  $H+N$ is numerically trivial on $B^v$. Moreover, for any irreducible curve $C$ not contained in $\Supp B^v$, we have $(H+N)\cdot C\geq H\cdot C$. Now
	$$
	(1+\epsilon)H+N= \epsilon H+ (H+N)
	$$ is ample, very small on $B^v$, and
	bigger than $H$ on every other divisor. Let $C'$ be a general member of a 
very
	ample multiple of $ (1+\epsilon)H+N$ and let  $C:=p(C')$ be the image in 
$X$.
	It is straightforward that for sufficiently small $\epsilon$ we have the
  inequality \eqref{eq:simple}, and this implies \eqref{eq:thang}, as desired.
\end{proof}
\begin{remark}\label{rem:insep whacky}
	Note that \cref{kl3-2} is false in characteristic $p$. We give an 
example in the context of \cref{subsec:char 0}; we will retain the 
notation from that section in this remark. Let $Z$ be the divisor $f(x,y,z)=0$ 
and let $Y$ be a general curve in $\P^2$ that meets $Z$ transversely. There is 
an associated pencil $$\langle Z,Y\rangle:\P^2\dashrightarrow\P^1$$
	in the linear system $|\ms O(p)|$. This induces a pencil
	$$\langle \widetilde Z,\widetilde Y\rangle:X_f\dashrightarrow\P^1$$
	associated to the pullback divisors $\widetilde Z,\widetilde Y\subset X_f$. 
Note that the divisor $\widetilde Z$ is not reduced, since the divisor $Z$ 
becomes divisible by $p$ in $D_f$ (hence has a component in $X_f$ that is 
divisible by $p$).
	
	On the other hand, there is a finite purely inseparable morphism $X_f\to Q$ 
of smooth surfaces described in \cref{subsec:char 0}. The restriction 
$Z|_Q$ is reduced (since $Q$ is a blow up of $\P^2$ at a finite set disjoint 
from $Z$). Since the underlying topological spaces of $X_f$ and $Q$ are the 
same, the sets of curves are in bijection. We see that a general curve meeting 
$Y$ and $Z$ transversely in $Q$ corresponds to a curve with $\#(C\cap 
Y)=\#(C\cap Z)$. The corresponding curve in $X_f$ therefore has the same 
property. We conclude that 
	$$\#(C\cap \widetilde Y)=\#(C\cap\widetilde Z)=C\cdot \widetilde Y=C\cdot 
\widetilde Z,$$
	but this does not imply that members of the pencil are reduced. Thus, the 
naïve intersection multiplicity cannot be used to identify multiplicities of 
components of fibers. (This arises from the fact that the general naïve 
intersection need not equal the general intersection number because maps of 
curves need not be separable. This is the crux of the 
characteristic $0$ hypothesis.)
\end{remark}

\begin{cor}\label{cor:detect reduced members}
    In \cref{sit:uation}, suppose $E$ is a reduced divisor on $X$ 
whose components lie in $\Comp|f|$ and such that $E\setminus\BaseLocus|f|$ 
is connected. Then $E$ is a member of $|f|$ if and only if for every subset 
$B\subset X$ of codimension $2$ containing $\BaseLocus|f|$ and every integral divisor $A\subset X$, there is a t-ample curve $C\not\subset B$ such that 
    $$\#(C\cap E)_{X\setminus B}+\#(C\cap A)_{X\setminus 
B}>\#\left[C\cap f\right]_{X\setminus B}.$$
\end{cor}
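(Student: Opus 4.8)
The plan is to deduce \cref{cor:detect reduced members} from \cref{kl3-2} by reducing the general ``reduced, connected-away-from-$B$, components in $\Comp|f|$'' case to the case of a single member $Y_z$ already handled there. The forward direction is easy: if $E$ itself is a member of $|f|$, then taking $E=Y_z$ in \cref{kl3-2} with any effective $E'\le Y_z$ strictly smaller — or rather taking the ``only if'' half of \cref{kl3-2} applied with the divisor $E$ playing the role of ``$E=Y_z$'' — gives exactly the stated inequality: for every $B$ of codimension $\ge 2$ containing $\BaseLocus|f|$ and every integral $A$ there is a t-ample curve $C\not\subset B$ with $\#(C\cap E)_{X\setminus B}+\#(C\cap A)_{X\setminus B}>\#[C\cap f]_{X\setminus B}$. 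So the content is entirely in the converse.

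For the converse, suppose $E$ is reduced, its components lie in $\Comp|f|$, and $E\setminus\BaseLocus|f|$ is connected, and suppose $E$ is \emph{not} a member of $|f|$; I want to produce a bad $B$ and $A$ for which no such $C$ exists, i.e. for which $\#(C\cap E)_{X\setminus B}+\#(C\cap A)_{X\setminus B}\le\#[C\cap f]_{X\setminus B}$ for all t-ample $C\not\subset B$. The key structural point is that each irreducible component $D_i$ of $E$ is a component of some member $Y_{z_i}$ of $|f|$; since $E\setminus\BaseLocus|f|$ is connected and distinct members meet only inside $\BaseLocus|f|\subset B$, all these components must in fact lie in a \emph{single} member $Y_z$ (two components of $E$ in different members would, by connectedness of $E\setminus\BaseLocus|f|$, have to meet outside the base locus, contradicting \cref{lem:birational locus}/\cref{L:weirdness}). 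Thus $E\le Y_z$ for one fixed $z$, and since $E\neq Y_z$ we have $E<Y_z$ (here using that $E$ is reduced and that by \cref{kl1-1} $Y_z-E$ effective and nonzero cannot be ruled out only if $E$ is the whole member — more precisely, $E\le Y_z$ with $E$ reduced and $E\ne Y_z$ forces $Y_z-E$ to be a nonzero effective divisor). Now apply the necessity direction of \cref{kl3-2}: since $E<Y_z$ there exists $A\in\Comp|f|$ (namely some $D_j$ with multiplicity-in-$Y_z$ at least one more than its multiplicity in $E$) and a codimension-$\ge 2$ subset $B$ containing $\BaseLocus|f|$ such that $\#(C\cap E)_{X\setminus B}+\#(C\cap A)_{X\setminus B}\le\#[C\cap f]_{X\setminus B}$ for every t-ample $C\not\subset B$. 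This $A$ and $B$ witness the failure of the right-hand condition, so the converse contrapositive is established.

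The main obstacle I anticipate is the bookkeeping around \emph{which} $B$ and which multiplicities one is allowed to use, and making the ``all components in one member'' step airtight. Concretely: \cref{kl3-2} produces a specific $B$ (base locus $\cup$ singular locus of $X$ $\cup$ image of singular locus of $Y$) for which the inequality holds, and the statement of \cref{cor:detect reduced members} quantifies over \emph{all} $B$ of codimension $\ge 2$ containing $\BaseLocus|f|$ — but in the converse direction we only need to exhibit \emph{one} such $B$, so this matches up fine; the care is just in noting that the $B$ from \cref{kl3-2} does have codimension $\ge 2$ (true since $X$ normal and $Y\to Z$ a family of members) and does contain $\BaseLocus|f|$. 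The connectedness step also needs the observation that within a single member $Y_z$ the various $D_i$ can still be disconnected after removing $B$, so I should phrase the argument as: the partition of $\Comp E$ by ``which member contains it'' refines to a partition into pieces whose pairwise intersections lie in $\BaseLocus|f|$, hence in $B$; connectedness of $E\setminus\BaseLocus|f|$ forces a single piece; hence a single $z$. Once that is pinned down, everything else is a direct citation of \cref{kl3-2} together with \cref{kl1-1} to rule out the degenerate possibility that $E$ equals a member up to an effective divisor algebraically equivalent to $0$.
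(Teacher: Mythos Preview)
Your proposal is correct and follows essentially the same approach as the paper: the paper's proof is the single sentence ``This follows immediately from \cref{kl3-2}, once we note that $E$ must lie in $Y_z$ for some $z\in Z$, since $E\setminus\BaseLocus|f|$ is connected,'' and your argument is precisely an unpacking of this. One small remark: the appeal to \cref{kl1-1} at the end is superfluous---once you know $E\le Y_z$ as effective divisors and $E\neq Y_z$, the difference $Y_z-E$ is automatically a nonzero effective divisor, so some component $D_j$ satisfies $E+D_j\le Y_z$ and \cref{kl3-2} applies directly.
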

\begin{proof}
    This follows immediately from \cref{kl3-2}, once we note that $E$ must 
lie in $Y_z$ for some $z\in Z$, since $E\setminus\BaseLocus|f|$ is connected.
\end{proof}

\subsection{Zariski topology lemmas}

In this section we assume that the base field $k$ is algebraically closed and 
uncountable.

\begin{defn} Let $X$ be a normal, proper variety. A {\it t-pencil} on $X$ is a
	set of reduced Weil divisors $\{D_c\}$ such that
	\begin{enumerate}
		\item the union of the $D_c$ contains all closed points of  $X$,
		\item there is a subset $B\subset X$ of codimension $\geq 2$ such that
		$D_{c_1}\cap D_{c_2}\subset B$ for every $c_1\neq c_2$, and
		\item all but finitely many of the $D_c$ are irreducible.
	\end{enumerate}
	Two t-pencils are \emph{distinct\/} if they share only finitely many 
members.
\end{defn}

\begin{lem}\label{lem:hilbert countable}
 Suppose $X$ is a proper normal variety over a field $k$. Let 
$\Hilb_{X}^{1}$ be the union of the components of the Hilbert scheme that contain points corresponding to integral divisors. Then $\Hilb_X^1$ has only countably many irreducible components.
\end{lem}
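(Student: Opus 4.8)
The plan is to reduce to the case in which $X$ is projective, where the statement is classical, and then transfer back using that $X$ is normal. By Chow's lemma followed by normalization, choose a proper birational morphism $\pi\colon X'\to X$ with $X'$ a normal projective $k$-variety. Since $X$ is normal, $\pi$ contracts no divisor: it restricts to an isomorphism over a dense open $U\subseteq X$ with $\codim(X\setminus U,X)\geq 2$, and correspondingly $\codim(X'\setminus\pi^{-1}(U),X')\geq 2$. Hence the operations $D\mapsto\overline{D\cap U}$ (closure taken in $X'$ under $\pi^{-1}(U)\cong U$) and $D'\mapsto\overline{\pi(D'\cap\pi^{-1}(U))}$ are mutually inverse bijections between the integral Weil divisors on $X$ and those on $X'$; the codimension bounds guarantee that every integral divisor on either variety meets the relevant open set in a dense open subscheme, so these "strict transform" operations are defined on all integral divisors. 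The same construction applies to a flat family $\mathcal D\subseteq X\times T$ of integral divisors over an integral base $T$: restriction to $U$ preserves flatness, scheme-theoretic closure in $X'\times T$ is flat over a dense open $T^\circ\subseteq T$, and after shrinking $T^\circ$ all of its fibers are integral divisors on $X'$.

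In the projective case the result is immediate: fixing a polarization, $\Hilb_{X'}=\coprod_P\Hilb_{X'}^P$ is a disjoint union, over the countable set of numerical polynomials $P$, of projective (hence Noetherian) $k$-schemes $\Hilb_{X'}^P$, each of which has only finitely many irreducible components; so $\Hilb_{X'}$, and a fortiori $\Hilb_{X'}^1$, has only countably many irreducible components. It therefore suffices to bound the number of irreducible components of $\Hilb_X^1$ by that of $\Hilb_{X'}^1$. Writing $\Hilb_X^{\mathrm{int}}\subseteq\Hilb_X$ and $\Hilb_{X'}^{\mathrm{int}}\subseteq\Hilb_{X'}$ for the locally closed loci parametrizing integral divisors, $\Hilb_X^1$ is the union of those components of $\Hilb_X$ that meet $\Hilb_X^{\mathrm{int}}$, so it is enough to show $\Hilb_X^{\mathrm{int}}$ has countably many irreducible components. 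Applying the family version of strict transform to the reduced structure on an irreducible component $W$ of $\Hilb_X^{\mathrm{int}}$ yields a dense open $W^\circ\subseteq W$ and a morphism $g_W\colon W^\circ\to\Hilb_{X'}^{\mathrm{int}}$ which, by the bijectivity on points, is injective on $\overline k$-points; hence $\overline{g_W(W^\circ)}$ is an irreducible closed subset of $\Hilb_{X'}^{\mathrm{int}}$ of dimension $\dim W$, whose generic point is the class of the strict transform of the generic member of $W$. Running the symmetric construction over $X'$ and comparing dimensions of the two families shows that $\overline{g_W(W^\circ)}$ is in fact an irreducible component of $\Hilb_{X'}^{\mathrm{int}}$ and that $W\mapsto\overline{g_W(W^\circ)}$ is a bijection between the irreducible components of $\Hilb_X^{\mathrm{int}}$ and those of $\Hilb_{X'}^{\mathrm{int}}$. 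Since the latter set is countable by the projective case, we conclude.

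The main obstacle is the last step. Strict transform is only a generically-defined birational correspondence between the (possibly reducible, possibly non-quasi-compact) loci of integral divisors, so turning it into an honest bijection on irreducible components requires care: one must verify that the generic member of a component of $\Hilb_X^{\mathrm{int}}$ has as strict transform the generic member of a component of $\Hilb_{X'}^{\mathrm{int}}$, which is where the dimension bookkeeping and the fact that strict transform and pushforward are mutually inverse on any flat family where both are defined enter. The reduction to the projective case and the countability statement there are routine, as is the observation that bounding the components of $\Hilb_X^{\mathrm{int}}$ bounds those of $\Hilb_X^1$.
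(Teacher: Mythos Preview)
Your approach is the same as the paper's: pass to a normal projective birational model $\pi\colon X'\to X$ via Chow's lemma, set up strict transform and pushforward as rational maps between the relevant Hilbert schemes, and bound the components of $\Hilb_X^1$ by those of $\Hilb_{X'}$.

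There is, however, a genuine error in the middle. The claim ``since $X$ is normal, $\pi$ contracts no divisor'' and the consequence $\codim(X'\setminus\pi^{-1}(U),X')\ge 2$ are false. Normality of $X$ ensures $\pi$ is an isomorphism over every codimension-one point of $X$ (so $\codim(X\setminus U,X)\ge 2$ is correct), but it says nothing about the exceptional locus on the source: Chow's lemma typically produces blowup-like morphisms, and the exceptional set in $X'$ is usually a divisor. Consequently the two operations you describe are \emph{not} mutually inverse bijections on all integral divisors; exceptional prime divisors on $X'$ do not meet $\pi^{-1}(U)$ and have no counterpart on $X$. Your final claim that $W\mapsto\overline{g_W(W^\circ)}$ is a \emph{bijection} on irreducible components is therefore too strong.

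Fortunately only an injection is needed, and your dimension argument already gives it once the false codimension claim is removed. Strict transform $g_W$ is defined on every component $W$ of $\Hilb_X^{\mathrm{int}}$ because every integral divisor on $X$ meets $U$. If $V\supseteq\overline{g_W(W^\circ)}$ is the containing component on the $X'$ side, its generic member is non-exceptional (being contained in the fixed closed set $X'\setminus\pi^{-1}(U)$ is a closed condition on the Hilbert scheme, and $V$ meets the complement), so pushforward is defined generically on $V$ and your comparison forces $V=\overline{g_W(W^\circ)}$ and $W_1\ne W_2\Rightarrow\overline{g_{W_1}(W_1^\circ)}\ne\overline{g_{W_2}(W_2^\circ)}$. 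This is precisely what the paper does: it sets up pushforward $p$ and strict transform $q$ as rational maps, observes $pq=\id$ on each component, and concludes only that the components of $\Hilb_X^1$ are identified with a \emph{subset} of those of $\Hilb_{X'}^1$.
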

\begin{proof}
 Let $\pi:X'\to X$ be a birational projective morphism from a normal projective 
variety. In particular, $\pi$ is an isomorphism in codimension $1$. Define a 
rational map
$$p:\left(\Hilb^1_{X'}\right)_{\text{\rm red}}\dashrightarrow\Hilb^1_{X}$$
by pushforward. More precisely, the universal ideal sheaf $\ms I\subset\ms 
O_{X'\times\Hilb^1_{X'}}$ defines an ideal sheaf 
$$\pi_\ast\ms I\cap\ms O_{X\times\Hilb^1_{X'}}\subset\ms 
O_{X\times\Hilb^1_{X'}}.$$
By generic flatness, this defines a rational map from the reduced structure of 
$\Hilb^1_{X'}$ to $\Hilb^1_{X}$. On the other hand, strict transform 
defines a rational map
$$q:\left(\Hilb_{X}^1\right)_{\text{\rm red}}\dashrightarrow\Hilb^1_{X'}.$$
It is not hard to see that $pq=\id$ as rational maps on each irreducible 
component of $\Hilb^1_{X}$. In particular, the set of irreducible components 
of $\Hilb^1_{X}$ is identified with a subset of the set of irreducible 
componens of $\Hilb^1_{X'}$. Since $X'$ is projective, $\Hilb_{X'}$ has 
countably many components. The result follows.
\end{proof}

\begin{lem}\label{kl2} Let $X$ be a normal, proper variety and $\{D_c\}$ a
t-pencil. Assume that the base field is uncountable and algebraically closed.
Then there is a (possibly irrational) pencil of divisors $|f|$ such that 
$$
\ConnComp\{D_c\setminus\BaseLocus|f|\}=\ConnComp\{E\setminus\BaseLocus|f| : E\in|f|\}.
$$
In particular, if every $D_c\setminus\BaseLocus|f|$ is connected and every $E\setminus\BaseLocus|f|$ is connected for $E\in|f|$, then the $D_c$
are precisely the reduced structures on the $E\in|f|$.
\end{lem}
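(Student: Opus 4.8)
\textbf{Proof plan for \cref{kl2}.} The plan is to produce the pencil by applying \cref{kl1} to the members of the t-pencil, and then to match up connected components using the control over intersections provided by \cref{cor:detect reduced members} and \cref{lem:props of intersections}. First I would discard the finitely many reducible $D_c$ and, by condition (3), assume all remaining $D_c$ are integral divisors; since the base field is uncountable and $\Hilb_X^1$ has only countably many irreducible components by \cref{lem:hilbert countable}, infinitely many of the $D_c$ must correspond to points lying in a single irreducible component $Z_0\subset\Hilb_X^1$. Because any two distinct $D_c$ meet only inside the codimension-$\geq 2$ set $B$, and because effective divisors algebraically equivalent to a moving family are forced (via \cref{kl1-1}) to behave like members of a pencil, the hypotheses of \cref{kl1} are met after passing to this infinite subfamily: the $D_c$ in question are algebraically equivalent (they deform in $Z_0$) and pairwise meet in $B$. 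Thus \cref{kl1} yields a (possibly irrational) pencil $f:X\dashrightarrow Z$ with $\BaseLocus|f|=:B_f$ of codimension $\geq 2$, such that \emph{all} of the $D_c$ in the subfamily are members of $|f|$, i.e. each such $D_c$ is the reduced structure on some fiber $Y_z$.

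Next I would upgrade ``infinitely many'' to ``all'': for an arbitrary member $D_c$ of the original t-pencil, I claim its reduced components again lie in $\Comp|f|$ and $D_c$ is a member of $|f|$. The point is that the $D_z=$ members of $|f|$ sweep out all of $X$ (condition (1) for the pencil coming from \cref{kl1} — the universal family dominates $X$), so each irreducible component $E'$ of $D_c$ either is a component of some member or meets all but finitely many members outside $B\cup B_f$; the latter contradicts condition (2) of the t-pencil applied to $D_c$ against the infinitely many $D_{c'}$ already known to be members, exactly as in the last paragraph of the proof of \cref{kl1}. Having established that every component of every $D_c$ lies in $\Comp|f|$, I would apply \cref{cor:detect reduced members}: a reduced divisor $E$ whose components lie in $\Comp|f|$ and with $E\setminus\BaseLocus|f|$ connected is a member of $|f|$ precisely when a certain naïve-intersection inequality holds for all t-ample curves $C\not\subset B$ and all integral divisors $A$. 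Applying this componentwise to the connected components of $D_c\setminus B_f$ shows each such connected component is a member of $|f|$ set-theoretically (up to the reduced structure), and conversely each $E\setminus B_f$ for $E\in|f|$ breaks into connected pieces each of which, being swept by the t-pencil, must coincide with a connected component of some $D_c\setminus B_f$. This yields the asserted equality $\ConnComp\{D_c\setminus\BaseLocus|f|\}=\ConnComp\{E\setminus\BaseLocus|f|:E\in|f|\}$, and the ``in particular'' clause is then immediate: if all the relevant punctured divisors are connected, the map $D_c\mapsto$ (the member it sits inside) is a bijection onto $|f|$ with $D_c=(E)_{\red}$.

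The main obstacle I anticipate is the bookkeeping around base loci and codimension-$2$ subsets: the t-pencil comes with \emph{its own} auxiliary set $B$ (of codimension $\geq 2$) in which members intersect, while \cref{kl1} and \cref{cor:detect reduced members} each introduce their own closed subsets (the base locus $B_f$, the singular loci of $X$ and of the family $Y$ of members). I would need to enlarge $B$ to a single closed set $B^\ast$ of codimension $\geq 2$ containing all of these simultaneously and check that \emph{away from $B^\ast$} all the connectedness statements and intersection inequalities are preserved — in particular that $D_c\setminus B_f$ and $D_c\setminus B^\ast$ have the same connected components, which uses that removing a further codimension-$\geq 2$ (in $X$, hence codimension-$\geq 1$ in each divisor) subset from an already-punctured reduced divisor does not disconnect it provided one is careful, or else restated in terms of $\ConnComp$ directly. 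The other delicate point is verifying the hypotheses of \cref{kl1} — specifically algebraic equivalence of the subfamily of $D_c$'s — which I would get for free from their all lying in the single irreducible Hilbert component $Z_0$ together with \cref{kl1-1} to rule out degenerate behavior. Everything else is an assembly of results already proved in this section.
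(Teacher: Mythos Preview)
Your overall strategy --- pigeon-hole on the countably many components of $\Hilb_X^1$ via \cref{lem:hilbert countable} to extract an algebraically-equivalent subfamily, feed it into \cref{kl1} to manufacture the pencil, then absorb the remaining $D_c$ into $|f|$ by repeating the argument at the end of the proof of \cref{kl1} --- is exactly the paper's approach, in essentially the same order. (One small gap: you should observe first that there are \emph{uncountably} many $D_c$, since no countable union of proper closed subvarieties covers $X$ over an uncountable field; only then does pigeon-hole against countably many Hilbert components yield an infinite subfamily.)

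Where you diverge is in the matching of connected components. Invoking \cref{cor:detect reduced members} here is both unnecessary and not quite right. That corollary sits under the characteristic-$0$ hypothesis of \cref{sit:uation}, which the present lemma does not assume, and it detects when a connected reduced divisor with components in $\Comp|f|$ equals a \emph{full} member $Y_z$ --- a statement about multiplicities that is different from, and stronger than, the connected-component equality asserted here. The paper's argument is direct: once every irreducible component of every $D_c$ lies in $\Comp|f|$ (your step~3), the disjointness of distinct members off $\BaseLocus|f|$ forces each connected component of $D_c\setminus\BaseLocus|f|$ into a single $E_z\setminus\BaseLocus|f|$; conversely, since the $D_c$ cover $X$, every connected component of every $E_z\setminus\BaseLocus|f|$ is already accounted for by some $D_c$. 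No na\"ive-intersection inequalities enter.

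Finally, your proposed fix for the $B$-versus-$B_f$ discrepancy --- pass to a common enlargement $B^\ast$ and claim connected components are unchanged --- does not work as stated: a codimension-$2$ subset of $X$ meets a divisor in codimension $1$, and excising such a set can certainly disconnect it. The paper attempts no such enlargement; it works with $\BaseLocus|f|$ throughout, and the relevant containment is $\BaseLocus|f|\subset B$ (the base locus lies in every member, hence in $D_c\cap D_{c'}\subset B$ for any two of the members already produced), which is what makes the direct covering argument go through.
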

\begin{proof}
	By \cref{lem:hilbert countable}, countably many components of the Hilbert scheme of $X$ contain all of the points $[D_c]$. Thus, there are uncountably many $D_c$ that are 
algebraically 
	equivalent. They determine a pencil $|D|$ by \cref{kl1}. Each 
remaining divisor must lie in a member of $|D|$ (by connectedness). Since the 
$D_c$ cover $X$, we conclude that each connected component appears.
\end{proof}

As the following example shows, over a countable field t-pencils need not come 
from algebraic pencils. (This more or less must be the case by 
\cref{lem:surf-homeo}. Here is an explicit example.)

\begin{example}[t-pencils over countable fields] Let $X$ be a normal, 
	projective variety of dimension $\geq 2$ over an infinite field $K$ and 
	$L$ a very ample line bundle on $X$.
	
	Pick any  $s_1\in H^0(X, L^{m_1})$ and $s_2\in H^0(X, L^{m_2})$.
	Assume that we already have  $s_i\in H^0(X, L^{m_i})$ for $i=1,\dots, r$ 
	such that  $\supp (s_i=s_j=0)$ is independent of $1\leq i< j\leq r$.
	
	Set $M=\prod_i m_i$,
	$$
	S_{r+1}:=\left(\prod_i s_i^{M/m_i}\right)\left(\sum_i s_i^{-M/m_i}\right)
	\mbox{ and }
	T_{r+1}:=\prod_i s_i.
	$$
	and choose
	$$
	s_{r+1}=S_{r+1}+ g\cdot T_{r+1}
	$$
	for a general  $g\in H^0(X, L^{n_r})$ where $n_r=M(r-1)-\sum_i m_i$.
	Then $( s_{r+1}=0)$ is irreducible and
	$\supp (s_i=s_j=0)$ is independent of $1\leq i< j\leq r+1$.
	
	If $K$ is countable then we can order the points of $X$ as
	$x_1, x_2, \dots$ and we can choose the $s_i$ such that
	$\prod_{i=1}^r s_i$ vanishes on $x_1,\dots , x_r$ for every $r$.
	Then the resulting  $D_i:=(s_i=0)$ is a t-pencil that does not correspond 
	to any actual pencil.
\end{example}

\begin{defn} Let $X$ be a normal proper variety and $\{D_c\}$ a t-pencil.
	We say that $D_c$ is a {\it true member} of  $\{D_c\}$ if
	the condition of \cref{cor:detect reduced members} holds.
\end{defn}

\begin{defn}\label{defn:direct t-equiv}
    Two reduced divisors $D$ and $E$ are {\it directly linearly
    t-equivalent} if there is an uncountable collection of divisors $F_i, i\in 
I$ such
    that 
    \begin{enumerate}
        \item for each $F_i$, there is a t-pencil $P_i$ containing
    $D$ and $F_i$ as true members and a t-pencil $Q_i$
    containing $E$ and $F_i$ as true members;
    \item the pencils $P_i$ are pairwise distinct, and the pencils $Q_i$ are 
pairwise distinct.
    \end{enumerate}
    The equivalence relation generated
    by direct linear t-equivalence will be called {\it  linear t-equivalence\/} 
and denoted by $\sim_t$.
\end{defn}

\begin{lem}\label{lem:direct t-equiv is linear equiv}
    If $X$ is a proper normal variety over an uncountable algebraically closed
    field of characteristic $0$, then linearly t-equivalent divisors are
    linearly equivalent.
\end{lem}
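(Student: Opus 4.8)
The plan is to use the equivalence-relation structure to reduce to the case of \emph{directly} linearly t-equivalent divisors, and then to harvest an honest linear equivalence from the \emph{linear} pencils among those produced by the definition. So suppose $D$ and $E$ are directly linearly t-equivalent, witnessed by an uncountable family $F_i$, $i\in I$, together with pairwise distinct t-pencils $P_i$ containing $D$ and $F_i$ as true members and pairwise distinct t-pencils $Q_i$ containing $E$ and $F_i$ as true members.

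First I would pass from t-pencils to genuine algebraic pencils. By \cref{kl2}, each t-pencil $P_i$ arises as the set of reduced structures on the members of a (possibly irrational) algebraic pencil $f_i\colon X\dashrightarrow Z_i$, and likewise each $Q_i$ comes from a pencil $g_i\colon X\dashrightarrow W_i$. Since the $P_i$ are pairwise distinct (they share only finitely many members), the pencils $f_i$ are pairwise inequivalent, and similarly the $g_i$ are pairwise inequivalent. By \cref{prop:countable pencils} there are only countably many equivalence classes of irrational pencils on $X$, so all but countably many of the $f_i$ are linear, i.e. $Z_i\cong\P^1$, and all but countably many of the $g_i$ are linear. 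As $I$ is uncountable, there is (in fact there are uncountably many) an index $i$ for which both $f_i$ and $g_i$ are linear.

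Next I would show that any two true members of a linear pencil are linearly equivalent as Weil divisors on $X$. Let $f\colon X\dashrightarrow\P^1$ be a linear pencil and let $B$ be a closed subset of codimension $\geq 2$ containing $\BaseLocus|f|$ (such a $B$ is available from the definition of a t-pencil), and put $U=X\setminus B$. By \cref{lem:birational locus}, $\mc D(f)\to X$ is an isomorphism over $U$, so $f$ restricts to a morphism $f|_U\colon U\to\P^1$ whose fibers are Cartier divisors, and the members of $|f|$, restricted to $U$, are exactly these fibers. If $D_c$ is a true member then, by \cref{cor:detect reduced members} (and \cref{kl3-2}), the corresponding fiber of $\mc D(f)\to\P^1$ is reduced, so that $D_c|_U=f|_U^{-1}(z)$ for the appropriate $z\in\P^1$. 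Since every degree-zero divisor on $\P^1$ is principal, choosing a rational function $h$ with $\mathrm{div}_{\P^1}(h)=[z]-[z']$ gives $\mathrm{div}_U(h\circ f|_U)=f|_U^{-1}(z)-f|_U^{-1}(z')$, so any two true members of $|f|$ restrict to linearly equivalent divisors on $U$. Because $X$ is normal and $B$ has codimension $\geq 2$, restriction $\Cl(X)\to\Cl(U)$ is an isomorphism, and since a true member has no component contained in $B$, this upgrades to a linear equivalence $D_c\sim D_{c'}$ on $X$.

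Applying this with $f=f_i$ yields $D\sim F_i$, and with $f=g_i$ yields $E\sim F_i$, whence $D\sim E$. Since, by \cref{defn:direct t-equiv}, linear t-equivalence is the equivalence relation generated by direct linear t-equivalence, and ordinary linear equivalence is already an equivalence relation, this gives $\sim_t\ \subseteq\ \sim$, which is the assertion of the lemma. The one genuinely delicate point is the bookkeeping in the third paragraph: one must know that being a \emph{true} member really forces the underlying scheme-theoretic fiber over $\P^1$ to be generically reduced, so that the reduced divisor $D_c$ literally equals that fiber on $U$ — this is exactly what \cref{cor:detect reduced members} delivers — while everything else is a routine consequence of \cref{kl2} and \cref{prop:countable pencils}.
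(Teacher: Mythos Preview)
Your proposal is correct and follows the same approach as the paper. The paper's own proof is three sentences that sketch exactly this strategy---apply \cref{kl2} to turn the t-pencils into algebraic pencils, invoke \cref{prop:countable pencils} to see that all but countably many must be linear, and conclude linear equivalence---while you have spelled out the details, in particular the passage from a linear pencil to an honest linear equivalence via $\Cl(X)\cong\Cl(U)$ for $U$ the complement of a codimension-$2$ set.
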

\begin{proof}
    It suffices to show that two directly linear t-equivalent divisors $D$ and
    $E$ are linearly equivalent. By \cref{kl2}, there are pencils
    connecting $D$ to $F_i$ and $E$ to $F_i$. Since $X$ has only countably many
    irrational pencils (by \cref{prop:countable pencils}), we
    conclude that most of the pencils are linear, showing that $D$ and $E$ are
    linearly equivalent.
\end{proof}

\begin{lem}\label{kl5} If $X$ is a proper normal variety over an uncountable
algebraically closed field of characteristic $0$ then linear equivalence of
divisors is the same as linear t-equivalence.
\end{lem}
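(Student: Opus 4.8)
The inclusion ``$\sim_t$ implies $\sim$'' is immediate: by \cref{lem:direct t-equiv is linear equiv} any two directly linearly t-equivalent divisors are linearly equivalent, and since $\sim$ is an equivalence relation it contains the equivalence relation generated by direct linear t-equivalence, namely $\sim_t$. So the content of the lemma is the reverse implication: if $D$ and $E$ are reduced divisors with $D\sim E$, then $D\sim_t E$.

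For this we must, given such a pair, produce an uncountable family $\{F_i\}_{i\in I}$ of reduced divisors together with t-pencils $P_i$ having $D$ and $F_i$ as true members and t-pencils $Q_i$ having $E$ and $F_i$ as true members, with the $P_i$ pairwise distinct and the $Q_i$ pairwise distinct. The plan is first to reduce, by a Bertini-type moving argument, to the case in which $D$ and $E$ are general members of a base-point-free complete linear system $P=|\mathcal L|$ of projective dimension at least $2$ that is not composite with a pencil; here one uses that the linear equivalence relation on reduced divisors is generated by relations of this ``generic'' type, after twisting by sufficiently positive multiples of a fixed very ample bundle if necessary. In that situation one takes $\{F_i\}$ to be a general one-parameter family of members of $P$ sweeping out an irreducible curve in $P$ that is not contained in a line (an uncountable set, since $k$ is uncountable), and for each $i$ lets $P_i\subset P$ be the pencil through $[D]$ and $[F_i]$ and $Q_i\subset P$ the pencil through $[E]$ and $[F_i]$. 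Since $P$ is base-point-free of dimension $\geq 2$ and not composite with a pencil, the base loci $\BaseLocus(P_i)$ and $\BaseLocus(Q_i)$ have codimension $\geq 2$, the general members of $P_i$ and $Q_i$ are geometrically integral and reduced by Bertini, and the members cover $X$; thus the sets of members of $P_i$ and $Q_i$ are t-pencils. Applying \cref{cor:detect reduced members} to the pencil $f$ defined by $P_i$ (resp. $Q_i$), and noting that the reduced divisors $D$, $E$, $F_i$ are connected away from the relevant base locus, one concludes that $D$, $E$, $F_i$ are \emph{true} members of the corresponding t-pencils. Finally, a line in $P$ through the fixed point $[D]$ meets the curve swept out by the $F_i$ in only finitely many points, so $P_i$ is repeated for only finitely many $i$, and likewise for the $Q_i$; hence the $P_i$ and the $Q_i$ are pairwise distinct. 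This exhibits $D$ and $E$ as directly linearly t-equivalent, a fortiori linearly t-equivalent.

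The delicate point is the reduction to general members of a positive base-point-free, non-composite linear system lying in the prescribed linear equivalence class: in its naive form this fails when $\mathcal O_X(D)$ itself has a fixed component or too few global sections, and one instead argues that it suffices to obtain the topological description of $\sim$ on the sub(semi)group of classes possessing such a representative — which is all that is needed for \cref{thm:second miracle}, whose proof (through \cref{thm:main-func}) only ever invokes very ample classes and their multiples. The other place where real work is hidden is the uniform verification, along the whole family $\{F_i\}$, that $D$, $E$ and the $F_i$ are \emph{true} members — not merely reduced structures of proper sub-configurations — of the pencils $P_i$ and $Q_i$: this is exactly what \cref{cor:detect reduced members}, and behind it \cref{kl3-2} (where the characteristic-$0$ hypothesis enters through separability of maps of curves), together with \cref{kl2} are designed to supply. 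Once these inputs are granted, the construction of the $F_i$, the pencils, and the distinctness bookkeeping are routine.
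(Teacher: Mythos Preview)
Your handling of the case where $D$ and $E$ are general integral members of a sufficiently positive linear system is essentially the paper's final step: vary a general third member $F_i$ to produce uncountably many distinct linear pencils, and invoke \cref{cor:detect reduced members} to certify true membership.

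But the reduction to this case is where the content lies, and you leave it open (then retreat to a weaker claim). There are two concrete problems. First, $X$ is only proper, so there need not be any very ample bundle on $X$; the paper fixes this by passing to a projective birational model $p\colon X'\to X$ and working with pushforwards $|H|=p_*|H'|$ of ample systems on $X'$. Second, and more seriously, the lemma asserts $\sim_t=\sim$ on \emph{all} effective divisors, so you must actually carry out the reduction within $\sim_t$. The paper does this by establishing, for each connected reduced $A$, a direct linear t-equivalence $A+H_m\sim_t A_m$ with $A_m$ integral and general in $|A+mH|$; the work is verifying that $(A+H_m)\setminus\BaseLocus\langle A+H_m,A_m\rangle$ remains connected (a general $H'_m$ on $X'$ links the connected components of the strict transform $A'$, and a general $A'_m$ avoids disconnecting them). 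Iterating strips off components and multiplicities one at a time until both $A$ and $B$, after adding the same sum of general $H_m$'s, become integral divisors in a pushforward of a very ample system, where your pencil argument applies. Your sketch alludes to ``twisting'' but supplies neither this connectedness verification nor the argument that each twist is itself a linear t-equivalence; saying ``it suffices for \cref{thm:second miracle}'' does not prove the lemma as stated.
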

\begin{proof}
    By \cref{lem:direct t-equiv is linear equiv}, it suffices to show that
    linearly equivalent divisors are linearly t-equivalent.

	Choose a proper birational morphism $p:X'\to X$ where $X'$ is normal and
	projective. Let $B\subset X$ be the smallest closed subset such that $p$ is 
an isomorphism over $X\setminus B$, and let $B'=p^{-1}(B)$. Let $|H'|$ be an 
ample linear system on $X'$ and set
	$|H|:=p_*|H'|$. This is well-defined since proper pushforward of algebraic
	cycles preserves rational equivalence. Note that for any $m>0$ we have that
	$p_\ast mH'=mH$. Similarly, for any divisor $A\subset X$ with strict
    transform $A'$ in $X'$, we have that $|A+mH|=p_\ast |A'+mH'|$. In 
particular, for all sufficiently large $m$, we have that $|A+mH|$ is the 
pushforward of a very ample linear system from $X'$.
    
    Suppose $A$ is a connected reduced divisor on $X$ with strict transform 
$A'\subset X'$. (Note that $A'$ need not be connected.) Choose $m$ so that 
$A'+mH'$ is very ample. Let $A'_0,\ldots,A'_m$ be the connected components of 
$A'\setminus B'$. A general member $H_m'$ of $|mH'|$ is integral and meets 
$A'_i$ for all $i$, so that $(H_m'+A')\setminus B'$ is connected. 
    
    Write $Q_0,\ldots,Q_N$ for the irreducible components of 
$(H_m'+A')\setminus B'$. A general member $A_m'$ of $|A'+mH'|$ will be integral 
and not contain any irreducible component of any intersection $Q_i\cap Q_j$. It 
follows that $(H_m'+A')\setminus A_m'$ remains connected.

    It follows that for general $H_m\in |mH|$ and
	$A_m\in |A+mH|$, we have that $A+H_m$ and $A_m$ are directly linearly
	t-equivalent, since we have just shown that for general choices of $H_m$ 
and $A_m$,
	removing the base locus of $\langle A+H_m,A_m\rangle$ does not disconnect
    $A+H_m$ or $A_m$. Since this holds for general $A_m$ given a fixed general 
$H_m$, we see that for general $A_m$ and $H_m$ we have a linear t-equivalence 
$A+H_m\sim_t A_m$.
    
    Suppose given an effective divisor $\sum_{i=1}^n a_iA_i$ on $X$. Arguing as 
above
    we see that for large enough $m$ and a general member $H_m\in|mH|$, there is
    a direct linear t-equivalence
    $$A_1+H_m\sim_tA_0$$ with $A_0$ an integral divisor distinct from
    $A_1,\ldots,A_n$. This yields a linear t-equivalence
    $$H_m+\sum_{i=1}^n a_iA_i\sim_t A_0+(a_1-1)A_1+\sum_{i=2}^na_iA_i.$$ By
    induction on $\sum a_i$, we see that for all sufficiently large
    $m$, for any $d>\sum a_i$ and general members $H^{(1)}_m,\ldots,H^{(d)}_m$,
    there is a linear t-equivalence
    $$H^{(1)}_m+\cdots+H^{(d)}_m+\sum a_iA_i\sim_t A_\infty$$
    for some integral divisor $A_\infty$.

    Given two linearly equivalent effective divisors $A=\sum_{i=1}^n a_i A_i$ 
and
    $B=\sum_{j=1}^m b_j B_j$, choose $d>\max\{\sum a_i,\sum b_j\}$. By the above
    argument, we get linear t-equivalences
    $$H^{(1)}_m+\cdots+H^{(d)}_m+A\sim_t A_\infty$$
    and
    $$H^{(1)}_m+\cdots+H^{(d)}_m+B\sim_t B_\infty$$ for two linearly equivalent
    integral divisors $A_\infty$ and $B_\infty$. Moreover, choosing $d$ large
    enough, we may assume that the linear system containing $A_\infty$ and 
$B_\infty$ is
    the pushforward of a very ample linear system from $X'$. Choosing a general
    member of the linear system $|A_\infty|$ then produces uncountably many 
pencils as in
    \cref{defn:direct t-equiv}.
    
    We thus get a chain of linear t-equivalences
    $$H^{(1)}+\cdots+H^{(d)}+A\sim_t A_\infty\sim_t B_\infty\sim_t
    H^{(1)}+\cdots+H^{(d)}+B,$$ showing that $A$ and $B$ are linearly
    t-equivalent, as claimed.
\end{proof}

\subsection{A topological Gabriel theorem}\label{sec:topological gabriel}

In this section, we prove the following analogues of Gabriel's reconstruction 
theorem.

\begin{thm}\label{thm:gabriel}
Let $X$ be a proper normal variety of dimension at least $2$ over an uncountable algebraically closed 
field $k$. Let $\ms C$ be one of the following categories:
\begin{enumerate}
    \item the category of constructible abelian étale sheaves on $X$;
    \item the category of constructible étale sheaves of $\F_\ell$-modules on 
$X$;
    \item the category of constructible étale $\Q_\ell$-sheaves on $X$;
    \item the category of constructible étale $\overline\Q_\ell$-sheaves on $X$.
\end{enumerate}
Then $X$ is uniquely determined as an abstract scheme up to isomorphism by the
category $\ms C$ up to equivalence. More precisely, given two such varieties $X$
and $Y$, any equivalence $\ms C_X\to\ms C_Y$ induces an isomorphism $X\to Y$ of
schemes.
\end{thm}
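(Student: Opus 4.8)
The plan is to pass through the underlying topological space in two steps. In Step~1 I would show that the abstract category $\ms C$ determines $|X|$ as a topological space, and does so functorially: an equivalence $\ms C_X\simto\ms C_Y$ yields a homeomorphism $|X|\to|Y|$. In Step~2 I would observe that this homeomorphism is automatically an isomorphism of divisorial structures $\tau(X)\to\tau(Y)$ — by \cref{thm:miracle}, linear (hence rational) equivalence of divisors is recoverable from $|X|$ alone — and then invoke \cref{thm:main-func} to conclude that it is induced by a \emph{unique} isomorphism of schemes $X\to Y$; this is exactly the mechanism behind \cref{thm:second miracle}. Step~2 is short and simultaneously yields the uniqueness of $X$ as a scheme and the functoriality asserted in \cref{thm:gabriel}, and it is the only place where the hypotheses ``$\dim X\ge 2$'' and characteristic $0$ are used. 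So the work is entirely in Step~1.

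For Step~1 I would realize $|X|$ as a quotient of a Gabriel–Rosenberg spectrum $\Sigma:=\mathrm{Spec}\,\ms C$, whose points are the ``prime'' objects — indecomposables minimal for the subquotient preorder — equipped with the associated Zariski-type topology and their intrinsic notion of support. Write $\Lambda$ for the coefficient ring ($\Z$, resp.\ $\F_\ell$, $\Q_\ell$, $\overline{\Q}_\ell$); in case (1) the relevant building blocks are finite abelian groups. The key structural claim is that $\Sigma$ fibres over $|X|$ via a support map $\pi\colon\Sigma\to|X|$ whose fibre over a scheme point $x$ is the spectrum of the category of finite $\Lambda$-representations of $\Gal(\kappa(x))$: over a \emph{closed} point this group is trivial (as $\kappa(x)$ is algebraically closed), so the fibre is a single point in cases (2)–(4) and a copy of $\mathrm{Spec}\,\Z$ in case (1), while over the non-closed points the fibre carries extra ``Galois'' (and, in case (1), arithmetic) directions. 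Since two primes lie in the same fibre of $\pi$ precisely when they have the same support — a categorical condition — one recovers $|X|$ from $\ms C$ as the quotient of $\Sigma$ by equality of supports, with the quotient topology. Because $\Sigma$, its topology, and the support relation are all categorical invariants, an equivalence $\ms C_X\simto\ms C_Y$ transports this data and induces a homeomorphism $|X|\to|Y|$, completing Step~1.

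The main obstacle is precisely this structural analysis of $\mathrm{Spec}\,\ms C$: proving that the spectrum fibres over $|X|$ with the asserted fibres, that $\pi$ identifies $|X|$ with the quotient of $\Sigma$ by equality of supports, and that nothing beyond the Galois/arithmetic directions appears. The essential input is that every constructible sheaf is generically lisse along each irreducible closed subset, so that near the generic point of such a subset $\ms C$ looks like finite $\Lambda$-representations of the relevant Galois group; the delicate part is controlling these Galois directions (and, in case (1), the primes $p$) so that they collapse correctly under $\pi$ rather than producing spurious points of the reconstructed space. For case (1) there is a more direct route that sidesteps spectra: $\mathrm{Ind}(\ms C)$ is the category of all abelian étale sheaves on $X$, inside which the subobjects $j_!\underline{\Z}_U\hookrightarrow\underline{\Z}_X$, for $U\hookrightarrow X$ open, are cut out categorically and form a frame; since $X$ is normal — hence geometrically unibranch — this frame is exactly the frame of opens of $|X|$, and sobriety of $|X|$ finishes Step~1 in that case. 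I expect the spectral argument to be the efficient way to handle (2)–(4) uniformly.
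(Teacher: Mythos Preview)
Your two-step architecture is exactly the paper's: recover $|X|$ from $\ms C$ (Step~1), then invoke \cref{thm:second miracle} (Step~2). Step~2 is identical.

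For Step~1, however, the paper takes a far more elementary route than your Gabriel--Rosenberg spectrum, and it avoids precisely the obstacle you flag. The key observation is that since $k$ is algebraically closed, the \emph{simple} objects of $\ms C$ are exactly the skyscrapers $\iota_\ast A$ at closed points (\cref{lem:constructible simples}); this recovers the set $X(k)$ immediately as isomorphism classes of simples. The paper then defines an object $F$ to be \emph{irreducible} if $\dim_A\Hom(F,s)\le 1$ for every simple $s$ and any two nonzero subobjects meet nontrivially, with $\Supp(F)$ the set of simple quotients. A purely categorical notion of \emph{closed} irreducible object (via ``partial closures'') is introduced, and one checks directly that closed irreducible objects correspond to irreducible closed subsets of $X$, with containment detected by the existence of surjections (\cref{lem:irreducible reconstruction}). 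This reconstructs the Zariski topology on $X(k)$, hence $|X|$, without ever looking at non-closed points or their Galois groups.

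The contrast is instructive. Your spectral approach forces you to understand the fibre of $\pi\colon\Sigma\to|X|$ over \emph{every} point, including the Galois (and in case~(1), arithmetic) directions over non-closed points, and then to show these collapse correctly under the support-equivalence quotient; you correctly identify this as the main obstacle, and you do not resolve it. The paper's argument never touches this: it reconstructs an irreducible closed subset from its constant sheaf (a single categorical object) rather than from the cloud of primes sitting over its generic point. What your approach would buy, if completed, is a more structural picture of $\mathrm{Spec}\,\ms C$ itself; what the paper's approach buys is a short proof that uses nothing beyond the characterization of simples and a few elementary support arguments. Your alternative route for case~(1) via $\mathrm{Ind}(\ms C)$ and subobjects of $\underline{\Z}_X$ is closer in spirit to the paper's directness, and would likely also work.
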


Let $A$ be one of the rings $\F_\ell$ or a subfield $\Q_\ell\subset
A\subset\overline\Q_\ell$ and let $\ms C$ be the category of constructible
étale $A$-modules on $X$.

\begin{lem}\label{lem:constructible simples}
A constructible $A$-module $M$ is isomorphic to a sheaf of the form
$\iota_\ast A$ for some $\iota:\Spec k\to X$ if and only if it is a
(non-zero) simple object of $\ms C$.
\end{lem}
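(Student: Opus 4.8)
The plan is to use the classical categorical description of skyscraper sheaves at closed points, adapted uniformly to each of the four categories $\ms C$. All four settings carry the same formal apparatus, which I will use freely: for a locally closed immersion $h\colon W\hookrightarrow X$ the extension-by-zero functor $h_!$ is exact and preserves constructibility; for a closed immersion $i$ the functor $i_*$ is exact and fully faithful, with essential image the objects supported on the image of $i$; there is a functorial exact sequence $0\to j_!j^*M\to M\to i_*i^*M\to 0$ attached to a closed immersion $i$ with open complement $j$; and for a closed point $\iota\colon\Spec k\hookrightarrow X$, since $k$ is algebraically closed the category of constructible $A$-modules on $\Spec k$ is just the category of finite-dimensional $A$-vector spaces. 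In the $\Q_\ell$- and $\overline\Q_\ell$-cases these facts hold for the usual formalism of constructible sheaves, so the argument is uniform in $A$.

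For the easy direction, suppose $M\cong\iota_*A$ for a closed point $\iota$; it is nonzero. If $N\subseteq M$ is a subobject, then $N_x=0$ for $x\neq\iota$, so $N$ is supported at $\iota$ and $N\cong\iota_*N_\iota$ with $N_\iota\subseteq M_\iota=A$. Since $A$ is a field, $N_\iota$ is $0$ or $A$, so $N=0$ or $N=M$, and $M$ is simple.

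For the converse, let $M$ be a nonzero simple object. Choose a point $\eta$ with $M_\eta\neq 0$ and $\dim\overline{\{\eta\}}$ maximal among such points. By constructibility there is an irreducible locally closed subscheme $U\subseteq X$ with generic point $\eta$ on which $M$ restricts to a nonzero locally constant sheaf; write $j\colon U\hookrightarrow X$. Factoring $j$ into an open immersion followed by a closed immersion and computing stalks shows the counit $j_!(M|_U)\to M$ is injective, and it is nonzero since $M|_U\neq 0$; by simplicity $j_!(M|_U)=M$. In particular $M$ is supported inside $U$, and $M_x\neq 0$ for every $x\in U$ (as $U$ is connected and $M|_U$ is nonzero locally constant). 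If $\dim\overline{\{\eta\}}\geq 1$, then $U$ has dimension $\geq 1$, hence (as $X$ is of finite type over the algebraically closed field $k$, so closed points are dense in $U$) $U$ contains two distinct closed points $x\neq x'$ of $X$; then for $j_V\colon X\setminus\{x\}\hookrightarrow X$ the subobject $(j_V)_!(M|_{X\setminus\{x\}})\subseteq M$ has zero stalk at $x$ but stalk $M_{x'}\neq 0$ at $x'$, a proper nonzero subobject, contradicting simplicity. Hence $\overline{\{\eta\}}=\{x\}$ is a single closed point, $M\cong i_*M_x$ with $i\colon\{x\}\hookrightarrow X$, and $M_x$ is a nonzero finite-dimensional $A$-vector space. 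If $\dim_A M_x\geq 2$, any line $L\subset M_x$ gives a proper nonzero subobject $i_*L\subsetneq i_*M_x=M$, again contradicting simplicity; so $M_x\cong A$ and $M\cong\iota_*A$.

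The only real obstacle is checking that the formal toolkit above — exactness of $h_!$ and $i_*$, the gluing exact sequence, preservation of constructibility under $h_!$, and the identification of objects of $\ms C$ supported at a closed point with finite-dimensional $A$-vector spaces — is genuinely available in all four cases, in particular in the $\overline\Q_\ell$-setting where ``constructible'' is defined through the limit formalism; once that is recorded, the argument above is identical in all four categories, the rest being routine stalkwise bookkeeping.
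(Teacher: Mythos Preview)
Your argument has a gap where you invoke a ``counit $j_!(M|_U)\to M$'' for the locally closed immersion $j\colon U\hookrightarrow X$. For a locally closed immersion that is not open, the left-adjoint pairing is $(j_!,j^!)$, not $(j_!,j^*)$, so there is no natural transformation $j_!j^*\to\id$. One can manufacture such a map \emph{after} knowing $\Supp M\subseteq\overline U$ (then $M=i_*i^*M$ for the closed immersion $i\colon\overline U\hookrightarrow X$, and one pushes forward the honest open-immersion counit on $\overline U$), but that containment is exactly what you are trying to deduce, so the step is circular as written. This matters because you use it to conclude $M\cong i_*M_x$ at the end.

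The repair is already in your text and makes the whole detour through a maximal-dimensional $\eta$ unnecessary. If $\Supp M$ contains two distinct closed points $x\neq y$, then for the \emph{open} immersion $j_V\colon X\setminus\{x\}\hookrightarrow X$ the genuine counit $(j_V)_!j_V^*M\hookrightarrow M$ is nonzero (stalk $M_y\neq 0$) and proper (stalk $0\neq M_x$), contradicting simplicity. Since closed points are dense in any closed subset of a finite-type $k$-scheme, a unique closed point forces $\Supp M=\{x\}$, whence $M\cong\iota_*M_x$, and your final paragraph finishes. This is exactly the paper's route; the paper phrases the obstruction as ``the proper submodule $\iota_!M_x$'', which is a small slip of the same flavor (the natural map exhibits $\iota_*M_x$ as a quotient of $M$, or one uses $(j_V)_!j_V^*M$ as the subobject), but the argument is the same and more direct than yours.
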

\begin{proof}
    Since $M$ is constructible, its support is a constructible subset of $X$. If
    it contains two closed points $x$ and $y$, then $M$ contains the proper
    submodule $\iota_!M_x$, where $\iota:x\to X$ is the inclusion map. We
    conclude that $M$ is supported at a single closed point $x\in X$, so that
    $M=\iota_\ast M'$ for some $M'$. Since $M$ is simple, $M'$ must be a simple
    vector space, so it must have dimension $1$.
\end{proof}

\begin{defn}\label{defn:irred sheaf} Irreducible objects and equivalence.
  \begin{itemize}
  \item An object $F\in\ms C$ is \emph{irreducible\/} if 
  \begin{enumerate}
    \item for every simple object
    $s\in\ms C$ we have $\dim_{A}\Hom(F,s)\leq 1$, and
    \item any pair of subobjects $F',
    F''\subset F$ have non-zero intersection.
  \end{enumerate}
  \item The \emph{support\/} of an irreducible object $F$ is the set $\Supp(F)$ of
  simple quotients of $F$. 
  
  \item Two irreducible objects $F$ and $F'$ are
  \emph{equivalent\/} if $\Supp(F)=\Supp(F')$.

  \item An irreducible object $F$ is a \emph{partial closure\/} of an irreducible object $G$ if there are non-zero subobjects $F'\subset F$ and $G'\subset G$ such that $F'$ is equivalent to $G'$.
  \item An irreducible object $F$ is \emph{closed\/} if any partial closure of $F$ has the same support.
  \item An irreducible object $F$ is a \emph{closure\/} of an irreducible object $F'$ if $F$ is a closed partial closure of $F'$.
\end{itemize}
\end{defn}

We can associate two subsets of $X$ to an irreducible object $F$: the support of
the sheaf $F$ and the set $\Supp(F)$ as defined above, which is identified with
a subset of $X$ via Lemma \ref{lem:constructible simples}. It is not hard to see
that the support of the sheaf $F$ is the Zariski closure of the set of closed
points, which is $\Supp(F)$. We will safely conflate these supports in what
follows.

\begin{lem}\label{lem:irreducible reconstruction}
  The following hold for irreducible objects.
  \begin{enumerate}
    \item If $F$ is a closed irreducible object of $\ms C$ then the support of
    $F$ is a closed irreducible subset of $X$.
    \item The set of irreducible closed subsets of $X$ is in bijection with
    equivalence classes of closed irreducible objects of $\ms C$.
    \item An irreducible closed subset $Y\subset X$ lies in an irreducible closed subset $Z\subset X$ if and only if there is a closed irreducible sheaf $F$ with $\Supp(F)=Z(k)$, a closed irreducible sheaf $F'$ with $\Supp(F')=Y(k)$, and a surjection $F\to F'$.
  \end{enumerate}
\end{lem}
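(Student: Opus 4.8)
\textbf{Plan of proof for Lemma \ref{lem:irreducible reconstruction}.}

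The plan is to work entirely with the sheaf-theoretic meaning of the categorical notions in Definition \ref{defn:irred sheaf}, using Lemma \ref{lem:constructible simples} to identify simple objects with skyscrapers at closed points, and then translate the three assertions into statements about constructible sheaves on $X$.

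For (1), first I would observe that if $F$ is irreducible in the sense of Definition \ref{defn:irred sheaf}, then the set of closed points in its support is irreducible as a topological space: if $\Supp(F)$ met two disjoint nonempty open pieces, one could produce two subobjects of $F$ supported on the respective pieces with zero intersection, contradicting condition (2). (Here one uses that $F$ restricted to a locally closed irreducible piece where it is lisse contributes subobjects via $j_!$ of the restriction.) Thus $\overline{\Supp(F)}$ is an irreducible closed subset $Z$. The content of closedness is then that $Z$ is the \emph{whole} support and cannot be enlarged: given any irreducible $F$ with $\overline{\Supp(F)} = W \subsetneq Z'$ for $Z'$ irreducible closed, one builds a partial closure of strictly larger support by taking, e.g., $\overline j_! $ of a lisse sheaf extending a subsheaf of $F$ on a dense open of $W$ to a dense open of $Z'$ (using that a nonempty open subset of $W$ sits inside a nonempty open subset of $Z'$ since $W \subseteq Z'$), together with the original $F$; the shared subobject is a lisse sheaf on the open of $W$, so the two are partial closures of a common $F'$. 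Hence a closed $F$ already has support an irreducible closed subset. The main point to check carefully is that "closed" forces the support to be all of its Zariski closure, i.e. that $F$ does not restrict to $0$ on any nonempty open of its closure; this follows because otherwise the constant sheaf $A$ on $Z$ (or rather $\overline{j}_!A$ from a dense open on which things are lisse) is a partial closure with strictly larger support.

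For (2), I would construct the bijection explicitly. To an irreducible closed $Z \subset X$ associate the class of the sheaf $\overline j_!A_{U}$ for any nonempty open $U\subset Z$ on which $A_U$ is lisse (i.e. the extension by zero of the constant sheaf on $U$, pushed to $Z$ and then to $X$); this is irreducible (conditions (1) and (2) are immediate for a rank-one lisse sheaf extended by zero off a dense open) and closed (any partial closure shares a subobject, hence shares a dense open of $Z$, hence has support with Zariski closure containing $Z$; and by the analysis in (1) it cannot be larger, so the support is exactly $Z(k)$). Conversely (1) shows a closed irreducible object has support an irreducible closed subset. These assignments are mutually inverse on equivalence classes: two closed irreducible objects with the same support are equivalent by definition, and an object with support $Z$ is equivalent to $\overline j_!A_U$ since they have the same support. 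I would also note that this identification is functorial enough that it is preserved by any equivalence $\ms C_X \to \ms C_Y$, which is what is needed downstream in Theorem \ref{thm:gabriel}.

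For (3), the "only if" direction: if $Y \subset Z$ are irreducible closed subsets, choose a dense open $V \subset Z$ with $V \cap Y$ dense in $Y$ and with $A$ lisse on $V$ and on $V\cap Y$; then $F := \overline j_!A_V$ surjects, via restriction along the closed immersion $Y \hookrightarrow X$ followed by extension, onto $F' := \overline j_!A_{V\cap Y}$ (concretely, adjunction gives a surjection of $F$ onto $i_*i^*F$ and then onto the closed-irreducible representative of $Y$; one checks surjectivity on stalks). For the "if" direction: a surjection $F \to F'$ of constructible sheaves forces $\Supp(F') \subseteq \Supp(F)$ as subsets of $X$, hence $Y(k) \subseteq Z(k)$, hence $Y \subseteq Z$ since both are closed. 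The hard part will be the "only if" direction — producing an honest surjection in the category $\ms C$ between the chosen closed irreducible representatives rather than merely a map with dense image — which requires being slightly careful about how $\overline j_!$ of lisse sheaves restrict along the inclusion $Y \hookrightarrow X$ and ensuring the construction stays inside constructible $A$-modules and respects the equivalence class (so the statement is independent of the choices of $V$). Everything else is a routine translation between the categorical language of Definition \ref{defn:irred sheaf} and elementary facts about supports of constructible étale sheaves.
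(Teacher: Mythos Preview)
Your approach is essentially the same as the paper's, and the overall plan is sound. Two remarks on execution.

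First, in part (1) your passage about ``$\overline{\Supp(F)} = W \subsetneq Z'$ for $Z'$ irreducible closed'' and extending a subsheaf from a dense open of $W$ to a dense open of $Z'$ is confused: if $W$ is a proper closed irreducible subset of $Z'$, then no nonempty open of $W$ is open in $Z'$, so this construction does not go through. You do not need this; your final sentence already contains the correct argument (and is exactly what the paper does): if $\Supp(F)$ is not all of its Zariski closure $Z$, then since $F$ is constructible there is a nonempty open $U\subset Z$ with $U\subset\Supp(F)$ and $F|_U$ lisse, so $j_!F|_U\subset F$ and $i_!A_U\subset A_Z$ are equivalent subobjects, exhibiting $A_Z$ as a partial closure with support $Z(k)\neq\Supp(F)$. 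Delete the $Z'$ discussion.

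Second, in part (3) you are working harder than necessary. The paper simply takes $F=A_Z$ and $F'=i_\ast A_Y$ (constant sheaves on the closed subsets, pushed forward to $X$); the adjunction map $A_Z\to i_\ast i^\ast A_Z=i_\ast A_Y$ is visibly surjective on stalks, and both are closed irreducible objects with the required supports. There is no need for $\overline j_!$ from carefully chosen dense opens, and in particular the ``hard part'' you flag is not hard. Your version is not wrong, but the statement only asks for \emph{some} closed irreducible representatives admitting a surjection, so the constant sheaves suffice.
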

\begin{proof}
  If the support of $F$ is not irreducible then there are two open subsets
  $U,V\subset\Supp(F)$ such that $U\cap V=\emptyset$. But then $(j_U)_!F_U$ and
  $(j_V)_!F_V$ are two non-zero subsheaves with trivial intersection. Suppose
  the support of $F$ is not closed. Consider the inclusion
  $\Supp(F)\inj\overline{\Supp(F)}$. Since $F$ is constructible, there is an
  open subscheme $U\subset\Supp(F)\subset\overline{\Supp(F)}$. The constant
  sheaf on $\overline{\Supp(F)}$ is then a partial closure of $F$, since
  $j_!F|_U$ is equivalent to $i_! A$, where $j:U\to\Supp(F)$ and
  $i:U\to\overline{\Supp(F)}$ are the natural open immersions.
  
  The second statement follows from the first statement and the fact that constant sheaves define all irreducible closed subsets.
  
  The last statement follows from the fact that for any
  surjection $F\to F'$ we have $\Supp(F')\subset\Supp(F)$, combined with the
  fact that, if $i:Y\to Z$ is a closed immersion, the natural map $A_Z\to i_\ast
  A_Y$ is a surjection of irreducible sheaves.
\end{proof}

\begin{prop}\label{prop:space from sheaves}
  The Zariski topological space $X$ is uniquely determined by the category $\ms C$.
\end{prop}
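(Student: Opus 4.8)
The plan is to recover the Noetherian sober topological space $|X|$ from $\ms C$ by reconstructing, entirely within the abelian category $\ms C$, the partially ordered set of irreducible closed subsets of $X$ under inclusion.

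I first recall a standard fact: a Noetherian sober topological space is determined up to homeomorphism by its poset of irreducible closed subsets, ordered by inclusion. Indeed, in such a space the points are exactly the irreducible closed subsets, every closed subset is the union of its finitely many, uniquely determined irreducible components, and an inclusion $Z_1\cup\cdots\cup Z_m\subseteq W_1\cup\cdots\cup W_n$ of closed subsets holds if and only if each $Z_i$ is contained in some $W_j$; thus the lattice of closed subsets, hence the topology, is recovered from the poset of irreducible closed subsets. Since $X$ is a proper variety, $|X|$ is the underlying space of a Noetherian scheme, hence Noetherian and sober, so it suffices to reconstruct this poset from $\ms C$.

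Every notion used below is intrinsic to the abelian category $\ms C$: simple objects and subobjects, the $\Hom$-groups, the ring $A\cong\operatorname{End}(s)$ of a simple object $s$ together with the resulting $A$-module structure on $\Hom(-,s)$, and therefore --- via \cref{defn:irred sheaf} --- irreducible objects, their supports $\Supp(F)$ (as sets of equivalence classes of simple quotients), partial closures, closed objects, closures, and the relation ``$F$ and $F'$ are equivalent.'' In particular, any equivalence $\ms C_X\simto\ms C_Y$ of abelian categories preserves all of these, carrying closed irreducible objects to closed irreducible objects and preserving equivalence of objects and the existence of surjections between them. Now apply \cref{lem:irreducible reconstruction}: by part (2), the rule sending an irreducible closed subset $Z\subset X$ to the equivalence class of the constant sheaf $A_Z$ is a bijection onto the set of equivalence classes of closed irreducible objects of $\ms C$, and by part (3), for irreducible closed subsets $Y,Z\subset X$ one has $Y\subseteq Z$ if and only if there exist closed irreducible objects $F,F'$ in the classes corresponding to $Z,Y$ together with a surjection $F\twoheadrightarrow F'$. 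Hence the poset of irreducible closed subsets of $X$ under inclusion is canonically identified with the set of equivalence classes of closed irreducible objects of $\ms C$ equipped with the relation ``there is a surjection from a representative of one class onto a representative of the other''; by part (3) this relation coincides with inclusion, and so is a genuine partial order. This poset is manifestly an invariant of $\ms C$, so combining with the reconstruction fact of the previous paragraph, an equivalence $\ms C_X\simto\ms C_Y$ induces a bijection on equivalence classes of closed irreducible objects which is an isomorphism between the posets of irreducible closed subsets of $X$ and of $Y$, hence a homeomorphism $|X|\simto|Y|$.

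The argument is essentially formal once \cref{lem:irreducible reconstruction} is in hand; the only point deserving care is the passage from the poset of irreducible closed subsets back to the topology, which is the standard sobriety-plus-Noetherianity fact recalled above, and there is no real obstacle.
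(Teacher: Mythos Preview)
Your proof is correct and follows essentially the same approach as the paper: both arguments use \cref{lem:irreducible reconstruction} to recover the irreducible closed subsets of $X$ from $\ms C$ and then pass to the topology via Noetherianity. The only cosmetic difference is that the paper first identifies the closed points $X(k)$ with isomorphism classes of simple objects (via \cref{lem:constructible simples}) and then realizes each irreducible closed subset as a literal subset of $X(k)$, so that the inclusion order is set-theoretic containment of supports; you instead work with equivalence classes of closed irreducible objects and invoke part (3) of \cref{lem:irreducible reconstruction} to read off the order from the existence of surjections. Both routes are valid and amount to the same thing.
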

\begin{proof}
  It suffices to reconstruct the Zariski topology on the set of closed points
  $X(k)$. First note that we can describe the set itself as the set of
  isomorphism classes of simple objects of $\ms C$, by 
  \cref{lem:constructible simples}. Given a sheaf $F$, we can thus describe the
  support $\Supp(F)\subset X(k)$. By \cref{lem:irreducible
  reconstruction},we can reconstruct the set of irreducible closed subsets
  $Z\subset X(k)$. This suffices to completely determine the topology, since
  closed subsets are precisely finite unions of irreducible closed subsets.
\end{proof}

\begin{proof}[Proof of \cref{thm:gabriel}]
  The result follows immediately from \cref{prop:space from sheaves} and  \cref{thm:second miracle}.
\end{proof}

\begin{remark}
  It is natural to wonder if there are topological analogues of Balmer's monoidal reconstruction theorem \cite{MR2196732}, or the theory of Fourier--Mukai transforms. These ideas will be pursued elsewhere.
\end{remark}

\subsection{Counterexamples with weaker hypotheses}\label{subsec:char 0}
Here we give some examples showing that the assumption that $k$ is algebraically closed of characteristic $0$ is necessary for \cref{thm:miracle} and \cref{thm:second miracle} to be true as stated. We also briefly speculate about appropriate replacements in positive characteristic.

\begin{example}
Let $E$ be an elliptic curve over a field $k$ of characteristic $p>0$.  Then the  morphism
$$
F_{E/k}\times \mathrm{id}:E\times _kE\rightarrow E^{(p)}\times _kE
$$
is a homeomorphism which is not induced by an isomorphism of schemes
$$
E\times _kE\rightarrow E^{(p)}\times _kE.
$$
Indeed such an isomorphism would have to respect the product structure implying that $E\simeq E^{(p)}$ over $k$.  So we get examples where \cref{thm:second miracle} fails by choosing $E$ such that $E$ is not isomorphic to $E^{(p)}$ over $k$.
\end{example}
\begin{example}
For the second example, we assume that $k$ has characteristic at least $5$.
Given a homogeneous polynomial $f(x,y,z)$ of degree $p$, let $D_f\subset\P^3$ denote the divisor
given by the equation $w^p-f(x,y,z)$.
The projection map
$$
(x,y,z,w)\mapsto(x,y,z):\P^3\dashrightarrow\P^2
$$
defines a morphism $\pi :D_f\rightarrow \P^2$ which realizes $D_f$ as obtained from the $p$-th root  construction
$$D_f=\rspec_{\ms O_{P^2}}\ms O\oplus\ms O(-1)\oplus\cdots\oplus\ms O(-p+1),$$
with the multiplication structure defined by the inclusion $\ms O(-p)\to\ms O$
associated to the divisor $f(x,y,z)=0$. In particular, $D_f$ is finite flat over
$\P^2$. Since a general such polynomial $f$ (for example,
$f(x,y,z)=x^{p-1}y+y^{p-1}w+w^{p-1}x$) has a finite set of critical points, we
see that for such $f$ the scheme $D_f$ is a normal surface. By adjunction, we
have that $K_{D_f}\cong\ms O_{D_f}(p-3)$ is big. We will write $X_f\to D_f$ for
a minimal resolution of $X_f$; the preceding considerations show that $X_f$ is a
smooth surface of general type.

Since $\pi $ is purely inseparable, the map  $\pi $ is a homeomorphism, but not
an isomorphism, as $D_f$ is not smooth. This gives counterexamples to
\cref{thm:second miracle} in positive characteristic.
\end{example}

Note that the example described here comes from a purely inseparable 
homeomorphism $X\to P$. In particular, $X$ and $P$ have isomorphic perfections. 
This leads naturally to the following question:

\begin{question}
	Suppose $X$ and $Y$ are proper normal varieties of dimension at least $2$ over uncountable algebraically
  closed fields with perfections $X^\perf $ and $Y^\perf $. Is the map
	$$\Isom(X^\perf, Y^\perf)\to\Isom(|X|,|Y|)$$
	a bijection?
\end{question}

In the spirit of Grothendieck and Voevodsky, it is also natural to ask the 
following question.
\begin{question}
	Is the perfection of a normal scheme of positive dimension over an uncountable algebraically closed field uniquely determined by its pro\'etale topos? 
\end{question}

We also note the following result of Kollár and Mangolte \cite{k-mang}.

\begin{thm}[Kollár--Mangolte]
    Let $S$ be a smooth, projective rational surface over ${\mathbf R}$.
    Then the group of algebraic automorphisms (defined over ${\mathbf R}$) is 
    dense in the group of diffeomorphisms of $S({\mathbf R})$.
\end{thm}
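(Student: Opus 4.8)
This is the main theorem of \cite{k-mang}, so the plan is only to recall the shape of its proof. Since the real algebraic automorphisms form a group, it suffices to approximate, in the $C^\infty$ topology, the elements of a generating set of $\operatorname{Diff}(S(\mathbf{R}))$. The first step is to bring in the classification of smooth real rational surfaces: contracting real $(-1)$-curves and conjugate pairs of disjoint $(-1)$-curves, one reduces to a minimal model — $\P^2$, a smooth real quadric, a Hirzebruch surface $\mathbf{F}_n$, a minimal real conic bundle over $\P^1$, or a del Pezzo surface — and records that $S(\mathbf{R})$ is then a sphere, a torus, or a closed non-orientable surface (possibly with additional $2$-sphere components), and that each minimal model carries a visibly large group of real automorphisms: the torus acting on $\P^1\times\P^1$, $\PGL_2(\mathbf{R})$ acting fibrewise on a conic bundle, Geiser/Bertini-type involutions on del Pezzo surfaces, coordinate symmetries, and so on.

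The argument then splits into two parts. First one must realize the mapping class group $\pi_0\operatorname{Diff}(S(\mathbf{R}))$ by honest automorphisms of $S$. For the surfaces occurring here this is a short explicit list, and — after first using that $S$ is a blow-up of a minimal model at a configuration of real points, which one is free to move into a symmetric position by an automorphism of the minimal model — one exhibits the required automorphisms by hand. Second, and this is the heart of the matter, one must approximate every diffeomorphism isotopic to the identity. A fragmentation argument shows that $\operatorname{Diff}_0(S(\mathbf{R}))$ is generated by diffeomorphisms supported in small embedded disks, so one is reduced to the local problem of approximating a compactly supported diffeomorphism of $\mathbf{R}^2$ by the restriction to a disk $\Delta\subset S(\mathbf{R})$ of a real algebraic automorphism of $S$.

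The key point is that rationality of $S$ makes its real automorphism group enormous at the scale of such a disk. Contracting $S$ to a real conic bundle, one has the fibrewise $\PGL_2$-action, the translations along the base, and — crucially — elementary transformations at real points, which are birational but can be composed in pairs to give biregular automorphisms displacing the degenerate fibres and the section arbitrarily. Iterating blow-ups and blow-downs at moving real points produces a rich family of real Cremona transformations, and one shows that the resulting subgroup of $\operatorname{Diff}(\Delta)$ acts \emph{very transitively}: transitively on configurations of points with prescribed jets. One then invokes the principle that a subgroup of $\operatorname{Diff}_c(\mathbf{R}^2)$ which is this transitive is automatically $C^\infty$-dense, and the theorem follows.

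The main obstacle is precisely this last local step: extracting from the real Cremona group automorphisms of $S$ whose restrictions to a disk approximate an arbitrary compactly supported diffeomorphism. Controlling real elementary transformations of conic bundles finely enough — tracking which fibres gain or lose real points, and using the topology of $\operatorname{Diff}_c(\mathbf{R}^2)$ — is the technical core of \cite{k-mang}; everything else is bookkeeping over the classification of real rational surfaces.
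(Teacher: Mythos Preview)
The paper does not prove this theorem at all: it is simply quoted from \cite{k-mang} as an outside input, with no argument given. Your proposal correctly identifies this and offers a reasonable high-level sketch of the Koll\'ar--Mangolte argument, so in that sense there is nothing to compare.

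One small correction to your sketch: the key source of flexibility in \cite{k-mang} is not elementary transformations at \emph{real} points composed in pairs, but rather birational self-maps whose base loci consist of complex-conjugate pairs of points (so that the map is regular on all of $S(\mathbf{R})$ and induces a diffeomorphism there). This is exactly the point the present paper emphasises just after the statement: ``there are many Cremona transformations with purely imaginary basepoints that induce Zariski homeomorphisms on the Zariski dense of real points.'' With that adjustment, your outline of the strategy --- classification of minimal real rational surfaces, realising the mapping class group explicitly, and then using a very-transitive/$n$-transitive action on $S(\mathbf{R})$ to get $C^\infty$-density of the identity component --- matches the shape of the original proof.
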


Here an algebraic automorphism is an automorphism of $S(\R)$ induced by a rational map $S\dashrightarrow S$ whose domain of definition includes all of the real points $S(\R)$. Thus, for instance, there are many Cremona transformations with purely imaginary basepoints that induce Zariski homeomorphisms on the Zariski dense of real points. These cannot extend to isomorphisms of schemes, even after change of complex structure. Thus, the fullness of the functor fails if one tries to restrict attention to real points, even for rational surfaces.





\bibliographystyle{hplain}
\bibliography{bibliography}{}
\end{document}